\documentclass[11pt,reqno]{amsart}
\usepackage[T1]{fontenc}
\usepackage{amsmath,amssymb,amsthm,mathtools}
\usepackage[textwidth=16cm,hcentering,top=2cm,bottom=2cm,headheight=10pt,headsep=14pt]{geometry}
\usepackage{needspace}
\usepackage{tikz}
\usepackage[hidelinks]{hyperref}
\hypersetup{
 pdftitle={On rigidity of stationary black holes under a nontrapping assumption},
 pdfauthor={Alexandru F. Radu},
 pdfsubject={Stationary black hole rigidity under nontrapping},
 pdfkeywords={stationary vacuum black holes, rigidity, Killing fields, nontrapping}
}
\allowdisplaybreaks[1]
\numberwithin{equation}{section}
\makeatletter
\renewcommand{\section}{\@startsection{section}{1}%
  \z@{.7\linespacing}{.5\linespacing}%
  {\normalfont\scshape\centering}}
\renewcommand{\subsection}{\@startsection{subsection}{2}%
  \z@{.5\linespacing}{-.5em}%
  {\normalfont\bfseries}}
\makeatother

\newcommand{\g}{\mathbf g}
\newcommand{\T}{\mathbf T}
\newcommand{\Z}{\mathbf Z}
\newcommand{\D}{\mathbf D}
\newcommand{\R}{\mathbf R}
\newcommand{\M}{\mathbf M}
\newcommand{\E}{\mathbf E}
\newcommand{\HH}{\mathcal H}
\newcommand{\LL}{\mathcal L}
\newcommand{\DD}{\mathcal D}
\newcommand{\Ric}{\operatorname{Ric}}
\newcommand{\Hess}{\operatorname{Hess}}
\newcommand{\tr}{\operatorname{tr}}
\newcommand{\Vol}{\operatorname{Vol}}
\newcommand{\RR}{\mathbb R}
\newcommand{\calM}{\mathcal M}
\newcommand{\pr}{\partial}
\newtheorem{theorem}{Theorem}[section]
\newtheorem{proposition}[theorem]{Proposition}
\newtheorem{lemma}[theorem]{Lemma}

\theoremstyle{definition}
\newtheorem{definition}[theorem]{Definition}
\theoremstyle{remark}

\begin{document}
\flushbottom
\title[Black hole rigidity under nontrapping]
{On rigidity of stationary black holes under a nontrapping assumption}
\author{Alexandru F. Radu}
\address{Simion Stoilow Institute of Mathematics, Romanian Academy,
Calea Grivitei Street, no.~21, 010702 Bucharest, Romania}
\email{alexandru.radu@imar.ro}
\keywords{Black hole rigidity, stationary vacuum space-times,
Killing vector-fields, unique continuation, nontrapping}
\date{}
\begin{abstract}
We prove that a smooth, regular, asymptotically flat stationary
vacuum black hole has a nonextremal Kerr exterior if no null
geodesic orthogonal to its stationary Killing field is trapped.
We extend the local rotational symmetry by choosing successive
domains whose limiting boundary in the ergoregion, if nonempty,
is locally a timelike photon surface for the stationary quotient metric.
To obtain this geometry without an initial regularity assumption
on the boundary, we derive uniform estimates along complete
rotational orbits from nontrapping and use convexity along short
null segments to control the boundary. Its null geodesics lift to
spacetime null geodesics orthogonal to the stationary field.
Circularity allows us to continue them through ergosurface contact
with stationary projection confined to a compact set, contradicting nontrapping
and giving global axisymmetry.
\end{abstract}

\maketitle
\enlargethispage{2pt}
\tableofcontents
\section{Introduction}\label{sec:introduction}

In this paper we prove that the domain of outer communications of
a smooth, asymptotically flat, regular, stationary vacuum black hole
is isometric to a nonextremal Kerr exterior if it admits no trapped
null geodesics orthogonal to the stationary Killing field.
This is the conjecture formulated jointly with Alexakis by
Ionescu and Klainerman~\cite[Section~4]{IK15}. We state the precise
assumptions below. Throughout the paper, $\g$ denotes the spacetime
metric, $\T$ its stationary Killing field, and the signature is
$(-,+,+,+)$.

The physical motivation comes from the expectation that an isolated
black hole settles into a stationary state after gravitational
collapse, as radiation escapes to infinity or crosses the horizon.
The perturbation analysis of Regge and Wheeler~\cite{RW57} and the
decay calculations of Price~\cite{Price72} support this picture.
If such relaxation takes place, one would like to identify all its
possible end states. The black hole uniqueness problem asks whether
they belong to the family found by Kerr~\cite{Kerr63}, which is
determined by mass and angular momentum. Following the distinction
made by Ionescu and Klainerman~\cite[Section~1]{IK15} between dynamical
relaxation and stationary classification, we study the latter problem
under the nontrapping assumption above.

For rotating black holes, the perturbation equations of
Teukolsky~\cite{Teukolsky73} and mode stability of
Whiting~\cite{Whiting89} led to the decay theory, including the
scalar-wave theorem of Dafermos, Rodnianski, and
Shlapentokh-Rothman~\cite{DRSR16} in the full subextremal range.
Nonlinear stability for sufficiently small angular momentum was
proved in the works of Klainerman and Szeftel~\cite{KS23},
Giorgi, Klainerman, and Szeftel~\cite{GKS24}, and
Shen~\cite{Shen23}. More recently, Hintz~\cite[Theorems~1.1 and 13.1]{Hintz26}
has treated the full subextremal range for small initial perturbations
with specified finite asymptotic expansions and decaying remainders.
Our question concerns the stationary states
themselves, without an initial closeness assumption to Kerr.

We follow the classical division of the uniqueness argument: first
construct an additional symmetry, then classify the resulting
stationary-axisymmetric solution. In the analytic setting,
Hawking~\cite{Hawking72} obtains a Killing field normal to the
horizon. Chru\'sciel~\cite[Theorem~1.1]{Chrusciel97} establishes
the global extension of the additional local symmetry to the domain
of outer communications under the required causality and
completeness hypotheses.
Hollands, Ishibashi, and Wald~\cite{HIW07} and Moncrief and
Isenberg~\cite{MI08} extended the nondegenerate analytic rigidity
theorem to higher dimensions, where the projected stationary flow
on a horizon cross-section can have nonclosed orbits.

Once we have axisymmetry and the required horizon and global
regularity, the uniqueness results of Carter~\cite{Carter71} and
Robinson~\cite{Robinson75} identify the exterior. The potential
formulation of Ernst~\cite{Ernst68} makes comparison of solutions
possible through identities such as that of Mazur~\cite{Mazur82}
and the argument of Bunting~\cite{Bunting83}, discussed by
Carter~\cite{Carter85}. At the end of our proof we use the global
analysis and axisymmetric uniqueness argument of Chru\'sciel and
Costa~\cite[Sections~5--7]{CC08},
verifying its hypotheses for the exterior we have constructed.
For an account of this classification theory, including
the static results of Israel~\cite{Israel67} and Bunting and
Masood-ul-Alam~\cite{BuntingMasood87}, we refer to
Chru\'sciel, Costa, and Heusler~\cite{CCH12}.
Within the stationary-axisymmetric class, the recent preprint of
Han, Khuri, Weinstein, and Xiong~\cite[Theorem~1.1]{HKWX26} excludes regular
multi-horizon equilibria, assuming nonzero angular momentum on every
degenerate component.

For smooth spacetimes, the construction of the additional symmetry
leads to a unique continuation problem. Characteristic horizon data
determine a Killing field in the domain of dependence, while the
black hole exterior lies across the characteristic boundary.
We have to reach this exterior using the smooth vacuum equations.
M\"uller zum Hagen~\cite{MH70} supplies analyticity where $\T$ is
timelike, so the essential difficulty is to continue the field
through the ergoregion $\g(\T,\T)>0$, where the stationary equations
lose ellipticity.

The local part of this problem has its antecedents in the rigidity
of analytic compact Cauchy horizons with closed generators, studied
by Moncrief and Isenberg~\cite{MI83}. For smooth bifurcate horizons,
Friedrich, R\'acz, and Wald~\cite[Proposition~B.1]{FRW99} construct the horizon
Killing field in the domain of dependence, using the characteristic
initial-value theory developed by Rendall~\cite{Rendall90}.
Alexakis, Ionescu, and Klainerman~\cite[Theorem~1.1]{AIK09} then show how to
extend the field to a full neighborhood of a smooth bifurcate
nonexpanding vacuum horizon. In the rotating case their construction
also gives a periodic rotational Killing field commuting with $\T$,
and it is this field that we will continue through the exterior.
For compact Cauchy horizons whose surface gravity can be normalized
to a nonzero constant, Petersen and R\'acz~\cite[Theorem~1.2]{PR23}
construct the Killing field on the globally hyperbolic side, and
Petersen~\cite[Theorems~1.4 and 1.23]{Petersen21} extends it across
the horizon and obtains a local black hole rigidity theorem without
assuming a bifurcation surface.

To continue the rotational field, we use the Carleman approach
developed for the ill-posed characteristic problem by Ionescu and
Klainerman~\cite{IKCharacteristic09}. The stationary estimates of
Ionescu and Klainerman~\cite[Definition~3.1 and Proposition~3.3]{IK09} require pseudoconvexity only
in the null directions orthogonal to $\T$, a restriction that
will determine the geometry of our extension hypersurfaces.
For the nonlinear problem, we also have to fix the gauge:
Alexakis~\cite[Section~2.1]{Alexakis09} does this by coupling curvature wave
equations to transport equations in a geometrically constructed
gauge. We use the invariant Killing-field formulation of Ionescu
and Klainerman~\cite[Section~2.1]{IK13}, also presented in their
review~\cite[Section~2.1]{IK15}, in which the curvature equation is
coupled to transport equations for the deformation of the extended
field.

We would like to apply this local theorem successively until the
rotational field reaches the whole ergoregion. To do so, we need
geometric information that supplies suitable extension hypersurfaces.
In the earlier global arguments this information comes from the
Mars--Simon tensor, following the characterization of Kerr on the
stationary quotient by Simon~\cite{Simon84} and its spacetime
development by Mars~\cite{Mars99,Mars00}. Ionescu and
Klainerman~\cite[Section~1.2, equations~(1.6)--(1.7)]{IK09} impose an identity
between the Ernst potential and the squared self-dual Killing form on
the bifurcation sphere $S_0$, together with a one-point inequality for
the Ernst potential. This identity gives the initial vanishing of
the Mars--Simon tensor on the horizon, which they propagate through
the exterior. Kr\"oncke and Petersen~\cite[Theorems~1.2 and 1.11]{KP25}
determine the transverse metric derivatives, in a geometric gauge,
from the induced Riemannian metric and constant-length Killing
field on a nondegenerate Killing horizon. They use this determination
to replace the horizon assumption in the theorem
of Ionescu and Klainerman~\cite{IK09} by agreement with Kerr horizon
data. Alexakis, Ionescu, and
Klainerman~\cite[Assumption~PK and Lemma~4.3]{AIK10} instead assume
smallness of a weighted contraction of that tensor on an exterior
hypersurface, which allows them to control the Hessian of
$\Re(1-\sigma)^{-1}$, where $\sigma$ is the Ernst potential,
and prove pseudoconvexity of its level sets. Thus the smallness
assumption supplies precisely the hypersurfaces needed to apply
the local extension theorem for Killing fields.

The local horizon theorem also leads to a global result through
an elliptic argument. On a regular maximal hypersurface through
$S_0$, Alexakis, Ionescu, and
Klainerman~\cite[Theorem~1.1 and Proposition~1.3]{AIK14} assume
smallness of $\|\g(\T,\T)\|_{L^\infty(S_0)}$. They adapt
the integral argument of Sudarsky and Wald~\cite{SW93} to obtain a
weighted integral estimate for the second fundamental form.
From the lapse equation and estimates
for the second fundamental form and shift, they show that the entire
ergoregion lies in the neighborhood where the rotational field has
already been constructed. For our purposes, these two arguments
identify the geometric task: either place the ergoregion inside
the initial rigidity neighborhood or construct hypersurfaces across
which the symmetry can be continued. We pursue the second possibility,
using nontrapping to control successive local constructions and their
limiting geometry.

\subsection{Assumptions and the main theorem}\label{subsec:assumptions}

Let $(\M,\g)$ be a connected, oriented, time-oriented smooth
four-dimensional spacetime with $\Ric(\g)=0$, and let $\T$ be
a smooth stationary Killing field.
We write $\D$ for the Levi--Civita connection and $\R$ for the
covariant curvature tensor. Greek tensor indices range from $0$
to $3$ and are raised with $\g$. Repeated upper and lower indices
are contracted. For a scalar function $f$, $\D_\alpha f=\partial_\alpha f$,
$\D^2f$ is its covariant Hessian, and
$|df|_\g^2=\g^{-1}(df,df)$.
We use $d,\LL_X,\iota_X$ for exterior differentiation,
Lie differentiation, and contraction with $X$.
The symbols $\flat,\sharp$ denote metric duality, and $\ast$ on a
differential form denotes Hodge duality, for the indicated metric,
with $\g$ understood if none is indicated. A superscript $\ast$
on a map denotes pullback. The oriented spacetime volume form is
$d\Vol_\g$.
The notation $D$ denotes differentiation in coordinates, and
the metric of a Hessian, Laplacian, or curvature is specified
when it differs from $\g$.
Latin tensor indices have the range specified locally. We denote
null fields by $L,\underline L$, with their domain and normalization
given at each construction. Positive constants $c,C$ may change between
estimates, with their dependence stated locally.

For a set $A\subset\M$,
$I^+(A)$ and $I^-(A)$ denote its chronological future and past.
We use the exterior-hypersurface formulation of the global and
horizon assumptions in Alexakis, Ionescu, and
Klainerman~\cite[Section~1.1]{AIK10} and
\cite[Section~1.1]{AIK14}.
In this setting we make precise the regularity hypotheses in the
conjecture of Ionescu and Klainerman~\cite[Section~4]{IK15}.

\medskip\noindent
GR. (Global regularity.)
Let $\Sigma^0$ be an embedded spacelike partial Cauchy hypersurface,
that is acausal and edgeless, parametrized by a diffeomorphism
$\Phi_0:\{x\in\RR^3:|x|>1/2\}\to\Sigma^0$, and put
$\Sigma_1=\Phi_0(\{|x|>1\})$.
There is an asymptotic end
$\M^{(\mathrm{end})}\simeq\RR\times\{x\in\RR^3:|x|>R_0\}$,
with $\T=\pr_t$ and spatial indices $i,j,k\in\{1,2,3\}$.
Let $M>0$ and
$S=(S^1,S^2,S^3)\in\RR^3$ satisfy $|S|<M^2$.
These are the ADM mass and angular momentum of the end.
Denote the Euclidean metric by $\delta_{ij}$ and by
$\epsilon_{ijk}$ the alternating symbol with $\epsilon_{123}=1$.
For an integer $m\ge0$ and $a\in\RR$, the notation $F=O_m(|x|^{-a})$
for a smooth remainder $F$ means, for sufficiently large $|x|$,
$|\partial^I F(x)|\le C_I|x|^{-a-|I|}$ for every spatial
multi-index $I$ with $|I|\le m$, componentwise for tensor fields.
In these coordinates,
\begin{equation}\label{intro:af}
\begin{aligned}
\g_{00}&=-1+\frac{2M}{|x|}+O_6(|x|^{-2}),\\
\g_{ij}&=\delta_{ij}+O_6(|x|^{-1}),\\
\g_{0i}&=-2\epsilon_{ijk}S^jx^k|x|^{-3}+O_6(|x|^{-3}).
\end{aligned}
\end{equation}
The domain of outer communications
$\E=I^-(\M^{(\mathrm{end})})\cap I^+(\M^{(\mathrm{end})})$
is globally hyperbolic. Its stationary orbits are complete and meet
$\Sigma_1$. We assume
\[
 \Sigma^0\cap I^\pm(\M^{(\mathrm{end})})=\Sigma_1,
\]
and that $\Sigma^0$ is $t=0$ on the end.

\medskip\noindent
SBS. (Smooth bifurcation sphere.)
The future event horizon $\partial I^-(\M^{(\mathrm{end})})$
and the past event horizon $\partial I^+(\M^{(\mathrm{end})})$
meet in the embedded sphere $S_0=\Phi_0(\{|x|=1\})$.
Near $S_0$ we denote these horizons by $\HH^+$ and $\HH^-$,
respectively, and assume that they are smooth transverse
nonexpanding null hypersurfaces tangent to $\T$.
For a null generator $L$, the null
second fundamental form on a spacelike cross-section is
$(X,Y)\mapsto\g(\D_XL,Y)$ for tangent $X,Y$.
Nonexpanding means that its trace in the induced metric vanishes.

The bifurcate setting is related to the usual nondegeneracy condition
by the extension theorems of R\'acz and Wald~\cite{RW92,RW96} for
Killing horizons with compact cross-sections and nonzero constant
surface gravity. We impose SBS directly on the stationary event
horizons and use it to construct the additional Killing field.

Let $Q=\E/\RR$ be the space obtained by identifying two points
of $\E$ whenever they lie on the same orbit of $\T$, and let
$\pi:\E\to Q$ send a point to its orbit. We give $Q$ the
quotient topology: a set is open when its inverse image under
$\pi$ is open in $\E$. We can state nontrapping using this topology
alone. In Proposition~\ref{prep:quotient} we show that $Q$ is a
smooth three-dimensional manifold and identify it with a smooth
hypersurface meeting each stationary orbit exactly once.
We use the same symbol for a stationary field and its projection:
\[
 X_{\pi(p)}=d\pi_p(X_p)\qquad([\T,X]=0).
\]
Contractions with $\g$ use the spacetime field; metrics on $Q$
act on its projection. We display $d\pi$ in comparisons of
spacetime and quotient covariant derivatives.

\medskip\noindent
NT. (Nontrapping.)
No maximal unparametrized null geodesic $\gamma$ in $\E$ with
$\g(\T,\dot\gamma)=0$ has $\pi(\gamma)$ contained in a compact
subset of $Q$.

\medskip
Here maximality refers to the unparametrized geodesic in $\E$,
including the case of a finite affine parameter interval. The
compact sets in NT lie in the stationary quotient of the open
exterior, away from its horizon and asymptotic ends.

\begin{theorem}\label{thm:main}
Under GR, SBS, and NT, the domain
$(\E,\g)$ is isometric to the exterior of a nonextremal Kerr spacetime.
\end{theorem}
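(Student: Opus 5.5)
The plan follows the route sketched in the abstract: first produce a global rotational Killing field $\Z$ on $\E$ commuting with $\T$, then verify the hypotheses of the axisymmetric uniqueness argument of Chru\'sciel and Costa~\cite[Sections~5--7]{CC08}.

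\emph{Step 1: local rotational field near the horizon.} I start from SBS and the theorem of Alexakis, Ionescu, and Klainerman~\cite[Theorem~1.1]{AIK09}. This gives a Killing field $\mathbf K$ in a neighborhood of $S_0$ that is null on $\HH^\pm$. If $\mathbf K$ is not a multiple of $\T$, a suitable combination $\Z=\mathbf K-c\T$ has closed orbits and commutes with $\T$. In the static case the argument reduces instead to staticity and Bunting--Masood-ul-Alam.

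\emph{Step 2: maximal domain of extension.} Let $\mathcal U\subset\E$ be the union of all connected, $\T$-invariant open sets containing a neighborhood of $S_0$ to which $\Z$ extends as a Killing field commuting with $\T$; by uniqueness of Killing extensions on connected sets this is well defined. Where $\T$ is timelike, M\"uller zum Hagen~\cite{MH70} gives analyticity of the stationary metric. Hence $\Z$ continues analytically along paths in the region $\g(\T,\T)<0$, and monodromy is harmless because it is trivial on the quotient. So the boundary $\partial\pi(\mathcal U)$ in $Q$, if nonempty, must lie in the closure of the projected ergoregion. There I use the local extension theorem of Ionescu--Klainerman~\cite{IK13,IK09}: $\Z$ continues across any hypersurface that is strongly pseudoconvex in the null directions orthogonal to $\T$. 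Therefore no piece of $\partial\pi(\mathcal U)$ can be touched from inside $\pi(\mathcal U)$ by such a surface.

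\emph{Step 3: boundary geometry (main obstacle).} I expect the key difficulty to be showing that the limiting boundary, which has no a priori regularity, is locally a timelike photon surface for the quotient metric $\g(\T,\T)^{-1}$-conformal data, i.e.\ that it is totally geodesic for the null geodesics orthogonal to $\T$ that are tangent to it. My first attempt would be the following.
\begin{itemize}
\item Use NT and compactness along complete orbits of $\Z$, which are compact circles inside $\mathcal U$, to get uniform bounds on the geometry of successive exhausting domains.
\item Then argue by contradiction with short null segments orthogonal to $\T$. If such a segment starting tangentially at the boundary bent outward, a small perturbation of the segment family would produce a pseudoconvex barrier, contradicting maximality. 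If it bent inward, a convexity argument would contradict the boundary being a limit of the domains.
\end{itemize}
This should give a $C^{1,1}$ and then smooth hypersurface ruled by such geodesics.

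\emph{Step 4: contradiction with NT and conclusion.} The photon-surface null geodesics lift to spacetime null geodesics $\gamma$ orthogonal to $\T$ that stay in the boundary. Circularity holds, since $\Z$ and $\T$ span timelike-orthogonal integrable planes by the Ricci-flat Frobenius identity on $\mathcal U$ and hence by continuity up to the boundary. Using circularity, I continue these geodesics through points of contact with the ergosurface $\g(\T,\T)=0$, where the quotient metric degenerates. The boundary is $\Z$-invariant and bounded away from the horizon and from infinity, since rigidity holds near both. Hence the projection $\pi(\gamma)$ stays in a compact subset of $Q$, contradicting NT. So $\mathcal U=\E$.

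It remains to check the hypotheses of \cite{CC08}: completeness and periodicity of $\Z$, the existence of an axis, and the horizon structure. Their argument then identifies $(\E,\g)$ with a Kerr exterior with parameters $(M,|S|)$. The condition $|S|<M^2$ gives nonextremality.
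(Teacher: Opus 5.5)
Your outline has the same architecture as the paper: the horizon rotation of Alexakis--Ionescu--Klainerman, $\T$-conditionally pseudoconvex extension in the ergoregion, a limiting boundary that is a photon surface of $(Q,h)$, reflection at the ergosurface, contradiction with NT, then axisymmetric uniqueness. But Step~3, where the proof actually happens, is not supplied, and your Step~2 set-up prevents it from being supplied.

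\textbf{The maximal domain.} The union of all extension domains does not carry a single field. Two extensions agree on the component of $U_1\cap U_2$ containing the horizon neighborhood, but not necessarily on the other components; Zorn's lemma gives maximal pairs, not a canonical union. More seriously, nothing in your definition makes the complement of a maximal domain closed under short zero-energy null segments, or makes the $\Z$-orbits in it complete. Your ``bent inward'' case needs the first property, and your orbit bounds need the second.

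The paper replaces maximality by a construction that builds both in. It removes only sets $V\cap\{f<\varepsilon\}$ cut out by $\Z$-invariant, strictly $\T$-conditionally pseudoconvex $f$. It requires agreement with the old field on the whole overlap and keeps the flow complete with period $2\pi$. Such cuts preserve closure under short null segments, since a zero-energy null geodesic cannot have an interior minimum of $f$, and intersections preserve it as well. A greedy choice over a countable basis of balls then guarantees that no fixed ball around a point of $H_\infty$ is removable at every late stage.

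Your Step~2 claim that the boundary avoids $\{\g(\T,\T)<0\}$ is also unsupported. That region need not be simply connected, and different components of $\mathcal U\cap B$ may continue to different analytic fields on a ball $B$. The paper never removes anything there. It crosses the ergosurface at the end using the reference field $\Z_0$, with the sign fixed by $\g(\T,\Z_0)>0$. Only then, once all of $\E$ is analytic and simply connected, does it apply Chru\'sciel's global theorem.

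\textbf{Regularity of the boundary.} Even granting null convexity, ``a segment starting tangentially at the boundary'' and the local extension theorem make no sense at a boundary with no regularity. Four ingredients are missing.
\begin{enumerate}
\item[(i)] The orbit estimate $|\Z/\rho|_{g_+}+|\nabla(\Z/\rho)|_{g_+}+|d\log\rho|_{g_+}\le C_K$ on complete orbits over a compact $K$. It is proved by rescaling the orbit system $\nabla_VV=A$, $\nabla_VA=\nu^2V$ to a null limit of infinite auxiliary length and invoking the NT length bound. Because the unit hyperboloid is noncompact, your ``uniform bounds'' must come from such an argument.
\item[(ii)] Positive reach of the remaining set. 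Two nearest points are moved in opposite directions along $\Z/\rho$ until their chord is null; its midpoint then lies inside the empty ball, contradicting null convexity.
\item[(iii)] Killing transport along the resulting exterior normal segments. This gives boundary values of $(\Z,\D\Z)$, with a single limit wherever the normal cone is not a line, so a local extension agrees with every nearby exterior component.
\item[(iv)] Comparison of the semiconvex transverse boundary curve with the $q_{\mathrm{opt}}$-geodesic chord. If the chord enters the exterior, null convexity is violated. If it enters the remaining set, subtracting a concave quadratic gives a strictly curved curve touching from the exterior, hence a pseudoconvex cut. That cut is made $\Z$-invariant using circularity and $\Z^2f\ge0$ along the orbit, then extended around the whole orbit with period $2\pi$, contradicting the selection.
\end{enumerate}
Smoothness comes out of (iv) directly, since the boundary equals the chord on every short interval. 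Your ``$C^{1,1}$ then smooth'' step has no argument behind it.

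\textbf{Smaller omissions.}
\begin{itemize}
\item Circularity requires showing the constant twist contractions vanish, via their limit on $S_0$. Timelikeness of $\operatorname{span}\{\T,\Z\}$ requires NT, not only Frobenius.
\item Continuation at ergosurface contact needs coefficients regular across $\g(\T,\T)=0$. The paper obtains them from $\Z_0$, built on the flat Lorentzian cylinders given by common level sets of $\g(\T,\T)$ and the twist potential, after using NT to prove the ergosurface is smooth.
\item Chru\'sciel--Costa requires $I^+$-regularity, i.e.\ a section ending on the future horizon rather than at $S_0$. This section has to be constructed.
\end{itemize}
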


We verify NT in every subextremal Kerr exterior in
Section~\ref{subsec:nontrapping}. To see which geodesics the
assumption concerns, recall that the stationary energy
$-\g(\T,\dot\gamma)$ of an affinely parametrized null geodesic
$\gamma(s)$ is constant:
\[
 \frac d{ds}\g(\T,\dot\gamma)
 =\dot\gamma^\alpha\dot\gamma^\beta\D_\alpha\T_\beta
       +\g(\T,\D_{\dot\gamma}\dot\gamma)
 =\frac12\dot\gamma^\alpha\dot\gamma^\beta
       (\D_\alpha\T_\beta+\D_\beta\T_\alpha)=0.
\]
We impose NT only at zero stationary energy, so the hypothesis
allows the usual photon trapping in Kerr. To understand why this is
the relevant restriction, we follow the observation of Ionescu and
Klainerman~\cite[Sections~1 and 4]{IK15}: for fields invariant under
$\T$, the characteristic directions of the wave equation are
precisely the null directions orthogonal to $\T$. They are confined
to the region where $\T$ is spacelike or null. The conjecture asks
whether the global behavior of these geodesics supplies enough
information to propagate the additional symmetry through that region.

\subsection{Main ideas of the proof}

In the rotating case, $\T|_{S_0}\not\equiv0$, our starting
point is the rotational Killing field $\Z$ constructed near $S_0$
by Alexakis, Ionescu, and
Klainerman~\cite[Theorem~1.2 and Proposition~4.2]{AIK09}. The case
$\T|_{S_0}\equiv0$ is treated by staticity in
Section~\ref{exit:section}. We normalize the rotational field's
effective period to $2\pi$ and retain $[\T,\Z]=0$.

The main idea is to choose successive domains of extension so
that any boundary left in the ergoregion is ruled by null geodesics.
To make this precise,
we use the Lorentzian metric $h$ on the stationary quotient
$Q=\E/\RR$, defined where $\g(\T,\T)>0$ by
\[
 \pi^{\ast}h=\g(\T,\T)\g-\T^\flat\otimes\T^\flat.
\]
Its null geodesics lift to spacetime null geodesics orthogonal to
$\T$. Our central result, Proposition~\ref{discrete:stalled-boundary},
shows that, in the ergoregion, the limiting boundary is locally
a smooth timelike photon surface of $(Q,h)$, in the sense of
Claudel, Virbhadra, and Ellis~\cite[Definition~2.1]{CVE01}:
a null geodesic initially tangent to this surface stays in it
locally. If $\Sigma$
denotes such a local surface, $\nabla$ the connection of $h$, and
$n$ either unit spacelike normal, its second fundamental form
$B(X,Y)=-h(\nabla_Xn,Y)$ satisfies
\[
 B(X,X)=0
 \qquad(X\in T\Sigma,\ h(X,X)=0).
\]
We then continue these null geodesics through contact with the
ergosurface and obtain a spacetime null geodesic whose stationary
projection remains in a compact set, contrary to NT. The main
difficulty is to recover the smooth surface and this curvature
identity from a limit for which no boundary regularity is assumed.

We explain first how the local extension theorem selects the
relevant geometry. Ionescu and
Klainerman~\cite[Theorem~2.4]{IK15} extend a commuting Killing field
from the local side $f<0$ across $f=0$, where $\T\ne0$, provided
the stationary defining function $f$ satisfies
\[
 \D^2f(X,X)<0
 \quad\bigl(X\ne0,\ Xf=\g(X,X)=\g(\T,X)=0\bigr).
\]
For an affinely parametrized zero-energy null geodesic $\gamma$
tangent to $f=0$ at $\gamma(0)$, we have
\[
 f(\gamma(s))
 =\frac{s^2}{2}\D^2f(\dot\gamma(0),\dot\gamma(0))+O(s^3)<0
 \qquad(0<|s|\ll1).
\]
Thus the two branches bend into the region already carrying the
symmetry. We must find a hypersurface with this strict inequality
and with its whole local negative side in the domain of $\Z$.
Our construction also requires $|df|_\g^2>0$.
The earlier Ernst-potential arguments supply such hypersurfaces
using the horizon identity or the Mars--Simon smallness assumption
described above. Under NT, we instead obtain them from the boundary
of the region to which $\Z$ has already been extended.

To implement this idea, we keep track of a compact closed remaining
set $H\subset Q$, with the rotational field defined on the part
of the ergoregion outside $H$. We require convexity along short
zero-energy null segments:
\[
 \pi(\gamma(a)),\pi(\gamma(b))\in H
 \quad\Longrightarrow\quad
 \pi(\gamma([a,b]))\subset H
\]
whenever the segment is sufficiently short in a fixed auxiliary
Riemannian metric on spacetime. The strict Hessian inequality lets us preserve
this property when an extension removes part of $H$, and the
property passes to decreasing intersections. In
Section~\ref{sec:discrete-continuation} we select the extensions
so that no fixed neighborhood can remain removable at every late
stage. Two estimates allow us to understand the boundary that
survives this selection.

The first controls the rotational direction as its domain changes.
The vacuum circularity identities make $\Z$ hypersurface orthogonal
for $h$, and NT implies that it is timelike. Write
$\rho=\sqrt{-h(\Z,\Z)}$ and fix a smooth auxiliary Riemannian
metric $g_+$ on $Q$. Although $\Z/\rho$ has unit timelike length,
its $g_+$-norm could be arbitrarily large. We use the entire periodic
orbit to exclude this possibility. In
Lemma~\ref{action:intrinsic-orbits} we prove
\[
 \left|\frac{\Z}{\rho}\right|_{g_+}
 +\left|\nabla\left(\frac{\Z}{\rho}\right)\right|_{g_+}
 +|d\log\rho|_{g_+}\le C_K
\]
on periodic orbits contained in a compact set
$K\subset\{\g(\T,\T)>0\}$. All norms are induced by $g_+$.
The constant
depends only on the fixed metrics, $K$, and a bound for the
$g_+$-lengths of null geodesic segments contained in $K$, which NT
supplies in our setting. In particular, it is independent of $\Z$
and its domain. Hypersurface orthogonality gives a closed equation
for the orbit velocity and acceleration. If the normalized
directions became unbounded, rescaling this equation would produce
a null limiting trajectory with infinite auxiliary length in $K$,
contradicting the length bound. This estimate is intrinsic to three-dimensional Lorentzian
geometry.

The second estimate converts convexity along short null segments
into control of the boundary. Lemma~\ref{closure:null-reach}
shows that a closed set with this convexity property has locally
uniform positive reach, provided its complement carries a complete
unit timelike field with uniform bounds on its size and first
derivative. Here
positive reach means that sufficiently close exterior points have
a unique nearest point in the set, for the fixed auxiliary metric.
Near a compact portion of the limiting boundary in the ergoregion,
the full orbits stay in a fixed compact subset of that region:
the action preserves $\g(\T,\T)$ and the initial horizon
neighborhood. We can therefore apply the first estimate there.
The field $\Z/\rho$ is complete because $\rho$ is constant on
each periodic orbit.
To see why a timelike flow should control nearest points,
suppose that a small ball in the complement touches the set at two
points. Moving the two points in opposite timelike directions gives
a short null segment whose midpoint lies inside the ball. This
contradicts the convexity assumption. The proof makes this argument
quantitative for a variable metric and a field initially defined
only on the complement. Like the orbit estimate, this implication
uses Lorentzian geometry alone, once its hypotheses are given.

We can now approach the limiting boundary along exterior normal
segments and use the Killing transport equations to bound $\Z$
and $\D\Z$ there. To recover the curvature identity above, we
compare the boundary on a transverse disk with geodesics of a
two-dimensional Riemannian metric $q_{\mathrm{opt}}$. Indeed,
hypersurface orthogonality gives local coordinates with
$\Z=\partial_\phi$ in which
\[
 h=\rho^2(-d\phi^2+q_{\mathrm{opt}}),
\]
with coefficients independent of $\phi$. The conformal correction
to the second fundamental form vanishes on null tangent vectors,
so the required Hessian sign is determined by the geodesic curvature
of the transverse curve. Comparison with a short
$q_{\mathrm{opt}}$-geodesic, followed by a quadratic perturbation,
shows that the limiting curve either agrees with the geodesic or
admits a strictly curved touching curve across which we can extend
$\Z$. We verify agreement of this extension on every old exterior
component and preserve its full period before applying the
selection property. That property excludes the strict alternative.
The equality case gives the smooth photon surface of
Proposition~\ref{discrete:stalled-boundary}.

At the ergosurface, where $h$ degenerates, we return to the spacetime
geodesic equation. Proposition~\ref{canon:reconstruction} constructs
a smooth reference field $\Z_0$, determined by $\g$ and $\T$,
which agrees up to sign with the rotational field wherever the
latter is already defined nearby. This supplies regular transverse
equations at contact. Circularity removes their term linear in
velocity, making them reversible with the conserved angular
momentum fixed. Since the transverse velocity vanishes at contact,
uniqueness forces the transverse curve to retrace its incoming arc.
We verify that its lift continues the original spacetime null
geodesic. Its projection therefore stays in the same compact set,
and NT gives the contradiction in
Theorem~\ref{discrete:continuation}.

Thus the new global argument rests on the orbit estimate, the
recovery of the limiting boundary geometry, and continuation
through ergosurface contact. For local extension we use the
Carleman estimate of Ionescu and
Klainerman~\cite[Proposition~3.3]{IK09} and the Killing-field wave
and transport system of Ionescu and
Klainerman~\cite[Section~2.1]{IK13}.
Appendix~\ref{app:local} verifies the quantitative stationary
version stated in Proposition~\ref{local:extension}.
The circularity identities used in the geometric argument are
those of Papapetrou~\cite[Section~IV]{Papapetrou66} and
Carter~\cite[Section~5]{Carter69}.

Section~\ref{prep:section} prepares the initial domain, the precise
local extension statement, and the comparison field needed at the
ergosurface. We use these conclusions in the later sections through
Propositions~\ref{prep:residual}, \ref{local:extension}, and
\ref{canon:reconstruction}, respectively. The geometric estimates
begin in Section~\ref{sec:action}, where we relate null curvature
to transverse geodesic curvature and prove the bound along complete
orbits. Section~\ref{sec:discrete-continuation} selects the successive
extensions, and Section~\ref{sec:limit-geometry} proves positive
reach and then the photon-surface conclusion. In
Section~\ref{sec:remaining} we continue the resulting null geodesic
at the ergosurface and finish the nontrapping argument.
Section~\ref{exit:section} extends the symmetry to the whole
exterior and applies the axisymmetric uniqueness theorem of
Chru\'sciel, Costa, and Heusler~\cite[Theorem~3.2]{CCH12}, with
the global analysis of Chru\'sciel and Costa~\cite{CC08}.
Appendix~\ref{app:completion} verifies the required future horizon
section.
\section{Stationary geometry and local rigidity}
\label{prep:section}\label{sec:local}\label{canon:section}

In Sections~\ref{prep:section}--\ref{sec:remaining} we assume
GR, SBS, and NT, with $\T|_{S_0}\not\equiv0$ and $\T$ future
directed at infinity. We prepare here the local constructions
used in the global argument. Starting with the rotational field
of Alexakis, Ionescu, and Klainerman~\cite{AIK09}, we extend it
by stationarity and choose an initial hypersurface satisfying
the strict Hessian inequality on zero-energy null tangent vectors.
The conclusion is Proposition~\ref{prep:residual}, which supplies
the first domain in Section~\ref{sec:discrete-continuation}.
Proposition~\ref{local:extension} states the local theorem used
at each subsequent extension, and
Section~\ref{subsec:compatibility} verifies agreement of the
fields and preservation of their effective period.

The construction near the ergosurface has a different purpose:
it provides regular transverse equations when the quotient metric
degenerates, as needed in Section~\ref{sec:remaining}. We use
the twist potential $\psi$, defined by
$d\psi=\iota_\T(\ast d\T^\flat)$, and show that the common
levels of $\g(\T,\T)$ and $\psi$ are flat Lorentzian cylinders.
Their closed parallel translations, with period fixed, determine
the rotational direction up to sign.
Proposition~\ref{canon:reconstruction} gives the resulting smooth
comparison field across $\g(\T,\T)=0$ and its agreement, up to
sign, with the rotational field on their common domain.

We use the curvature convention of Ionescu and
Klainerman~\cite{IK13}. For vector fields $X,Y,V$,
\[
 \R(X,Y)V=\D_X\D_YV-\D_Y\D_XV-\D_{[X,Y]}V,
\]
and the corresponding covariant tensor is
\[
 \R_{\alpha\beta\gamma\delta}
 =\g\bigl(\R(\partial_\alpha,\partial_\beta)\partial_\delta,
                                      \partial_\gamma\bigr).
\]
In particular, a covector $\xi$ satisfies
$[\D_\alpha,\D_\beta]\xi_\gamma
=\R_{\alpha\beta\gamma\delta}\xi^\delta$.
The Killing identity in this convention is
\eqref{local:killing-connection}.

\subsection{The stationary section and topology}

We begin by placing the stationary orbits in a smooth quotient.
Chru\'sciel and Wald~\cite{CW94} studied the topology of stationary
exteriors using the topological-censorship theorem of Friedman,
Schleich, and Witt~\cite{FSW93}. In our setting, the topology
prescribed for $\Sigma_1$ in GR will give simple connectedness
directly, once we prove properness of its projection.

We will also need a spacelike section transverse to $\T$
everywhere, including the ergoregion. To construct it, we average
a temporal function along the stationary flow. Causal comparison
with a timelike orbit in the asymptotic end makes this average
finite and, at the same time, proves properness of the action.

For a vector field $X$, $\Phi_s^X$ denotes its local flow on the indicated manifold.
Completeness on an invariant open set means that this flow is
defined there for every $s\in\RR$ and preserves the set.
We write $p\ll q$ when $q\in I^+(p)$.

\begin{proposition}\label{prep:quotient}
Under GR, the $\T$-action on $\E$ is free and proper, and
$Q=\E/\RR$ is a smooth simply connected three-manifold.
There is $\vartheta\in C^\infty(\E)$ such that
\[
 \begin{aligned}
 \T\vartheta&=1,\\
 \g^{-1}(d\vartheta,d\vartheta)&<0,
 \end{aligned}
\]
and
\[
 (\vartheta,\pi):\E\longrightarrow\RR\times Q
       \text{ is a diffeomorphism}.
\]
The map $\pi|_{\Sigma_1}:\Sigma_1\to Q$ is proper and
surjective, and $\E$ is simply connected.
\end{proposition}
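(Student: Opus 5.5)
Since $\E$ is globally hyperbolic, I start from a smooth temporal function $\tau$ on $\E$ (Bernal--S\'anchez): $d\tau$ is past-directed timelike, each level set is a Cauchy surface, and $\tau$ increases along future causal curves. I choose $\tau$ so that $\tau=t$ on a region of $\M^{(\mathrm{end})}$ containing the timelike orbits $\{x=x_0\}$ with $|x_0|$ large. I then average it along the flow,
\[
 \vartheta(p)=\lim_{S\to\infty}\frac{1}{2S}\int_{-S}^{S}\bigl(\tau(\Phi^\T_s p)-s\bigr)\,ds ,
\]
or more robustly use a weighted version $\int \chi(\tau(\Phi_s^\T p)-c)\,ds$ for a suitable cutoff. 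The aim is to show that $s\mapsto\tau(\Phi_s^\T p)-s$ stays bounded, uniformly on compact sets together with its derivatives.

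\textbf{Causal comparison.} Fix $p\in\E$ and an end orbit $\gamma_0(s)=\Phi^\T_s(q_0)$, $q_0\in\M^{(\mathrm{end})}$. Because $p\in I^-(\M^{(\mathrm{end})})\cap I^+(\M^{(\mathrm{end})})$, and $\M^{(\mathrm{end})}$ is foliated by the timelike $\T$-orbits, there are $a<b$ with $\gamma_0(a)\ll p\ll\gamma_0(b)$. This uses that points of the end can be joined timelike to $\gamma_0$ after a bounded time shift, which follows from the asymptotics. Applying the isometry $\Phi_s^\T$ gives $\gamma_0(a+s)\ll\Phi_s p\ll\gamma_0(b+s)$, hence
\[
 a\le\tau(\Phi_sp)-s\le b .
\]
Openness of $\ll$ makes $a,b$ locally uniform in $p$. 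So the integrand is bounded and, with Ces\`aro or cutoff averaging, $\vartheta$ is well defined and satisfies $\T\vartheta=1$ exactly.

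\textbf{Gradient and smoothness.} $d\vartheta$ is an average of pullbacks $(\Phi_s^\T)^*d\tau$, each past timelike, and so lies in the closed convex timelike cone. Strict timelikeness needs a uniform-in-$s$ lower bound. To get it I would use the cutoff version $\vartheta$ defined implicitly by $\int\chi(\tau(\Phi_sp)-s-\vartheta(p))\,ds=\mathrm{const}$, so that only a window of $s$ of bounded length contributes, and nontrivially. Smoothness then follows by differentiation under the integral, since the window is locally uniform. I expect this to be the main obstacle: it requires uniform control of $d\tau$ along the flow over bounded windows, which is immediate from compactness of $\{\Phi_s p: s\in[s_1,s_2]\}$.

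\textbf{Free and proper action; quotient.} Properness: if $\Phi_{s_n}p_n\to q$ with $p_n\to p$, then the bounds $a\le\tau(\Phi_{s}p)-s\le b$, uniform near $p$ and $q$, force $s_n$ to be bounded. Freeness follows since $\vartheta(\Phi_sp)=\vartheta(p)+s$. The level set $N=\{\vartheta=0\}$ is then a smooth spacelike hypersurface meeting each orbit exactly once and transversally. Hence $(\vartheta,\pi)$ is a diffeomorphism $\E\to\RR\times N\cong\RR\times Q$, with inverse $(s,y)\mapsto\Phi_s(y)$, and $Q$ is a smooth three-manifold.

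\textbf{Topology.} By GR every orbit meets $\Sigma_1$, so $\pi|_{\Sigma_1}$ is surjective. For properness: for $K\subset Q$ compact, $\pi^{-1}(K)\cap\Sigma_1$ lies in $\{|\vartheta|\le C\}\cap\Sigma_1$. This holds because $\Sigma_1$ is acausal and the comparison $a\le\tau-s\le b$ bounds the orbit parameter at which an orbit can meet $\Sigma^0$; two crossings are excluded since the orbit is timelike or null on intervals only in a controlled way, or more simply because the crossing times are trapped in a compact set by the previous inequality. Thus $\pi^{-1}(K)\cap\Sigma_1$ is a closed subset of the compact set $\Phi([-C,C]\times\sigma(K))$, where $\sigma$ is the section $N$, and is therefore compact. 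A proper surjective submersion-like map from $\Sigma_1\cong\{|x|>1\}\cong S^2\times\RR$, which is simply connected, gives $\pi_1$-surjectivity: loops in $Q$ lift by path lifting for the $\RR$-bundle $\E\to Q$, and $\E$ deformation retracts to $N$. Concretely, $\E\simeq\RR\times Q$, and the image of $\pi_1(\Sigma_1)\to\pi_1(\E)$ is all of $\pi_1(\E)$, because $\Sigma_1$ is a Cauchy surface for $\E$ (by $\Sigma^0\cap\E=\Sigma_1$ and global hyperbolicity). Hence $\E\cong\RR\times\Sigma_1$ is simply connected, and so is $Q\simeq\E$.
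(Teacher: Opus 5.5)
Your causal comparison with an end orbit, and the properness, freeness and product structure you derive from it, follow the paper. The proof of simple connectedness, however, has a genuine gap. You assert that $\Sigma_1$ is a Cauchy surface of $\E$ because $\Sigma^0\cap\E=\Sigma_1$ and $\E$ is globally hyperbolic. This is not among the hypotheses, and it does not follow from the facts you cite: a closed acausal edgeless spacelike hypersurface of a globally hyperbolic spacetime need not be Cauchy (a hyperboloid in Minkowski space), and GR only says that the \emph{stationary orbits} meet $\Sigma_1$.

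Your fallback, that a proper surjective map from a simply connected manifold is $\pi_1$-surjective, is false in general (the covering $S^3\to L(k,1)$, $k\ge2$). Moreover, $\pi|_{\Sigma_1}$ is not even known to be injective or a submersion. In the ergoregion the orbits are spacelike and may cross $\Sigma_1$ several times or tangentially, so your remark excluding two crossings is unfounded there.

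The missing idea is the degree. A timelike orbit in the end meets the acausal $\Sigma_1$ exactly once and transversally. With orientations fixed at infinity, the proper map $\pi|_{\Sigma_1}$ therefore has degree one. Since $\Sigma_1\cong S^2\times\RR$ is simply connected, this map lifts to a proper map $F:\Sigma_1\to\widetilde Q$ into the universal cover. At the lift $q_0$ of that asymptotic regular value which is hit by $F$, we have $\deg F=1$. Any other lift $q_1$ has empty fiber and would give $\deg F=0$. Hence the cover has one sheet, and $\pi_1(Q)=\pi_1(\E)=0$.

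Properness of $\pi|_{\Sigma_1}$ also needs to be made explicit. Fix $u\in\Sigma_1$; for $t>b-a$ the chain $u\ll\gamma_0(b)\ll\gamma_0(a+t)\ll\Phi_t^\T p$ contradicts acausality. You also need closedness of $\Sigma_1$ in $\E$, which holds because edgeless achronal sets are closed.

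Your definition of $\vartheta$ also does not work as written. The Ces\`aro mean of the bounded function $s\mapsto\tau(\Phi_s^\T p)-s$ need not converge, and even when it does it gives no control of $d\vartheta$. The implicit definition $\int\chi(\tau(\Phi_s^\T p)-s-\vartheta(p))\,ds=\mathrm{const}$ never localizes. The argument of $\chi$ stays in $[a-\vartheta(p),b-\vartheta(p)]$ for all $s$, and for $p$ on $\gamma_0$ the integrand is constant in $s$, so the integral is $0$ or $+\infty$. Subtracting $s$ removes exactly the growth that would confine the integrand to a bounded window.

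The fix, as in the paper, is to apply a smooth step $\chi$ to $\tau(\Phi_s^\T p)$ itself and renormalize:
\[
 \vartheta(p)=\int_\RR\bigl[\chi(\tau(\Phi_s^\T p))-\chi(s)\bigr]\,ds .
\]
Your bounds $a+s<\tau(\Phi_s^\T p)<b+s$ give the integrand locally uniform compact support. A change of variables gives $\vartheta(\Phi_t^\T p)=\vartheta(p)+t$. The differential $d\vartheta_p=\int\chi'(\tau(\Phi_s^\T p))\,(\Phi_s^\T)^{\ast}d\tau\,ds$ is strictly timelike, because it integrates strictly timelike covectors of one cone against a nonnegative weight that is not identically zero. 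So no uniform-in-$s$ lower bound is needed. Normalizing $\tau=t$ on a region of the end is also unnecessary: composing $\tau$ with the inverse of $s\mapsto\tau(\gamma_0(s))$ gives $\tau(\gamma_0(s))=s$, which is all you use.
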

\begin{proof}
Fix a future timelike stationary orbit
$\gamma(s)=\Phi_s^\T p_\infty$ in the end. We claim that the
stationary time between two points of a compact set is bounded.
The same comparison will subsequently control the projection of
$\Sigma_1$.
The definition $\E=I^-(\M^{(\mathrm{end})})\cap
I^+(\M^{(\mathrm{end})})$, followed by timelike travel within
the end, gives
\begin{equation}\label{prep:comparison}
 \text{for each compact }C\subset\E,\quad
 \gamma(a)\ll p\ll\gamma(b)\quad(p\in C)
\end{equation}
for some $a<b$. If $p,\Phi_t^\T p\in C$ and $t>b-a$,
\[
 \gamma(a+t)\ll\Phi_t^\T p\ll\gamma(b)\ll\gamma(a+t),
\]
contradicting chronology. The reversed chain excludes
$t<a-b$. Hence the inverse image of a compact set under
$(t,p)\mapsto(p,\Phi_t^\T p)$ is closed in a compact set
$[-(b-a),b-a]\times C$. This proves properness. A nonzero
period is also excluded,
since its integer multiples would violate the same bound.
The free action therefore has a smooth Hausdorff quotient $Q$.

To obtain a spacelike section of this action, we construct a
temporal function $\vartheta$ satisfying $\T\vartheta=1$.
We start with a future increasing Cauchy temporal function $\tau$,
as supplied by Bernal and S\'anchez~\cite[Theorem~1.1]{BS05}, and
average its transition between two fixed levels. Here temporal
means that $d\tau$ is timelike. Choose
$\chi'\ge0$, $\chi=0$ on $(-\infty,-1]$, and $\chi=1$
on $[1,\infty)$, with $\chi\in C^\infty(\RR)$. Define
\[
 \vartheta(p)=\int_\RR
       [\chi(\tau(\Phi_s^\T p))-\chi(s)]\,ds.
\]
The orbit $\gamma$ is inextendible by properness, so
$\tau(\gamma(s))\to\pm\infty$. Applying \eqref{prep:comparison}
after stationary translation gives
\[
 \tau(\gamma(a+s))<\tau(\Phi_s^\T p)<\tau(\gamma(b+s)).
\]
Thus the integrand and its derivatives in $p$ have locally
uniform compact support in $s$, and
\[
 \vartheta(\Phi_t^\T p)-\vartheta(p)
 =\int_\RR[\chi(s)-\chi(s-t)]\,ds=t,
\]
and differentiation under the integral gives
\[
 d\vartheta_p
 =\int_\RR\chi'(\tau(\Phi_s^\T p))
                     [(\Phi_s^\T)^{\ast}d\tau]_p\,ds .
\]
The nonzero integrands in the formula for $d\vartheta_p$ lie in one timelike
cone and are not all zero, since $\T\vartheta=1$.
Writing them as $(a(s),v(s))$ in an orthonormal coframe,
\[
 \int a(s)\,ds>\int|v(s)|\,ds
                     \ge\left|\int v(s)\,ds\right|.
\]
Therefore $d\vartheta$ is timelike. The product map
$\RR\times\{\vartheta=0\}\to\E$ is
\[
 (t,p)\longmapsto\Phi_t^\T p,
\]
and its inverse is
\[
 q\longmapsto(\vartheta(q),\Phi_{-\vartheta(q)}^\T q).
\]

To prove properness of $\pi|_{\Sigma_1}$, we first observe that the edgeless
acausal hypersurface $\Sigma^0$ is closed, since
Minguzzi~\cite[Proposition~2.132 and Definition~3.35]{Minguzzi19}
gives $\overline S\setminus S\subset\operatorname{edge}(S)$ for
achronal $S$. Hence $\Sigma_1$ is closed in $\E$, and it remains
to bound the stationary parameter over a compact subset of $Q$.
For compact $A\subset Q$, take compact representatives
$C\subset\E$ and $u\in\Sigma_1$, and use
\eqref{prep:comparison} for $C\cup\{u\}$. If
$p\in C$, $\Phi_t^\T p\in\Sigma_1$, and $t>b-a$, then
\[
 u\ll\gamma(b)\ll\gamma(a+t)\ll\Phi_t^\T p,
\]
contrary to acausality. Reversing the chain excludes $t<a-b$,
so
\[
 (\pi|_{\Sigma_1})^{-1}(A)
 \subset\{\Phi_t^\T p:p\in C,\ |t|\le b-a\}.
\]
Closedness proves properness, and GR gives surjectivity.

We can now use the topology of $\Sigma_1$. Its projection has
degree one, and lifting that map to the universal cover of $Q$
will show that this cover has one sheet. Orient $Q$ by $\T$ and
the spacetime orientation, and $\Sigma_1$ positively at infinity. An asymptotic orbit has
one transverse intersection with $\Sigma_1$, by acausality,
so $\deg(\pi|_{\Sigma_1})=1$. Since $\Sigma_1$ is simply
connected, the map lifts to the connected universal cover
$\varpi:\widetilde Q\to Q$. The lift
$F:\Sigma_1\to\widetilde Q$, with
$\varpi\circ F=\pi|_{\Sigma_1}$, is proper because
\[
 F^{-1}(B)\subset(\pi|_{\Sigma_1})^{-1}(\varpi(B))
\]
is closed and the right side is compact for compact $B$.
Let $q_\infty$ be an asymptotic regular value and let
$q_0\in\varpi^{-1}(q_\infty)$ be the lift hit by $F$.
The unique asymptotic intersection has positive Jacobian, so
\[
 \deg F=\sum_{x\in F^{-1}(q_0)}\operatorname{sgn}\det dF_x=1.
\]
For any other $q_1\in\varpi^{-1}(q_\infty)$,
\[
 F^{-1}(q_1)=\varnothing
 \quad\Longrightarrow\quad
 \deg F=\sum_{x\in F^{-1}(q_1)}\operatorname{sgn}\det dF_x=0,
\]
contradicting constancy of degree on the connected manifold
$\widetilde Q$. The covering has one sheet, so $\pi_1(Q)=0$ and
\[
\pi_1(\E)=\pi_1(\RR\times Q)=0.\qedhere
\]
\end{proof}

Auxiliary Riemannian metrics are denoted by $g_+$ on $Q$ and
$\mathbf g_+$ on $\E$, with $\mathbf g_+$ stationary.
The norm $|\cdot|_+$ refers to the induced positive tensor norm
on the space in question. When both spaces occur in an estimate,
we specify $|\cdot|_{g_+}$ and $|\cdot|_{\mathbf g_+}$.
Other metric dependence
is indicated on $\Hess,\Delta$, and norms. The notation
$A\sim B$ means $C^{-1}B\le A\le CB$, with the dependence
of $C$ specified in each estimate.

\subsection{The rotational field near the bifurcation sphere}
\label{subsec:horizon-null}
We recall the double-null construction near
$S_0=\HH^+\cap\HH^-$, where $\HH^+$ and $\HH^-$ are the
future and past horizon hypersurfaces in SBS. This will fix our
normalizations and exhibit the horizon identities used in local
rigidity. We follow Alexakis, Ionescu, and
Klainerman~\cite[Section~2]{AIK09}, using the null geometry of
Christodoulou and Klainerman~\cite{CK93}. Throughout the
construction we restrict to a sufficiently small neighborhood
$\mathcal O$ of $S_0$.

We first choose smooth future null normals $L,\underline L$ to
$S_0$, with $L$ tangent to $\HH^+$, $\underline L$ tangent to
$\HH^-$, and $\g(L,\underline L)=-1$, and extend them affinely
along the respective horizon generators. We define affine
parameters by
\begin{align*}
 L(\underline u)&=1\quad\hbox{on }\HH^+,\\
 \underline L(u)&=1\quad\hbox{on }\HH^-,
\end{align*}
with $u=\underline u=0$ on $S_0$. Set $u=0$ on $\HH^+$ and
$\underline u=0$ on $\HH^-$. On each section of $\HH^+$ with
constant $\underline u$, choose the other future null normal
by $\g(L,\underline L)=-1$ and issue its null geodesics.
They form a null hypersurface on which we keep $\underline u$
constant. Interchanging the horizons constructs the level
hypersurfaces of $u$. At $S_0$, the two generator directions
together with $TS_0$ span the spacetime tangent space.
The inverse function theorem and smooth dependence on the
geodesic initial data therefore give two transverse families
after shrinking $\mathcal O$.

The resulting functions are optical functions: they have nonzero
differentials and satisfy the eikonal equations
\begin{align}
 \g^{-1}(du,du)&=0,\label{prep:eikonal}\\
 \g^{-1}(d\underline u,d\underline u)&=0.\notag
\end{align}
Their level sets and intersections are
\begin{align*}
 \mathcal N_a&=\{u=a\}\cap\mathcal O,\\
 \underline{\mathcal N}_b&=\{\underline u=b\}\cap\mathcal O,\\
 S_{a,b}&=\mathcal N_a\cap\underline{\mathcal N}_b.
\end{align*}
Here $a,b$ are sufficiently small real numbers,
$\mathcal N_0=\HH^+\cap\mathcal O$,
$\underline{\mathcal N}_0=\HH^-\cap\mathcal O$, and
$S_{0,0}=S_0$. The nearby exterior is the wedge
$u<0<\underline u$. This pair of transverse null hypersurface
families is the double-null foliation. Its generators, extending the
chosen horizon fields, are
\begin{align}
 L&=-\g^{\alpha\beta}\D_\beta u\,\partial_\alpha,
                    \label{prep:optical-generators}\\
 \underline L&=-\g^{\alpha\beta}\D_\beta\underline u\,\partial_\alpha.\notag
\end{align}
With $\Omega=\g(L,\underline L)$, the normalization gives
\begin{align*}
 \Omega&=-1\quad\hbox{on }(\HH^+\cup\HH^-)\cap\mathcal O,\\
 -\tfrac32&<\Omega<-\tfrac12\quad\hbox{on }\mathcal O.
\end{align*}
We obtain the second bound by shrinking $\mathcal O$ and using
continuity. The gradient definitions also give
\begin{align*}
 Lu&=\underline L\underline u=0,\\
 L\underline u&=\underline Lu=-\Omega,\\
 L^\alpha\D_\alpha L_\beta
 &=\D^\alpha u\,\D_\alpha\D_\beta u
   =\tfrac12\D_\beta\bigl(\g^{-1}(du,du)\bigr)=0.
\end{align*}
The same computation gives $\D_{\underline L}\underline L=0$.
Thus both generators are affinely parametrized. Their span is
Lorentzian, so each $S_{a,b}$ is spacelike. Denote its induced
metric by $\gamma$ and choose a local orthonormal tangent frame
$E_A$, $A=1,2$. The associated frame $(E_1,E_2,e_3,e_4)$ has
$e_3=\underline L$ and $e_4=L$, with
$\g(e_3,e_4)=\Omega$.

The null second fundamental forms of $S_{a,b}$ in the directions
$L$ and $\underline L$ are, respectively,
\begin{align}
 \chi_{AB}&=\g(\D_{E_A}L,E_B)
                =-\D^2u(E_A,E_B),\label{prep:null-second-forms}\\
 \underline\chi_{AB}&=\g(\D_{E_A}\underline L,E_B)
                =-\D^2\underline u(E_A,E_B).\notag
\end{align}
They are symmetric. Their traces are the null expansions, and
their trace-free parts are the shears:
\begin{align*}
 \tr\chi&=\gamma^{AB}\chi_{AB},\\
 \tr\underline\chi&=\gamma^{AB}\underline\chi_{AB},\\
 \widehat\chi_{AB}&=\chi_{AB}-\tfrac12(\tr\chi)\gamma_{AB},\\
 \widehat{\underline\chi}_{AB}
 &=\underline\chi_{AB}-\tfrac12(\tr\underline\chi)\gamma_{AB}.
\end{align*}
Indices $A,B$ and the norms of these tensors use $\gamma$.
For example,
\[
 |\chi|_\gamma^2=|\widehat\chi|_\gamma^2
                           +\tfrac12(\tr\chi)^2.
\]
Equivalently, these tensors describe the variation of the
induced metric along the null normals:
\begin{align*}
 (\LL_L\g)(E_A,E_B)&=2\chi_{AB},\\
 (\LL_{\underline L}\g)(E_A,E_B)&=2\underline\chi_{AB}.
\end{align*}
The affine Raychaudhuri equations are
\begin{align*}
 L(\tr\chi)
 &=-|\chi|_\gamma^2-\Ric_\g(L,L)
   =-\tfrac12(\tr\chi)^2-|\widehat\chi|_\gamma^2
                                  -\Ric_\g(L,L),\\
 \underline L(\tr\underline\chi)
 &=-\tfrac12(\tr\underline\chi)^2
   -|\widehat{\underline\chi}|_\gamma^2
                       -\Ric_\g(\underline L,\underline L).
\end{align*}
Consequently SBS and the vacuum equations imply
\begin{align*}
 \chi&=0\quad\hbox{on }\HH^+\cap\mathcal O,\\
 \underline\chi&=0\quad\hbox{on }\HH^-\cap\mathcal O.
\end{align*}
These are the horizon identities used in the local rigidity
theorem. We will return to null geometry in
Section~\ref{subsec:optical}, where the relevant quantities are
the normal accelerations of the null curves on a timelike
extension hypersurface.

Alexakis, Ionescu, and Klainerman~\cite[Theorem~1.2 and
Propositions~4.1--4.2]{AIK09} now give, after shrinking
$\mathcal O$, a rotational Killing field $\Z$ satisfying
$[\T,\Z]=0$. We normalize its effective period to $2\pi$:
$\Phi_{2\pi}^\Z=\operatorname{Id}$, with $2\pi$ the least
positive parameter acting identically on the whole domain.
This normalization allows shorter individual orbits, which we
will address in Lemma~\ref{local:free-action}.

To use $\Z$ in the exterior, we extend its circle action along
the stationary orbits. The point requiring proof is that the
translated local fields agree wherever their domains overlap.
We arrange this by choosing an acausal hypersurface
$\Sigma_{\rm loc}$ by the normal exponential map and showing, with the timelike Hawking field,
that each stationary orbit meets it at most once. The next lemma gives the resulting product parametrization of a
stationary neighborhood.
Figure~\ref{fig:local-horizon} shows the position of
$\Sigma_{\rm loc}$ relative to $\Sigma_1$.

\begin{lemma}\label{prep:collar}
Let $\exp$ be the exponential map of $\g$.
There exist $\delta>0$ and a $\Z$-invariant outward unit
spacelike normal $e_1$ to $S_0$ such that
\[
 \Sigma_{\rm loc}=\{\exp_p(re_1(p)):p\in S_0,\ 0<r<\delta\}
\]
is an acausal spacelike hypersurface and
\[
 \RR\times\Sigma_{\rm loc}\longrightarrow\mathcal U_{\mathrm{loc}}
       \subset\E,\qquad (t,p)\longmapsto\Phi_t^\T p
\]
is a diffeomorphism onto an open set. The field $\Z$ extends
from $\mathcal O\cap\mathcal U_{\mathrm{loc}}$ to a Killing field on
$\mathcal U_{\mathrm{loc}}$ with $[\T,\Z]=0$ and complete flow
of effective period $2\pi$.
The normal exponential parametrization extends smoothly to
$\{0\}\times S_0$, and, for every compact $K\subset Q$,
\[
 \pi(\exp_p(re_1(p)))\notin K
       \quad(p\in S_0,\ 0<r<\delta_K)
\]
for some $\delta_K>0$.
\end{lemma}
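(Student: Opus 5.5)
We choose $e_1$ by averaging over the circle action and then use the timelike Hawking field to show acausality and the product structure. First, $\Z$ is tangent to $S_0$, since it preserves $\HH^\pm$ and hence their intersection. The normal bundle of $S_0$ is Lorentzian and spanned by $L,\underline L$. Put $e_1=(L-\underline L)/\sqrt2$, with the normalization $\g(L,\underline L)=-1$ on $S_0$, and average it over the compact group $\{\Phi_s^\Z\}_{s\in[0,2\pi]}$, which acts isometrically on $S_0$ and its normal bundle. This gives a $\Z$-invariant unit spacelike normal. It is outward, pointing into the wedge $u<0<\underline u$, because averaging preserves the open cone of spacelike normals $aL-b\underline L$ with $a,b>0$; we renormalize to unit length. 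Then $\exp_p(re_1)$ lies in $u<0<\underline u$ for small $r>0$: along the geodesic, $u$ and $\underline u$ have first derivatives $-\g(\nabla u,e_1)$ and $-\g(\nabla\underline u,e_1)$ of the required signs at $r=0$. The map $(p,r)\mapsto\exp_p(re_1)$ is an immersion at $r=0$ in the sense of normal coordinates, so for small $\delta$ it is an embedding of $S_0\times[0,\delta)$ onto a smooth hypersurface with boundary $S_0$. The statement that the parametrization extends smoothly to $\{0\}\times S_0$ is immediate. The hypersurface is spacelike near $S_0$, since its tangent space at $S_0$ is $TS_0\oplus\RR e_1$; we shrink $\delta$ if necessary.

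The key step is that each stationary orbit meets $\Sigma_{\rm loc}$ at most once. We use the Hawking field: near $S_0$ there is a Killing field $K=\T+\Omega_H\Z$ (up to normalization, from AIK09) which vanishes on $S_0$ and is timelike and future directed in the wedge, with the form $K\approx\kappa(\underline u L-u\underline L)$. Because $\Z$ is $2\pi$-periodic and $\Sigma_{\rm loc}$ is $\Z$-invariant, a $\T$-orbit meeting $\Sigma_{\rm loc}$ twice, at $p$ and $\Phi_t^\T p$, yields a $K$-orbit from $p$ to $\Phi_t^\T\Phi^\Z_{\Omega_H t}p$, which lies in $\Sigma_{\rm loc}$ up to a $\Z$-rotation. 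Since $K$ is a boost and $\Sigma_{\rm loc}$ is a slice transverse to it, the boost time function $\log(-\underline u/u)$ is strictly monotone along $K$ and approximately constant on $\Sigma_{\rm loc}$. This is the hard part: we must show $\Sigma_{\rm loc}$ is a level set (or lies within a strip) of a function strictly increasing along $K$ globally in $\E$, not only in $\mathcal O$. I would argue instead with acausality. $\Sigma_{\rm loc}$ is achronal in $\E$ because it can be realized inside a Cauchy-type slice through $S_0$ (it is contained in $\overline{J^-(\HH^+)}\cap\overline{J^+(\HH^-)}$ near the bifurcation sphere), and $\Phi_t^\T p$ is chronologically related to $p$ for every $t\neq0$ when $p$ lies near $S_0$, by the comparison \eqref{prep:comparison} combined with the local Hawking flow. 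For large $|t|$, Proposition~\ref{prep:quotient} (the chain $\gamma(a)\ll p\ll\gamma(b)$) gives $p\ll\Phi_t^\T p$. For small $|t|$ we use the $K$-flow, which is timelike in the wedge. Acausality of $\Sigma_{\rm loc}$ follows after shrinking $\delta$, by the standard fact that small spacelike hypersurfaces with boundary on a bifurcation sphere are acausal in the wedge.

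Injectivity and transversality (from $\g(\T,\cdot)$ combined with $K$ being timelike and $\Z$ tangent) make $(t,p)\mapsto\Phi_t^\T p$ an injective local diffeomorphism, hence a diffeomorphism onto the open set $\mathcal U_{\rm loc}$. We define the extension of $\Z$ by $(\Phi^\T_t)_*\Z$. This is well defined by the product structure, Killing, and commutes with $\T$. Completeness and the effective period $2\pi$ follow because on $\Sigma_{\rm loc}$ the flow of $\Z$ preserves $\Sigma_{\rm loc}$ (by invariance of $e_1$ and isometry) and is periodic there. Finally, the statement $\pi(\exp_p(re_1))\notin K$ for $r<\delta_K$ follows from properness of $\pi|_{\Sigma_1}$ in Proposition~\ref{prep:quotient}. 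The points $\exp_p(re_1)$ converge to $S_0\subset\partial\E$ as $r\to0$, uniformly in $p$. If $\pi$ of a sequence with $r\to0$ stayed in the compact set $K$, we could translate along $\T$ into a compact subset of $\E$, contradicting the fact that the distance to the horizon is preserved by the stationary flow: a neighborhood of $\overline{\HH^+\cup\HH^-}$ is $\T$-invariant.
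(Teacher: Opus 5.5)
Your outline is the paper's route: average a normal under the circle action, use the Hawking field, combine achronality with periodicity of $\Z$, and push $\Z$ forward along $\T$. The injectivity step, however, does not close.

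\textbf{The injectivity step.} The claim that $p$ and $\Phi^\T_tp$ are chronologically related for every $t\ne0$ is false in the rotating case. At $p=\exp_{p_0}(re_1)$ with $\Z(p_0)\ne0$ one has $\T=\mathbf K-\Omega_H\Z$ with $\mathbf K=O(r)$. So $\T$ is spacelike there, and $\Phi^\T_tp$ is spacelike to $p$ for small $t$.

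The pair that is related is the one from your first paragraph: $\Phi^\Z_{-\Omega_Ht}p$ and $\Phi^\T_tp=\Phi^{\mathbf K}_t\Phi^\Z_{-\Omega_Ht}p$, both in $\Sigma_{\rm loc}$ by invariance. But starting at radius $r$, the boost flow of $\mathbf K$ stays in the normal chart only for $|t|\lesssim\kappa^{-1}\log(\delta_c/r)$, where $\delta_c$ is the chart radius. That chart is where $\Z$ is defined and $\mathbf K$ is known to be timelike.

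Your ``large $|t|$'' bound from \eqref{prep:comparison} applies only for $|t|>b-a$. Here $a,b$ depend on $p$ and diverge as $p\to S_0$, and nothing shows that the two ranges overlap. For intermediate $t$ you cannot even speak of a $\mathbf K$-orbit, since $\Z$ along the $\T$-orbit is what the lemma constructs.

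The missing idea is that $\Phi^\Z_{2\pi}=\operatorname{Id}$ gives $\Phi^\T_{T}=\Phi^{\mathbf K}_{T}$ near $S_0$, with $T=2\pi/|\Omega_H|$. Write $t=nT+s$ with $0\le s<T$. Take the short timelike segment $\Phi^{\mathbf K}_{[0,s]}$ from $\Phi^\Z_{-\Omega_Hs}p$ to $\Phi^\T_sp$. Follow it by the stationary translates $\Phi^\T_{s+kT}$, $0\le k<n$, of the single segment $\Phi^{\mathbf K}_{[0,T]}p$. The result is a timelike curve between two points of $\Sigma_{\rm loc}$. Only local flows of length at most $T$ are used, uniformly in $r<\delta$ after shrinking. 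The translates remain timelike even where they leave $\mathcal O$, because $\Phi^\T$ is an isometry.

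\textbf{Steps asserted rather than proved.} Acausality of $\Sigma_{\rm loc}$ is not a standard local fact. $\Sigma_{\rm loc}$ surrounds the whole sphere $S_0\subset\partial\E$, and the danger is causal curves that leave $\mathcal O$ and return. Containment in $\overline{J^-(\HH^+)}\cap\overline{J^+(\HH^-)}$ says nothing about such curves.

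The paper realizes $\Sigma_{\rm loc}$ near $S_0$ as the zero level of a temporal function on the causally convex, globally hyperbolic set $D(\Sigma^0)$. It glues $\tau_1=(V-U)/2$, in boost coordinates with $\mathbf K=\kappa(V\partial_V-U\partial_U)$, to a Cauchy temporal function $\tau_0$ with $\tau_0^{-1}(0)=\Sigma^0$. Both vanish on $S_0$, so $|\tau_1-\tau_0|=O(\operatorname{dist}(\cdot,S_0))$, and a logarithmic cutoff makes the error term $(\tau_1-\tau_0)\,d\chi$ small. A limit argument using acausality of $\Sigma^0\supset S_0$, closedness of $J^+$ in $D(\Sigma^0)$, and strong causality would also work, but some global argument is required.

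In the last assertion there is no stationary-invariant ``distance to the horizon''. $\T$-invariance of a neighborhood does not prevent translates of points near $S_0$ from lying in a fixed compact subset of $\E$. Properness of $\pi|_{\Sigma_1}$ also does not transfer to $\Sigma_{\rm loc}\ne\Sigma_1$ without control of the stationary times between them.

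What is needed is a bound on these times. Suppose $p_j=\Phi^\T_{t_j}q_j$ with $q_j$ in a compact set. Apply \eqref{prep:comparison} to that set and one point of $\Sigma_{\rm loc}$; together with acausality of $\Sigma_{\rm loc}$ this gives $|t_j|\le b-a$. Hence $p_j$ has a limit in $\E$, which contradicts $p_j\to S_0$.

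\textbf{A smaller omission.} Agreement of your pushforward with the original $\Z$ on $\mathcal O\cap\mathcal U_{\rm loc}$ requires joining each point of a shrunken $\mathcal O$ to $\Sigma_{\rm loc}$ by a $\T$-segment inside $\mathcal O$. The boost coordinates give this, with flow parameter $\frac1{2\kappa}\log(V/U)$.
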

\begin{proof}
Let $\mathbf K_0$ denote the Hawking field constructed by
Alexakis, Ionescu, and Klainerman~\cite[Theorem~1.1]{AIK09}.
Their normalization in Proposition~2.3, equation~(2.12), is
\[
 \mathbf K_0=\underline uL-u\underline L
       \quad\hbox{on }(\HH^+\cup\HH^-)\cap\mathcal O.
\]
In particular, $\mathbf K_0|_{S_0}=0$.
Their Proposition~3.5 proves that
$\g(\mathbf K_0,\mathbf K_0)<0$ in the nearby exterior.
Propositions~4.1--4.2 give $[\T,\mathbf K_0]=0$ and a constant
$\lambda_0$ for which $\T+\lambda_0\mathbf K_0$ generates a
periodic action. If $t_0>0$ is its effective period, our choice
of sign and normalization is
\[
 \Z=\frac{t_0}{2\pi}(\T+\lambda_0\mathbf K_0).
\]
Here $\lambda_0\ne0$, since otherwise $\T$ would have periodic
orbits in $\E$, contrary to Proposition~\ref{prep:quotient}.
We rescale the Hawking field by setting
\[
 \mathbf K=-\lambda_0\mathbf K_0
\]
and define the horizon angular velocity relative to $\Z$ by
\[
 \Omega_H=-\frac{2\pi}{t_0}.
\]
Thus our $\mathbf K$ is a constant multiple of their Hawking
field, normalized so that the coefficient of $\T$ is one:
\[
 \mathbf K=\T+\Omega_H\Z.
\]
It vanishes on $S_0$, commutes with $\T$ and $\Z$, and satisfies
\[
 \g(\mathbf K,\mathbf K)
 =\lambda_0^2\g(\mathbf K_0,\mathbf K_0)<0
       \quad\hbox{in }\mathcal O\cap\E.
\]
Shrinking around the compact
fixed set $S_0$ gives both flows for $|s|\le 2\pi/|\Omega_H|$, and
\[
 \Phi_{2\pi/|\Omega_H|}^\T=
 \Phi_{2\pi/|\Omega_H|}^{\mathbf K}\Phi_{-2\pi\operatorname{sgn}\Omega_H}^\Z
 =\Phi_{2\pi/|\Omega_H|}^{\mathbf K}.
\]
Fix an exterior point $p$ in this neighborhood and choose
$a<b$ with $\gamma(a)\ll p\ll\gamma(b)$ as in
\eqref{prep:comparison}, where $\gamma$ is the timelike
stationary orbit in the asymptotic end. If $\mathbf K$ were
past directed, repeated stationary translates of this timelike
segment would give, for $2\pi n/|\Omega_H|>b-a$,
\[
 \begin{gathered}
 \gamma(a+2\pi n/|\Omega_H|)\ll\Phi_{2\pi n/|\Omega_H|}^\T p\ll p,\\
 p\ll\gamma(b)\ll\gamma(a+2\pi n/|\Omega_H|),
 \end{gathered}
\]
contradicting \eqref{prep:comparison}. Thus $\mathbf K$
is future directed.

We choose the normal coordinates equivariantly: average a
future timelike normal to $S_0$ under the circle action, normalize
it to $e_0$, and let $e_1$ be the orthogonal unit normal pointing
into the exterior. The differential of $\mathbf K$ acts as a
boost on their plane, with positive rate $\kappa$. To see that
the rate is constant along $S_0$, we use the Killing identity:
\[
 \begin{aligned}
 X^\alpha\D_\alpha\D_\beta\mathbf K_\gamma
 &=X^\alpha\R_{\gamma\beta\alpha\delta}\mathbf K^\delta=0
                    \qquad(X\in TS_0),\\
 \kappa^2&=-\tfrac12(\D_\alpha\mathbf K_\beta)
                              (\D^\alpha\mathbf K^\beta),\\
 X(\kappa^2)&=-(\D^\beta\mathbf K^\gamma)
                       X^\alpha\D_\alpha\D_\beta\mathbf K_\gamma=0.
 \end{aligned}
\]
In fact $\D_{e_0}\mathbf K=\kappa e_1$ and
$\D_{e_1}\mathbf K=\kappa e_0$ at $S_0$, whereas
$\D_X\mathbf K=0$ for $X\in TS_0$. Exponentiating this
endomorphism on the normal plane gives
\[
 \begin{aligned}
 d\Phi_t^{\mathbf K}e_0&=\cosh(\kappa t)e_0+\sinh(\kappa t)e_1,\\
 d\Phi_t^{\mathbf K}e_1&=\sinh(\kappa t)e_0+\cosh(\kappa t)e_1,\\
 d\Phi_t^{\mathbf K}
 \left(\frac{V-U}{2}e_0+\frac{V+U}{2}e_1\right)
 &=\frac{e^{\kappa t}V-e^{-\kappa t}U}{2}e_0
   +\frac{e^{\kappa t}V+e^{-\kappa t}U}{2}e_1.
 \end{aligned}
\]
We apply the exponential map equivariantly to obtain the exact
coordinate representation
\begin{equation}\label{prep:normal-coordinates}
 \begin{gathered}
 (U,V,p)\longmapsto
 \exp_p\left(\frac{V-U}{2}e_0+\frac{V+U}{2}e_1\right),\\
 \mathbf K=\kappa(V\partial_V-U\partial_U),\\
 \Z U=\Z V=0,\\ \T=\mathbf K-\Omega_H\Z.
 \end{gathered}
\end{equation}
The normal-exponential coordinates $U,V$ identify the exterior
with $U,V>0$ and the future horizon with $U=0$.
The function $\tau_1=(V-U)/2$ is temporal near $S_0$,
since $d\tau_1(e_0)=1$ and $|d\tau_1|_\g^2=-1$ there.

\begin{figure}[htbp]
\centering
\begin{tikzpicture}[x=1cm,y=1cm,font=\normalsize,
                    line cap=round,line join=round]
 \begin{scope}
  \fill[black!3] (0,0)--(2.35,2.35)--(4.7,0)--(2.35,-2.35)--cycle;
  \filldraw[fill=black!14,draw=black!60,dashed]
        (0,0) circle[radius=.43];
  \draw[line width=.85pt] (0,0)--(2.35,2.35);
  \draw[line width=.85pt] (0,0)--(2.35,-2.35);
  \draw[dashed,line width=.7pt] (2.35,2.35)--(4.7,0)--(2.35,-2.35);
  \node[above left] at (1.23,1.23) {$\HH^+$};
  \node[below left] at (1.23,-1.23) {$\HH^-$};
  \node[above right] at (3.56,1.14) {$\mathcal I^+$};
  \node[below right] at (3.56,-1.14) {$\mathcal I^-$};
  \draw[line width=1.15pt] (0,0)--(4.7,0);
  \node[above=4pt] at (2.1,0) {$\Sigma_1$};
  \node at (2.2,1.03) {$\E$};
  \node[align=center] at (2.75,-.67) {asymptotic\\end};
  \filldraw[fill=white,line width=.65pt] (0,0) circle[radius=2pt];
  \filldraw[fill=white,line width=.65pt] (4.7,0) circle[radius=2pt];
  \node[anchor=east] at (-.36,-.3) {$S_0$};
  \node[above left] at (-.19,.32) {$\mathcal O$};
  \node[right=4pt] at (4.7,0) {$i^0$};
  \draw[->] (-.8,1.1)--(-.8,1.9);
  \node[above] at (-.8,1.9) {future};
  \node at (2.15,-2.9) {(a) The exterior and its section};
 \end{scope}
 \begin{scope}[xshift=10.3cm]
  % Horizontal and vertical coordinates are (U+V)/2 and (V-U)/2.
  \begin{scope}
   \clip (-2.1,-1.9) rectangle (2.5,1.9);
   \fill[black!5] (0,0)--(-2.1,2.1)--(2.1,2.1)--cycle;
   \fill[black!5] (0,0)--(-2.1,-2.1)--(2.1,-2.1)--cycle;
   \fill[black!16]
     (0,0)--(1.9,1.9)
     --({sqrt(1.05*1.05+1.9*1.9)},1.9)
     -- plot[domain=1.9:-1.9,samples=65,variable=\s]
              ({sqrt(1.05*1.05+\s*\s)},\s)
     --(1.9,-1.9)--cycle;
  \end{scope}
  \draw[dashed,black!60,line width=.55pt]
       (-2.1,-1.9) rectangle (2.5,1.9);
  \draw[line width=.8pt] (-1.95,-1.95)--(2.08,2.08);
  \draw[line width=.8pt] (-1.95,1.95)--(2.08,-2.08);
  \node[above right] at (2.08,2.08) {$U=0$};
  \node[below right] at (2.08,-2.08) {$V=0$};
  \node[anchor=east] at (-2.22,0) {$\mathcal O$};
  \node at (-.13,1.42) {causal future};
  \node at (-.13,-1.42) {causal past};
  \draw[black!55,line width=.65pt,domain=-1.9:1.9,samples=65,
        variable=\s] plot ({sqrt(1.05*1.05+\s*\s)},\s);
  \draw[line width=1.25pt] (0,0)--(1.05,0);
  \filldraw[fill=white,line width=.65pt] (1.05,0) circle[radius=1.7pt];
  \node[anchor=west,fill=white,inner sep=1.5pt]
        at (1.2,.17) {$\Sigma_{\rm loc}$};
  \draw[line width=.8pt,domain=-1.62:1.62,samples=65,
        variable=\s] plot ({sqrt(.55*.55+\s*\s)},\s);
  \draw[->,line width=.85pt,domain=.58:.88,samples=16,
        variable=\s] plot ({sqrt(.55*.55+\s*\s)},\s);
  \node[anchor=west,fill=white,inner sep=1.5pt] at (1.49,.83) {$\mathbf K$};
  \draw[black!65,line width=.45pt] (1.47,.82)--(1.09,.82);
  \node[anchor=west,fill=white,inner sep=1.5pt]
        at (1.5,-.82) {$0<UV<\delta^2$};
  \draw[black!65,line width=.45pt] (1.48,-.82)--(1.02,-.82);
  \filldraw[fill=white,line width=.65pt] (0,0) circle[radius=2pt];
  \node[anchor=east] at (-.35,-.03) {$S_0$};
  \node at (.27,-2.9) {(b) Enlargement of $\mathcal O$};
 \end{scope}
\end{tikzpicture}
\caption{The horizon neighborhood and the exterior section, with angular
variables suppressed. In (a), $\Sigma_1$ approaches $S_0$ at one
open end and spatial infinity $i^0$ at the other; $\mathcal I^\pm$
denote future and past null infinity. The boundary at infinity is
schematic. Panel (b) enlarges the normal chart
\eqref{prep:normal-coordinates}. The Hawking field is first constructed
in the light causal wedges, and local rigidity gives $\Z$ throughout
$\mathcal O$. The segment $\Sigma_{\rm loc}=\{U=V\in(0,\delta)\}$
sweeps out the darker exterior region $0<UV<\delta^2$ under the
stationary flow. Both $\T$ and $\mathbf K$ project to the displayed
hyperbolas.}
\label{fig:local-horizon}
\end{figure}

To make $\Sigma_{\rm loc}$ acausal, we extend its local temporal
function. Acausality will then exclude repeated stationary
intersections by the timelike curve obtained from the Hawking flow
and the periodic rotation. We write $D(\Sigma^0)$ for its domain
of dependence in $\M$ and match the local function to a temporal
function $\tau_0$ there with zero level $\Sigma^0$.
Since $\Sigma^0$ is acausal and edgeless,
Minguzzi~\cite[Corollary~3.36, Proposition~3.43, and
Theorem~3.45]{Minguzzi19} shows that $D(\Sigma^0)$ is open,
causally convex and globally hyperbolic. The hypersurface
$\Sigma^0$ is a spacelike Cauchy hypersurface of this domain,
so the theorem of Bernal and S\'anchez~\cite[Theorem~1.2]{BS06} supplies
$\tau_0$ there with $\tau_0^{-1}(0)=\Sigma^0$.
For the following cutoff calculation, choose a smooth Riemannian
metric near $S_0$, use $|\cdot|_+$ for its norm, and write
$\ell=\operatorname{dist}_+(\cdot,S_0)$ in a tubular neighborhood.
For $0<r_1<r_2$, choose
$\chi=1$ on $\ell\le r_1$, $\chi=0$ on $\ell\ge r_2$, with
\[
\begin{aligned}
|\tau_1-\tau_0|&\le C\ell,\\
|d\chi|_+&\le\frac{C}{\ell\log(r_2/r_1)}.
\end{aligned}
\]
For example, take
$\chi=\eta(\log(\ell/r_1)/\log(r_2/r_1))$, where $\eta$ is
smooth, equals one on $(-\infty,0]$, and vanishes on $[1,\infty)$.
The estimate for $\tau_1-\tau_0$ follows from their common
zero value on $S_0$ and bounded first derivatives. Consequently
\[
 |(\tau_1-\tau_0)d\chi|_+
 \le C\ell\,\frac{\|\eta'\|_\infty|d\ell|_+}
                   {\ell\log(r_2/r_1)}
 \le\frac{C}{\log(r_2/r_1)}.
\]
Set $\tau=(1-\chi)\tau_0+\chi\tau_1$. Its differential is
\[
 d\tau
 =(1-\chi)d\tau_0+\chi d\tau_1
       +(\tau_1-\tau_0)d\chi.
\]
For fixed small $r_2$, the convex combination stays in a compact
subset of one timelike cone. The preceding estimate therefore makes
$d\tau$ timelike when $r_1$ is sufficiently small. Extend $\tau$ by
$\tau_0$ outside this neighborhood. Its zero level agrees near $S_0$ with the normal
section in the statement and is acausal, also in $\M$ by
causal convexity.

We can now prove uniqueness of the stationary intersection.
Suppose $p,\Phi_t^\T p\in\Sigma_{\rm loc}$ and write
$t=2\pi n/|\Omega_H|+s>0$, $0\le s<2\pi/|\Omega_H|$. If $s>0$, then
\[
 \Phi_{-s\Omega_H}^\Z p
 \ll\Phi_s^{\mathbf K}\Phi_{-s\Omega_H}^\Z p
 =\Phi_s^\T p.
\]
Append the $n$ stationary translates of the full timelike
segment. If $s=0$, use those segments alone. In either case
two points of $\Sigma_{\rm loc}$ are chronologically related,
a contradiction. Reversal handles $t<0$. The product map
is transverse because $\Z$ is tangent and $\mathbf K$ is
timelike, so it is a diffeomorphism onto its open image.
On this image define
\[
 \Z_{\Phi_t^\T p}=d\Phi_t^\T|_p\Z_p.
\]
The injectivity just proved makes this definition single valued.
We check agreement with the original field by choosing
the original normal neighborhood to have $0<U,V<\delta$.
For a point $q$ in this neighborhood, put
\[
 s=\frac{1}{2\kappa}\log\frac{V(q)}{U(q)}.
\]
The local stationary flow from $q$ to parameter $-s$ stays in
the normal neighborhood: both normal coordinates lie between
their values at $q$ and $\sqrt{U(q)V(q)}<\delta$.
Its endpoint $p'$ satisfies
\[
 U(p')=V(p')=\sqrt{U(q)V(q)},
 \qquad p'\in\Sigma_{\rm loc}.
\]
Thus $q=\Phi_s^\T p'$ by a flow segment entirely in the original
neighborhood. If also $q=\Phi_t^\T p$ with
$p\in\Sigma_{\rm loc}$, injectivity of the stationary product
gives $(t,p)=(s,p')$. Along this local segment, $[\T,\Z]=0$ gives
$d\Phi_t^\T\Z_p=\Z_{\Phi_t^\T p}$ by differentiating the
pushforward in $t$. For the metric and bracket,
\[
\begin{aligned}
(\Phi_t^\T)^{\ast}(\LL_\Z\g)
       &=\LL_{(\Phi_t^\T)^{\ast}\Z}(\Phi_t^\T)^{\ast}\g
       =\LL_\Z\g=0,\\
[\T,\Z]&=0.
\end{aligned}
\]
Rotational equivariance of the normal exponential map makes
$\Sigma_{\rm loc}$ invariant. Hence the extended rotational
flow is given for every $s\in\RR$ by
\begin{align*}
\Phi_s^\Z(\Phi_t^\T p)&=\Phi_t^\T(\Phi_s^\Z p),\\
\shortintertext{so that}
\Phi_{2\pi}^\Z(\Phi_t^\T p)&=\Phi_t^\T p.
\end{align*}
A smaller period on the image would restrict to a smaller
period on the original neighborhood, so the period remains effective.

It remains to show that the end approaching $S_0$ leaves every
compact subset of $Q$. If instead $p_j\to S_0$ in this section
and $\pi(p_j)$ stays in a compact subset of $Q$, write
$p_j=\Phi_{t_j}^\T q_j$ with compact representatives $q_j$.
Apply \eqref{prep:comparison} to those representatives
and one $p_{\ast}\in\Sigma_{\rm loc}$. Acausality and the chains
used in Proposition~\ref{prep:quotient} give $|t_j|\le b-a$.
Then $p_j$ has an interior limit in $\E$, contradicting
$p_j\to S_0$.
\end{proof}

We retain the local Hawking field
$\mathbf K=\T+\Omega_H\Z$, the constant $\kappa>0$ in its normal action,
and the invariant normal frame $e_0,e_1$ constructed in the proof.
The normal coordinates \eqref{prep:normal-coordinates} have
$\mathbf K=\kappa(V\partial_V-U\partial_U)$ and $\Z U=\Z V=0$.
We also retain the temporal function $\tau$ on the interior domain
of dependence of $\Sigma^0$, equal to $(V-U)/2$ near $S_0$
and to $\tau_0$ outside the compact neighborhood used above.
These are the horizon data used in the initial hypersurface
construction and in Section~\ref{exit:section}.

\subsection{Regularity of the ergosurface}

\begin{definition}[Ergoregion and ergosurface]\label{def:ergoregion}
The ergoregion and ergosurface are, respectively,
\[
 \{p\in\E:\g(\T,\T)(p)>0\},
\]
and
\[
 \{p\in\E:\g(\T,\T)(p)=0\}.
\]
The same terms denote their images in $Q$, and
$E_0=\pi(\{\g(\T,\T)=0\})$. We use the same notation for a
stationary scalar on $\E$ and its induced function on $Q$.
Sets defined by these scalars are understood in the ambient
space specified in each statement.
\end{definition}

We now use nontrapping to show that the ergosurface is regular.
The reason is geometric: a null stationary orbit becomes a null
geodesic if the metric gradient of $\g(\T,\T)$ is proportional
to $\T$, and its
stationary projection is then a single point. To exclude precisely
this possibility, we compute the norm of $d[\g(\T,\T)]$ and its
relation to the twist. We first construct the potential
$d\psi=\iota_\T(\ast d\T^\flat)$. Put
$\mathbf F_{\alpha\beta}=\D_\alpha\T_\beta$.
The Killing equation and its differentiated identity give
\[
 \begin{aligned}
 (d\T^\flat)_{\alpha\beta}
 &=\D_\alpha\T_\beta-\D_\beta\T_\alpha=2\mathbf F_{\alpha\beta},\\
 \D_{[\alpha}\mathbf F_{\beta\gamma]}&=0,\\
 \D^\alpha\mathbf F_{\alpha\beta}&=\g^{\alpha\gamma}\R_{\beta\alpha\gamma\delta}\T^\delta=0,\\
 \LL_\T\T^\flat&=(\LL_\T\g)(\T,\cdot)+\g([\T,\T],\cdot)=0.
 \end{aligned}
\]
The brackets in $\D_{[\alpha}\mathbf F_{\beta\gamma]}$ denote
antisymmetrization. The contracted curvature in the divergence identity vanishes by
$\Ric(\g)=0$. Thus $d(\ast d\T^\flat)=0$. Since the Killing
flow preserves
the orientation and Hodge star, we obtain
\[
 d\bigl(\iota_\T(\ast d\T^\flat)\bigr)
 =\LL_\T(\ast d\T^\flat)-\iota_\T d(\ast d\T^\flat)=0.
\]
By Proposition~\ref{prep:quotient}, $\E$ is simply connected,
so this closed form has a global potential $\psi$. Since
$\T\psi=0$, we also regard $\psi$ as a function on $Q$.
The next lemma shows that $\g(\T,\T)$ and $\psi$ provide two
independent transverse coordinates at the ergosurface.

\begin{lemma}\label{canon:regularity}
The surface $E_0\subset Q$ is smooth and embedded. On
$\pi^{-1}(E_0)$,
\[
\begin{aligned}
\g^{-1}(d[\g(\T,\T)],d\psi)&=0,\\
|d[\g(\T,\T)]|_\g^2&=|d\psi|_\g^2>0.
\end{aligned}
\]
In particular, $d(\psi|_{E_0})\ne0$.
\end{lemma}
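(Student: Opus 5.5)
Write $X=\g(\T,\T)$, so $dX=2\mathbf F(\cdot,\T)$ up to sign convention; precisely $\D_\alpha X=2\T^\beta\D_\alpha\T_\beta=-2\mathbf F_{\beta\alpha}\T^\beta$. Write $\omega=\iota_\T(\ast\mathbf F)$, so that $d\psi=2\omega$. The plan is first to prove the algebraic identities at points of $\pi^{-1}(E_0)$ by pointwise linear algebra of a two-form $\mathbf F$ and a null vector $\T$. We use the standard identity
\[
|\iota_\T\mathbf F|^2_\g-|\iota_\T{\ast}\mathbf F|_\g^2=-\g(\T,\T)\,\tfrac12\mathbf F_{\alpha\beta}\mathbf F^{\alpha\beta}
\]
together with
\[
\g^{-1}(\iota_\T\mathbf F,\iota_\T{\ast}\mathbf F)=-\g(\T,\T)\,\tfrac14\mathbf F_{\alpha\beta}({\ast}\mathbf F)^{\alpha\beta},
\]
which follow from the four-dimensional contraction identities $\mathbf F_{\alpha\gamma}\mathbf F_\beta{}^\gamma-({\ast}\mathbf F)_{\alpha\gamma}({\ast}\mathbf F)_\beta{}^\gamma=\tfrac12\g_{\alpha\beta}\mathbf F^2$ and $\mathbf F_{\alpha\gamma}({\ast}\mathbf F)_\beta{}^\gamma=\tfrac14\g_{\alpha\beta}\mathbf F\cdot{\ast}\mathbf F$. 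Signs are to be checked in the paper's orientation convention. On $\g(\T,\T)=0$ the right sides vanish. This gives orthogonality, and equality of norms after absorbing the factor $4$ between $dX=\pm2\iota_\T\mathbf F$ and $d\psi=2\omega$ into the normalization.

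Next, positivity. Both one-forms annihilate $\T$: $\T X=0$ by stationarity and $\T\psi=0$. On the ergosurface $\T$ is null, so its orthogonal complement is a degenerate hyperplane whose only null directions are multiples of $\T^\flat$, and every other covector annihilating $\T$ is spacelike. Hence $|dX|_\g^2\ge0$, with equality iff $dX=c\,\T^\flat$. Suppose equality holds at $p$. Then $\D X\propto\T$, so $\D_\T\T=-\tfrac12\D X\propto\T$ at every point of the orbit (the orbit is stationary, so the condition propagates), and the null orbit through $p$ is a pregeodesic. It is a null geodesic with $\g(\T,\dot\gamma)=\g(\T,\T)=0$. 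Orbits are complete by GR, so after affine reparametrization the maximal geodesic is this orbit, possibly only a subarc if the affine parameter blows up. In every case its projection is the single point $\pi(p)$, which is compact. This contradicts NT. Hence $|dX|^2>0$ and, by the identity, $|d\psi|^2>0$.

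Finally, smoothness and embeddedness. Since $dX\ne0$ on $X=0$, the ergosurface in $\E$ is a smooth embedded stationary hypersurface. Through the diffeomorphism $(\vartheta,\pi)$ of Proposition~\ref{prep:quotient}, the function $X$ descends to $Q$ with nonzero differential, so $E_0$ is a smooth embedded surface. For $d(\psi|_{E_0})\ne0$, we show that $dX$ and $d\psi$ are linearly independent on $\pi^{-1}(E_0)$. If $d\psi=\lambda\,dX$, orthogonality gives $0=\lambda|dX|^2$, so $\lambda=0$, contradicting $|d\psi|^2>0$. Both forms vanish on $\T$, so they descend to independent covectors on $Q$. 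Thus $d\psi$ is not a multiple of $dX$ on $TQ$ at $E_0$, i.e. $d\psi|_{TE_0}\ne0$.

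The main obstacle is the first step: getting the duality identities with correct signs and factors in the paper's conventions for $\ast$ and $d\T^\flat=2\mathbf F$. Exact equality of norms depends on the normalization $d\psi=\iota_\T(\ast d\T^\flat)$ and must be checked carefully. A clean route is to verify the identities in an adapted null frame with $\T=e_4$, where both sides are explicit in the components $\mathbf F_{4A}$. A secondary point is the maximality subtlety in the NT application, where the orbit may be affinely incomplete; we handle it with NT's explicit inclusion of finite affine intervals.
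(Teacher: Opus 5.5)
Your route to the algebraic identities differs from the paper's. The paper works in a null frame $L=\T,\underline L,E_1,E_2$ at a point of $\pi^{-1}(E_0)$. It finds that both forms vanish on $L$ and that $d\psi(E_1)=d[\g(\T,\T)](E_2)$, $d\psi(E_2)=-d[\g(\T,\T)](E_1)$. Orthogonality and equal norms follow, together with the explicit value $4\sum_A\g(\D_L\T,E_A)^2$. Your contraction of the standard quadratic two-form identities with $\T\otimes\T$ is frame-free, also records the relations off the ergosurface, and works.

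Two small corrections to that step. First, nothing needs to be absorbed: $d[\g(\T,\T)]=-2\iota_\T\mathbf F$ and $d\psi=2\iota_\T{\ast}\mathbf F$ carry the same factor, so you get exact equality of norms. Second, the sign in your first identity should be $+\tfrac12\g(\T,\T)\mathbf F_{\alpha\beta}\mathbf F^{\alpha\beta}$, which is irrelevant on $\g(\T,\T)=0$. Your nonnegativity argument, smoothness of $E_0$, and independence argument for $d(\psi|_{E_0})\ne0$ match the paper.

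The step that does not go through as written is the application of NT. NT concerns only maximal geodesics. Its inclusion of finite affine intervals does not tell you that the reparametrized orbit is maximal. Your phrase ``possibly only a subarc'' allows the orbit to be a proper piece of a longer geodesic. In that case the maximal geodesic's projection need not be $\{\pi(p)\}$, and no contradiction follows.

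Completeness of the $\T$-flow gives the orbit for all $t\in\RR$, but you also need that the orbit has no endpoint in $\E$. Suppose the affine parameter stays bounded as $t\to+\infty$ and the geodesic extends past that value. Then $\Phi^\T_tp$ would converge in $\E$, which properness of the action (Proposition~\ref{prep:quotient}) forbids. Equivalently, the limit $q$ would satisfy $\Phi^\T_\tau q=q$ for every $\tau$, contradicting freeness. With this, the orbit itself is a maximal zero-energy null geodesic with projection $\{\pi(p)\}$, and NT gives the contradiction. This is exactly the paper's ``complete and proper, so it has no endpoint in $\E$.''
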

\begin{proof}
We compute at a point $p\in\pi^{-1}(E_0)$ in a null frame
adapted to $\T$, which is nonzero by
Proposition~\ref{prep:quotient}. Write
$\boldsymbol{\epsilon}=d\Vol_\g$ and choose an oriented null frame at $p$
$L=\T,\underline L,E_1,E_2$, with
$\g(L,\underline L)=-1$, $E_1,E_2$ orthonormal, and
$\boldsymbol{\epsilon}(L,\underline L,E_1,E_2)=-1$.
Writing $\mathbf F_{LA}=\mathbf F(L,E_A)$, $A\in\{1,2\}$, the contractions are
\[
\begin{aligned}
 \D_\alpha[\g(\T,\T)]&=2\T^\beta\mathbf F_{\alpha\beta},\\
 \D_\alpha\psi&=\T^\beta\boldsymbol{\epsilon}_{\beta\alpha}{}^{\gamma\delta}\mathbf F_{\gamma\delta}.
\end{aligned}
\]
The chosen orientation gives
\[
\begin{aligned}
 (\ast\mathbf F)_{L1}&=-\mathbf F_{L2},\\
 (\ast\mathbf F)_{L2}&=\mathbf F_{L1}.
\end{aligned}
\]
Here raising a null index interchanges $L,\underline L$ and
introduces a minus sign. Hence the Killing equation gives
\[
 \begin{aligned}
 d[\g(\T,\T)](L)&=d\psi(L)=0,\\
 d[\g(\T,\T)](E_A)&=-2\g(\D_L\T,E_A),\quad A=1,2,\\
 d\psi(E_1)&=d[\g(\T,\T)](E_2),\\
 d\psi(E_2)&=-d[\g(\T,\T)](E_1).
 \end{aligned}
\]
Because both covectors annihilate $L$, their metric contractions
reduce to the two spacelike frame components. We obtain
\[
 \begin{aligned}
 \g^{-1}(d[\g(\T,\T)],d\psi)
 &=-d[\g(\T,\T)](L)d\psi(\underline L)
   -d[\g(\T,\T)](\underline L)d\psi(L)\\
 &\quad+(-2\mathbf F_{L1})(-2\mathbf F_{L2})
            +(-2\mathbf F_{L2})(2\mathbf F_{L1})=0,\\
 |d\psi|_\g^2&=(-2\mathbf F_{L2})^2+(2\mathbf F_{L1})^2
                         =|d[\g(\T,\T)]|_\g^2.
 \end{aligned}
\]
We obtain
\begin{equation}\label{canon:twist}
 \begin{aligned}
 |d[\g(\T,\T)]|_\g^2=|d\psi|_\g^2
 &=4\sum_{A=1}^2\g(\D_L\T,E_A)^2,\\
 \g^{-1}(d[\g(\T,\T)],d\psi)&=0
       \qquad(\g(\T,\T)=0).
 \end{aligned}
\end{equation}
If the common value were zero, then
$\D^\alpha[\g(\T,\T)]=\lambda\T^\alpha$ for some $\lambda\in\RR$.
Stationarity preserves this
identity, with the same $\lambda$, along
$\gamma(t)=\Phi_t^\T p$. Reparametrizing by $ds/dt=e^{-\lambda t/2}$
gives
\[
 \D_\T\T=-\frac{\lambda}{2}\T,
\]
and with
\[
 \gamma'(s)=e^{\lambda t/2}\T,
\]
we obtain
\[
 \D_{\gamma'(s)}\gamma'(s)
 =e^{\lambda t}\left(\D_\T\T+\frac{\lambda}{2}\T\right)=0.
\]
The stationary orbit is complete and proper, so it has no
endpoint in $\E$. After the displayed reparametrization it is
therefore a maximal zero-energy null geodesic whose stationary
projection is a point. This contradicts NT and proves strict
positivity in \eqref{canon:twist}. The two orthogonal spacelike
gradients are then independent, proving smoothness of $E_0$
and $d(\psi|_{E_0})\ne0$.
\end{proof}

\subsection{Reconstruction at the ergosurface}

The continuation argument will produce rotational fields on
successively larger domains. Near the ergosurface, we would like
to compare all of them with a field determined by the stationary
geometry. The common levels of $\g(\T,\T)$ and $\psi$ supply
such a comparison: each is a Lorentzian cylinder carrying the
constant-length Killing field $\T$. In two dimensions, the
Killing equation then forces $\T$ to be parallel and the cylinder
to be flat. We can consequently recover a periodic direction from
its parallel translations.

To implement this construction, we first prove that the levels
of $\psi$ on $E_0$ are circles. Here the two zeros of the horizon
rotation account for the two ends of $E_0$. We then identify the
periodic translations of the resulting cylinders and show that
every ambient rotational field of effective period $2\pi$ agrees
with one of them. This will give the smooth reference field,
including all its derivatives, on both sides of the ergosurface.

\begin{definition}[Area function]\label{def:orbit-area}
Following Chru\'sciel and Costa~\cite[equation~(5.1)]{CC08} and
Chru\'sciel, Costa, and Heusler~\cite[Section~8.2]{CCH12}, define
\[
 \begin{aligned}
 W=W[\T,\Z]
 &:=-\det\begin{pmatrix}
 \g(\T,\T)&\g(\T,\Z)\\
 \g(\T,\Z)&\g(\Z,\Z)
 \end{pmatrix}\\
 &=\g(\T,\Z)^2-\g(\T,\T)\g(\Z,\Z).
 \end{aligned}
\]
On $W>0$, put $\rho=\sqrt W$. Here $W$ is the area function,
and $\rho$ is its positive square root. If $[\T,\Z]=0$, their
flow parameters $(\tau,\phi)$ give the absolute orbit area density
$\rho\,|d\tau\,d\phi|$.
On this set, let $\sigma$ be the Riemannian metric induced by $\g$ on
$(\operatorname{span}\{\T,\Z\})^\perp$.
Where the fields commute and are Killing, $\sigma$ also denotes
the metric on the local orbit space, defined by orthogonal lifts.
\end{definition}

\begin{proposition}\label{canon:reconstruction}
For every compact interval $I\subset\psi(E_0)$ there are
$\epsilon>0$, an open interval $J$ containing $I$, and a neighborhood
$\mathcal W\Subset Q$ of $E_0\cap\psi^{-1}(I)$ such that
\[
 \mathcal W\simeq S^1\times(-\epsilon,\epsilon)\times J,
\]
where the last two coordinates are $\g(\T,\T)$ and $\psi$.
The spacetime inverse image of each circle is a flat Lorentzian
cylinder. Its parallel translation of effective period $2\pi$,
oriented by \eqref{canon:limit}, defines a smooth field $\Z_0$ on
$\pi^{-1}(\mathcal W)$ satisfying
\[
 \begin{aligned}
 [\T,\Z_0]&=0,\\
 \Phi_{2\pi}^{\Z_0}&=\operatorname{Id}.
 \end{aligned}
\]
Its flow is complete and has effective period $2\pi$, and
\begin{equation}\label{canon:limit}
 \g(\T,\Z_0)>0.
\end{equation}
Every Killing field $\Z$ on a connected stationary open set
$U\subset\E$, commuting with $\T$ and of complete effective
period $2\pi$, satisfies $\Z=\pm\Z_0$ on each connected component
of $U\cap\pi^{-1}(\mathcal W)$.
The function $W=W[\T,\Z_0]$ is smooth, positive, and constant
on each level cylinder, and equals $W[\T,\Z]$ on
$U\cap\pi^{-1}(\mathcal W)$.
There is a smooth Riemannian metric $\sigma_{\rm ref}$ on the
base rectangle, agreeing with the transverse metric $\sigma$ of
Definition~\ref{def:orbit-area} on the image of
$U\cap\pi^{-1}(\mathcal W)$ in that rectangle.
On $\pi^{-1}(\mathcal W)$,
\[
 \min\{\g(\Z_0,\Z_0),W\}\ge c_0>0,
\]
and, for every $j\ge0$,
\[
 |\D^j\Z_0|_+\le C_j.
\]
All choices and constants depend only on $(\g,\T)$, $I$, and
the fixed auxiliary metric.
\end{proposition}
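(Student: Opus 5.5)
The plan is to build everything from the two transverse coordinates supplied by Lemma~\ref{canon:regularity}. By that lemma, $d[\g(\T,\T)]$ and $d\psi$ are $\g$-orthogonal and spacelike of equal positive length along $\pi^{-1}(E_0)$. They are therefore independent on a neighborhood. Hence $(\g(\T,\T),\psi)$ is a submersion from a neighborhood of $E_0$ in the three-manifold $Q$ onto an open subset of $\RR^2$, and its fibres are one-dimensional. In spacetime, each fibre $\mathcal C$ is a two-dimensional surface tangent to $\T$. Its normal bundle is spanned by the two spacelike orthogonal gradients, so $\mathcal C$ is timelike.

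\emph{Step one: the levels are circles, and we obtain the product structure.} The levels of $\psi|_{E_0}$ are closed one-manifolds, since $d(\psi|_{E_0})\ne0$. The point is to show that $\psi|_{E_0}$ is proper over the compact interval $I$. I would argue as follows.
\begin{itemize}
\item The ergoregion avoids the asymptotic end, by \eqref{intro:af}, where $\g(\T,\T)\to-1$. So a sequence in $E_0$ leaving every compact set of $Q$ must approach the horizon.
\item Near $S_0$, Lemma~\ref{prep:collar} and the normal coordinates \eqref{prep:normal-coordinates} describe $E_0$ as a $\Z$-invariant surface accumulating on $S_0$. Both $\psi$ and $\g(\T,\T)$ are $\Z$-invariant there: $\LL_\Z$ of $\iota_\T\ast d\T^\flat$ vanishes, so $\Z\psi$ is constant on orbits, and it vanishes because the orbits are closed.
\item Hence the end of $E_0$ near $S_0$ consists of $\Z$-orbits. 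These degenerate only at the two zeros of the horizon rotation on $S_0$, and $\psi$ tends there to the endpoint values of $\psi(E_0)$.
\end{itemize}
Thus $\psi^{-1}(I)\cap E_0$ is compact. Each of its levels is a compact one-manifold, and I would check that each is connected, using simple connectedness of $Q$ from Proposition~\ref{prep:quotient}, or the $\Z$-orbit description near the ends together with Ehresmann. This makes each level a single circle. Ehresmann's theorem applied to the proper submersion $(\g(\T,\T),\psi)$ on a slightly larger compact set then gives $\mathcal W\simeq S^1\times(-\epsilon,\epsilon)\times J$. I expect this properness near the horizon ends to be the main obstacle. If the $\Z$-orbit description fails to pin down the endpoint values, I would first try the boundedness of $\psi$ and the structure of $E_0$ near the poles of $S_0$ in the normal chart.

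\emph{Step two: flatness and the definition of $\Z_0$.} On $\mathcal C=\pi^{-1}(\text{circle})$, $\T$ is a Killing field of the induced metric with constant length. In two dimensions $\nabla\T=f\,\epsilon_{\mathcal C}$. Differentiating $\g(\T,\T)$ gives $f\,\epsilon_{\mathcal C}(X,\T)=0$ for every $X$, hence $f=0$ whether or not $\T$ is null. So $\T$ is parallel, $\mathcal C$ is flat, and it is complete because the stationary orbits are complete and the circle is compact. Therefore $\mathcal C\simeq\RR^{1,1}/\Gamma$, with $\Gamma\simeq\mathbb Z$ generated by a translation $w$. The generator $w$ is not parallel to $\T$, because $\T$ has no periodic orbits by Proposition~\ref{prep:quotient}. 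I fix the sign of $w$ by $\g(\T,w)>0$, after checking that $\g(\T,w)\ne0$, and set $\Z_0=w/2\pi$, extended as a parallel field on $\mathcal C$. The field $\Z_0$ commutes with the parallel field $\T$ and satisfies $\Phi_{2\pi}^{\Z_0}=\operatorname{Id}$ with effective period $2\pi$. Smooth dependence on $(\g(\T,\T),\psi)$ follows from writing the holonomy vector as the solution of parallel-transport ODEs along a smoothly varying transversal loop.

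\emph{Step three: agreement with $\Z$ and the estimates.} Let $\Z$ be a Killing field as in the statement. Then $\Z\g(\T,\T)=0$, and $\Z\psi=0$ by the invariance argument above. So $\Z$ is tangent to each $\mathcal C$ and is Killing for the flat induced metric there. A Killing field of $\RR^{1,1}$ commuting with a nonzero translation has no boost part, so $\Z$ is parallel on $\mathcal C$. Periodicity with effective period $2\pi$ makes $2\pi\Z$ a generator of $\Gamma$, so $\Z=\pm\Z_0$. The sign is continuous, hence constant on connected components.

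For the remaining conclusions:
\begin{itemize}
\item $W[\T,\Z_0]$ is constant on each level because both fields are parallel. It is positive because $\mathcal C$ is timelike, and it equals $W[\T,\Z]$ since $\Z=\pm\Z_0$.
\item $\g(\Z_0,\Z_0)>0$, since otherwise the closed $\Z_0$-orbits would be closed causal curves in the globally hyperbolic region $\E$.
\item Compactness of $\overline{\mathcal W}$, after shrinking $\epsilon$, gives the lower bound $c_0$ and the derivative bounds $|\D^j\Z_0|_+\le C_j$.
\item $\sigma_{\rm ref}$ is obtained by pushing down the metric on the nondegenerate complement of $\operatorname{span}\{\T,\Z_0\}$ through the level coordinates. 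It agrees with $\sigma$ wherever $\Z=\pm\Z_0$.
\end{itemize}
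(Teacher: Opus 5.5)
Your route matches the paper's: flatness of the level cylinders from a constant-length Killing field, $\Z_0$ as the deck translation divided by $2\pi$, and identification of an ambient $\Z$ by parallelism on each cylinder. However, Step one has a genuine gap, exactly where you expect it.

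You assert that $\psi$ tends, at the two zeros of $\Z|_{S_0}$, to the endpoints of $\psi(E_0)$, and that each level is connected. Neither follows from what you give. Simple connectedness of $Q$ does not make a properly embedded surface connected (think of two parallel planes in $\RR^3$). $\Z$-invariance near $S_0$ only says that a level near a zero is a union of $\Z$-orbits; it does not count them.

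The missing ingredient is the local structure of $E_0$ at a zero $p$. There $\T(p)=0$ and $d[\g(\T,\T)]_p=0$. For $X=X^1\nu+V\in T_p\Sigma^0$, with $\nu$ the outward unit normal to $S_0$ in $\Sigma^0$ and $V\in T_pS_0$, the Hessian of $\g(\T,\T)$ on $\Sigma^0$ is $-2\kappa^2(X^1)^2+2\Omega_H^2\g(V,V)$. So $p$ is a nondegenerate critical point of index one, and by the Morse lemma $E_0$ has exactly one punctured-disk end at $p$ in the exterior.

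Next, extend $\psi$ across $S_0$; the twist form is closed on a neighborhood retracting onto $S_0$. The closure of any component $C$ of $E_0$ in $\Sigma_1\cup S_0$ is compact and adds only zeros of $\Z|_{S_0}$. Since $d(\psi|_C)\neq0$, the maximum and minimum of $\psi$ on $\overline C$ occur only at added points. Hence every component reaches both zeros and takes values strictly between the two values of $\psi$ there. This gives properness over compact $I\subset\psi(E_0)$, and the connectedness of the two ends then forces $E_0$, and hence each level, to be connected.

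Two smaller steps also need repair.

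\emph{The metric $\sigma_{\rm ref}$.} Pushing down $\g$ restricted to the gradient plane does not define it on the whole rectangle. That restriction is $\T$-invariant, but off the domains where $\Z_0$ coincides with some Killing field, nothing makes it constant around the $\Z_0$-circles. The paper instead averages $\g(\widetilde v,\widetilde w)$ over each $\Z_0$-circle. The average is independent of the base point, and it equals $\sigma$ wherever a Killing $\Z$ exists, because that flow preserves horizontal lifts.

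\emph{The identification $\Z=\pm\Z_0$.} The relation $\Phi^\Z_{2\pi}=\operatorname{Id}$ only gives $2\pi\widetilde\Z=kw$ with $k\in\mathbb Z$. Effectiveness is a property of all of $U$, and single orbits may be shorter. You must exclude $k=0$: at a zero of $\Z$ the flow differential fixes $\T$ and both invariant gradients, hence is the identity, and isometry rigidity on the connected set $U$ contradicts effectiveness. You must also exclude $|k|>1$: $k$ is locally constant on the rectangle, so $\Phi^\Z_{2\pi/|k|}$ would be the identity on an open subset of $U$, hence on all of $U$.
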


We first prove that the levels on $E_0$ are circles, and then
identify the translation associated with a generator of the
resulting Lorentzian cylinder. The period formula in
Lemma~\ref{canon:flat-cylinder} will give the smooth dependence
on the level values required by the proposition.

\begin{lemma}\label{canon:foliation}
There exist $\psi_-<\psi_+$ such that
$\psi:E_0\to(\psi_-,\psi_+)$ is a proper submersion with
connected circular level sets. In particular,
$E_0$ is diffeomorphic to $S^1\times(\psi_-,\psi_+)$.
\end{lemma}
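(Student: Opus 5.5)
The plan is to combine the submersion property from Lemma~\ref{canon:regularity} with an analysis of how $E_0$ leaves compact subsets of $Q$, and then use Ehresmann's theorem. By Lemma~\ref{canon:regularity}, $E_0\subset Q$ is a smooth embedded surface and $d(\psi|_{E_0})\ne0$, so $\psi|_{E_0}$ is a submersion. What remains is to understand the noncompact ends of $E_0$, prove properness onto an open interval, and identify the fibers.

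\emph{Step 1: the ends of $E_0$.} In the asymptotic end, \eqref{intro:af} gives $\g(\T,\T)<0$ for large $|x|$, so $E_0$ stays in a bounded region there. Hence every end of $E_0$ must approach $S_0$. To analyse this, I would work in the collar $\Sigma_{\rm loc}\simeq(0,\delta)\times S_0$ of Lemma~\ref{prep:collar}, with the normal coordinates \eqref{prep:normal-coordinates} on $\Sigma_{\rm loc}=\{U=V\}$. Writing $\T=\mathbf K-\Omega_H\Z$, we have
\[
\g(\T,\T)=\g(\mathbf K,\mathbf K)-2\Omega_H\g(\mathbf K,\Z)+\Omega_H^2\g(\Z,\Z).
\]
From the boost form of $\D\mathbf K$ at $S_0$ one gets $\g(\mathbf K,\mathbf K)=-c\,\kappa^2UV(1+O(U+V))$, while $\g(\mathbf K,\Z)=O(UV)$ because $\Z U=\Z V=0$ and $\mathbf K$ is proportional to the normal boost. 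Consequently, on $\Sigma_{\rm loc}$,
\[
E_0=\{\,r^2\bigl(c\kappa^2+O(r)\bigr)=\Omega_H^2|\Z|^2_{S_0}(p)+O(r^3)\,\}.
\]
So $E_0$ stays a positive distance from $S_0$ except near the zeros of $\Z|_{S_0}$. The field $\Z$ generates an effective circle action on $S_0\simeq S^2$, and its Killing property and nondegeneracy at fixed points show that it has exactly two fixed points (the poles), each with linear rotation. Near each pole, $E_0$ is therefore a conical tube $r\approx C\,\mathrm{dist}(p,\text{pole})$, foliated by $\Z$-orbits, which are single circles.

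\emph{Step 2: the values of $\psi$ at the two ends.} I would show that $\psi$ extends continuously to $S_0$, since $d\psi=\iota_\T(\ast d\T^\flat)$ is smooth up to $S_0$ in the collar coordinates. Along each tube, $\psi$ then converges to its value at the corresponding pole; call these values $\psi_\pm$. Because $\psi|_{E_0}$ has no critical points and is $\Z$-invariant in the collar, on each tube it is monotone in the tube parameter, with the level circles being $\Z$-orbits. In Kerr these limits are the values of the twist potential at the two poles, which differ by the horizon angular momentum.

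\emph{Step 3: properness and topology.} Remove small open tubes around the two poles. The remainder of $E_0$ is closed in $Q$ and lies in a compact set, bounded away from the asymptotic end and from $S_0$ by the $\delta_K$ statement of Lemma~\ref{prep:collar}; hence it is compact. Any set $\psi^{-1}([a,b])$ with $[a,b]$ strictly inside the range then meets the tubes only in compact pieces, and properness follows. Each component of $E_0$ is thus a proper submersion onto an open interval, so by Ehresmann it is a fibration with compact one-dimensional fibers, i.e.\ finite unions of circles. Such a component is noncompact and has exactly two ends, one over each endpoint of its image interval. Each end must lie in a pole tube, and each tube is a single end. It follows that there is exactly one component, whose image is the interval between the two pole values. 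This forces $\psi_-\ne\psi_+$, and after relabeling $\psi_-<\psi_+$. Near an end the fiber is one $\Z$-orbit, and the fiber type is constant along the interval, so every level set is a single circle and $E_0\simeq S^1\times(\psi_-,\psi_+)$.

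\emph{Main obstacle.} The hardest step will be the asymptotics near $S_0$ in Steps 1 and 2. There one needs the precise quadratic vanishing of $\g(\mathbf K,\mathbf K)$ and $\g(\mathbf K,\Z)$, the nondegeneracy of the zeros of $\Z$ on $S_0$, and control of $\psi$ up to $S_0$ so that each tube is exactly one end with a well-defined limit of $\psi$. I would first try to read all of this off the exact normal-coordinate form \eqref{prep:normal-coordinates} together with the Killing identity for second derivatives of $\mathbf K$ and $\Z$ along $S_0$.
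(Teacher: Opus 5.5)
Your local analysis near $S_0$ matches the paper's in substance. Steps~1--2 give two nondegenerate zeros of $\Z|_{S_0}$, a single conical end of $E_0$ at each, continuity of $\psi$ up to $S_0$, and monotonicity of $\psi$ along each tube. The paper obtains the conical ends from the Morse lemma for $\g(\T,\T)$ on $\Sigma^0$, whose Hessian at each zero has index one, rather than from collar asymptotics.

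The gap is in Step~3, at the claim that $\psi^{-1}([a,b])$ meets the tubes only in compact pieces, so that each component is a proper submersion onto an open interval. On the tube at a pole with limiting value $\psi_i$, the set $\psi^{-1}([a,b])$ is compact only if $\psi_i\notin[a,b]$. You therefore need to know that no pole value lies in the interior of $\psi(C)$ for a component $C$. Equivalently, $\psi(E_0)$ must lie in the open interval bounded by the two pole values. Monotonicity along a tube controls $\psi$ only near that pole. Nothing in Steps~1--2 prevents the compact core from attaining $\psi_i$, or values on the far side of it. Without this, Ehresmann's theorem does not apply, and the end count built on it has no foundation: two ends per component, hence one component, hence $\psi_-\ne\psi_+$.

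The missing idea is an extreme-value argument on the compactified closure, which is how the paper proceeds. Let $\overline C$ be the closure of a component $C$ in $E_0$ together with the two pole points. It is compact, and $\psi$ is continuous on it. Since $d(\psi|_C)\ne0$, neither the maximum nor the minimum of $\psi$ on $\overline C$ is attained in $C$. They are attained at added pole points, and they are distinct because $\psi|_C$ is not constant. Hence $\overline C$ contains both poles. Since the end at each pole is a single connected punctured disk, there is only one component. The extreme values are the two pole values $\psi_-<\psi_+$, and $\psi(E_0)\subset(\psi_-,\psi_+)$. It follows that $E_0\cap\psi^{-1}([a,b])$ is compact for $\psi_-<a<b<\psi_+$. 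Properness, and with it your Ehresmann or gradient-flow trivialization, then follow.

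A smaller point concerns compactness of the core. The $\delta_K$ statement of Lemma~\ref{prep:collar} goes the wrong way: it says collar points with small $r$ leave every compact set. You need the converse, that points of $E_0$ escaping toward the horizon enter the small-$r$ part of the collar. The paper avoids this issue by identifying $E_0$ with its lift to $\Sigma_1$. That lift's closure in $\Sigma_1\cup S_0$ is compact, and its added points are exactly the zeros of $\Z|_{S_0}$.
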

\begin{proof}
We first use $\Sigma_1$ to compactify $E_0$ by its endpoints on
$S_0$. The stationary projection identifies $E_0$ with its inverse
image in $\Sigma_1$ by a proper diffeomorphism:
$\T$ is nonzero null and hence transverse to the spacelike
section, acausality forbids two intersections with one
stationary orbit, and Proposition~\ref{prep:quotient} gives
properness. Its closure in $\Sigma_1\cup S_0$ is compact,
since $\T$ is timelike at infinity. Any added point lies in
\[
 \{p\in S_0:\g(\T,\T)(p)=0\}
 =\{p\in S_0:\Z(p)=0\},
\]
because $\T=-\Omega_H\Z$ on $S_0$ and $\Omega_H\ne0$.

We locate these endpoints using the circle action on $S_0$.
The derivative of $\Z|_{S_0}$ at a zero is a nonzero skew
rotation, so each zero is isolated and has index one. The index
theorem on $S^2$ gives precisely two zeros. At either zero the circle acts trivially on the
Lorentzian normal plane and with weight $\pm1$ on $T_pS_0$.
Indeed, a higher weight would give a nonidentity circle
element with identity differential at $p$, contradicting
uniqueness of isometries and the effective period $2\pi$.

We use the Morse lemma of Milnor~\cite[Lemma~2.2]{Milnor63}
to determine the ergosurface near these two points. Fix a zero
$p\in S_0$ and recall that $\mathbf K=\T+\Omega_H\Z$ vanishes on
$S_0$. Choose an orthonormal frame $e_0,e_1$ of the Lorentzian
normal plane to $S_0$ at $p$, with $e_0$ future timelike and
$e_1$ outward, so that
\begin{align*}
 \D_{e_0}\mathbf K&=\kappa e_1,\\
 \D_{e_1}\mathbf K&=\kappa e_0.
\end{align*}
The differential of the circle action is the identity on this
normal plane. On $T_pS_0$ it is a weight-one rotation. Thus
\begin{align*}
 \D_\nu\Z&=0
       \quad\text{for }\nu\in(T_pS_0)^\perp,\\
 \D_V\Z&\in T_pS_0
       \quad\text{for }V\in T_pS_0,\\
 \g(\D_V\Z,\D_V\Z)&=\g(V,V).
\end{align*}
Let $\nu$ now be the outward unit normal to $S_0$ in $\Sigma^0$.
Write $\nu=\nu^0e_0+\nu^1e_1$. Its unit normalization gives
$(\nu^1)^2-(\nu^0)^2=1$.
Since $\D_V\mathbf K=0$ for $V\in T_pS_0$, we obtain
\begin{align*}
 \D_\nu\T&=\kappa(\nu^1e_0+\nu^0e_1),\\
 \g(\D_\nu\T,\D_\nu\T)
   &=\kappa^2\bigl((\nu^0)^2-(\nu^1)^2\bigr)=-\kappa^2,\\
 \D_V\T&=-\Omega_H\D_V\Z,\\
 \g(\D_\nu\T,\D_V\T)&=0.
\end{align*}
At $p$ we also have $\T(p)=0$ and $d[\g(\T,\T)]_p=0$.
Consequently the Hessian on $\Sigma^0$, for its induced metric,
is the restriction of the spacetime Hessian. For
$X=X^1\nu+V\in T_p\Sigma^0$ the preceding identities give
\[
 \begin{aligned}
 \Hess_{\Sigma^0}[\g(\T,\T)]_p(X,X)
 &=2\g(\D_X\T,\D_X\T)\\
 &=2(X^1)^2\g(\D_\nu\T,\D_\nu\T)
   +4X^1\g(\D_\nu\T,\D_V\T)\\
 &\quad+2\Omega_H^2\g(\D_V\Z,\D_V\Z)\\
 &=-2\kappa^2(X^1)^2+2\Omega_H^2\g(V,V).
 \end{aligned}
\]
Thus $p$ is a nondegenerate critical point of index one.
The diagonal Hessian allows us to choose Morse coordinates
whose differential sends $T_pS_0$ to $\{x_1=0\}$ and whose
positive first axis points outward. In these coordinates,
\[
 \g(\T,\T)=-x_1^2+x_2^2+x_3^2.
\]
The surface $S_0$ is tangent to $\{x_1=0\}$ and therefore
satisfies $|x_1|\le C(x_2^2+x_3^2)$ in these coordinates.
After shrinking the chart, the part of the zero cone with
$x_1>0$ lies in the exterior, while the part with $x_1<0$
lies on the other side of $S_0$. Hence $E_0$ has exactly
one punctured-disk end at each zero.

The twist potential joins these local descriptions of the two
ends. Since the normal neighborhood retracts onto $S_0$,
the closed twist form has a primitive there. Choosing its additive
constant on the connected exterior overlap extends $\psi$ across
$S_0$. Let $C$ be a connected component of $E_0$.
Lemma~\ref{canon:regularity} gives $d(\psi|_C)\ne0$, so its
maximum and minimum on the compact closure of $C$ are
distinct and must occur at the two added points.
Every component consequently approaches both zeros.
Since the end at either zero is connected, there can be
only one component.

Denote the two extremal values by $\psi_-<\psi_+$.
The closure just constructed in $\Sigma_1\cup S_0$ is compact,
and its two added points have values $\psi_-,\psi_+$. Hence, for
$\psi_-<a<b<\psi_+$,
\[
 E_0\cap\psi^{-1}([a,b])\Subset E_0.
\]
Thus $\psi:E_0\to(\psi_-,\psi_+)$ is a proper submersion.
Its normalized gradient for any smooth Riemannian metric on
$E_0$ flows one regular level to every other level; the
flow exists between any prescribed levels by the displayed
compactness. This trivializes $E_0$ over the interval.
A regular level is a compact one-dimensional manifold,
and it is connected because the product is connected.
It is therefore one circle, as asserted.
\end{proof}

\begin{lemma}\label{canon:flat-cylinder}
Let $(C,g_C)$ be a connected, oriented and time-oriented Lorentzian
surface with no closed causal curves. Suppose that a nonvanishing
Killing field $\T$ has constant length and generates a free proper
$\RR$-action with compact quotient. Then $C$ is isometric to
$\RR^{1,1}/\mathbb Z v$, where $v$ is spacelike and independent
of the constant vector representing $\T$.
There are exactly two parallel fields on $C$ whose flows have
effective period $2\pi$, corresponding to $\pm v/(2\pi)$.
If $\gamma:[0,1]\to C$ is a loop based at $p$ representing one
generator of $\pi_1(C)$, its corresponding periodic field $\Z_0$
satisfies
\begin{equation}\label{canon:period}
 2\pi\Z_0(p)
 =\int_0^1P_{\gamma(t)\to p}\dot\gamma(t)\,dt,
\end{equation}
where $P$ is parallel transport back to $p$ along $\gamma$ for
the induced connection.
The integral is independent of the representative loop.
\end{lemma}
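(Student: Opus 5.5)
The argument has three parts. First we show that $\T$ is parallel and $C$ is flat. Then we identify the universal cover with $\RR^{1,1}$. Finally we read off the periodic fields and the period formula from the linear holonomy-free structure.

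\emph{Flatness and parallelism.} In two dimensions a Killing field satisfies $\nabla_X\T=\lambda\,J X$, where $J$ is the rotation (boost) by a right angle for $g_C$ and $\lambda$ is a function. Constant length gives
\[
0=X\,g_C(\T,\T)=2\lambda\,g_C(JX,\T)
\]
for every $X$. Choosing $X=J^{-1}\T$ makes $g_C(JX,\T)=\pm g_C(\T,\T)$, so $\lambda=0$ wherever $\T$ is non-null.

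If $\T$ is null, constant length zero means it is null everywhere. In that case $g_C(JX,\T)$ vanishes for $X$ along $\T$ only, and other $X$ force $\lambda=0$ as well. In both cases $\nabla\T=0$.

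The Killing identity $\nabla^2\T=R(\cdot,\cdot)\T$ then gives $R(X,Y)\T=0$. In dimension two the curvature is $K$ times a rotation operator, which is invertible on nonzero vectors, so $K=0$.

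\emph{Identification with $\RR^{1,1}/\mathbb Z v$.} A flat Lorentzian surface with a parallel nonvanishing field has trivial holonomy on the span of $\T$. Since the holonomy group lies in $SO_0(1,1)$, is abelian, and fixes a nonzero vector, it is trivial; orientation and time orientation exclude reflections. Hence there are global parallel frames and a developing map $\widetilde C\to\RR^{1,1}$ which is a local isometry.

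To show the developing map is a diffeomorphism onto $\RR^{1,1}$, I would use the free proper $\RR$-action with compact quotient. This makes $C$ an $\RR$-bundle over a circle, so $\pi_1(C)=\mathbb Z$. The flat structure is then a translation structure, $\T$ develops to a constant vector $w$, and the $\T$-lines are complete. Integrating the parallel frame along an orthogonal transversal over the compact quotient gives completeness, hence a diffeomorphism onto $\RR^{1,1}$.

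The deck generator acts by a translation $v$, since the holonomy is trivial and the structure is a translation structure. The vector $v$ is independent of $w$ because the quotient by $\T$ is compact. It cannot be causal, since otherwise a closed causal curve projects from a segment joining $0$ to $v$; hence $v$ is spacelike.

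This completeness step is the main obstacle. If $\T$ is null, the orthogonal transversal is along $\T$ itself, so I would instead use a parallel null field $\underline L$ with $g_C(\T,\underline L)=-1$. Compactness of the orbit space then bounds the flow of $\underline L$ between $\T$-orbits, and geodesic completeness follows from the global parallel frame by standard translation-structure arguments.

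\emph{Periodic fields and the period formula.} A parallel field is a constant vector $a$ on $\RR^{1,1}$, and its flow on the quotient is translation by $sa$. This flow is the identity exactly when $sa\in\mathbb Z v$. The effective period is $2\pi$ exactly when $2\pi a=\pm v$, which gives exactly two fields, $\pm v/(2\pi)$.

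For the loop $\gamma$ representing a generator of $\pi_1(C)$, lift it to $\widetilde\gamma$ in $\RR^{1,1}$. Parallel transport is the identity in the affine chart, so
\[
\int_0^1P_{\gamma(t)\to p}\dot\gamma(t)\,dt=\int_0^1\dot{\widetilde\gamma}\,dt=\widetilde\gamma(1)-\widetilde\gamma(0)=\pm v.
\]
This equals $2\pi\Z_0(p)$ for the field corresponding to that generator's orientation. The integral depends only on the deck translation, and hence only on the homotopy class, which gives independence of the representative loop.
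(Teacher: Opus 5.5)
Your treatment of flatness, holonomy, the deck group, the two periodic fields and the period formula matches the paper's proof. The paper gets $\nabla\T=0$ from the fact that a skew form on a plane with a nonzero vector in its kernel vanishes, which is your $\nabla_X\T=\lambda JX$ argument without the case split.

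The gap is the step you yourself flag: identifying $\widetilde C$ with all of $\RR^{1,1}$. As written, neither ``integrating the parallel frame along an orthogonal transversal'' nor ``compactness of the orbit space bounds the flow of $\underline L$'' is an argument. Compactness of $C/\RR$ alone does not stop a field transverse to the $\T$-orbits from escaping in finite time; on $\RR\times S^1$ with $\T=\partial_\tau$, the field $\partial_\theta+(1+\tau^2)\partial_\tau$ does so. There is also no general completeness theorem for noncompact translation surfaces, since open subsets of the plane are examples.

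The missing idea is that every parallel field $V$ commutes with $\T$, because $[\T,V]=\nabla_\T V-\nabla_V\T=0$. Let $\tau$ be the $\RR$-coordinate of the trivialization $C\simeq\RR\times S^1$, so $\T\tau=1$. Then $\T(V\tau)=V(\T\tau)=0$, so $V\tau$ descends to the compact circle and is bounded. Along the flow of $V$ the circle coordinate stays in a compact set and $\tau$ changes at most linearly in time, so $V$ is complete. Two complete commuting parallel fields then act on the simply connected cover with open orbits and trivial discrete stabilizer, which gives the isometry with $\RR^{1,1}$. This is exactly the paper's argument, and it treats null and non-null $\T$ at once, so your separate null case is unnecessary.

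If you prefer to keep a transversal, use a closed one, such as the slice $\{\tau=0\}$, rather than an integral curve of the orthogonal parallel field, which in general does not close. Developing its lift gives a curve $c$ with $c(\theta+1)=c(\theta)+v$ and $c'$ never parallel to $w$. Because $\T$ is complete and develops to $w$, the developing map is $D(\theta,\tau)=c(\theta)+\tau w$. Take a linear functional $\ell$ with kernel $\RR w$. Then $(\ell\circ c)'$ is periodic and nonvanishing, hence bounded away from zero, so $\ell\circ c$ is a diffeomorphism of $\RR$ and $D$ is bijective. This route also gives $\ell(v)\neq0$ directly, i.e.\ the independence of $v$ and $w$.
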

\begin{proof}
We first show that $\T$ is parallel. Write $\nabla^C$ for the
induced connection, with indices $a,b\in\{1,2\}$. The Killing
equation makes $\nabla^C\T$ skew, whereas constancy of its length
gives
\[
 \T^b\nabla^C_a\T_b=\tfrac12\partial_a g_C(\T,\T)=0.
\]
A skew form on a two-dimensional vector space cannot annihilate
a nonzero vector unless it is zero. Thus $\nabla^C\T=0$.
Writing $R^C$ for its curvature tensor and $K_C$ for Gaussian
curvature, we obtain
\[
 0=R^C(X,Y)\T
   =K_C\bigl(g_C(Y,\T)X-g_C(X,\T)Y\bigr),
\]
which gives $K_C=0$. Parallel transport around a loop preserves
orientation, time orientation and the nonzero vector $\T$.
The only such linear isometry of a Lorentzian plane is the
identity, so the flat connection has trivial holonomy.

To identify the entire cylinder with a quotient of Minkowski
space, we prove completeness of its parallel fields. The stationary
quotient is a circle, and the proper $\RR$-action gives a product
$C\simeq\RR\times S^1$.
Let $\tau$ be its first coordinate, with $\T\tau=1$.
Any parallel field $V$ commutes with $\T$, since
$[\T,V]=\nabla^C_\T V-\nabla^C_V\T=0$. Consequently
\[
 |V\tau|\le\max_{\{\tau=0\}}|V\tau|<\infty.
\]
Its projected flow stays on the compact circle, so this bound
prevents a finite flow endpoint. A parallel frame therefore
consists of complete commuting fields. On the universal cover,
their joint flow identifies the surface isometrically with the
Minkowski plane $\RR^{1,1}$. Indeed, its orbits are open, hence
there is one orbit, and its discrete stabilizer is trivial
because the cover is simply connected.

The transformations identifying points of this universal cover
are translations, since their linear parts are the holonomy.
They form an infinite cyclic group generated by a translation $v$.
A causal $v$ would give a closed
causal curve by projecting the straight segment from $0$ to $v$.
Hence $v$ is spacelike. It cannot be proportional to $\T$, since
that would give a periodic stationary orbit. A parallel field
has effective period $2\pi$ precisely when its time-$2\pi$
translation is $v$ or $-v$.

Finally, lift a generator loop to $\RR^{1,1}$. Trivial holonomy
makes parallel transport path independent, and integrating its
transported velocity gives the displacement $v$. This proves
\eqref{canon:period} and its independence of the loop.
\end{proof}

\begin{proof}[Proof of Proposition~\ref{canon:reconstruction}]
We first extend the circular levels from $E_0$ to nearby values
of $\g(\T,\T)$. By Lemma~\ref{canon:regularity}, the gradients
of $\g(\T,\T)$ and $\psi$ span a spacelike plane near the compact
part of $E_0$ under consideration. In this plane the equations
$Y[\g(\T,\T)]=1$ and $Y\psi=0$ determine a unique smooth
stationary vector field $Y$. Its flow extends the circles of
Lemma~\ref{canon:foliation} to a product over a rectangle in
$(\g(\T,\T),\psi)$; choose the closure of this rectangle inside
a larger one with the same properties.

Each spacetime level surface $C$ is a Lorentzian cylinder,
since its normal plane is spacelike and its quotient by $\T$
is one circle. The restriction of $\T$ is Killing and has
constant length on $C$. The ordered normal gradients orient $C$,
and its time orientation and causality are inherited from $\E$.
Lemma~\ref{canon:flat-cylinder} therefore gives its periodic
parallel field. Formula~\eqref{canon:period}, applied
to a smooth family of generator loops, shows that these fields
assemble to a smooth $\Z_0$. Invariance of the construction under
the stationary isometries gives $[\T,\Z_0]=0$.

On $E_0$, $\T$ is nonzero null and $\Z_0$ is spacelike, so their
inner product never vanishes. Choose the circle orientation so
that it is positive. Connectedness and a smaller rectangle give
\eqref{canon:limit}. Since both fields are parallel along $C$,
their inner products are constant there. Thus
\[
 W=\g(\T,\Z_0)^2-\g(\T,\T)\g(\Z_0,\Z_0)
\]
is a function on the base, and is positive because $\T,\Z_0$
span its Lorentzian tangent plane. Compactness gives the asserted
positive lower bounds. Smooth dependence of parallel transport
in \eqref{canon:period} gives every derivative bound on a compact
stationary section, hence on its full stationary inverse image.

It remains to identify an ambient Killing field $\Z$ as in the
statement with the intrinsic translation we have constructed.
Its flow preserves $\g,\T$ and the twist form, so
$d(\Z\psi)=\LL_\Z d\psi=0$. Periodicity then gives
\[
 2\pi\Z\psi=\psi(\Phi_{2\pi}^\Z p)-\psi(p)=0.
\]
It also preserves $\g(\T,\T)$, and therefore each level surface.
On any part of $C$ in its domain, $\Z$ is intrinsically Killing
and
\[
 \nabla^C_\T\Z=[\T,\Z]+\nabla^C_\Z\T=0.
\]
The same two-dimensional skew-form argument proves
$\nabla^C\Z=0$ there.

We claim that $\T$ and $\Z$ are independent on this
neighborhood. If $\Z$ vanished at a point, its flow differential
would fix the two invariant gradients. On their Lorentzian orthogonal plane it fixes $\T$,
so it is the identity there as well. The ambient isometry would fix the point and have identity
differential there, and hence be the identity on the
connected set $U$, contrary to effectiveness. If $\Z$ were proportional to $\T$, its parallel restriction
would instead have a stationary orbit, again contradicting
periodicity and the free stationary action.

The completeness assumption now lets us identify the entire
level cylinder in $U$. Identify the universal cover of $C$ with
$\RR^{1,1}$ as in Lemma~\ref{canon:flat-cylinder}, and lift a
point $p\in U\cap C$ to $\widetilde p$. The lifts $\widetilde\T,\widetilde\Z$ are
independent constant vectors wherever they are defined. Completeness
of both flows in $U$ gives the lifted orbit map
\[
 (s,t)\longmapsto
       \widetilde p+s\widetilde\T+t\widetilde\Z,
       \qquad (s,t)\in\RR^2.
\]
Its image is $\RR^{1,1}$ because the two vectors are independent.
Thus $U$ contains all of $C$. If $v$ generates the deck translations,
the period identity becomes
\[
 2\pi\widetilde\Z=kv=2\pi k\widetilde\Z_0,
       \qquad k\in\mathbb Z\setminus\{0\}.
\]
Smoothness makes $k$ locally constant on the base rectangle. If
$|k|>1$ on a component, then
\[
 \Phi_{2\pi/|k|}^{\Z}=\operatorname{Id}
       \quad\hbox{on that open component},
\]
and isometry uniqueness gives the same identity throughout the
connected domain $U$, contrary to effectiveness. Hence $k=\pm1$ on every component of the overlap.

To obtain a transverse metric defined on the fixed rectangle,
we average along the circles of $\Z_0$. Let
$x=(\g(\T,\T),\psi)$ denote the base coordinates. For a tangent
vector $v$ to the base, its horizontal lift at $p$
is the unique vector $\widetilde v_p$ in the spacelike plane spanned by these gradients with $dx_p(\widetilde v_p)=v$. Define
\[
 (\sigma_{\rm ref})_x(v,w)
 =\frac1{2\pi}\int_0^{2\pi}
   \g(\widetilde v,\widetilde w)_{\Phi_s^{\Z_0}p}\,ds.
\]
The periodic action and stationarity make this independent of
$p$ above $x$. It is smooth and positive, with uniform bounds
on the smaller rectangle. Where $\Z$ is Killing, its flow
preserves the horizontal lifts and their metric products.
The integrand is then constant, so $\sigma_{\rm ref}=\sigma$.
\end{proof}

On the side where $\T$ is timelike, stationary analyticity lets
us promote the intrinsic translation $\Z_0$ to an ambient Killing
field. We first identify it with the horizon field on an open
overlap. We then continue the identity $\LL_{\Z_0}\g=0$ by
showing that the period integral \eqref{canon:period} depends
analytically on the two invariant level values.

\begin{lemma}\label{canon:negative-analytic}
The neighborhood $\mathcal W$ can be chosen so that
\[
 \LL_{\Z_0}\g=0
 \quad\hbox{on }\pi^{-1}(\mathcal W\cap\{\g(\T,\T)<0\}).
\]
With the orientation fixed in Lemma~\ref{prep:collar},
$\Z_0=\Z$ on $\pi^{-1}(\mathcal W)\cap\mathcal U_{\mathrm{loc}}$.
\end{lemma}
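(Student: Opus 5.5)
The plan has two parts. First, identify $\Z_0$ with the horizon field $\Z$ near the two ends of $E_0$. Then propagate the Killing equation for $\Z_0$ through the connected region where $\T$ is timelike, using analyticity.

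\textbf{Step 1: identification near the ends.} By Lemma~\ref{prep:collar}, $\Z$ is a Killing field on the connected stationary open set $\mathcal U_{\mathrm{loc}}$. It commutes with $\T$ and has complete flow of effective period $2\pi$. The uniqueness clause of Proposition~\ref{canon:reconstruction} then gives $\Z=\pm\Z_0$ on each connected component of $\mathcal U_{\mathrm{loc}}\cap\pi^{-1}(\mathcal W)$. To fix the sign, use \eqref{canon:limit}, which says $\g(\T,\Z_0)>0$. Near $S_0$ we have $\T=\mathbf K-\Omega_H\Z$, and $\mathbf K$ vanishes on $S_0$, so $\g(\T,\Z)\approx-\Omega_H\g(\Z,\Z)$. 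Since $\Omega_H=-2\pi/t_0<0$, this is positive wherever $\Z$ is spacelike, i.e.\ near $S_0$ away from the two zeros. Hence $\Z=\Z_0$ on each component of the overlap that reaches near $S_0$.

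Next I would choose $\mathcal W$ so that every component of the overlap is of this type. Use the Morse-coordinate description of the ends of $E_0$ from Lemma~\ref{canon:foliation}, and shrink $\mathcal W$, keeping $J\supset I$, to a product in the coordinates $(\g(\T,\T),\psi)$. The overlap with $\mathcal U_{\mathrm{loc}}$ is then a union of whole level cylinders; this is the completeness argument in the proof of the proposition. Since the base is connected in those coordinates, the overlap is connected enough for the sign to be determined by a single point near $S_0$.

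\textbf{Step 2: analytic continuation.} On the stationary region $\{\g(\T,\T)<0\}$, M\"uller zum Hagen's theorem~\cite{MH70} makes $\g$ and $\T$ real analytic in suitable harmonic-type coordinates. The functions $\g(\T,\T)$ and $\psi$ are then analytic, and so is the foliation by level cylinders, whose normal gradients are independent there by the construction of $\mathcal W$. The period formula \eqref{canon:period} expresses $\Z_0$ through parallel transport along a loop chosen analytically in the level values. Hence $\Z_0$ is analytic on $\pi^{-1}(\mathcal W\cap\{\g(\T,\T)<0\})$, and so is $\LL_{\Z_0}\g$. By Step~1, $\LL_{\Z_0}\g$ vanishes on the nonempty open overlap with $\mathcal U_{\mathrm{loc}}$. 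Analytic continuation gives $\LL_{\Z_0}\g=0$ on each connected component that meets $\mathcal U_{\mathrm{loc}}$.

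To finish, I would shrink $J$, and $\epsilon$ if needed, so that the negative half $\mathcal W\cap\{\g(\T,\T)<0\}\simeq S^1\times(-\epsilon,0)\times J$ is connected. It meets the overlap near the ends $\psi\to\psi_\pm$ only if $J$ reaches there. Since $I$ is an arbitrary compact interval, I would instead argue through the full strip $(\psi_-,\psi_+)$, which is connected and approaches the ends. Concretely, apply the analytic continuation on the larger connected set $\{-\epsilon'<\g(\T,\T)<0\}$ built from $E_0$ over all of $(\psi_-,\psi_+)$ together with its end neighborhoods inside $\mathcal U_{\mathrm{loc}}$, and then restrict to $\mathcal W$.

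\textbf{Main obstacle.} The step I expect to be hardest is justifying analyticity of $\Z_0$ uniformly up to the end regions. Near $\psi_\pm$ the base coordinates degenerate at the zeros of $\Z$ on $S_0$, so the cylinder construction with $\epsilon$ independent of $\psi$ fails there. I would handle this by using the horizon field $\Z$ itself on $\mathcal U_{\mathrm{loc}}$ near the ends, where it is already Killing, and invoking the construction of $\Z_0$ only on compact $\psi$-ranges. That leaves a chain of overlapping connected analytic pieces along which the identity $\Z=\Z_0$, and with it $\LL_{\Z_0}\g=0$, propagates.
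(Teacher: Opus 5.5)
Your outline matches the paper's. The uniqueness clause of Proposition~\ref{canon:reconstruction} gives $\Z=\pm\Z_0$ on each overlap component, \eqref{canon:limit} fixes the sign, and M\"uller zum Hagen's analyticity together with \eqref{canon:period} propagates $\LL_{\Z_0}\g=0$.

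\textbf{The sign step has a gap.} The second assertion ($\Z_0=\Z$ on \emph{all} of $\pi^{-1}(\mathcal W)\cap\mathcal U_{\mathrm{loc}}$) depends on it, and it does not work as written.

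Your approximation $\g(\T,\Z)\approx-\Omega_H\g(\Z,\Z)$ is never quantified. You invoke it near $S_0$ away from the two zeros, but that region contains no overlap points once the width is small: there $\g(\T,\T)\approx\Omega_H^2\g(\Z,\Z)$ is bounded below. Moreover $\mathcal W\Subset Q$, so the last assertion of Lemma~\ref{prep:collar} keeps the overlap a positive distance from $S_0$. ``Components that reach near $S_0$'' is therefore not a usable notion.

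The claim that the overlap is ``connected enough'' is also an invalid inference. Being a union of whole level cylinders over a connected rectangle does not help. The overlap is the preimage of the open set of base points whose cylinders lie in $\mathcal U_{\mathrm{loc}}$. Nothing you say makes that set connected, or places a point where your sign computation applies in each of its components.

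The missing idea is that the sign can be fixed pointwise, with no connectivity argument. On the overlap, $\g(\Z,\Z)=\g(\Z_0,\Z_0)\ge c_0$. Also $\mathbf K=\T+\Omega_H\Z$ is timelike on all of $\mathcal U_{\mathrm{loc}}$: it is timelike on $\Sigma_{\mathrm{loc}}$, and $\g(\mathbf K,\mathbf K)$ is $\T$-invariant. Shrink the width of the rectangle until $|\g(\T,\T)|<\Omega_H^2c_0/2$. Then at every overlap point
\[
 2\Omega_H\g(\T,\Z)
 =\g(\mathbf K,\mathbf K)-\g(\T,\T)-\Omega_H^2\g(\Z,\Z)
 <-\tfrac12\Omega_H^2c_0<0.
\]
Since $\Omega_H<0$, this gives $\g(\T,\Z)>0$, and \eqref{canon:limit} forces $\Z=\Z_0$ on every component. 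Note that the first assertion needs no sign at all: $\Z=\pm\Z_0$ already gives $\LL_{\Z_0}\g=0$ on the overlap.

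\textbf{The obstacle you single out in the analytic continuation is not real.} You rightly distrust a continuation over all of $(\psi_-,\psi_+)$ with uniform width, and it is unnecessary. Take a fixed small level $\{\exp_p(re_1(p)):p\in S_0\}$. On the axis, where $\T=\mathbf K$, we have $\g(\T,\T)<0$. At a fixed point off the axis, $\g(\T,\T)>0$ by continuity from $S_0$. Hence $E_0$ meets $\pi(\mathcal U_{\mathrm{loc}})$ at a point whose twist value lies in the open interval $(\psi_-,\psi_+)$.

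Now enlarge $I$ to a compact interval containing that value, and apply Proposition~\ref{canon:reconstruction} to it. The negative half $S^1\times(-\epsilon,0)\times J$ of the new rectangle is connected. It meets $\pi(\mathcal U_{\mathrm{loc}})$, because $d[\g(\T,\T)]\ne0$ at that point and $\pi(\mathcal U_{\mathrm{loc}})$ is open. A single analytic continuation then gives $\LL_{\Z_0}\g=0$ on the whole negative half.

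Finally restrict back to the original $I$. This does not change $\Z_0$, which is determined intrinsically by the level cylinders and the orientation \eqref{canon:limit}. Carried out correctly, your chain of analytic pieces collapses to this one step, and no analysis at the ends is needed.
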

\begin{proof}
We first enlarge the rectangle enough to meet the initial
domain of $\Z$ where $\T$ is timelike.
On the surface $\{\exp_p(re_1(p)):p\in S_0\}$ of
Lemma~\ref{prep:collar}, with $r>0$ fixed and small,
$\g(\T,\T)$ is negative on the axis, where $\T=\mathbf K$,
and positive at a fixed point off the axis by continuity from $S_0$. Thus $E_0$ meets $\pi(\mathcal U_{\mathrm{loc}})$.
Enlarge $I$ to include the twist value at such a point, and
apply Proposition~\ref{canon:reconstruction} to this larger interval.
Since $d[\g(\T,\T)]\ne0$ there, the negative half of the
rectangle meets this open set as well.

The normalization in Lemma~\ref{prep:collar} gives
$\Omega_H<0$ and the timelike Hawking field
$\mathbf K=\T+\Omega_H\Z$.
Proposition~\ref{canon:reconstruction}, applied to the connected
stationary domain $\mathcal U_{\mathrm{loc}}$, already gives
$\Z=\pm\Z_0$ on every overlap component. In particular,
$\g(\Z,\Z)\ge c_0$. Decrease the width of the rectangle until
$|\g(\T,\T)|<\Omega_H^2c_0/2$. At every overlap point,
\[
 \begin{aligned}
 2\Omega_H\g(\T,\Z)
 &=\g(\mathbf K,\mathbf K)-\g(\T,\T)
                              -\Omega_H^2\g(\Z,\Z)\\
 &<-\g(\T,\T)-\Omega_H^2\g(\Z,\Z)\\
 &\le-\tfrac12\Omega_H^2c_0<0.
 \end{aligned}
\]
Thus $\g(\T,\Z)>0$, and \eqref{canon:limit} fixes the sign:
$\Z=\Z_0$ on the entire overlap.

Stationary vacuum analyticity, due to M\"uller zum Hagen~\cite{MH70}
and reproved by Tod~\cite[Section~3.1]{Tod07}, gives analytic
stationary coordinates where $\g(\T,\T)<0$.
There $\g,\T,\psi$ and the
induced connections on the level cylinders are analytic.
To apply \eqref{canon:period}, fix one generator loop and move
it between nearby levels by the analytic horizontal fields
dual to $d[\g(\T,\T)],d\psi$. Their flows depend analytically
on the two level values, uniformly along the fixed compact loop.
The parallel-transport equation is a linear ODE with the same
parameter dependence. Its period integral therefore satisfies,
in local analytic frames and for some constants $C,A>0$,
\[
 |\partial_x^\alpha\Z_0|\le C A^{|\alpha|}|\alpha|!
\]
for every multi-index $\alpha$ in the two base variables, on smaller
parameter neighborhoods. Parallel transport along
the cylinders gives analyticity in the remaining coordinates.
We have proved that $\LL_{\Z_0}\g$ is analytic on the connected
negative half of the rectangle. Its vanishing on the initial open
overlap therefore gives $\LL_{\Z_0}\g=0$ throughout this half.
We finish by restricting to the original interval $I$.
\end{proof}
\subsection{The local extension theorem}

We now state the local theorem that we will apply at each
extension. Ionescu and Klainerman~\cite[Theorem~2.4]{IK15} show
that a Killing field can be continued across a stationary
hypersurface if the tangent zero-energy null geodesics bend
strictly into the side carrying the field. Their condition is the
stationary refinement of the classical pseudoconvexity criterion
for wave equations, as in
H\"ormander~\cite[Chapter~XXVIII]{Hormander85}. We take the
initial side to be $f<0$, which fixes the sign in the definition
below.

\begin{definition}[$\T$-conditional pseudoconvexity]
\label{def:conditional-pseudoconvexity}
Let $f$ be smooth near $p\in\E$, with
$\T f=0$, $f(p)=0$, and $df(p)\ne0$.
It is $\T$-conditionally pseudoconvex at $p$ if
\begin{equation}\label{local:convex}
 \D^2f(X,X)<0
 \quad\text{for every }X\in T_p\E\setminus\{0\}
 \text{ with }df(X)=\g(X,X)=\g(\T,X)=0.
\end{equation}
\end{definition}

Condition~\eqref{local:convex} is called strict $\T$-null
convexity in~\cite[Definition~2.2]{IK15}. We use the
conditional pseudoconvexity terminology of
Ionescu and Klainerman~\cite[Section~3]{IK09}.

\begin{proposition}[Ionescu--Klainerman]\label{local:extension}
Let $p\in V\subset\E$, with $V$ open, and let
$f\in C^\infty(V)$ satisfy \eqref{local:convex} and
\[
\begin{aligned}
\T f&=0\quad\hbox{on }V,\\
f(p)&=0,\\
\g^{-1}(df,df)(p)&>0,\\
\T(p)&\ne0.
\end{aligned}
\]
For every smooth field $\Z$ on $V\cap\{f<0\}$ satisfying
$\LL_\Z\g=0$ and $[\T,\Z]=0$, there are a neighborhood
$p\in V'\subset V$ and a unique smooth extension
$\widetilde\Z$ near $p$ such that
\[
\begin{aligned}
\LL_{\widetilde\Z}\g&=0,\\
[\T,\widetilde\Z]&=0,\\
\widetilde\Z|_{V'\cap\{f<0\}}&=\Z.
\end{aligned}
\]
On a fixed coordinate ball, with the auxiliary norm fixed, the
radius of $V'$ is independent
of $\Z$ and uniform under uniform coordinate $C^6$ bounds for
$\g,\g^{-1},\T,f$, positive lower bounds for $|\T|_+$ and
$\g^{-1}(df,df)$, and a uniform $A\ge1$ such that, for some
$\mu_0\in\RR$ and every $X\in T_p\E$,
\begin{equation}\label{local:quantitative}
\begin{aligned}
(\mu_0\g-\D^2f)(X,X)
 +A\bigl(|Xf|^2+|\g(\T,X)|^2\bigr)&\ge A^{-1}|X|_+^2,\\
|\mu_0|&\le A.
\end{aligned}
\end{equation}
\end{proposition}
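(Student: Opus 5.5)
The plan is to follow the Ionescu--Klainerman scheme~\cite[Section~2.1]{IK13}: reduce the extension of $\Z$ to unique continuation for a coupled wave--transport system, and supply that continuation with the stationary Carleman estimate~\cite[Proposition~3.3]{IK09}. The quantitative hypothesis \eqref{local:quantitative} is exactly the pseudoconvexity input that estimate needs. First we build a candidate extension. Fix a smooth timelike or transversal direction and choose a smooth hypersurface through $p$ transverse to $\{f=0\}$ on which $\Z$ can be prescribed. We then extend $\Z$ to a full neighborhood by solving the ODE system
\[
\D_{Y}\widetilde\Z_\alpha=\widetilde\omega_{Y\alpha},\qquad
\D_Y\widetilde\omega_{\alpha\beta}=\R_{\alpha\beta\gamma\delta}Y^\gamma\widetilde\Z^\delta
\]
along the integral curves of a fixed stationary vector field $Y$ transverse to $\{f=0\}$ with $Yf>0$. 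The data are taken on a slightly shifted level $\{f=-\varepsilon\}$ inside the known region. Because $[\T,Y]=0$, uniqueness for ODEs gives $[\T,\widetilde\Z]=0$. The Killing identity \eqref{local:killing-connection} shows that $\widetilde\Z=\Z$ on $\{f<0\}$, so agreement on the old side is automatic. Uniqueness of any Killing extension is likewise immediate, since a Killing field is determined by its 1-jet along curves.

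Next we show that $\pi_{\alpha\beta}=\LL_{\widetilde\Z}\g$ vanishes. We follow~\cite[Section~2.1]{IK13} and introduce $W=\LL_{\widetilde\Z}\R$ (suitably modified by $\pi$) and the defect $B=\D\widetilde\Z-\widetilde\omega$. Vacuum and the transport equations give a closed system. It consists of a wave equation $\square W=\mathcal M(\pi,\D\pi,B,\D B,W)$, together with transport equations $\D_Y\pi$, $\D_YB=\mathcal M(W,\pi,B)$, and these have smooth coefficients. All unknowns vanish on $\{f<0\}$ and are $\T$-invariant, because $\LL_\T$ commutes with everything. We then apply the Carleman inequality with weight $e^{-\lambda F}$, where $F=\log(f+\varepsilon\, \text{quadratic correction})$ is built from $f$ and the $\T$-invariance. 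For $\T$-invariant functions the symbol only sees covectors orthogonal to $\T$, so strict convexity on the null directions with $\g(\T,X)=Xf=0$ suffices. Cutting off and absorbing the transport terms, which the Carleman weight controls via integration along $Y$ with the exponential weight, gives $W=\pi=B=0$ on a neighborhood $V'$. Hence $\widetilde\Z$ is Killing there.

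For the uniformity, we track every constant in these steps. The transversal field, the ODE existence interval, the cutoff radius, the Carleman parameter $\lambda$ and the region where the modified weight is pseudoconvex all depend only on the following quantities: $C^6$ bounds on $\g,\g^{-1},\T,f$ (six derivatives cover the curvature, its first derivatives in the wave equation, and two derivatives of the weight); the lower bounds on $|\T|_+$ and $\g^{-1}(df,df)$; and the constants $A,\mu_0$ in \eqref{local:quantitative}. The inequality \eqref{local:quantitative} is the quantitative form of \eqref{local:convex} that makes the Carleman constant explicit, as in~\cite[Definition~3.1]{IK09}. None of these depend on $\Z$.

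The main obstacle is the Carleman step. We must check that the stationary estimate of~\cite{IK09} holds with constants controlled only by $A$ and the $C^6$ norms. We must also handle the fact that $\T$ may be null or spacelike at $p$, where ellipticity is lost. We would first try to verify directly that $\mu_0\g-\D^2 F$ is positive on the $\T$-orthogonal characteristic set after the standard modification $F=f+\varepsilon f^2/\varepsilon'$, reducing to \eqref{local:quantitative} by a compactness-free linear-algebra argument. The detailed verification is deferred to Appendix~\ref{app:local}.
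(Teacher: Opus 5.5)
Your overall route is the paper's: the reduction of \cite[Section~2.1]{IK13}, the conditional Carleman estimate \cite[Proposition~3.3]{IK09}, and a weighted transport estimate. However, the reduction step, which is where the real work lies, does not close as you state it.

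\emph{First, the choice of $Y$.} Killing transport along an arbitrary stationary transverse $Y$ gives only $B(Y,\cdot)=0$, where $B_{\gamma\alpha}=\D_\gamma\widetilde\Z_\alpha-\widetilde\omega_{\gamma\alpha}$. The general identity $\D_\mu\D_\gamma\widetilde\Z_\alpha=\R_{\alpha\gamma\mu\delta}\widetilde\Z^\delta+{}^{(\widetilde\Z)}\Gamma_{\gamma\mu\alpha}$ then gives $\D_YB_{\gamma\alpha}=Y^\mu\,{}^{(\widetilde\Z)}\Gamma_{\gamma\mu\alpha}$. The symmetric part of this identity is the tautology $\tfrac12\D_Y\pi=\tfrac12\D_Y\pi$, so you get no transport equation for $\pi$ at all. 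The antisymmetric part contains $\tfrac12\bigl(\D_\gamma(\pi_{Y\alpha})-\D_\alpha(\pi_{Y\gamma})\bigr)$, which are transverse derivatives of $\pi(Y,\cdot)=B(\cdot,Y)$, so the system loses a derivative. These terms become zeroth order only when $\pi(N,\cdot)=0$, and this holds for a geodesic field $N$. In that case your transport reduces to the Jacobi equation $\D_N\D_N\widetilde\Z=\R(N,\widetilde\Z)N$, and $\D_N(N^\beta\pi_{\alpha\beta})=-N^\beta\pi_{\gamma\beta}\D_\alpha N^\gamma$ with zero data. The paper builds such an $N$ from $\nabla f/|df|^2$ on $\{f=0\}$, with $C^2$ bounds depending only on the $C^6$ data, $[\T,N]=0$, and $Nf\ge\tfrac34$.

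\emph{Second, the claimed system.} Even with $N$ geodesic, your system $\D_YB=\calM(W,\pi,B)$, $\Box_\g W=\calM(\pi,\D\pi,B,\D B,W)$ is false. The wave equation for $\LL_{\widetilde\Z}\R$ contains $\D^\sigma\,{}^{(\widetilde\Z)}\Gamma_{\alpha\sigma\rho}$, i.e.\ second derivatives of $\pi$. Subtracting $\mathbf B\odot\R$ only converts these into $\tfrac12\D^\sigma\mathbf P_{\alpha\rho\sigma}$, where $\mathbf P_{\alpha\beta\gamma}=\D_\alpha\pi_{\beta\gamma}-\D_\beta\pi_{\alpha\gamma}-\D_\gamma\omega_{\alpha\beta}$. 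Likewise $\D_N\mathbf B_{\alpha\beta}=\tfrac12N^\rho\mathbf P_{\rho\beta\alpha}-(\D_\alpha N^\rho)\mathbf B_{\rho\beta}$, which is not algebraic in $(W,\pi,\mathbf B)$.

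\emph{The missing idea} is the third unknown $\mathbf P$ together with the gauge $\mathbf P_{\alpha\beta\gamma}N^\gamma=0$. You obtain it by choosing $\omega$ from $\D_N\omega_{\alpha\beta}=\pi_{\alpha\mu}\D_\beta N^\mu-\pi_{\beta\mu}\D_\alpha N^\mu$ with $\omega=0$ on $\{f<0\}$. If your transport is the $\DD$-transport of \eqref{local:killing-connection} along a geodesic $N$, this gauge is automatic: $d\widetilde\Z^\flat-2\widetilde\omega$ satisfies exactly this equation with zero data, so your $B$ equals $\mathbf B=\tfrac12(\pi+\omega)$.

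The curvature commutator calculation then gives $\D_N\mathbf P_{\alpha\beta\mu}=2N^\nu\mathbf W_{\alpha\beta\mu\nu}+\calM(\mathbf B,\mathbf P)$. So the curvature unknown enters the transport equation for $\mathbf P$, not the one for $\mathbf B$. Since $\Box_\g\mathbf W$ contains $\D\mathbf P$, you must differentiate the transport equations once and take $G=\mathbf W$, $H=(\mathbf B,\mathbf P,\partial\mathbf B,\partial\mathbf P)$. The commutators contribute only curvature times undifferentiated tensors, and $\D_N\D\mathbf P$ involves $\D\mathbf W$ but not $\D^2\mathbf W$. This yields $|\Box_\g G|+|NH|\le C(|G|+|\partial G|+|H|)$ and $|\T G|\le C|G|$. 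That is exactly what the $\lambda^{1/2}$-multiplied wave estimate and the transport estimate can absorb together. A transport equation with $\partial H$ on the right, as in your version, cannot be absorbed.

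By contrast, the Carleman step is not the obstacle. \cite[Proposition~3.3]{IK09} applies as stated, with weight $\log(\varepsilon+f+\varepsilon^{12}|x|^2)$ on $\{|x|<\varepsilon^{10}\}$, once you read off its conditions from \eqref{local:quantitative}, the conormal lower bound, and derivative bounds on $f$.
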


For strong pseudoconvexity without the constraint
$\g(\T,X)=0$, Ionescu and
Klainerman~\cite[Theorem~1.2 and Section~2.2]{IK13} obtain an extension neighborhood
independent of the size of the Killing field. We need this
uniformity for the stationary conditional estimate, with the
coordinate bounds specified above. In Appendix~\ref{app:local}
we verify that specialization using their curvature and transport
system and the stationary Carleman estimate of Ionescu and
Klainerman~\cite[Proposition~3.3]{IK09}.

At the ergosurface, we can check the geometric condition using
one scalar inequality: every null vector orthogonal to $\T$ is
proportional to $\T$. The next lemma gives this reduction,
together with the quantitative bound needed to apply the theorem.

\begin{lemma}\label{local:null-stationary}
Let $\g$ be a smooth Lorentz metric in dimension four and
$\T$ a Killing field near $p$, with
$\T(p)\ne0$ and $\g(\T,\T)(p)=0$.
For a smooth function $f$ with $\T f=0$ and $f(p)=0$,
the condition \eqref{local:convex} is equivalent to
\[
 \g^{-1}(df,d[\g(\T,\T)])(p)<0.
\]
This inequality also implies $\g^{-1}(df,df)(p)>0$.
On a fixed coordinate ball, the bounds
\[
 \begin{gathered}
 \|\g\|_{C^2}+\|\g^{-1}\|_{C^0}
       +\|\T\|_{C^1}+\|f\|_{C^2}\le C,\\ |\T(p)|_+\ge c>0,\\
 -\g^{-1}(df,d[\g(\T,\T)])(p)\ge c
 \end{gathered}
\]
imply \eqref{local:quantitative} for some $A=A(c,C)\ge1$
and $\mu_0\in\RR$ with $|\mu_0|\le A$.
The auxiliary norm and coordinate ball are fixed in this assertion.
\end{lemma}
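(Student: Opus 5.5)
The proof is pointwise linear algebra at $p$, followed by a compactness argument for the quantitative statement.

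First, the null vectors orthogonal to $\T$. Since $\T(p)$ is null and nonzero, $\T(p)^\perp$ is a degenerate hyperplane with radical spanned by $\T$. Every other direction in it is spacelike. Hence $X\ne0$ with $\g(X,X)=\g(\T,X)=0$ forces $X=\lambda\T(p)$ with $\lambda\ne0$. Since $\T f=0$, the constraint $df(X)=0$ holds automatically, and \eqref{local:convex} reduces to the single inequality $\D^2f(\T,\T)(p)<0$.

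Next we compute this Hessian. Differentiating $\T f=0$ along $\T$ gives
\[
 0=\T(\T f)=\D^2f(\T,\T)+df(\D_\T\T).
\]
The Killing equation gives $\g(\D_\T\T,Y)=-\g(\D_Y\T,\T)=-\tfrac12 Y[\g(\T,\T)]$, that is, $\D_\T\T=-\tfrac12\operatorname{grad}\g(\T,\T)$. Therefore
\[
 \D^2f(\T,\T)=\tfrac12\g^{-1}(df,d[\g(\T,\T)]),
\]
and this identity gives the claimed equivalence.

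For $\g^{-1}(df,df)(p)>0$, note that $df(\T)=0$, so $\operatorname{grad}f\in\T^\perp$. On this hyperplane the induced form is positive semidefinite, and it vanishes only on multiples of $\T$. Suppose $\operatorname{grad}f=\lambda\T$. Then $\g^{-1}(df,d[\g(\T,\T)])=\lambda\,\T[\g(\T,\T)]=0$, because $\T$ preserves $\g(\T,\T)$. This contradicts the strict inequality. Hence $\operatorname{grad}f$ is not a multiple of $\T$, and so it is spacelike.

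For the quantitative version \eqref{local:quantitative}, I take $\mu_0=0$ and argue by a Schur-complement or compactness estimate on the quadratic form $Q(X)=-\D^2f(X,X)$ restricted to the kernel $N=\{Xf=\g(\T,X)=0\}$. This is the main step. It is cleaner to add a multiple of $\g$ by choosing $\mu_0$ large, but the sign must be handled. I would write $X=a\T+Y$ with $Y$ in a complement of $\T$ inside $\T^\perp$, chosen $\g_+$-orthogonal to $\T$. On $\T^\perp$ we have $\g(X,X)=\g(Y,Y)\ge c|Y|_+^2$, with $c=c(C,c)$ by the nondegeneracy bounds. Then, using $\mu_0\ge 1$ large,
\[
 (\mu_0\g-\D^2f)(X,X)\ge \tfrac12 c' a^2|\T|_+^2+\mu_0 c|Y|_+^2-C|a||Y|_+-C|Y|_+^2,
\]
where $c'$ comes from $-\D^2f(\T,\T)=-\tfrac12\g^{-1}(df,d\g(\T,\T))\ge c/2$. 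Choosing $\mu_0$ large absorbs the cross term and gives a bound $\ge A^{-1}|X|_+^2$ on $\T^\perp$.

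For general $X$, I add the component transverse to $\T^\perp$, which is controlled by $|\g(\T,X)|$ with constants from $|\T(p)|_+\ge c$. The penalty $A|\g(\T,X)|^2$ with $A$ large absorbs all the remaining cross terms, since $|\D^2f|,|\g|\le C$. The term $|Xf|^2$ is not needed. All constants depend only on $c$ and $C$, because the Hessian involves at most first derivatives of $\g$ and second derivatives of $f$, and $\D_\T\T$ involves $\|\T\|_{C^1}$.
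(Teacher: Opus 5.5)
Your proof is correct. The reduction of \eqref{local:convex} to $\D^2f(\T,\T)(p)<0$ and the identity $\D^2f(\T,\T)=\tfrac12\g^{-1}(df,d[\g(\T,\T)])$ are exactly as in the paper. Two steps differ.

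For the spacelike conormal you argue by contradiction: $\operatorname{grad}f=\lambda\T$ would make the pairing vanish, since $\T[\g(\T,\T)]=0$. The paper instead applies Cauchy--Schwarz to the semidefinite form $\g|_{\T^\perp}$. That also yields the quantitative bound $\g^{-1}(df,df)(p)\ge c_1(c,C)$. This bound is not part of the statement, but it is the conormal lower bound that enters the uniformity in Proposition~\ref{local:extension}.

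For \eqref{local:quantitative}, the paper does not estimate directly. It places the data $(\g,\T,df,\D^2f)$ at $p$ in a compact family on which strict $\T$-conditional pseudoconvexity persists. It then applies the finite-dimensional lemma \cite[Lemma~2.17]{IK15} at each datum and takes a finite subcover. Your decomposition $X=a\T+Y+bW$ works differently: a large $\mu_0$ absorbs the cross term on $\T^\perp$, and the penalty $A|\g(\T,X)|^2$ absorbs the transverse component. This is more elementary and self-contained, and it gives explicit constants. It also shows that at a null point of $\T$ the $|Xf|^2$ penalty is superfluous. The only compactness you need is the uniform positivity of $\g$ on a $|\cdot|_+$-orthogonal complement of $\T$ in $\T^\perp$. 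The paper's route is shorter given the citation and does not depend on the special null structure of $\T^\perp$.

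One correction to the write-up: you first announce $\mu_0=0$, which fails in general. Replacing $f$ by $f+\phi^2$, with $\phi$ stationary and $\phi(p)=0$, leaves $df(p)$ and $\D^2f(\T,\T)(p)$ unchanged. It can, however, make $\D^2f$ positive on the spacelike direction of $\ker df\cap\T^\perp$. So commit to $\mu_0$ large, which is permitted since only $|\mu_0|\le A$ is required.
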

\begin{proof}
We evaluate the Hessian in the stationary direction. The Killing
equation and $\T f=0$ give
\[
 \begin{aligned}
 (\D_\T\T)_\alpha
 &=\T^\beta\D_\beta\T_\alpha
  =-\T^\beta\D_\alpha\T_\beta
  =-\tfrac12\D_\alpha[\g(\T,\T)],\\
 \D^2f(\T,\T)
 &=\T(\T f)-(\D_\T\T)f
  =\tfrac12\g^{-1}(df,d[\g(\T,\T)]).
 \end{aligned}
\]
At $p$, the restriction of $\g$ to $\T^\perp$ is positive
semidefinite with kernel $\RR\T$.
Thus a nonzero null vector orthogonal to $\T$ is a nonzero
multiple of $\T$. Since $df(\T)=0$, this proves the equivalence.

Both metric gradients $(df)^\sharp$ and $(d[\g(\T,\T)])^\sharp$
lie in $\T^\perp$.
Cauchy--Schwarz in $\T^\perp/\RR\T$ gives
\[
 \bigl|\g^{-1}(df,d[\g(\T,\T)])\bigr|^2
 \le\g^{-1}(df,df)
       \g^{-1}(d[\g(\T,\T)],d[\g(\T,\T)]).
\]
The strict inequality in the statement therefore makes both
factors positive. Under its quantitative assumptions,
\[
\begin{aligned}
\g^{-1}(df,df)(p)&\ge c_1>0,\\
-\D^2f(\T,\T)(p)&\ge c/2,
\end{aligned}
\]
where $c_1$ depends only on $c,C$.

We obtain uniform constants by applying the finite-dimensional
lemma of Ionescu and Klainerman~\cite[Lemma~2.17]{IK15} to a
compact family of data $(\g,\T,df,\D^2f)$ at $p$. The bounds above place
these data in a compact set: the metric remains nondegenerate
and Lorentzian, $\T$ remains nonzero and null, $df(\T)=0$, and
\[
 \g^{-1}(df,df)\ge c_1,
 \qquad -\D^2f(\T,\T)\ge c/2.
\]
Every limiting datum therefore satisfies the same strict
$\T$-conditional pseudoconvexity condition. For each datum their lemma supplies
$\mu_0$ and a positive definite quadratic form in
\eqref{local:quantitative}. Its positivity persists on a
neighborhood of that datum. A finite subcover gives a common
upper bound for the coefficients and a common positive lower
bound for the quadratic forms. Increasing $A$ preserves the
inequality and gives $A=A(c,C)$ and $|\mu_0|\le A$ as asserted.
\end{proof}

By Lemma~\ref{canon:regularity}, the ergosurface satisfies
$|d[\g(\T,\T)]|_\g^2>0$. Applying Lemma~\ref{local:null-stationary}
with $f=-\g(\T,\T)$ gives
\begin{equation}\label{local:ergosurface-convexity}
 \D^2[-\g(\T,\T)](\T,\T)
 =-\frac12|d[\g(\T,\T)]|_\g^2<0.
\end{equation}
Consequently, once a Killing field is defined on the local
$\g(\T,\T)>0$ side, we can extend it across the ergosurface.
To examine the extension condition inside the ergoregion, we pass
to the stationary quotient using the projection formalism of
Geroch~\cite{Geroch71}. A timelike surface in this quotient has
two null tangent directions, and the calculations below show that
these are exactly the directions tested by \eqref{local:convex}.

\begin{definition}[Metrics on the space of stationary orbits]\label{def:stationary-metrics}
On $\pi(\{\g(\T,\T)\ne0\})\subset Q$, define
\[
 \pi^{\ast}\bar h
 =\g-\frac{\T^\flat\otimes\T^\flat}{\g(\T,\T)}.
\]
The differential $d\pi$ identifies $\bar h$ with $\g|_{\T^\perp}$.
Thus $\bar h$ is Riemannian where $\g(\T,\T)<0$ and Lorentzian
where $\g(\T,\T)>0$. On the latter region, set
$h=\g(\T,\T)\bar h$, so that
\[
\pi^{\ast}h=\g(\T,\T)\g-\T^\flat\otimes\T^\flat.
\]
The Lorentzian metrics $h,\bar h$ have the same unparametrized
null geodesics.
\end{definition}

On $\{\g(\T,\T)>0\}$, horizontal means orthogonal to $\T$.
Each field on $Q$ has a unique stationary horizontal lift.
We use $\nabla$ and $\nabla f$ for the connection and gradient
of $h$, and indicate other connections by a metric superscript.
For a spacelike covector, $|df|_h=\sqrt{h^{-1}(df,df)}>0$.
We will use $h$ both for the extension condition and for
nontrapping, so we verify the Hessian and geodesic identities
relating it to the spacetime metric. For stationary horizontal
fields $X,Y,V$ on $\E$, the Koszul formula gives
\[
 \begin{aligned}
 2\g(\D_XY,V)
 &=X\g(Y,V)+Y\g(X,V)-V\g(X,Y)\\
 &\quad-\g(X,[Y,V])+\g(Y,[V,X])+\g(V,[X,Y])\\
 &=2\bar h(\nabla^{\bar h}_{d\pi X}(d\pi Y),d\pi V).
 \end{aligned}
\]
The last equality uses $\g|_{\T^\perp}=\pi^{\ast}\bar h$ and
$d\pi[X,Y]=[d\pi X,d\pi Y]$; vertical bracket components
are orthogonal to $X,Y,V$. Thus
\[
 d\pi(\D_XY)=\nabla^{\bar h}_{d\pi X}(d\pi Y),
\]
while its stationary component is
\[
 \g(\D_XY,\T)=-\g(Y,\D_X\T).
\]
A stationary function annihilates this vertical component.
We are therefore left with the conformal change from $\bar h$
to $h$, whose correction terms will vanish on null vectors
tangent to a level of $f$. For fields $X,Y,V$ on $Q$, since $\nabla h=0$ and $\bar h=\g(\T,\T)^{-1}h$,
\[
 \begin{aligned}
 (\nabla_X\bar h)(Y,V)
 &=-\frac{X[\g(\T,\T)]}{\g(\T,\T)^2}h(Y,V),\\
 2\bar h(\nabla^{\bar h}_XY-\nabla_XY,V)
 &=(\nabla_X\bar h)(Y,V)+(\nabla_Y\bar h)(X,V)
                              -(\nabla_V\bar h)(X,Y)\\
 &=-\frac{X[\g(\T,\T)]\bar h(Y,V)
              +Y[\g(\T,\T)]\bar h(X,V)}{\g(\T,\T)}
    +\frac{V[\g(\T,\T)]\bar h(X,Y)}{\g(\T,\T)}.
 \end{aligned}
\]
Raising the last slot with $\bar h^{-1}=\g(\T,\T)h^{-1}$ gives
\begin{equation}\label{local:conformal-connection}
 \nabla^{\bar h}_XY=\nabla_XY
 -\frac{X[\g(\T,\T)]Y+Y[\g(\T,\T)]X
                  -h(X,Y)\nabla[\g(\T,\T)]}{2\g(\T,\T)}.
\end{equation}
Consequently, for a stationary function $f$ and stationary horizontal
fields $X,Y$ on $\E$,
\[
 \begin{aligned}
 \D^2f(X,Y)
 &=X(Yf)-(\D_XY)f
  =\Hess_{\bar h}f(d\pi X,d\pi Y)\\
 &=\Hess_hf(d\pi X,d\pi Y)
   +\frac{X[\g(\T,\T)]Yf+Y[\g(\T,\T)]Xf}{2\g(\T,\T)}\\
 &\quad-\frac{h(d\pi X,d\pi Y)}{2\g(\T,\T)}
             h^{-1}(d[\g(\T,\T)],df).
 \end{aligned}
\]
For a null tangent $X$ to $f=0$, both correction terms vanish:
\[
 Xf=\g(X,X)=0\quad\Longrightarrow\quad
 \D^2f(X,X)=\Hess_hf(d\pi X,d\pi X).
\]
On the timelike surface $f=0$ in $Q$, take $n=\nabla f/|df|_h$.
For a tangent vector $X$, use $Xf=|df|_h h(n,X)=0$ to obtain
\[
 \Hess_hf(X,X)
 =h(\nabla_X(|df|_h n),X) =X(|df|_h)h(n,X)+|df|_h h(\nabla_Xn,X)
  =|df|_h h(\nabla_Xn,X).
\]
We can therefore test \eqref{local:convex} on the two null
tangents of the quotient surface. If its spacetime inverse image is invariant under $\Z$
and $\operatorname{span}\{\T,\Z\}$ is timelike, its nonzero conormal lies in the spacelike plane
$(\operatorname{span}\{\T,\Z\})^\perp$, so it is spacelike.

This quotient description also preserves the geodesics in NT.
To see this, take a stationary horizontal field $X$ on $\E$. Then
\begin{align*}
\g(X,\D_X\T)
 &=X^\alpha X^\beta\D_\alpha\T_\beta=0,\\
\shortintertext{and hence}
\g(\D_XX,\T)&=0.
\end{align*}
Together with the projection formula, this gives
\[
 d\pi(\D_XX)=\nabla^{\bar h}_{d\pi X}(d\pi X).
\]
If $\g(X,X)=0$, then \eqref{local:conformal-connection} gives
\begin{equation}\label{local:null-lift}
\begin{aligned}
 d\pi(\D_XX)
 &=\nabla_{d\pi X}(d\pi X)-X(\log\g(\T,\T))\,d\pi X,\\
 \g(\D_XX,\T)&=0.
\end{aligned}
\end{equation}
The difference is tangential and can be absorbed by changing
the parameter. We have thus identified the unparametrized
$h$-null geodesics with the projections of spacetime
null geodesics satisfying $\g(\T,\dot\gamma)=0$. This allows
us to apply NT to the quotient geodesics used below.

\subsection{Compatibility of the extensions}\label{subsec:compatibility}

The local theorem gives an extension near each boundary point.
To assemble these extensions and retain the period of the initial
rotation, we use the transport equations for a Killing field and its
first covariant derivative. Alexakis, Ionescu, and
Klainerman~\cite[Section~3]{AIK14} use the corresponding
transport system to estimate an approximate Killing field.
Foote, Han, and Oh~\cite[Sections~3--4]{FHO13} formulate the
associated connection and attribute the transport identities to
Kostant~\cite{Kostant55}. We use the exact identities below to
glue the local fields and then extend their period by continuity
of the local flow.
Uniqueness for this system says that a Killing field is determined
by its value and first covariant derivative at one point.
For a Killing field $\Z$, its covariant derivative
$A=\D(\Z^\flat)$ is a two-form, with
$A_{\alpha\beta}=\D_\alpha\Z_\beta$.
Set $\mathcal J=(\Z^\flat,A)$. The Killing identity gives
\begin{equation}\label{local:killing-connection}
\begin{aligned}
\D_\alpha\Z_\beta&=A_{\alpha\beta},\\
\D_\gamma A_{\alpha\beta}
       &=\R_{\beta\alpha\gamma\delta}\Z^\delta.
\end{aligned}
\end{equation}
Thus $\mathcal J$ is parallel for the connection $\DD$ on
$T^{\ast}\M\oplus\Lambda^2T^{\ast}\M$ defined by
\[
 \DD_\gamma(\xi,A)=
 \bigl(\D_\gamma\xi_\beta-A_{\gamma\beta},\,
       \D_\gamma A_{\alpha\beta}
         -\R_{\beta\alpha\gamma\delta}\xi^\delta\bigr),
\]
where $\xi^\delta=\g^{\delta\epsilon}\xi_\epsilon$.
Conversely, if $(\xi,A)$ is a parallel section on an open set,
then $A=\D\xi$ and
\[
 (\LL_{\xi^\sharp}\g)_{\alpha\beta}
 =\D_\alpha\xi_\beta+\D_\beta\xi_\alpha
 =A_{\alpha\beta}+A_{\beta\alpha}=0.
\]
Hence $\xi^\sharp$ is Killing. In a fixed bundle frame write
$\DD_{\partial_\alpha}=\partial_\alpha+\Gamma^\DD_\alpha$.
Use the component norm in this frame. Along a coordinate path
$\gamma$, two parallel sections $\mathcal J_1,\mathcal J_2$ satisfy,
for $s\ge0$,
\[
 |\mathcal J_1-\mathcal J_2|(\gamma(s))
 \le \exp\!\left(\int_0^s
          |\dot\gamma^\alpha(v)\Gamma^\DD_\alpha(\gamma(v))|\,dv\right)
             |\mathcal J_1-\mathcal J_2|(\gamma(0)).
\]
In particular, agreement at one point gives agreement on a
connected common domain. We also use uniqueness of isometries:
if $F(p)=p$ and $dF_p=\operatorname{Id}$, then
\[
 F(\exp_pv)=\exp_{F(p)}(dF_pv)=\exp_pv.
\]
The set $\{p:F(p)=p,\ dF_p=\operatorname{Id}\}$ is therefore open and closed
in the domain of $F$.

We will also use these equations to obtain boundary limits of
$\Z$ and $\D\Z$. Fixed coordinate segments give estimates
depending on the connection and the initial values of $\mathcal J$,
even when the boundary is only a graph.
Consider a relatively compact
cylinder $(s_-,s_+)\times B$, with $B\subset\RR^3$, and fix
$s_-<s_0<s_+$ and $u:B\to(s_0,s_+)$. Suppose that $\Z$ is
Killing on the open set $\{(s,y):s<u(y)\}$. Extend $\mathcal J$ by solving
\[
 \begin{aligned}
 \DD_{\partial_s}\mathcal J^0&=0,\\
 \mathcal J^0(s_0,y)&=\mathcal J(s_0,y).
 \end{aligned}
\]
For a tangential multi-index $I$, differentiation gives
\[
 (\partial_s+\Gamma^\DD_s)\partial_y^I\mathcal J^0
 =-\sum_{\substack{J\le I\\J\ne0}}\binom{I}{J}
       (\partial_y^J\Gamma^\DD_s)
                      \partial_y^{I-J}\mathcal J^0.
\]
Here $J\le I$ is componentwise and $J\ne0$. Integrating from
$s_0$ and applying Gronwall gives, for each
$I$ and each fixed $y$ in a smaller base,
\[
 \sup_s|\partial_y^I\mathcal J^0(s,y)|
 \le C_m\left(
       |\partial_y^I\mathcal J(s_0,y)|
       +\sum_{|J|<|I|}
                   \sup_s|\partial_y^J\mathcal J^0(s,y)|\right),
       \qquad |I|\le m.
\]
Induction on $|I|$, and then the original transport equation
for each $s$ derivative, yield
\[
 \|\mathcal J^0\|_{C^m}
 \le C_m\|\mathcal J(s_0,\cdot)\|_{C^m}.
\]
The initial norm uses the same smaller base, and the constants
depend only on the fixed cylinder and the connection coefficients.
In particular, the displayed estimates are uniform as the graph
$u$ varies between $s_0$ and $s_+$.
Every segment from $(s_0,y)$ to a point with $s<u(y)$ remains
in $\{s<u(y)\}$. ODE uniqueness therefore gives
$\mathcal J^0=\mathcal J$ on this set, including all derivative
limits at its boundary.

Before gluing the local extensions, we record why completeness
also preserves their overlap. We will use this fact whenever a new
invariant neighborhood is added to the domain of the rotation.

\begin{lemma}\label{local:complete-flows}
Let $U_1,U_2$ be open subsets of a smooth manifold, and let $X_i$
be a smooth complete vector field on $U_i$, $i=1,2$. If
$X_1=X_2$ on $U_1\cap U_2$, then, for every $t\in\RR$,
\[
 \begin{aligned}
 \Phi_t^{X_1}|_{U_1\cap U_2}&=\Phi_t^{X_2}|_{U_1\cap U_2},\\
 \Phi_t^{X_1}(U_1\cap U_2)&=U_1\cap U_2.
 \end{aligned}
\]
The field $X$ defined by $X|_{U_i}=X_i$ is complete on
$U_1\cup U_2$. If $P>0$ and
$\Phi_P^{X_i}=\operatorname{Id}_{U_i}$ for $i=1,2$, then
$\Phi_P^X=\operatorname{Id}_{U_1\cup U_2}$.
\end{lemma}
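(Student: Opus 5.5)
The plan is to prove the first assertion by an open–closed argument in the flow parameter at each point, then derive the remaining assertions from it. Fix $p\in U_1\cap U_2$ and let $c_i(t)=\Phi_t^{X_i}(p)$, which by completeness are defined for all $t\in\RR$ and stay in $U_i$. Put
\[
 \mathcal S=\{t\in\RR:\ c_1(s)=c_2(s)\ \text{for all $s$ between $0$ and $t$}\}.
\]
Then $0\in\mathcal S$, and $\mathcal S$ is an interval. It is closed: if $t_k\to t$ with $t_k\in\mathcal S$, continuity gives $c_1(t)=c_2(t)$, and the same holds for every $s$ between $0$ and $t$. It is open: if $t\in\mathcal S$, the common point $q=c_1(t)=c_2(t)$ lies in $U_1\cap U_2$, which is open, and $X_1=X_2$ there. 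Hence both curves solve the same ODE near $q$ with the same initial value, and local uniqueness of integral curves gives agreement on a neighborhood of $t$. Since $\RR$ is connected, $\mathcal S=\RR$. Thus $c_1(t)=c_2(t)\in U_1\cap U_2$ for all $t$, which gives both $\Phi_t^{X_1}=\Phi_t^{X_2}$ on $U_1\cap U_2$ and $\Phi_t^{X_1}(U_1\cap U_2)\subset U_1\cap U_2$. Applying this with $-t$ gives equality of the image with $U_1\cap U_2$.

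Next, $X$ is well defined and smooth on $U_1\cup U_2$ because it agrees on the overlap. For completeness, take $p\in U_i$. The curve $\Phi_t^{X_i}(p)$ is an integral curve of $X$ defined for all $t$, so it is the maximal integral curve of $X$ through $p$ by uniqueness. Hence $\Phi_t^X$ is defined on all of $U_1\cup U_2$ for every $t$, with
\[
 \Phi_t^X|_{U_i}=\Phi_t^{X_i}.
\]
The flow preserves $U_1\cup U_2$ because each $U_i$ is invariant under its own flow.

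The period statement then follows immediately: if $\Phi_P^{X_i}=\operatorname{Id}_{U_i}$ for $i=1,2$, then $\Phi_P^X$ restricted to each $U_i$ equals $\Phi_P^{X_i}$, which is the identity there, so $\Phi_P^X=\operatorname{Id}_{U_1\cup U_2}$.

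The only step needing care is openness of $\mathcal S$. It is exactly where openness of $U_1\cap U_2$ is used: this keeps the common trajectory inside the set where the fields agree, so the two curves cannot branch apart. Everything else is standard uniqueness of maximal integral curves.
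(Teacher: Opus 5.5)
Your proof is correct and follows essentially the paper's argument. The paper shows that the maximal interval on which the two integral curves agree has no finite endpoint $b$: continuity gives $\Phi_b^{X_1}(p)=\Phi_b^{X_2}(p)\in U_1\cap U_2$, and local uniqueness at that point extends the agreement past $b$. This is your closedness and openness of $\mathcal S$ in another form, and the invariance, completeness and period assertions are then deduced exactly as you do.
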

\begin{proof}
For $p\in U_1\cap U_2$, uniqueness gives agreement of the two
integral curves near $t=0$. If their maximal common interval had a
finite upper endpoint $b$, completeness and continuity would give
\[
 \Phi_b^{X_1}(p)
 =\lim_{t\uparrow b}\Phi_t^{X_1}(p)
 =\lim_{t\uparrow b}\Phi_t^{X_2}(p)
 =\Phi_b^{X_2}(p)\in U_1\cap U_2.
\]
Local uniqueness at this point extends their agreement past $b$.
The lower endpoint is treated in the same way. The common flow
therefore preserves the intersection, with equality obtained by
applying the inverse flow. Each point of the union already has its
complete orbit in one $U_i$, so these orbits define the complete
flow of $X$. Its period follows by restriction to $U_1$ and $U_2$.
\end{proof}

\begin{lemma}\label{local:compatibility}
Let $U\subset\E$ be stationary and open, with a smooth field
$\Z$ satisfying $\LL_\Z\g=0$ and $[\T,\Z]=0$ on $U$.
Let $C\subset\partial U\cap\{\vartheta=0\}$ be compact,
where $\vartheta$ is the stationary temporal function of
Proposition~\ref{prep:quotient}. Suppose every $p\in C$ has
an open neighborhood $V_p$ carrying a Killing field $\Z_p$ with
$[\T,\Z_p]=0$ and $\Z_p=\Z$ on $V_p\cap U$.
Then the fields $\Z_p$ glue, after restriction, to a stationary
Killing field $\widetilde\Z$ on a stationary neighborhood $V$
of $C$, with $[\T,\widetilde\Z]=0$ and
$\widetilde\Z=\Z$ on $V\cap U$.
If $\Z$ has complete period-$2\pi$ flow on $U$ and the projected
local flow of $\widetilde\Z$ preserves $\pi(C)$, then $V$ can
be chosen invariant, with complete flow and
$\Phi_{2\pi}^{\widetilde\Z}=\operatorname{Id}_V$.
\end{lemma}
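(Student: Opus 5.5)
We first glue the local fields on the section $\{\vartheta=0\}$ and then spread the result along $\T$. For each $p\in C$, shrink $V_p$ to a small convex ball for a fixed auxiliary metric, so that $V_p\cap U$ is nonempty and every component of $V_p\cap V_{p'}$ meets $U$. Compactness of $C$ gives a finite subcover $V_{p_1},\dots,V_{p_N}$.

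On $V_{p_i}\cap V_{p_j}$, the sections $\mathcal J_i=(\Z_{p_i}^\flat,\D\Z_{p_i}^\flat)$ and $\mathcal J_j$ are both $\DD$-parallel by \eqref{local:killing-connection}. They agree on $V_{p_i}\cap V_{p_j}\cap U$, since both equal $\Z$ there. By the Gronwall estimate for parallel sections, they then agree on every component of the overlap that meets $U$.

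Choosing the balls requires care: $C\subset\partial U$ and $U$ need not be locally connected near its boundary. We use a Lebesgue-number argument instead. We pick a smaller neighborhood $V'$ of $C$ such that any two points of $V'$ lying in a common $V_{p_i}\cap V_{p_j}$ lie in a connected subset of that overlap which meets $U$. Concretely, we take $V'$ to be the points within $\delta/3$ of $C$, where $\delta$ is a Lebesgue number, and we use small balls centered at points of $C$; those balls meet $U$ because their centers lie in $\partial U$. This yields a well-defined Killing field on $V'$ agreeing with $\Z$ on $V'\cap U$. We expect this step to be the main technical obstacle.

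To make the domain stationary, we set $V=\{\Phi_t^\T q: q\in V',\ t\in\RR\}$ and push forward by $d\Phi_t^\T$. The field is single valued on $V$ because of the following. Where two translates overlap, both pushed-forward fields are Killing and commute with $\T$ locally, since the local $\Z_p$ do. They agree with $\Z$ on $U$, which is stationary, and the same parallel-section argument extends this agreement. Here we use the product structure $(\vartheta,\pi)$ of Proposition~\ref{prep:quotient}, replacing $V'$ by $\Phi^\T_{(-1,1)}(V'\cap\{\vartheta=0\})$ so that $V\simeq\RR\times\pi(V')$. The result is $[\T,\widetilde\Z]=0$ and $\widetilde\Z=\Z$ on $V\cap U$.

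For the period statement, we work on the quotient. The projected local flow of $\widetilde\Z$ preserves the compact set $\pi(C)$, so on $\pi(C)$ it is defined for all time. By continuity of the local flow, a time-$2\pi$ flow is defined on a neighborhood of $\pi(C)$. We set
\[
V_0=\bigcap_{s\in[0,2\pi]}\Phi_s^{\widetilde\Z}(\pi(V)),
\]
which is open because $[0,2\pi]$ is compact, and we take its orbit saturation under $\Phi_s$, $s\in[0,2\pi]$. On the nonempty set $V\cap U$, completeness and Lemma~\ref{local:complete-flows} give $\Phi_{2\pi}^{\widetilde\Z}=\operatorname{Id}$ with identity differential, since the flow there equals that of $\Z$.

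By the uniqueness-of-isometries remark, the set where $\Phi_{2\pi}$ fixes points with identity differential is open and closed in each component of the domain. Each component of a small saturated neighborhood of $C$ meets $U$, because $C\subset\partial U$. Hence $\Phi_{2\pi}^{\widetilde\Z}=\operatorname{Id}$ on that neighborhood. Periodicity then makes the saturated set invariant and the flow complete. Pulling back by $\pi$ gives the stationary invariant $V$.
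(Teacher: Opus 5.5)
Your proposal is correct and follows the same structure as the paper's proof:
\begin{itemize}
\item glue finitely many local extensions using uniqueness for the transport system \eqref{local:killing-connection};
\item make the glued field stationary through the product structure $(\vartheta,\pi)$ and $[\T,\Z_p]=0$;
\item obtain a time-$2\pi$ flow near the compact invariant set $\pi(C)$;
\item identify $\Phi_{2\pi}^{\widetilde\Z}$ with the identity through the old flow on $U$ and isometry uniqueness, then saturate.
\end{itemize}
The differences lie only in the local devices. Your Lebesgue-number balls, whose relevant overlaps contain a point of $C\subset\partial U$ and hence meet $U$, replace the paper's removal of the compact sets where two local sections disagree. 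Your open--closed argument on components meeting $U$ replaces the paper's $C^1$ limit of $\Phi_{2\pi}^{\Z}$ at points of $C$. At that step you need the direct ODE-uniqueness comparison rather than Lemma~\ref{local:complete-flows}, since $\widetilde\Z$ is not yet known to be complete, and you must prove the identity on the neighborhood before saturating by the flow.
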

\begin{proof}
We first glue the local fields near $C$, then use the compact
period interval to obtain an invariant neighborhood. Choose
finitely many local extensions $\Z_i$ covering $C$, and set
$\mathcal J_i=(\Z_i^\flat,\D(\Z_i^\flat))$, regarded as sections
of $T^{\ast}\M\oplus\Lambda^2T^{\ast}\M$ with the connection
\eqref{local:killing-connection}. At a point $p\in C$
in their common domain, choose $p_j\in U$ tending to $p$ there.
Then
\[
 \mathcal J_i(p)-\mathcal J_k(p)
 =\lim_{j\to\infty}
      (\mathcal J_i(p_j)-\mathcal J_k(p_j))=0.
\]
Killing transport gives equality on every overlap component
meeting $C$. To exclude the other components, use the stationary
product coordinates $(\vartheta,x)$ and restrict the local
domains to $(-\epsilon_i,\epsilon_i)\times V_i$. Choose
$V_i'\Subset V_i$ covering $\pi(C)$. On $V_i\cap V_k$, the
set where $\mathcal J_i(0,x)=\mathcal J_k(0,x)$ is both open and closed.
Hence
\[
 \{x\in\overline{V_i'}\cap\overline{V_k'}:
             \mathcal J_i(0,x)\ne\mathcal J_k(0,x)\}
\]
is compact and disjoint from $\pi(C)$. Remove these finitely
many compact sets from $\bigcup_iV_i'$ and restrict to a
neighborhood $V_0$ of $\pi(C)$. The local fields agree there.
Their components and the metric coefficients are independent of
$\vartheta$, so they define one Killing field on $\RR\times V_0$.
For $x\in V_0\cap\pi(U)$, we have
$(\widetilde\Z,\D\widetilde\Z)=(\Z,\D\Z)$ at $(0,x)$. Killing transport along $\RR\times\{x\}\subset U$
proves equality on the entire old overlap.

We next obtain a common flow interval before taking a limit of
the period identity. Choose a relatively compact neighborhood
of $\pi(C)$ inside $V_0$ and extend a cut-off projection of
$\widetilde\Z$ to a compactly supported field on $Q$.
Its complete flow is denoted by $\widetilde\Phi_s$.
The compact invariant set $\pi(C)$ stays where the cutoff is
one. Smooth dependence on initial data therefore gives a
neighborhood $O_0$ of $\pi(C)$ whose trajectories for
$|s|\le2\pi$ stay in that region. The spacetime lift is
\[
 \Phi_s^{\widetilde\Z}(\vartheta,x)
 =\left(\vartheta+
       \int_0^s(\widetilde\Z\vartheta)(\widetilde\Phi_vx)\,dv,
                    \widetilde\Phi_sx\right).
\]
The integrand and its derivatives are bounded on this fixed
compact family of trajectories. Thus the lift exists on
$[-2\pi,2\pi]\times(\RR\times O_0)$ and depends smoothly on
its initial point. For later use, the ordinary variational
equation for any smooth flow of a field $X$ is
\[
 \partial_sD\Phi_s=(DX)_{\Phi_s}D\Phi_s.
\]
Gronwall's inequality and differentiation of this equation
bound every derivative of the flow on a fixed compact family
of trajectories over a fixed finite time interval.

For $p\in C$, choose $p_j\in U\cap\{\vartheta=0\}$ tending
to $p$. The new and old flows agree by ODE uniqueness, so their
period identities pass to the limit in $C^1$:
\begin{align*}
 \Phi_{2\pi}^{\widetilde\Z}(p)
 &=\lim_j\Phi_{2\pi}^{\Z}(p_j)=p,\\
 d\Phi_{2\pi}^{\widetilde\Z}|_p
 &=\lim_jd\Phi_{2\pi}^{\Z}|_{p_j}=\operatorname{Id}.
\end{align*}
The new flow preserves $\g$, since
\[
 \frac d{ds}(\Phi_s^{\widetilde\Z})^{\ast}\g
       =(\Phi_s^{\widetilde\Z})^{\ast}(\LL_{\widetilde\Z}\g)=0.
\]
Isometry uniqueness makes its time-$2\pi$ map the identity on
a stationary neighborhood $O$ of $C$. Choose a smaller
stationary neighborhood $O'$ whose projected closure is
compact in $\pi(O)$ and whose flow segments for
$0\le s\le2\pi$ remain in the constructed domain. Set
\[
 V=\bigcup_{0\le s\le2\pi}\Phi_s^{\widetilde\Z}(O').
\]
Every orbit segment from $O'$ closes after time $2\pi$.
Repeating this segment gives an integral curve for all real
times, and uniqueness identifies it with the flow from every
point on that segment. Hence $V$ is invariant and the flow is
complete and period-$2\pi$. Commutation with $\T$ makes $V$
stationary.
\end{proof}

We will apply this lemma at the limiting boundary after obtaining
the limits of $\Z$ and $\D\Z$ from its complement in
Lemma~\ref{normal:parallel-boundary} and local invariance in
Proposition~\ref{closure:orbit-tube}. There is one further point
about the period. An effective circle action can have shorter
individual orbits, whereas we will need period $2\pi$ on every
orbit of an invariant boundary. Its coorientation lets us prove
this by examining the differential of a stabilizing isometry.

\begin{lemma}\label{local:free-action}
Let $U\subset\E$ be connected, stationary and open, and let
$\Z$ be Killing on $U$, with $[\T,\Z]=0$ and complete effective
period $2\pi$. Suppose $\Sigma\subset\pi(U)$ is a smooth
cooriented surface, possibly with boundary, preserved by the
projected action, and $W[\T,\Z]>0$ on $\pi^{-1}(\Sigma)$.
Then there is an invariant open neighborhood $O$ of $\Sigma$
in $\pi(U)$ such that
\[
 \{s\in\RR:\Phi_s^\Z(q)=q\}=2\pi\mathbb Z
 \qquad(q\in O).
\]
\end{lemma}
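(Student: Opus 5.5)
We argue by examining the isotropy at a point of $\pi^{-1}(\Sigma)$ and then spreading the conclusion to a neighborhood. Fix $q\in\pi^{-1}(\Sigma)$ and suppose $\Phi_s^\Z(q)=q$ for some $s\notin 2\pi\mathbb Z$. Since $W[\T,\Z](q)>0$, $\Z(q)\ne0$. Because the period set of $q$ is a closed subgroup containing $2\pi\mathbb Z$ and $s$, and it is not all of $\RR$, we may take $s=2\pi/k$ with $k\ge2$. The isometry $F=\Phi_s^\Z$ fixes $q$, commutes with the stationary flow, and preserves $\T$, $\Z$ and $\pi^{-1}(\Sigma)$. We will show that $dF_q=\operatorname{Id}$. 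Then uniqueness of isometries, recorded before Lemma~\ref{local:complete-flows}, gives $F=\operatorname{Id}$ on the connected set $U$. This contradicts effectiveness of the period $2\pi$.

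To compute $dF_q$, decompose $T_q\E=\operatorname{span}\{\T,\Z\}\oplus N$, where $N=(\operatorname{span}\{\T,\Z\})^\perp$. The hypothesis $W>0$ says that $\operatorname{span}\{\T,\Z\}$ is a Lorentzian two-plane, so $N$ is a Riemannian two-plane and the sum is direct. Since $F_\ast\T=\T$ and $F_\ast\Z=\Z$, the differential $dF_q$ is the identity on the first summand. As an isometry it therefore preserves $N$ and acts on it by an element of $O(2)$.

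Next we use the coorientation. The spacetime hypersurface $\pi^{-1}(\Sigma)$ is stationary and $\Z$-invariant, so its tangent space at $q$ contains $\T$ and $\Z$. Its normal covector is therefore a nonzero vector $\nu\in N$, spanning the line $N\cap T_q\pi^{-1}(\Sigma)^{\perp}$. This holds also at boundary points of $\Sigma$, where we use a local extension of $\Sigma$ only to define the normal line. Since $F$ preserves the hypersurface, $dF_q\nu=\pm\nu$. The coorientation is preserved: $F$ is connected to the identity through the maps $\Phi_t^\Z$, $t\in[0,s]$, which preserve $\Sigma$ and move the unit normal continuously, so the sign is $+$. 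The tangent line $N\cap T_q\pi^{-1}(\Sigma)$ is then also preserved, and $dF_q|_N$ is either the identity or the reflection fixing $\nu$. The reflection is excluded because $dF_q$ preserves orientation, again by continuity along the flow, and it is already the identity on the other summand and on $\nu$. Hence $dF_q=\operatorname{Id}$.

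This proves that every $q\in\pi^{-1}(\Sigma)$ has period set exactly $2\pi\mathbb Z$. For the neighborhood $O$, we use that isotropy of a circle action is upper semicontinuous: by the slice theorem for the compact group action generated by $\Z$ on the stationary quotient, nearby orbits have isotropy conjugate to a subgroup of that at $q$. The isotropy at $q$ is trivial, so a neighborhood of each orbit is free. Saturating under the action gives an invariant open $O\supset\Sigma$ in $\pi(U)$. Alternatively, we can take $O=\{W>0\}\cap\{\text{free orbits}\}$ directly; the set of free orbits is open because $\{q:\Phi_{2\pi/k}^\Z q=q\}$ is closed and only finitely many $k$ matter near a point where $\Z\ne0$.

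The main obstacle is the sign argument on $N$: we must make sure that $dF_q$ cannot act as a nontrivial rotation of $N$. A rotation must fix $\nu$, and a rotation of a plane fixing a nonzero vector is the identity, so the coorientation step is the essential one, and it is handled as above.
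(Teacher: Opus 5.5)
Your computation at a genuinely fixed spacetime point is correct and matches the paper's second step: $d\Phi_s^\Z$ fixes $\T$ and $\Z$, preserves the cooriented normal, and by orientation fixes the remaining direction. Isometry uniqueness and effectiveness then force $s\in2\pi\mathbb Z$. Your neighborhood step is also fine, whether by the slice theorem or by your closed-fixed-set alternative, which is essentially the paper's flow-box argument.

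The gap is that the lemma concerns the \emph{projected} action on $Q$. Here $O\subset\pi(U)$, and for a representative $p$ of $q\in O$ the condition $\Phi_s^\Z(q)=q$ means $\Phi_s^\Z p=\Phi_a^\T p$ for some $a\in\RR$, not $\Phi_s^\Z p=p$. You show only that the spacetime stabilizer of each $p\in\pi^{-1}(\Sigma)$ is $2\pi\mathbb Z$. In the slice-theorem step you then assert that the isotropy of the quotient action at $\pi(p)$ is trivial, and this does not follow. Your differential argument needs $F(p)=p$. If $F(p)=\Phi_a^\T p\ne p$, then $dF_p$ maps $T_p\E$ to a different tangent space, and the argument does not apply.

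The missing step is to show $a=0$, and it needs an ingredient you never use: freeness of the stationary action from Proposition~\ref{prep:quotient}. Since $W>0$, the fields $\T$ and $\Z$ are independent, so the projected field is nonzero and the projected stabilizer is a proper closed subgroup of $\RR$ containing $2\pi\mathbb Z$. You may therefore take $s=2\pi/k$. Commutation of the flows gives $\Phi_{ks}^\Z p=\Phi_{ka}^\T p$, hence $p=\Phi_{2\pi}^\Z p=\Phi_{ka}^\T p$, so freeness forces $ka=0$ and thus $a=0$. The paper phrases this as a continuous homomorphism from the compact projected stabilizer into $\RR$, which must vanish. With this reduction, every projected stabilizer fixes $p$ itself. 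Your differential argument then shows the projected isotropy along $\Sigma$ is $2\pi\mathbb Z$, and your neighborhood argument for the projected circle action completes the proof.
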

\begin{proof}
We first show that an element fixing a projected point fixes
its spacetime representatives. For $q=\pi(p)\in\Sigma$, let
$H_q\subset\RR/(2\pi\mathbb Z)$ be the projected stabilizer.
Freeness of the stationary action defines $a:H_q\to\RR$ by
\[
 \Phi_s^\Z p=\Phi_{a(s)}^\T p.
\]
Commutation makes $a$ a continuous homomorphism, and compactness
gives
\[
 |na(s)|=|a(ns)|\le\max_{H_q}|a|\qquad(n\in\mathbb Z).
\]
Thus $a=0$: every projected stabilizer also fixes $p$.

The plane spanned by $\T,\Z$ is Lorentzian. Let $N$ be the
cooriented unit normal to $\pi^{-1}(\Sigma)$, and let $E$ complete
it to an orthonormal basis of $\{\T,\Z\}^{\perp_\g}$.
For $s\in H_q$, the flow differential fixes $\T,\Z,N$.
It sends $E$ to $\pm E$, and preservation of orientation fixes
the positive sign. Hence $d\Phi_s^\Z|_p=\operatorname{Id}$. Isometry uniqueness
on the connected domain $U$ and effectiveness give
$s=0\bmod2\pi$.

Freeness persists near $q$. Otherwise there are
$q_j\to q$ and nonzero stabilizers $s_j\in\RR/(2\pi\mathbb Z)$.
A subsequence converges to a stabilizer of $q$, so $s_j\to0$.
The projected field is nonzero at $q$, because $W>0$, and a
flow box excludes arbitrarily short nonzero periods near $q$.
This is a contradiction. The circle saturation of the union
of these free neighborhoods is open, invariant and free.
\end{proof}

\subsection{The initial hypersurface}

We now choose the boundary from which the successive extensions
will start. Extend the normal parameter of Lemma~\ref{prep:collar}
to the stationary coordinate $r$ on $\mathcal U_{\mathrm{loc}}$ by
requiring $\T r=0$ and
\[
 r\bigl(\exp_p(se_1(p))\bigr)=s
 \qquad(p\in S_0,\ 0<s<\delta).
\]
Small positive levels of $r$ are suitable because
the Hawking field vanishes to first order at $S_0$: its squared
length has a negative quadratic leading term, which gives a
strictly negative Hessian in the null tangent directions.
Recall the Hawking field
$\mathbf K=\T+\Omega_H\Z$ from Lemma~\ref{prep:collar}.
With $t$ the $\mathbf K$ flow parameter and $\kappa>0$ as in
\eqref{prep:normal-coordinates}, the Jacobi equation gives
$\g_{tt}=\g(\mathbf K,\mathbf K)=-\kappa^2r^2+O(r^4)$.
Write a null tangent to a level of $r$ as $X=X^t\mathbf K+Y$,
where $Y$ is tangent to the transported sphere and $|Y|$ is
its $S_0$ metric norm. Then
\[
\begin{aligned}
r|X^t|&\sim|Y|,\\
\D^2r(X,X)&=-r^{-1}|Y|^2+O(|Y|^2)<0.
\end{aligned}
\]
Thus a sufficiently small level has the required sign. To make
this choice uniform over the sphere, we prove the expansions with
their differentiated remainders, then choose two levels. They
separate the compact set where continuation remains to be proved
from the horizon and leave an open region carrying $\Z$ between
the two initial boundaries.

\Needspace{10\baselineskip}
\begin{proposition}\label{prep:residual}
Let $\pi:\E\to Q=\E/\RR$ send each point to its $\T$-orbit.
Let $\Z$ be the rotational Killing field and $e_1$ the invariant
outward unit spacelike normal to $S_0$ from Lemma~\ref{prep:collar},
with corresponding stationary Gaussian coordinate $r$.
For sufficiently small $0<\delta_-<\delta_+<\delta$, where $\delta$
is the normal-coordinate radius in Lemma~\ref{prep:collar}, define in $Q$
\[
 \begin{aligned}
 C_\pm&=\pi\{\exp_p(re_1(p)):p\in S_0,\ 0<r<\delta_\pm\},\\
 \mathcal D_0&=\{\g(\T,\T)\ge0\}\setminus C_-,\\
  H_0&=\{\g(\T,\T)\ge0\}\setminus C_+.
 \end{aligned}
\]
Then $C_\pm$ are open, $\overline{C_-}\subset C_+$, and
$\pi^{-1}(C_+)$ is invariant under the complete flow of $\Z$,
of effective period $2\pi$. The sets $\mathcal D_0,H_0$
are compact, and $\mathcal D_0$ is a manifold with corners.
The relative boundary of $H_0$ in $\{\g(\T,\T)\ge0\}$ is a
smooth annulus, transverse to $E_0=\{\g(\T,\T)=0\}\subset Q$,
whose quotient by the induced circle action of $\Z$ is one compact arc.
On a spacetime neighborhood of the inverse image of this annulus,
the stationary Gaussian coordinate $r$ satisfies
\[
\begin{aligned}
\{r<\delta_+\}&\subset\pi^{-1}(C_+),\\
|dr|_\g^2&=1,\\
\D^2r(X,X)&<0
 \quad\bigl(0\ne X\in T\E,\ Xr=\g(X,X)=0\bigr).
\end{aligned}
\]
Every component of $\mathcal D_0$ has a nonempty relatively
open boundary portion in $\partial C_-$ at positive distance
from $H_0$ in the fixed auxiliary Riemannian metric $g_+$ on $Q$.
\end{proposition}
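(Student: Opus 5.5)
The proof works in the normal chart \eqref{prep:normal-coordinates} of Lemma~\ref{prep:collar}, where everything is explicit. Since $\Sigma_{\rm loc}=\{U=V\in(0,\delta)\}$ and the stationary flow sweeps out $0<UV<\delta^2$, the stationary Gaussian coordinate satisfies $r=\sqrt{UV}$ up to the exponential-map identification on $\mathcal U_{\rm loc}$. Then $C_\pm=\pi\{0<r<\delta_\pm\}$ are the images of the open sets $\{0<UV<\delta_\pm^2\}$ and are open. The inclusion $\overline{C_-}\subset C_+$ follows from the last assertion of Lemma~\ref{prep:collar}: the end $r\to0$ leaves every compact subset of $Q$, so the closure in $Q$ adds only the level $r=\delta_-$. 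Invariance of $\pi^{-1}(C_+)$ under the complete period-$2\pi$ flow of $\Z$ is inherited from $\mathcal U_{\rm loc}$. This uses $\Z U=\Z V=0$ and the rotational equivariance of $e_1$, so that $\Z r=0$.

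For compactness of $\mathcal D_0$ and $H_0$, use that $\pi|_{\Sigma_1}$ is proper and surjective (Proposition~\ref{prep:quotient}). The set $\{\g(\T,\T)\ge0\}\cap\Sigma_1$ is bounded at infinity because $\T$ is timelike in the asymptotic end. Near $S_0$ it is contained in $C_-$ up to a compact remainder, again because the end $r\to0$ leaves every compact set. Hence both sets are closed subsets of a compact set.

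The boundary annulus is $\{r=\delta_+\}\cap\{\g(\T,\T)\ge0\}$. To see its shape, restrict the Morse description from the proof of Lemma~\ref{canon:foliation} to the level sphere $r=\delta_+$, which is a $C^\infty$-small perturbation of $S_0$ since the normal exponential map extends smoothly to $r=0$. On $S_0$, $\g(\T,\T)=\Omega_H^2\g(\Z,\Z)$ vanishes exactly at the two poles, and $\g(\T,\T)=-\kappa^2r^2+\Omega_H^2\g(\Z,\Z)+O(r^3)$ nearby. So on the sphere $r=\delta_+$ the set $\{\g(\T,\T)\ge0\}$ is the complement of two small polar caps. Its boundary circles are regular levels of $\g(\T,\T)$ restricted to the sphere, so the intersection with $E_0$ is transverse. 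The circle action is free away from the poles, so the quotient is a compact arc, and $\mathcal D_0$ is a manifold with corners along $E_0\cap\{r=\delta_-\}$. For the boundary portion at positive distance from $H_0$, note that every component of $\mathcal D_0$ must meet $\{r=\delta_-\}$; otherwise a component of $\{\g(\T,\T)\ge0\}$ would avoid the horizon region. The annulus at $r=\delta_-$ is disjoint from the compact set $H_0\subset\{r\ge\delta_+\}$, so the distance is positive by compactness.

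The identities $|dr|_\g^2=1$ and the Hessian inequality are the main work. $|dr|^2=1$ is the Gauss lemma for the stationary normal exponential map, applied on $\Sigma_{\rm loc}$ and transported by the isometry $\Phi_t^\T$. For the Hessian, write $r=\sqrt{UV}$ with $\mathbf K=\kappa(V\pr_V-U\pr_U)$, and compute in the frame $\mathbf K$, $\nabla r$, $Y\in$ the transported sphere directions. The Jacobi expansion gives $\g(\mathbf K,\mathbf K)=-\kappa^2r^2+O(r^4)$ and $\g(\mathbf K,Y)=O(r^2)|Y|$. Condition $Xr=0$ removes the $\nabla r$ component, so a null tangent is $X=X^t\mathbf K+Y$ with $r|X^t|\sim|Y|$. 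Then $\D^2r(\mathbf K,\mathbf K)=-(\D_{\mathbf K}\mathbf K)r=\tfrac12\pr_r\g(\mathbf K,\mathbf K)=-\kappa^2r+O(r^3)$, while $\D^2r(Y,Y)=O(1)|Y|^2$, since the second fundamental form of the level spheres is bounded. The cross terms are $O(r)|X^t||Y|$. Altogether $\D^2r(X,X)=-\kappa^2r|X^t|^2+O(|Y|^2)\le -c\,r^{-1}|Y|^2+C|Y|^2<0$ for small $r$.

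The main obstacle is controlling the remainders uniformly in differentiated form over the sphere, including near the poles where $\Z$ vanishes. For this I would use the smooth extension of the exponential parametrization to $r=0$ and carry out the expansion in the $(U,V,p)$ coordinates, where $\mathbf K$ is exactly linear. Finally, choose $\delta_+$ small enough that the inequality holds, and then $\delta_-<\delta_+$.
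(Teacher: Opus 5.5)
Your treatment of the local geometry matches the paper's argument. This covers $r=\sqrt{UV}$, the identity $|dr|_\g^2=1$ from the Jacobi computation along the normal geodesics, the expansion $\D^2r(X,X)=-\kappa^2r(X^t)^2+O(|Y|^2)$ with $r|X^t|\sim|Y|$, and the polar-cap description of $\{\g(\T,\T)\ge0\}$ on the level $r=\delta_+$.

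The gap is in the last assertion. You say every component of $\mathcal D_0$ must meet $\{r=\delta_-\}$, "otherwise a component of $\{\g(\T,\T)\ge0\}$ would avoid the horizon region," and you treat that as self-evidently impossible. It is not. Knowing that the ergoregion is nonempty near $S_0$ says nothing about a compact component of $\{\g(\T,\T)\ge0\}$ detached from the horizon, and excluding such an island is exactly where NT must enter.

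The paper proves that the ergoregion is connected. Every component of $\{\g(\T,\T)>0\}$ has nonempty boundary in $E_0$, since otherwise it would be open and closed in $Q$, contrary to the timelike end. The surface $E_0$ is connected by Lemma~\ref{canon:foliation}, so every component meets the connected positive side of a tubular neighborhood of $E_0$. Take a path in the ergoregion from a point of $D$ with $\g(\T,\T)>0$ into $C_-$. Its first point on $\partial C_-\cap\{\g(\T,\T)>0\}$ lies in $D$, and a corner chart there gives the relatively open boundary portion.

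An equivalent shortcut is also available. A detached component is compact, and its intersection with $E_0$ is nonempty and a union of closed components of $E_0$. On such a closed surface $\psi$ attains a maximum, which contradicts $d(\psi|_{E_0})\ne0$ from Lemma~\ref{canon:regularity}. You need one of these arguments; as written the step is unsupported.

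There are two smaller points.

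\emph{Compactness.} The fact that the end $r\to0$ of $\Sigma_{\rm loc}$ leaves every compact set does not show that points of $\Sigma_1$ near $S_0$ project into $C_-$; the implication runs the wrong way. What you need is $\E\cap\mathcal O=\{U,V>0\}$, together with the boost $\Phi^{\mathbf K}$ carrying $(U,V,p)$ to $\Sigma_{\rm loc}$ at $r=\sqrt{UV}$, which is small near $S_0$. Your first paragraph essentially contains this.

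\emph{Freeness of the circle action on the annulus.} This needs justification. The paper uses Lemma~\ref{local:free-action}, with $W>0$ obtained from $\g(\mathbf K,\mathbf K)<0\le\g(\T,\T)$. Alternatively, you can use the equivariant identification of the level with $S_0$, together with the fact that each stationary orbit meets $\Sigma_{\rm loc}$ at most once.
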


\begin{proof}
We compute $\D^2r(X,X)$ for null tangent vectors $X$ in
coordinates adapted to the Hawking field. Use the invariant normals of Lemma~\ref{prep:collar}
and propagate $\exp_p(re_1)$ by $\mathbf K=\T+\Omega_H\Z$.
The translates are disjoint by rotational invariance and
Lemma~\ref{prep:collar}. Let $t$ be the $\mathbf K$ parameter,
let $y^A$, $A=1,2$, be coordinates on $S_0$, and write
$E_A=\partial_{y^A}$. We use $\g|_{TS_0}$ for the induced
sphere metric, whose components are $\g_{AB}(0)$. In this proof
$\D_r=\D_{\partial_r}$, and an $r$ index in $\R$ denotes
contraction with $\partial_r$. Auxiliary norms in this normal
neighborhood use a smooth Riemannian metric defined across $S_0$.
The fixed set $S_0$ of the $\mathbf K$ flow is totally geodesic:
an isometry fixing a point and a tangent vector fixes their
geodesic. Thus the Jacobi initial data along each normal
geodesic are
\[
\begin{aligned}
\mathbf K(0)&=0,\\
\D_r\mathbf K(0)&=\kappa e_0,\\
\D_r^2\mathbf K(0)&=0,\\
\D_rE_A(0)&=\D_{E_A(0)}e_1\in\RR e_0 .
\end{aligned}
\]
The vanishing second derivative follows from the Jacobi
equation and $\mathbf K(0)=0$. We identify vectors along each
radial geodesic by $\D$-parallel transport, and use $J$ for either Jacobi field
$\mathbf K$ or $E_A$. It satisfies the integral equation
\[
 \begin{aligned}
 J_\alpha(r)&=J_\alpha(0)+r(\D_rJ)_\alpha(0)
       +\int_0^r(r-a)\R_{\alpha r r\beta}(a)J^\beta(a)\,da,\\
 (\D_rJ)_\alpha(r)&=(\D_rJ)_\alpha(0)
       +\int_0^r\R_{\alpha r r\beta}(a)J^\beta(a)\,da.
 \end{aligned}
\]
Bounded curvature first gives $|\mathbf K(r)|_+\le Cr$ and
$|E_A(r)|_+\le C$. Substitution back into these integrals gives
\[
 \begin{aligned}
 |\mathbf K-\kappa re_0|_+&\le C\int_0^r(r-a)a\,da\le Cr^3,\\
 |\D_r\mathbf K-\kappa e_0|_+&\le C\int_0^ra\,da\le Cr^2,\\
 |E_A-E_A(0)-r\D_rE_A(0)|_+&\le Cr^2,\\
 |\D_rE_A-\D_rE_A(0)|_+&\le Cr.
 \end{aligned}
\]
For example,
\[
 \begin{aligned}
 \g(\mathbf K,E_A)&=\kappa r^2\g(e_0,\D_rE_A(0))+O(r^3),\\
 \partial_r\g(E_A,E_B)|_{r=0}
 &=\g(\D_rE_A(0),E_B(0))+\g(E_A(0),\D_rE_B(0))=0,\\
 \partial_r\g(\mathbf K,\mathbf K)
 &=2\g(\kappa e_0+O(r^2),\kappa re_0+O(r^3))
   =-2\kappa^2r+O(r^3).
 \end{aligned}
\]
Taking pairwise inner products gives, uniformly on a finite
cover of $S_0$,
\begin{equation}\label{prep:normal-expansion}
 \begin{aligned}
 \mathbf K(r)&=\kappa r e_0+O(r^3),\\
 E_A(r)&=E_A(0)+r\D_rE_A(0)+O(r^2),\\
 \g&=dr^2+\g_{tt}\,dt^2+2\g_{tA}\,dt\,dy^A+\g_{AB}\,dy^A dy^B,\\
 \g_{tt}&=-\kappa^2r^2+O(r^4),\\
  \partial_r\g_{tt}&=-2\kappa^2r+O(r^3),\\
 \g_{tA}&=O(r^2),\\
  \partial_r\g_{tA}&=O(r),\\
 \g_{AB}&=\g_{AB}(0)+O(r^2),\\
  \partial_r\g_{AB}&=O(r).
 \end{aligned}
\end{equation}
We also check the radial coefficients exactly, since they
identify the Hessian with radial derivatives of the metric.
For $J=\mathbf K,E_A$,
\[
 \begin{aligned}
 \frac{d^2}{dr^2}\g(J,\partial_r)
 &=\g(\D_r^2J,\partial_r)
  =\R(\partial_r,\partial_r,\partial_r,J)=0,\\
 \g(J,\partial_r)|_{r=0}&=0,\\
 &\frac d{dr}\g(J,\partial_r)|_{r=0}=0,\\
 &
 \g_{rr}=1,\\
 &\g_{rt}=\g_{rA}=0.
 \end{aligned}
\]
The coordinate $r$ is invariant under $\mathbf K,\Z$ and
hence under $\T$.

Let $X=X^t\mathbf K+Y^AE_A$ be null and tangent to a level of $r$,
and write $|Y|^2=\g_{AB}(0)Y^AY^B$. Since $\g_{rr}=1$ and $\g_{rt}=\g_{rA}=0$, differentiation
at fixed components gives
\begin{equation}\label{prep:strict-calculation}
 \begin{aligned}
 0&=-\kappa^2r^2(X^t)^2+|Y|^2
       +O(r^4(X^t)^2+r^2|X^t||Y|+r^2|Y|^2),\\
 \left|\kappa^2r^2(X^t)^2-|Y|^2\right|
   &\le Cr(r^2(X^t)^2+|Y|^2),\\
 r|X^t|&\sim |Y|,\\ 
 \kappa^2r^2(X^t)^2&=|Y|^2+O(r|Y|^2),\\
 \D^2r(X,X)
   &=\tfrac12(\partial_r\g_{tt})(X^t)^2
       +(\partial_r\g_{tA})X^tY^A
       +\tfrac12(\partial_r\g_{AB})Y^AY^B\\
   &=-\kappa^2r(X^t)^2
       +O(r^3(X^t)^2+r|X^t||Y|+r|Y|^2)\\
   &=-\frac{|Y|^2}{r}+O(|Y|^2)
       \le-\frac{|Y|^2}{2r}
 \end{aligned}
\end{equation}
after absorbing the $Cr$ errors. A nonzero null $X$ has
$Y\ne0$ because $\mathbf K$ is timelike. The estimate holds
throughout a sufficiently small positive range of $r$.
In particular, at the level $r=\delta_+$ in the statement,
\[
\begin{aligned}
\D^2(r-\delta_+)&=\D^2r,\\
|d(r-\delta_+)|_\g^2&=1,\\
\{r<\delta_+\}&\subset\pi^{-1}(C_+).
\end{aligned}
\]
We can therefore apply Proposition~\ref{local:extension} at
$r=\delta_+$: the strict inequality holds, and the field is defined
on the entire local side $r<\delta_+$.

It remains to describe the part of this level in the ergoregion.
We show that it is an annulus meeting $E_0$ transversely. Identify
each small positive level of $r$ in $Q$ with $S_0$ by
$p\mapsto\pi(\exp_p(re_1(p)))$ and transport geodesic coordinates $y^1,y^2$
from either pole. At the pole on $r=0$,
\[
 \partial_{y^A}\partial_{y^B}[\g(\T,\T)]
 =2\Omega_H^2\delta_{AB}.
\]
Hence the coordinate Hessian remains bounded below by $c>0$
times the Euclidean metric on a fixed disk, for all sufficiently
small $r\ge0$.
Its center remains on the axis, so rotational invariance
gives $\partial_y[\g(\T,\T)](r,0)=0$, whereas
\[
 \g(\T,\T)(r,0)=\g(\mathbf K,\mathbf K)(r,0)<0
 \qquad(r>0).
\]
The norm remains positive on the compact complement of the
two coordinate disks, including their boundaries, for small $r$.
For every Euclidean unit vector $\omega$ in the coordinate plane,
\[
 \begin{aligned}
 \frac{d}{d\lambda}\g(\T,\T)(r,\lambda\omega)
 &=\int_0^\lambda
   \partial_{y^A}\partial_{y^B}[\g(\T,\T)](r,t\omega)
                     \omega^A\omega^B\,dt\\
 &\ge c\lambda.
 \end{aligned}
\]
Each radial segment therefore crosses the zero set exactly
once and transversely. The two zero sets are smooth circles,
and the part of this level where $\g(\T,\T)>0$ is the annulus
between the disks they enclose.
The stationary lift of each level has invariant radial
coorientation. On the closed annulus,
\[
 \g(\mathbf K,\mathbf K)<0\le\g(\T,\T),
\]
with
\[
 \mathbf K=\T+\Omega_H\Z.
\]
Thus $\T,\Z$ are independent and span a Lorentzian plane.
Lemma~\ref{local:free-action}, applied
on the connected local stationary neighborhood, makes the projected
action free on this annulus.
The equivariant normal exponential map now identifies the
quotient of the annulus with one arc.
Choose $0<\delta_-<\delta_+$ sufficiently small and let $C_\pm$ be the
stationary images of $0<r<\delta_\pm$. These are the required nested
neighborhoods, and their boundary hypersurfaces satisfy
\eqref{prep:strict-calculation}.

For compactness of $\mathcal D_0$ and $H_0$, we examine their
two possible ends. At the
horizon end, the exact boost coordinates
\eqref{prep:normal-coordinates} move every sufficiently nearby
point into the neighborhood parametrized by $0<r<\delta_-$:
\begin{align*}
\Phi_t^{\mathbf K}(U,V,p)&=(e^{-\kappa t}U,e^{\kappa t}V,p),\\
\shortintertext{so that}
\Phi_{\frac1{2\kappa}\log(U/V)}^{\mathbf K}(U,V,p)
        &=(\sqrt{UV},\sqrt{UV},p).
\end{align*}
Throughout this segment both normal coordinates stay between
$\min(U,V)$ and $\max(U,V)$. The endpoint lies on
$\Sigma_{\rm loc}$ at $r=\sqrt{UV}$. Since the $\mathbf K$-
and $\T$-saturations coincide, every sequence in $\Sigma_1$
approaching $S_0$ eventually projects into $C_-$.
At infinity $\g(\T,\T)<0$. Hence, for some $\epsilon>0$,
$R<\infty$,
\[
 \mathcal D_0,\ H_0\ \subset\
 \pi\bigl(\Phi_0(\{1+\epsilon\le|x|\le R\})\bigr).
\]
Both are closed and thus compact. The transverse
intersections with the smooth ergosurface make
$\mathcal D_0$ a manifold with corners.

For the assertion about every component of $\mathcal D_0$, we
first establish connectedness of the ergoregion. The positive side
of a tubular neighborhood of the connected surface $E_0$ is
connected. Every component of
$\{\g(\T,\T)>0\}$ meets it: the boundary of such a component
lies in $E_0$, and an empty boundary would make the component
open and closed in $Q$, contrary to the timelike asymptotic end.
Thus the ergoregion is connected.

Let $D$ be a component of $\mathcal D_0$ and choose
$p\in D$ with $\g(\T,\T)(p)>0$. There is a path
$\gamma:[0,1]\to\{\g(\T,\T)>0\}$ from $p$ to $C_-$.
For
\[
 s_{\ast} =\inf\{s:\gamma(s)\in C_-\},
\]
we have
\[
\begin{aligned}
\gamma([0,s_{\ast}])&\subset D,\\
\gamma(s_{\ast})&\in D\cap\partial C_-\cap\{\g(\T,\T)>0\}.
\end{aligned}
\]
A boundary chart at this point contains a relatively compact
nonempty open subset of $D$ whose closure lies in
$C_+\setminus\overline{C_-}$. Its closure is disjoint from the
compact set $H_0$, so their $g_+$-distance is positive.
Compactness of the manifold with corners $\mathcal D_0$
gives finitely many components, so these distances have a
common positive lower bound. The same chart gives a nonempty
relatively open portion of $D\cap\partial C_-$ separated
from $H_0$.
\end{proof}

We take $\pi^{-1}(C_+)$ as the initial domain of $\Z$. The
complete action there, together with its extension to
$\mathcal U_{\mathrm{loc}}$ in Lemma~\ref{prep:collar}, supplies
the field on a neighborhood of the initial relative boundary.
\section{Null geometry and complete rotational orbits}
\label{sec:action}\label{sec:barriers}

We now prove the estimate which prevents the rotational direction
from degenerating as its domain grows. The intrinsic statement is
Lemma~\ref{action:intrinsic-orbits}: a bound for the lengths of
confined null geodesics controls the normalized direction of a
hypersurface-orthogonal periodic timelike Killing field and its
first derivative. We first derive the vacuum
circularity identities and use NT to check that this lemma applies
to the timelike field $\Z$ on the Lorentzian quotient.
The same circularity identities express the local extension
condition as positive geodesic curvature on a transverse disk.
Together, this curvature formula and the orbit estimate will let
us recover the limiting boundary geometry in
Section~\ref{sec:limit-geometry}.

\subsection{Circularity}

We begin with the vacuum circularity calculation of
Papapetrou~\cite[Section~IV]{Papapetrou66} and
Carter~\cite[Section~5, equations~(37)--(39)]{Carter69}.
We derive the identities without dividing by the determinant
of the two Killing fields, and determine the constant twist
contractions from their limiting values on $S_0$. Nontrapping
will then exclude degeneracy of $\operatorname{span}\{\T,\Z\}$
and show that this plane is timelike throughout the domain.

Throughout this section, let $U\subset\E$ be a connected
$\T$-invariant open set containing
$\pi^{-1}(C_+)$ from
Proposition~\ref{prep:residual}. Let $\Z$ be a Killing field on $U$
which agrees on $\pi^{-1}(C_+)$ with the field of
Lemma~\ref{prep:collar}. Assume
$[\T,\Z]=0$ and that its flow is complete with effective period
$2\pi$. The estimates below are uniform
over all such $U,\Z$. Auxiliary norms on $Q$ and $\E$ refer,
respectively, to the fixed metrics $g_+$ and $\mathbf g_+$.
We use $\nabla$ for the connection of the Lorentzian metric
$h$ on $\{\g(\T,\T)>0\}\subset Q$, as in
Definition~\ref{def:stationary-metrics}; $\D$ remains the
spacetime connection. As before, $\Z$ also denotes the projected
field on $\pi(U)$: contractions with $\g$ use its spacetime
value, and contractions with $h$ use its projection. Recall the area
function $W=\g(\T,\Z)^2-\g(\T,\T)\g(\Z,\Z)$ from
Definition~\ref{def:orbit-area}. On the ergoregion,
\[
 h(\Z,\Z)=\g(\T,\T)\g(\Z,\Z)-\g(\T,\Z)^2=-W.
\]
Thus $W>0$ is precisely the assertion that the rotational
orbits are timelike for $h$. Circularity explains how nontrapping
enters: if $\operatorname{span}\{\T,\Z\}$ were degenerate,
a constant combination of the two fields would have a null orbit
which is a geodesic up to reparametrization and projects to
one compact circle. After excluding this possibility, we use
causality and the period $2\pi$ to obtain the quantitative bounds
in the following lemma.

\begin{lemma}\label{action:area}
The circularity identities
\begin{equation}\label{action:circularity}
\begin{aligned}
 \T^\flat\wedge\Z^\flat\wedge d\T^\flat&=0,\\
 \T^\flat\wedge\Z^\flat\wedge d\Z^\flat&=0
\end{aligned}
\end{equation}
hold on $U$. On $U\cap\{\g(\T,\T)\ge0\}$ one has
\[
\begin{aligned}
\g(\Z,\Z)&>0,\\
W[\T,\Z]&>0,
\end{aligned}
\]
and the distribution $\{\T,\Z\}^{\perp_\g}$ is integrable.
Here $W=W[\T,\Z]$ is the area function of
Definition~\ref{def:orbit-area}, and $\rho=\sqrt W$.
For each compact $K\Subset\{\g(\T,\T)>0\}\subset Q$ there is
$\delta_K>0$, depending only on $K$ and the stationary geometry,
such that
\begin{equation}\label{action:causal}
\begin{aligned}
\g(\Z,\Z)&\ge
       \frac{\delta_K^2}{(2\pi)^2}\min_K\g(\T,\T),\\
\frac{|\g(\T,\Z)|+\rho}{\g(\Z,\Z)}&\le\frac{2\pi}{\delta_K}
\end{aligned}
\end{equation}
at every point of $U\cap\pi^{-1}(K)$.
\end{lemma}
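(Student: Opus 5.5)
I prove the circularity identities first, then positivity via NT, then the quantitative bounds. For \eqref{action:circularity}, I use the classical Papapetrou--Carter argument. For a Killing field $X$ on a Ricci-flat manifold, $d\ast dX^\flat=0$. The twist one-form $\omega_X=\iota_X\ast dX^\flat$ then satisfies $d\omega_X=\LL_X\ast dX^\flat=0$. Moreover $\iota_Y\omega_X$ is constant whenever $Y$ is a Killing field commuting with $X$: indeed $d(\iota_Y\omega_X)=\LL_Y\omega_X-\iota_Yd\omega_X=0$, since $\LL_Y$ commutes with $\ast$ and with $X^\flat$. Up to a sign, $\iota_Z\omega_\T=\ast(\T^\flat\wedge\Z^\flat\wedge d\T^\flat)$, and the analogous statement holds for $\Z$. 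It therefore suffices to evaluate these constants at one point, or along a sequence. Since $U$ is connected and contains $\pi^{-1}(C_+)$, I evaluate them near $S_0$ through Lemma~\ref{prep:collar}. Both $\T$ and $\Z$ extend smoothly to $S_0$. There $\Z$ vanishes at the two poles, so every $3$-form wedge containing $\Z^\flat$ vanishes at a pole. By continuity and constancy, both scalars vanish on $U$. Frobenius then gives integrability of $\{\T,\Z\}^{\perp_\g}$ wherever the plane is nondegenerate.

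Next I prove $W>0$ and $\g(\Z,\Z)>0$ on $U\cap\{\g(\T,\T)\ge0\}$. I argue by continuity along $U$ from the initial region, where Proposition~\ref{prep:residual} already gives $\g(\mathbf K,\mathbf K)<0$ and therefore a timelike span. Suppose $W$ vanishes at some point. If $\g(\Z,\Z)>0$ there, the span is degenerate, so there is a constant combination $Y=\T+c\Z$ that is null at $p$. Circularity shows that $\D\langle Y,Y\rangle$ lies in the span. Using $W=0$ along the orbit of the two-torus/cylinder action, I then show that the orbit of $Y$ is, after reparametrization, a null geodesic with $\g(\T,\dot\gamma)=0$, exactly as in the proof of Lemma~\ref{canon:regularity}. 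Its projection is a closed $\Z$-circle in $Q$, which is compact, and this contradicts NT.

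The delicate point is to make the geodesic computation work in this degenerate case without dividing by $W$. I plan to compute $\D_YY=-\tfrac12\nabla\g(Y,Y)$. From circularity and $\g(\T,Y)=0$ at the degenerate points, I then need that $\nabla\g(Y,Y)$ is proportional to $Y$. If instead $\g(\Z,\Z)\le0$ while $\g(\T,\T)\ge0$, then $W\ge\g(\T,\Z)^2\ge0$. The case $\g(\Z,\Z)<0$ yields a closed timelike curve, which is excluded by global hyperbolicity of $\E$. The remaining equality cases reduce to the degenerate case already treated.

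For \eqref{action:causal}, I use the period together with causality. The combination $\Z-a\T$ has a closed orbit projecting onto the $\Z$-circle through $p$, shifted in stationary time by $2\pi a$. Consider the curve $s\mapsto\Phi^\Z_s\Phi^\T_{-as}p$ for $s\in[0,2\pi]$, which joins $p$ to $\Phi^\T_{-2\pi a}p$. If $\Z-a\T$ were causal, this curve would be causal, and $p$ would be causally related to its own stationary translate by time $2\pi|a|$. Applying \eqref{prep:comparison} to the compact preimage of $K$ bounds $|a|$ by $\delta_K^{-1}$-type constants, with $\delta_K$ comparable to the time gap $b-a$. Hence $\g(\Z-a\T,\Z-a\T)>0$ for all $|a|\ge 2\pi/\delta_K$, after renaming constants. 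The roots of this quadratic in $a$ are $(\g(\T,\Z)\pm\rho)/\g(\T,\T)$. Bounding them, and evaluating the quadratic at $a=2\pi/\delta_K$ with $\g(\T,\T)\ge\min_K\g(\T,\T)$, yields both inequalities.

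The main obstacle is the degenerate-span step, namely turning $W=0$ into a genuine maximal zero-energy null geodesic whose projection stays on a compact set. I expect to handle it by showing that $W$ and $\g(\Z,\Z)$ are constant along the closed orbit, and then reparametrizing as in Lemma~\ref{canon:regularity}.
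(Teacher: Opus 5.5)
Your circularity step and your treatment of a degenerate span at a point with $\Z\ne0$ match the paper. The paper evaluates the constant twist contractions using $\T=-\Omega_H\Z$ on all of $S_0$, and your use of the poles works equally well. The gaps are in the quantitative step and at the axis.

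\textbf{The quantitative step runs in the wrong direction.} What \eqref{action:causal} needs is a \emph{lower} bound on the stationary displacement produced by a causal loop. If $\Z-a\T$ is causal at $p\in\pi^{-1}(K)$, then after one period its orbit joins $p$ to $\Phi^\T_{-2\pi a}p$, and you must conclude $2\pi|a|\ge\delta_K$. That gives $|a_\pm|\ge\delta_K/(2\pi)$ for the roots $a_\pm=(\g(\T,\Z)\pm\rho)/\g(\T,\T)$. Both inequalities then follow from $a_+a_-=\g(\Z,\Z)/\g(\T,\T)$ and $1/a_\pm=(\g(\T,\Z)\mp\rho)/\g(\Z,\Z)$.

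Your intermediate claim, that $\Z-a\T$ is spacelike for $|a|\ge2\pi/\delta_K$, is instead an \emph{upper} bound on the roots. It is compatible with $\g(\T,\T)=\g(\T,\Z)=1$ and $\g(\Z,\Z)=\varepsilon\to0$, so neither inequality can follow from it.

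Nor can \eqref{prep:comparison} supply the bound you need. Together with stationarity it excludes causal curves from $p$ to \emph{past} translates $\Phi^\T_{-t}p$, $t>0$. It says nothing about causal curves to small \emph{future} translates, and that is exactly what a future-directed null combination $\T+\alpha\Z$ produces after time $2\pi/|\alpha|$.

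The missing input is local. Cover a compact lift of $K$ by causally convex neighborhoods contained in convex normal neighborhoods. Since $\g_p(\exp_p^{-1}\Phi^\T_sp,\exp_p^{-1}\Phi^\T_sp)=s^2\g_p(\T,\T)+O(s^3)>0$, no causal curve joins $p$ to $\Phi^\T_sp$ for $0<|s|<\delta_K$.

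\textbf{The axis $\{\Z=0\}$ is not handled.} Your claim that the remaining equality cases reduce to the degenerate case is false there. At $\Z(p)=0$ the wedge identity that forces $d[\g(V,V)]$ into the span of $V^\flat,\Z^\flat$ is vacuous. If moreover $\g(\T,\T)(p)>0$, there is no null combination at all.

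The axis also breaks your continuity argument. It does occur in $U\cap\{\g(\T,\T)<0\}$ (the fixed curves through the poles near $S_0$), and $W$ vanishes on it, so you cannot simply propagate the sign of $W$ from the initial region.

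The paper handles this in three steps:
\begin{itemize}
\item It linearizes the circle action at a fixed point to show that the fixed set is locally the exponential image of a Lorentzian two-plane. The axis is therefore a smooth codimension-two submanifold with connected complement, which carries $W>0$ from near $S_0$ to every off-axis point.
\item Axis points in the open ergoregion are then excluded by the lower bound in \eqref{action:causal} applied on a nearby compact set.
\item Axis points on the ergosurface are excluded by noting that $\T$ and $(d[\g(\T,\T)])^\sharp$ both lie in the fixed Lorentzian plane. This forces $|d[\g(\T,\T)]|_\g^2=0$, contrary to Lemma~\ref{canon:regularity}.
\end{itemize}
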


\begin{proof}
We first derive circularity in a form that also applies at a
point where $\operatorname{span}\{\T,\Z\}$ is degenerate.
For either Killing field $X=\T,\Z$,
the vacuum computation preceding Lemma~\ref{canon:regularity}
gives $d(\ast dX^\flat)=0$ and
$\LL_X(\ast dX^\flat)=0$. In particular,
\[
 d\bigl(\iota_X(\ast dX^\flat)\bigr)
  =\LL_X(\ast dX^\flat)-\iota_Xd(\ast dX^\flat)=0.
\]
The contraction identity
$\iota_X(\ast dX^\flat)=\ast(X^\flat\wedge dX^\flat)$ identifies
this with the twist one-form. Commutation gives
$\LL_\Z\T^\flat=0$ and $\LL_\T\Z^\flat=0$.
Applying Cartan's formula, we obtain
\[
 d\bigl(\iota_\Z\ast(\T^\flat\wedge d\T^\flat)\bigr)
 =\LL_\Z\ast(\T^\flat\wedge d\T^\flat)
       -\iota_\Z d\ast(\T^\flat\wedge d\T^\flat)=0,
\]
and similarly
\[
 d\bigl(\iota_\T\ast(\Z^\flat\wedge d\Z^\flat)\bigr)=0.
\]
These scalars have zero limit on $S_0$, where $\T=-\Omega_H\Z$,
and vanish on the connected set $U$. Thus
\[
\begin{aligned}
\T^\flat\wedge\Z^\flat\wedge d\T^\flat&=0,\\
\T^\flat\wedge\Z^\flat\wedge d\Z^\flat&=0.
\end{aligned}
\]
Where $\T,\Z$ are independent, let $A,B$ be smooth local
sections of $\{\T,\Z\}^{\perp_\g}$ and choose $E_1,E_2$ with
\[
 \det\begin{pmatrix}
 \T^\flat(E_1)&\T^\flat(E_2)\\
 \Z^\flat(E_1)&\Z^\flat(E_2)
 \end{pmatrix}\ne0.
\]
For $X=\T,\Z$, evaluation of the four-form gives
\[
 \begin{aligned}
 0&=(\T^\flat\wedge\Z^\flat\wedge dX^\flat)(E_1,E_2,A,B)\\
  &=\det\begin{pmatrix}
 \T^\flat(E_1)&\T^\flat(E_2)\\
 \Z^\flat(E_1)&\Z^\flat(E_2)
 \end{pmatrix}dX^\flat(A,B),\\
 \g(X,[A,B])
 &=A\g(X,B)-B\g(X,A)-dX^\flat(A,B)=0.
 \end{aligned}
\]
The common orthogonal distribution is involutive wherever
the two fields are independent. Recalling the twist potential
$d\psi=\ast(\T^\flat\wedge d\T^\flat)$, we also obtain
$\Z\psi=0$ on $U$.

Where $\Z\ne0$, its periodic orbit is spacelike by causality:
$\g(\Z,\Z)$ is constant on the orbit, and a nonpositive value
would give a closed causal curve. The fields $\T,\Z$ are
independent there, since commutation and dependence at a point
would give a periodic stationary orbit, contrary to
Proposition~\ref{prep:quotient}.
We claim that their plane is nondegenerate. If it were
degenerate at $p$, we could follow its null direction by fixing
\[
 \Omega=-\frac{\g(\T,\Z)}{\g(\Z,\Z)}(p)
\]
and put $V=\T+\Omega\Z$. This nonzero Killing field
spans the null direction of that plane at $p$. Since $\T,\Z$
are complete on $U$ and commute, $V$ is complete. Its flow
preserves both fields, so along the whole orbit through $p$,
\[
 \g(V,V)=\g(V,\Z)=\g(V,\T)=0.
\]
Circularity and the Killing equation give
\[
 \begin{aligned}
 V^\flat\wedge\Z^\flat\wedge dV^\flat&=0,\\
 \iota_VdV^\flat&=-d[\g(V,V)],\\
 0=\iota_V(V^\flat\wedge\Z^\flat\wedge dV^\flat)
   &=-V^\flat\wedge\Z^\flat\wedge d[\g(V,V)].
 \end{aligned}
\]
Thus $d[\g(V,V)]$ is a linear combination of
$V^\flat,\Z^\flat$. Commutation shows that it annihilates
$\Z$. Since
$\g(V,\Z)=0$ and $\g(\Z,\Z)>0$, its $\Z^\flat$
coefficient must vanish. Along this orbit,
$d[\g(V,V)]=\lambda V^\flat$ for a smooth scalar $\lambda$,
so that
\[
(\D_VV)^\alpha
 =-\tfrac12\g^{\alpha\beta}\D_\beta[\g(V,V)]
 =-\tfrac\lambda2V^\alpha.
\]
The orbit is a reparametrized null geodesic of zero stationary
energy. Recall the temporal coordinate $\vartheta$ of
Proposition~\ref{prep:quotient}, normalized by $\T\vartheta=1$.
It satisfies
\[
 \vartheta(\Phi_s^V p)
       =s+\vartheta(\Phi_{\Omega s}^{\Z}p).
\]
The second term is bounded by periodicity, so this geodesic
has no endpoint in $\E$ and is maximal as an unparametrized
geodesic. Its stationary projection lies in one compact
$\Z$-orbit, contrary to NT.

To determine the signature on the whole domain, we also need
the local structure of the axis. At a fixed point $p$, we average
a future timelike vector $V$:
\[
 \widetilde V=\int_0^{2\pi}d\Phi_s^\Z|_p V\,ds.
\]
The vector $\widetilde V$ is future timelike and fixed.
Normalize it and choose an orthonormal basis in
$\widetilde V^\perp$. The circle
representation has the form
\[
 d\Phi_s^\Z|_p=\operatorname{diag}
 \left(1,1,
 \begin{pmatrix}\cos(ms)&-\sin(ms)\\\sin(ms)&\cos(ms)\end{pmatrix}\right),
 \qquad m\in\mathbb Z\setminus\{0\}.
\]
The case $m=0$ would make every isometry the identity by its
value and differential at $p$. The fixed space is therefore
a Lorentzian two-plane. Since
\[
 \Phi_s^\Z(\exp_p v)=\exp_p(d\Phi_s^\Z|_p v),
\]
the local fixed set is the exponential image of that plane,
so the axis is smooth of codimension two.
Perturbing paths off this submanifold shows that its complement
is connected. The determinant never vanishes there.
Near $S_0$, the span contains the timelike field
$\mathbf K=\T+\Omega_H\Z$, so it is timelike everywhere
off the axis.

To derive \eqref{action:causal}, we follow a null combination of the
two Killing fields for one rotational period. It returns to the
same rotational point, leaving only a stationary displacement. On a compact
subset of the ergoregion, causality puts a positive lower bound
on the size of such a displacement. To make this uniform, lift
$K$ to the fixed spacelike stationary section. Choose finitely many causally convex
neighborhoods contained in convex normal neighborhoods covering
this compact lift. Uniformly for $p$ in the lift,
\[
 \g_p\bigl(\exp_p^{-1}(\Phi_s^\T p),
           \exp_p^{-1}(\Phi_s^\T p)\bigr)
 =s^2\g_p(\T,\T)+O(s^3)>0
       \qquad(0<|s|<\delta_K),
\]
after decreasing $\delta_K>0$. Causal convexity therefore
excludes any causal curve from $p$ to $\Phi_s^\T p$ for
$0<|s|<\delta_K$. Stationary translation gives the same
conclusion over $K$.

Away from the axis, the roots of
$\g(\T+\alpha\Z,\T+\alpha\Z)=0$ are
\[
 \alpha_\pm=\frac{-\g(\T,\Z)\pm\rho}{\g(\Z,\Z)}.
\]
These roots are nonzero in the ergoregion. With their values
at $p$ held constant, the corresponding orbits are null.
Commutation and periodicity give
\[
 \Phi_{2\pi/|\alpha_\pm|}^{\T+\alpha_\pm\Z}p
 =\Phi_{2\pi/|\alpha_\pm|}^{\T}
       \Phi_{2\pi\operatorname{sgn}\alpha_\pm}^{\Z}p
 =\Phi_{2\pi/|\alpha_\pm|}^{\T}p.
\]
Thus $2\pi/|\alpha_\pm|\ge\delta_K$, and
\[
\begin{aligned}
\frac{|\g(\T,\Z)|+\rho}{\g(\Z,\Z)}
 &=\max\{|\alpha_+|,|\alpha_-|\}\le\frac{2\pi}{\delta_K},\\
\g(\T,\T)\g(\Z,\Z)
 &<\g(\T,\Z)^2
 \le\frac{(2\pi)^2}{\delta_K^2}\g(\Z,\Z)^2.
\end{aligned}
\]
Division by $\g(\Z,\Z)>0$ proves \eqref{action:causal} off
the axis. If an axis point lay in the ergoregion, the lower
bound applied on a compact neighborhood of its stationary
projection would contradict continuity and $\Z=0$ there.
At an axis point on the ergosurface, both $\T$ and
$\bigl(d[\g(\T,\T)]\bigr)^\sharp$ lie in the fixed Lorentzian two-plane.
Since $\T\ne0$ is null and annihilates $\g(\T,\T)$,
$\bigl(d[\g(\T,\T)]\bigr)^\sharp\in\RR\T$, and hence
\[
|d[\g(\T,\T)]|_\g^2=0,
\]
contrary to the strict positivity proved after
\eqref{canon:twist}. This excludes the axis throughout the
set in the statement.
\end{proof}

For every smooth $\T,\Z$-invariant hypersurface in the
ergoregion with an invariant normal, Lemma~\ref{local:free-action}
and $W>0$ show that its projected circle action is free, with
effective period $2\pi$. Proposition~\ref{canon:reconstruction} gives
the same conclusion on the ergosurface.

With circularity and $W>0$ established, we can choose local
coordinates adapted to the timelike rotational direction.
The distribution $\Z^{\perp_h}$ is integrable, so we flow one
of its local integral surfaces in $Q$ by $\Z$, using the flow
parameter $\phi$. This gives
\begin{equation}\label{action:reduction}
 h=-W\,d\phi^2+q,
\end{equation}
where $\Z=\partial_\phi$ and $q=\g(\T,\T)\sigma$.
Here $\sigma$ is the metric of Definition~\ref{def:orbit-area},
identified with a metric on the transverse surface by the
$\T$-horizontal lift, and $\rho=\sqrt W$.
The Riemannian metric $q$ acts on the transverse surface,
and its coefficients and $\rho$ are independent of $\phi$.
Equivalently, $q$ is the restriction of $h$ to $\Z^{\perp_h}$.
For a $\Z$-invariant scalar $u$, the notation
$|du|_q^2=h^{-1}(du,du)$ therefore has the same meaning in
every such chart.

\subsection{Curvature and conditional pseudoconvexity}\label{subsec:optical}

The three-dimensional metric $h$ in \eqref{action:reduction}
is locally static with respect to $\Z$. Its associated optical
metric, in the convention of Casey, Dunajski, Gibbons, and
Warnick~\cite[Section~I, equation~(1.1)]{CDGW11}, is
\[
 q_{\mathrm{opt}}=W^{-1}q=\frac{\g(\T,\T)}{W}\sigma.
\]
Then
\[
 \rho^{-2}h=-d\phi^2+q_{\mathrm{opt}},
\]
so the $\phi$ direction is orthogonal to the transverse surfaces
and has constant squared length $-1$ in the rescaled metric. An affinely
parametrized null geodesic $(\phi(s),\gamma(s))$ of this product satisfies
\begin{align*}
 \phi''&=0,\\
 \nabla^{q_{\mathrm{opt}}}_{\dot\gamma}\dot\gamma&=0,\\
 |\dot\gamma|_{q_{\mathrm{opt}}}^2&=(\phi')^2.
\end{align*}
Consequently, a unit-speed geodesic $\gamma$ of $q_{\mathrm{opt}}$
gives a null geodesic $s\mapsto(\phi_0+s,\gamma(s))$ of
$\rho^{-2}h$. After reparametrization this is a null geodesic
of $h$, which lifts horizontally, with a further change of
parameter, to a spacetime null geodesic orthogonal to $\T$
by \eqref{local:null-lift}.
We shall relate the signed geodesic curvature in $q_{\mathrm{opt}}$
to the second fundamental form of a timelike surface in $(Q,h)$.
The conformal correction vanishes on null tangent vectors, which
will give the extension inequality and identify its equality case.

\begin{lemma}\label{local:optical-curvature}
Let $\Sigma$ be a smooth $\Z$-invariant timelike surface in
$\pi(U\cap\{\g(\T,\T)>0\})$, represented locally by a curve
in a transverse surface with metric $q_{\mathrm{opt}}$. Let
$e_{\mathrm{opt}},N_{\mathrm{opt}}$ be the unit tangent and normal
of this curve for $q_{\mathrm{opt}}$, identified with their
$\phi$-independent lifts to the product chart. Define the signed
geodesic curvature by
\[
 \nabla^{q_{\mathrm{opt}}}_{e_{\mathrm{opt}}}e_{\mathrm{opt}}
       =\kappa_{\mathrm{opt}}N_{\mathrm{opt}}.
\]
Define its $h$-unit normal and null tangents by
\begin{align*}
 n&=\rho^{-1}N_{\mathrm{opt}},\\
 e_4&=\frac{-\partial_\phi+e_{\mathrm{opt}}}{\sqrt2\rho},\\
 e_3&=\frac{-\partial_\phi-e_{\mathrm{opt}}}{\sqrt2\rho},
\end{align*}
and put $B(X,Y)=-h(\nabla_Xn,Y)$ for $X,Y\in T\Sigma$.
Writing $B_{ab}=B(e_a,e_b)$ for $a,b\in\{3,4\}$,
we have $h(e_3,e_4)=-1$ and
\begin{equation}\label{local:optical-null-curvature}
 B_{33}=B_{44}=\frac{\kappa_{\mathrm{opt}}}{2\rho}.
\end{equation}
If $f$ is a smooth defining function for $\Sigma$, with $\Z f=0$
and $nf>0$, its pullback by $\pi$ satisfies on $\pi^{-1}(\Sigma)$
\begin{equation}\label{flow:null-extension}
 \D^2(f\circ\pi)(X,X)=-(nf)B(d\pi X,d\pi X)
 \quad\bigl(X(f\circ\pi)=\g(X,X)=\g(\T,X)=0\bigr).
\end{equation}
Thus $\kappa_{\mathrm{opt}}>0$ is equivalent to
\eqref{local:convex}. If $\kappa_{\mathrm{opt}}\equiv0$, the integral curves of
$e_3$ and $e_4$ are null geodesics of $(Q,h)$ up to reparametrization.
\end{lemma}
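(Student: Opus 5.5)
The plan is to compute in the product chart of \eqref{action:reduction}, exploiting the conformal relation $h=\rho^2\hat h$ with $\hat h=-d\phi^2+q_{\mathrm{opt}}$. For $\hat h$, which is a genuine product metric, the surface $\Sigma$ is the product of the $\phi$-line with the curve. We take $\hat n=N_{\mathrm{opt}}$ as its unit normal and $\hat e_{4,3}=(-\partial_\phi\pm e_{\mathrm{opt}})/\sqrt2$ as null tangents. Since $\partial_\phi$ is $\hat h$-parallel and the transverse connection is that of $q_{\mathrm{opt}}$, we get
\[
 \hat B(\hat e_4,\hat e_4)=-\hat h(\nabla^{\hat h}_{\hat e_4}N_{\mathrm{opt}},\hat e_4)
 =-\tfrac12 q_{\mathrm{opt}}(\nabla^{q_{\mathrm{opt}}}_{e_{\mathrm{opt}}}N_{\mathrm{opt}},e_{\mathrm{opt}})
 =\tfrac12\kappa_{\mathrm{opt}},
\]
and likewise for $\hat e_3$; the cross terms involving $\partial_\phi$ vanish by $\phi$-independence. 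We also note $\hat h(\hat e_3,\hat e_4)=-1$, which after rescaling by $\rho^{-1}$ gives $h(e_3,e_4)=-1$.

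Next we pass to $h$ using the standard conformal transformation of the second fundamental form. With $n=\rho^{-1}\hat n$, we have $B=\rho\hat B+\rho^2\,(\hat n\log\rho)\,\hat h|_{T\Sigma}$ up to the sign convention; this follows from a formula analogous to \eqref{local:conformal-connection} with $h$ and $\hat h$ interchanged. The umbilic correction vanishes on null vectors. Hence
\[
 B(e_4,e_4)=\rho^{-2}B(\hat e_4,\hat e_4)=\rho^{-2}\cdot\rho\cdot\tfrac12\kappa_{\mathrm{opt}}=\frac{\kappa_{\mathrm{opt}}}{2\rho},
\]
and the same holds for $e_3$. This proves \eqref{local:optical-null-curvature}. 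The point needing care is bookkeeping of the factors of $\rho$ and of the sign of $B$. I would double-check these by a direct Koszul computation of $h(\nabla_{e_4}n,e_4)$ using $h=-\rho^2d\phi^2+\rho^2q_{\mathrm{opt}}$.

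For \eqref{flow:null-extension}, we use the identity derived after \eqref{local:conformal-connection}. For a stationary horizontal null $X$ with $Xf=0$, it gives $\D^2(f\circ\pi)(X,X)=\Hess_hf(d\pi X,d\pi X)=|df|_h\,h(\nabla_{d\pi X}n,d\pi X)$. Here $n=\nabla f/|df|_h$, using that $nf>0$ fixes the orientation of the normal, and $|df|_h=nf$ on $\Sigma$. This yields $-(nf)B$. Any $X$ with $\g(\T,X)=0$ is horizontal and therefore equals the horizontal lift of $d\pi X$, so the formula applies to all vectors in \eqref{local:convex}. The null tangents of $\Sigma$ are exactly the multiples of $e_3$ and $e_4$, by timelikeness. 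Therefore \eqref{local:convex} is equivalent to $B_{33},B_{44}>0$, that is, to $\kappa_{\mathrm{opt}}>0$ since $\rho>0$.

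Finally, suppose $\kappa_{\mathrm{opt}}\equiv0$. Then the curve is a $q_{\mathrm{opt}}$-geodesic, and by the discussion preceding the lemma the curves $s\mapsto(\phi_0\mp s,\gamma(\pm s))$ are $\hat h$-null geodesics. These are exactly the integral curves of $\hat e_4$ and $\hat e_3$, which are parallel multiples of $e_4$ and $e_3$. Conformal invariance of unparametrized null geodesics then makes them $h$-null geodesics up to reparametrization. The main obstacle I expect is consistency of signs and $\rho$-powers in the conformal correction, not any conceptual step.
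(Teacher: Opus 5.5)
Your proposal is correct and follows the paper's route: pass to the product metric $\rho^{-2}h=-d\phi^2+q_{\mathrm{opt}}$, compute the null components of the second fundamental form there, note that the conformal correction is a multiple of $h|_{T\Sigma}$ and so drops out on $e_3,e_4$, and then obtain \eqref{flow:null-extension} from the stationary Hessian identity with $\nabla f=(nf)n$ on $\Sigma$.

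The one slip is in the correction term. It is actually $B=\rho\widehat B-\rho(N_{\mathrm{opt}}\log\rho)\,\widehat h|_{T\Sigma}=\rho\widehat B-(n\log\rho)\,h|_{T\Sigma}$, not $\rho^2(\widehat n\log\rho)\,\widehat h|_{T\Sigma}$. This is harmless, because the term vanishes on null tangents. In the last step, your appeal to conformal invariance of unparametrized null geodesics is equivalent to the paper's direct computation $\nabla_{e_4}e_4=(e_4\log\rho)e_4$.
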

\begin{proof}
For this calculation write $\widehat h=\rho^{-2}h$ and let
$\widehat\nabla$ be its connection. Subtracting the two Koszul
formulas and using $h=\rho^2\widehat h$, we obtain
\begin{align*}
 2\rho^{-2}h(\nabla_XY-\widehat\nabla_XY,V)
 &=2(X\log\rho)\widehat h(Y,V)
   +2(Y\log\rho)\widehat h(X,V)\\
 &\quad-2(V\log\rho)\widehat h(X,Y),\\
 \nabla_XY
 &=\widehat\nabla_XY+(X\log\rho)Y+(Y\log\rho)X
            -\widehat h(X,Y)\operatorname{grad}_{\widehat h}\log\rho.
\end{align*}
Extend $N_{\mathrm{opt}}$ independently of $\phi$. For $X\in T\Sigma$,
$\widehat h(X,N_{\mathrm{opt}})=0$, and differentiation of
$n=\rho^{-1}N_{\mathrm{opt}}$ gives the cancellation
\begin{align*}
 \nabla_Xn
 &=-\rho^{-1}(X\log\rho)N_{\mathrm{opt}}
     +\rho^{-1}\widehat\nabla_XN_{\mathrm{opt}}
     +\rho^{-1}(X\log\rho)N_{\mathrm{opt}}
     +\rho^{-1}(N_{\mathrm{opt}}\log\rho)X\\
 &=\rho^{-1}\bigl\{\widehat\nabla_XN_{\mathrm{opt}}
                     +(N_{\mathrm{opt}}\log\rho)X\bigr\}.
\end{align*}
Hence, for tangent $X,Y$,
\[
 B(X,Y)=-\rho\widehat h(\widehat\nabla_XN_{\mathrm{opt}},Y)
             -\rho(N_{\mathrm{opt}}\log\rho)\widehat h(X,Y).
\]
For $a=3,4$, the second term vanishes when $X=Y=e_a$.
The connection of $-d\phi^2+q_{\mathrm{opt}}$ and
$q_{\mathrm{opt}}(e_{\mathrm{opt}},N_{\mathrm{opt}})=0$ therefore give
\begin{align*}
 B(e_a,e_a)
 &=-\frac1{2\rho}q_{\mathrm{opt}}
        (\nabla^{q_{\mathrm{opt}}}_{e_{\mathrm{opt}}}N_{\mathrm{opt}},
                                                   e_{\mathrm{opt}})\\
 &=\frac1{2\rho}q_{\mathrm{opt}}
        (N_{\mathrm{opt}},\nabla^{q_{\mathrm{opt}}}_{e_{\mathrm{opt}}}
                                             e_{\mathrm{opt}})
 =\frac{\kappa_{\mathrm{opt}}}{2\rho}.
\end{align*}
This proves \eqref{local:optical-null-curvature}. The normalization
$h(e_3,e_4)=-1$ agrees with the normalization of the horizon null
pair in Alexakis, Ionescu, and Klainerman~\cite[equation~(2.1)]{AIK09}.
Here $e_3,e_4$ are tangent to a timelike surface in $(Q,h)$,
and $B$ differentiates its spacelike normal. The horizon forms
$\chi_{AB},\underline\chi_{AB}$ of
\eqref{prep:null-second-forms} instead differentiate null normals
to spacelike two-surfaces in spacetime.

For the defining function in the statement, $\nabla f=(nf)n$
on $\Sigma$, where $nf=df(n)$. Using the stationary Hessian identity following
Definition~\ref{def:stationary-metrics}, we obtain
\begin{align*}
 \D^2(f\circ\pi)(X,X)
 &=\Hess_hf(d\pi X,d\pi X)\\
 &=h\bigl(\nabla_{d\pi X}((nf)n),d\pi X\bigr)\\
 &=(nf)h(\nabla_{d\pi X}n,d\pi X)
 =-(nf)B(d\pi X,d\pi X).
\end{align*}
The conormal is spacelike, since
\[
 \g^{-1}(\pi^{\ast}df,\pi^{\ast}df)=\g(\T,\T)h^{-1}(df,df)
              =\g(\T,\T)(nf)^2>0.
\]
Every nonzero $h$-null vector tangent to $\Sigma$ is proportional
to $e_3$ or $e_4$, proving the equivalence with \eqref{local:convex}.

If $\kappa_{\mathrm{opt}}\equiv0$, set
$Y=-\partial_\phi+e_{\mathrm{opt}}$. The product connection gives
\begin{align*}
 \widehat\nabla_YY
 &=\nabla^{q_{\mathrm{opt}}}_{e_{\mathrm{opt}}}e_{\mathrm{opt}}=0,\\
 \nabla_YY&=2(Y\log\rho)Y.
\end{align*}
Since $e_4=Y/(\sqrt2\rho)$, its acceleration is
\[
 \nabla_{e_4}e_4
 =\frac{\nabla_YY-(Y\log\rho)Y}{2\rho^2}
 =(e_4\log\rho)e_4.
\]
Replacing $e_{\mathrm{opt}}$ by $-e_{\mathrm{opt}}$ gives
$\nabla_{e_3}e_3=(e_3\log\rho)e_3$. Thus both normalized
null fields have integral curves that are geodesics up to
reparametrization.
\end{proof}

To see the side on which the extension takes place, let
$\gamma,\underline\gamma$ be the affine $h$-null geodesics from
$p\in\Sigma$ with velocities $e_4,e_3$. Then
\begin{align*}
 f(\gamma(s))&=-\tfrac12(nf)(p)B_{44}(p)s^2+O(s^3),\\
 f(\underline\gamma(s))&=-\tfrac12(nf)(p)B_{33}(p)s^2+O(s^3).
\end{align*}
When $\kappa_{\mathrm{opt}}>0$, both geodesics lie in $f<0$
for small nonzero $s$. This is the side on which the Killing
field is defined in Proposition~\ref{local:extension}. When
$\kappa_{\mathrm{opt}}$ vanishes identically, they remain on the
surface, giving the geodesics to which we will apply NT.
Figure~\ref{fig:optical-convexity} shows these null directions
and their projections onto the surface with metric $q_{\mathrm{opt}}$.

\begin{figure}[htbp]
\centering
\begin{tikzpicture}[x=1.45cm,y=1.25cm,font=\normalsize,
                   line cap=round,line join=round]
 \node at (0,2.35) {(a) Timelike surface in $(Q,h)$};
 \node at (5.75,2.35) {(b) Surface with metric $q_{\mathrm{opt}}$};
 \begin{scope}[shift={(-.25,0)}]
  % Local product model: x=.3y^2, tau=-phi, projected obliquely.
  % The dashed ambient null lines leave the curved sheet on its f<0 side.
  \draw[black!60,line width=.55pt]
   plot[domain=-1.3:1.3,samples=50,variable=\s]
       ({.3*\s*\s+.8*\s-.46},{.35*\s+1.2075});
  \draw[black!60,line width=.55pt]
       (-.993,.7525)--(-.073,-1.6625);
  \draw[black!60,line width=.55pt]
       (1.087,1.6625)--(2.007,-.7525);
  \draw[black!60,line width=.55pt]
   plot[domain=-1.3:1.3,samples=50,variable=\s]
       ({.3*\s*\s+.8*\s+.46},{.35*\s-1.2075});
  \draw[black!35,line width=.4pt] (-.46,1.2075)--(.46,-1.2075);
  \draw[line width=.85pt]
   plot[domain=-1.3:1.3,samples=50,variable=\s]
       ({.3*\s*\s+.8*\s},{.35*\s});
  \node at (.92,1.10) {$\Sigma$};
  \draw[dashed,line width=.75pt] (-.42,-1.47)--(.50,1.75);
  \draw[dashed,line width=.75pt] (1.32,-.77)--(-1.56,.91);
  \node[above right] at (.50,1.75) {$\gamma$};
  \node[above left] at (-1.56,.91) {$\underline\gamma$};
  \draw[->,line width=.9pt] (0,0)--(.20,.70);
  \draw[->,line width=.9pt] (0,0)--(-.60,.35);
  \node[right=4pt,fill=white,inner sep=1pt] at (.20,.70) {$e_4$};
  \node[above left=1pt,fill=white,inner sep=1pt] at (-.60,.35) {$e_3$};
  \draw[->,line width=.9pt] (0,0)--(1.50,0);
  \node[below=3pt] at (1.30,0) {$n$};
  \draw[->,line width=.8pt] (2.34,-.56)--(2.02,.28);
  \node[above right=1pt] at (2.02,.28) {$-\partial_\phi$};
  \fill (0,0) circle[radius=1.7pt];
  \node[below left=4pt,fill=white,inner sep=1pt] at (0,0) {$p$};
 \end{scope}
 \begin{scope}[shift={(5.75,0)}]
  \fill[black!10]
   plot[domain=-1.7:1.7,samples=50] ({.28*\x*\x},\x)
   --(-1.35,1.7)--(-1.35,-1.7)--cycle;
  \draw[line width=.85pt]
   plot[domain=-1.7:1.7,samples=50] ({.28*\x*\x},\x);
  \node[right] at (.86,1.62) {$f=0$};
  \node at (-.86,.14) {$f<0$};
  \node at (1.58,.77) {$f>0$};
  \draw[dashed,line width=.65pt] (0,-1.68)--(0,1.68);
  \draw[->,line width=.85pt] (0,0)--(0,.77);
  \draw[->,line width=.85pt] (0,0)--(0,-.77);
  \node[left=3pt] at (0,.86) {$e_{\mathrm{opt}}$};
  \node[left=3pt] at (0,-.86) {$-e_{\mathrm{opt}}$};
  \draw[->,line width=.85pt] (0,0)--(1.66,0);
  \node[below] at (1.01,-.08) {$N_{\mathrm{opt}}$};
  \fill (0,0) circle[radius=1.7pt];
  \node[below right=3pt] at (0,0) {$p$};
 \end{scope}
 \node at (2.85,-2.22) {$W^{-1}h=-d\phi^2+q_{\mathrm{opt}}$};
\end{tikzpicture}
\caption{The second fundamental form of $\Sigma$ and the geodesics
of $q_{\mathrm{opt}}$.
In (a), $e_4,e_3$ are tangent to the timelike surface $\Sigma$,
whereas the dashed ambient null geodesics with these initial
velocities leave it on the side $f<0$ for both signs of the
parameter. The curved sheet is shown in a local perspective view,
with the circle parameter $\phi$ unwrapped. In (b), the two
projections onto the transverse surface traverse the same
$q_{\mathrm{opt}}$-geodesic in opposite directions. This curve lies
in the shaded side where $\Z$ is defined because
$\kappa_{\mathrm{opt}}>0$. The normal arrows point toward $f>0$.
The dashed geodesics are drawn schematically as straight lines.}
\label{fig:optical-convexity}
\end{figure}

\subsection{Nontrapping on compact sets}

For the compactness arguments, we need a quantitative consequence
of NT: a zero-energy null segment confined over a fixed compact
set has bounded auxiliary length. We obtain it by parametrizing
the geodesic flow at unit speed for $\mathbf g_+$, which gives
a smooth equation through the ergosurface as well.

Recall the fixed stationary auxiliary metric $\mathbf g_+$ on
$\E$. Its unit tangent bundle is invariant under
$(p,v)\mapsto(\Phi_t^\T p,d\Phi_t^\T v)$.
We work on the quotient of this bundle by that action, with
base projection $[p,v]\mapsto\pi(p)\in Q$, and write $(p,v)$
for a representative, with $v\in T_p\E$ of $\mathbf g_+$-length
one. The geodesic equation parametrized by
$\mathbf g_+$-arclength induces a smooth vector field on this
quotient; $v$ remains a spacetime vector, not a vector in $TQ$.

\begin{lemma}\label{action:escape}
The equation
\begin{equation}\label{action:normalized-flow}
 \D_vv=-\tfrac12(\D_v\mathbf g_+)(v,v)v
\end{equation}
defines a smooth stationary local flow on the $\mathbf g_+$-unit
tangent bundle and preserves
\[
 \g(v,v)=\g(\T,v)=0.
\]
A trajectory whose projection remains in a compact $K\subset Q$
has a limiting position and unit velocity modulo stationary
translations at every finite parameter endpoint, and its
spacetime lift extends through that endpoint.
Under NT, there is $C_K<\infty$, depending only on $K$, the
stationary geometry and $\mathbf g_+$, such that every zero-energy
null geodesic segment $\gamma:I\to\E$ satisfies
\[
 \pi(\gamma(I))\subset K\quad\Longrightarrow\quad
                 \int_I|\dot\gamma|_{\mathbf g_+}\,ds\le C_K.
\]
\end{lemma}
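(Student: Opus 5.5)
I would prove the three assertions in order: smoothness and invariance of the normalized flow, extension at finite endpoints, and then the uniform length bound by contradiction with NT.

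\emph{The normalized flow.} Equation \eqref{action:normalized-flow} is the geodesic equation rewritten in $\mathbf g_+$-arclength. If $\gamma$ is an affine geodesic and $s$ is $\mathbf g_+$-arclength, then $v=\dot\gamma/|\dot\gamma|_{\mathbf g_+}$ satisfies $\D_vv=\lambda v$. Differentiating $\mathbf g_+(v,v)=1$ along the curve, and writing $\D$ for the $\g$-connection, gives
\[
 0=(\D_v\mathbf g_+)(v,v)+2\mathbf g_+(\D_vv,v)=(\D_v\mathbf g_+)(v,v)+2\lambda ,
\]
which fixes $\lambda$. Conversely, \eqref{action:normalized-flow} is a smooth second-order ODE, because $\D$ and $\mathbf g_+$ are smooth. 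It preserves the unit bundle by the same identity. Since $\mathbf g_+$ and $\g$ are $\T$-invariant, the flow commutes with $d\Phi^\T_t$ and descends to the quotient bundle. Its solutions are reparametrized $\g$-geodesics, so $\g(v,v)$ and $\g(\T,v)$ satisfy linear ODEs: their derivatives are $2\lambda\g(v,v)$ and $\lambda\g(\T,v)$, using the Killing equation. Hence their vanishing is preserved. I stress that nothing degenerates at the ergosurface, unlike for $h$.

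\emph{Endpoints.} Suppose the projection stays in $K$ on $[0,s_1)$ with $s_1<\infty$. Lift $\pi^{-1}(K)$ to the section $\{\vartheta=0\}$; it is compact by Proposition~\ref{prep:quotient}. The quotient unit bundle over $K$ is therefore compact, the vector field is bounded there, and the trajectory is uniformly Lipschitz, hence Cauchy. It thus has a limit $[p,v]$. For the spacetime lift, $\frac{d}{ds}\vartheta(\gamma)=d\vartheta(v)$ is bounded over the compact set, so $\vartheta$ stays bounded and the lift converges in $\E$. ODE existence at the limit point extends it. Consequently, maximal trajectories with projection in $K$ exist for all $s\in\RR$, and a maximal geodesic of infinite $\mathbf g_+$-length in both directions with projection in $K$ contradicts NT.

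\emph{Length bound.} Suppose segments $\gamma_j$ with projection in $K$ have $\mathbf g_+$-lengths $\ell_j\to\infty$. Parametrize each by arclength on $[-\ell_j/2,\ell_j/2]$, after translating the parameter, and translate stationarily so that the midpoint lies in the compact lift. By compactness of the quotient unit bundle over $K$, pass to a subsequence whose midpoints converge to $[p,v]$ with $\g(v,v)=\g(\T,v)=0$, the constraints being closed conditions. By continuous dependence on initial data, on every compact interval $[-S,S]$ the trajectories $\gamma_j$ converge uniformly to the trajectory $\gamma_\infty$ from $[p,v]$. Hence $\pi(\gamma_\infty([-S,S]))\subset K$ because $K$ is closed, the convergence being valid on $[-S,S]$ as long as $\gamma_\infty$ is defined there. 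The endpoint step shows $\gamma_\infty$ extends to all $s\in\RR$ with projection in $K$. It is a zero-energy null geodesic, maximal because both of its ends have infinite arclength: a reparametrized geodesic of infinite $\mathbf g_+$-length cannot have an endpoint in $\E$ with projection staying in $K$, by the endpoint argument applied to the affine parametrization. This contradicts NT, so $C_K$ exists.

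The main subtle point is making maximality of the limit rigorous. A geodesic could be inextendible in $\E$ yet escape only through the horizon or infinity; the limit cannot do this, since its projection lies in $K$. I would also note that the constant depends only on the stated data because the argument uses only compactness of $K$ and the stationary geometry.
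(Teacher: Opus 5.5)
Your proposal is correct and follows the paper's proof essentially step for step: the normalized flow with its constraint ODEs, the Lipschitz bound on the compact quotient unit bundle together with bounded $d\vartheta(v)$ for finite endpoints, and a compactness and continuous-dependence limit that contradicts NT. The only variation is how you certify maximality of the limit. The paper orients the segments to the future and uses $c_K\le d\vartheta(v)$ to get $\vartheta(\gamma(s))\to\pm\infty$. You instead use infinite $\mathbf g_+$-arclength in both directions, which also works: any geodesic extension would contain the whole curve as a sub-arc with an end at an interior parameter, giving it finite $\mathbf g_+$-length.
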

\begin{proof}
We argue by compactness of the unit initial data. Parametrize
the curves by $\mathbf g_+$-arclength and write $v=\dot\gamma$.
Differentiating $\mathbf g_+(v,v)=1$, we first write the normalized geodesic
equation and check preservation of the null and zero-energy
constraints:
\[
 \begin{aligned}
 \D_vv&=-\tfrac12(\D_v\mathbf g_+)(v,v)v,\\
 \frac d{ds}\g(v,v)&=-(\D_v\mathbf g_+)(v,v)\g(v,v),\\
 \frac d{ds}\g(\T,v)
 &=-\tfrac12(\D_v\mathbf g_+)(v,v)\g(\T,v).
 \end{aligned}
\]
The normalized geodesic vector field preserves the closed set
$\g(v,v)=\g(\T,v)=0$ in the unit tangent bundle. The quotient unit
bundle over $K$ is compact. For a fixed auxiliary distance on
this bundle, boundedness of the vector field gives
\[
 \operatorname{dist}\bigl([\gamma(s),v(s)],
                         [\gamma(s'),v(s')]\bigr)
       \le C_K|s-s'|.
\]
Thus $[\gamma(s),v(s)]$ has a limit at every finite endpoint. In the
stationary product coordinates supplied by
Proposition~\ref{prep:quotient}, its lift satisfies
\[
 |\vartheta(\gamma(s))-\vartheta(\gamma(s'))|
       \le C_K|s-s'|,
\]
because $d\vartheta$ is stationary and bounded on the compact
unit bundle. The position and velocity therefore have a
spacetime limit, and local existence for
\eqref{action:normalized-flow} continues the lift.

If the asserted length bound failed, we could take confined segments
with lengths tending to infinity. We center them at their
midpoints and orient them to the future, so that a limit of
their unit initial data will give a geodesic defined in both
directions.
After passing to a subsequence, their unit initial data satisfy
$\xi_j=[p_j,v_j]\to\xi$ in the compact quotient unit bundle
over $K$.
Cut off the induced normalized field on the quotient outside a
relatively compact neighborhood of this bundle.
Denote its complete cut-off flow by $\Phi_s$ and use the same
auxiliary Riemannian distance on the quotient bundle.
In a finite coordinate cover of this compact set, the smooth
vector field has bounded first derivatives. The variational
equation for its flow and Gronwall's inequality give, for
every fixed $R$,
\[
 \sup_{|s|\le R}\operatorname{dist}(\Phi_s\xi_j,\Phi_s\xi)
       \le C_R\operatorname{dist}(\xi_j,\xi)\longrightarrow0.
\]
For large $j$ the centered original segment contains $[-R,R]$,
so its limit stays in the closed quotient unit bundle over $K$ there.
As $R$ is arbitrary, the limiting trajectory solves the
original equation on $\RR$ without entering the cutoff region.
On the compact set of stationary classes of future null vectors
with $\mathbf g_+(v,v)=1$ over $K$, the stationary covector
$d\vartheta$ satisfies
\[
 0<c_K\le d\vartheta(v)\le C_K.
\]
The spacetime lift exists on every finite interval and obeys
$\vartheta(\gamma(s))\to\pm\infty$. It is a maximal
zero-energy null geodesic confined over $K$, contrary to NT.

\end{proof}

\subsection{Estimates along complete orbits}

We now prove a bound for the rotational direction that is
uniform over the domains carrying $\Z$. The quantity to control
is its auxiliary size $|\Z|_{g_+}/\rho$. Although
$h(\Z/\rho,\Z/\rho)=-1$, the unit timelike hyperboloid is
noncompact, so we need to use the geometry of the complete orbit.

Recall that $U\supset\pi^{-1}(C_+)$ is connected and stationary,
and that $\Z$ is Killing on $U$, commutes with $\T$, and has
a complete flow of effective period $2\pi$. Circularity gives a system
of ordinary differential equations for the velocity and acceleration along each orbit, with
constant coefficients in a parallel frame. We will show that an
unbounded sequence of $|\Z|_{g_+}/\rho$ yields a null limit of
this system whose auxiliary length is infinite. Since the limit
stays over a compact set, the length estimate of
Lemma~\ref{action:escape} gives the contradiction. The orbit
argument is intrinsic to Lorentzian geometry, so we first state
it under its geometric hypotheses and then verify them here.

\begin{lemma}\label{action:intrinsic-orbits}
Let $h$ be a smooth Lorentzian metric on a three-dimensional
manifold $Q$, let $g_+$ be a smooth Riemannian metric, and let
$K\subset Q$ be compact. Suppose every $h$-null geodesic segment
contained in $K$ has $g_+$-length at most $L_K$.
For every hypersurface-orthogonal timelike Killing field $\Z$
on an open subset of $Q$, put $\rho=\sqrt{-h(\Z,\Z)}$.
On every periodic $\Z$-orbit contained in $K$,
\begin{equation}\label{action:intrinsic-bound}
 \left|\frac{\Z}{\rho}\right|_{g_+}
 +\left|\nabla\left(\frac{\Z}{\rho}\right)\right|_{g_+}
 +|d\log\rho|_{g_+}\le C_K.
\end{equation}
Here $\nabla$ is the connection of $h$, and $g_+$ induces all
vector, covector, and tensor norms in the estimate. The constant depends
only on $K,h,g_+$ and $L_K$, independently of the field and
its domain.
\end{lemma}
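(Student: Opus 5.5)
The plan is to reduce everything to a closed system of ordinary differential equations along a single orbit, and then argue by contradiction through rescaling. The rescaled limit should be an $h$-null curve defined for all parameter values, with infinite $g_+$-length inside $K$, which contradicts the assumed bound $L_K$.

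\textbf{Step 1: the orbit system.} Write $U=\Z/\rho$. Hypersurface orthogonality gives $\Z^\flat\wedge d\Z^\flat=0$. Combined with the Killing equation in three dimensions, this forces
\[
\nabla_a\Z_b=\Z_b\,\partial_a\log\rho-\Z_a\,\partial_b\log\rho .
\]
From this I read off two facts.
\begin{itemize}
\item The acceleration of the orbit is $A:=\nabla_UU=\nabla\log\rho$, where the gradient is taken with respect to $h$. It is $h$-orthogonal to $U$, because $\Z\rho=0$.
\item The covariant derivative of the acceleration is $\nabla_UA=h(A,A)\,U$.
\end{itemize}
The second identity follows by differentiating $\nabla\log\rho$ along $\Z$. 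It uses $\LL_\Z\nabla\log\rho=0$ and the displayed formula for $\nabla\Z$.

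Consequently, in an $h$-parallel frame along the orbit, the pair $(U,A)$ satisfies a linear system with constant coefficients; $h(A,A)=|d\log\rho|_h^2$ is constant along the orbit. The same formula shows that $\nabla U$ is determined algebraically by $U$ and $d\log\rho$. Hence it suffices to bound $|U|_{g_+}$ and $|d\log\rho|_{g_+}$ on orbits contained in $K$. On such orbits $h,h^{-1}$ and their derivatives are uniformly controlled in $g_+$.

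\textbf{Step 2: rescaling to a null limit.} Suppose the estimate fails. Then there are fields $\Z_j$ and periodic orbits in $K$, together with points $p_j$ on them, with
\[
M_j:=|U|_{g_+}(p_j)+|A|_{g_+}^{1/2}(p_j)\to\infty .
\]
\begin{itemize}
\item \emph{Normalization.} Choose the point on each orbit nearly maximizing $M_j$. Reparametrize by $t=M_js$, and set $V_j=U/M_j$ and $B_j=A/M_j^2$. The system becomes
\[
\dot x=V_j,\qquad \nabla_{V_j}V_j=B_j,\qquad \nabla_{V_j}B_j=h(B_j,B_j)\,V_j\,M_j^{-2}\cdot M_j^{2}\cdot M_j^{-2}\!,
\]
with bounded right-hand sides in $g_+$.
\item \emph{Extension to all of $\RR$.} Since the orbits are periodic, we may repeat them indefinitely. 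The rescaled curves are therefore defined for every $t\in\RR$ and remain in $K$.
\item \emph{The limit.} Gronwall and Arzel\`a--Ascoli give a limit curve $x(t)$ in $K$, defined for all $t$. It satisfies $h(V,V)=\lim(-M_j^{-2})=0$ and $h(V,B)=0$, with $|V|_{g_+}+|B|^{1/2}_{g_+}=1$ at $t=0$.
\item \emph{Infinite length.} The maximizing choice and the linear constant-coefficient structure prevent $|V|_{g_+}$ from decaying on average. So the limit has infinite $g_+$-length.
\end{itemize}

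\textbf{Step 3 (main obstacle): the limit is a geodesic.} It remains to show that the limit is a reparametrized $h$-null geodesic. Then a long subsegment contradicts $L_K$. The danger is the case where the normalized acceleration $B$ survives the limit with a component transverse to $V$ in $V^{\perp_h}$.

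My first attempt uses $h(A,A)=|d\log\rho|_h^2$ and the hyperbolic structure of the frame solution, $U(s)=\cosh(\kappa s)U_0+\kappa^{-1}\sinh(\kappa s)A_0$ with $\kappa^2=h(A,A)$. If the transverse part of $B$ stayed of order one, the parallel-frame solution would grow exponentially in $t$. That would contradict the uniform bound on $V$ along the whole infinite periodic curve, which holds by the maximizing choice. So in the limit $B$ must be proportional to $V$, and the curve is a geodesic up to reparametrization.

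If this fails, the fallback is to estimate $|A|_{g_+}$ separately first. Periodicity forces $\kappa$ times the $h$-proper period to stay bounded, since otherwise $U$ could not return to its initial value along a closed curve of bounded geometry. I would then rescale only $U$.
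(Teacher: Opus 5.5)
Your Step 1 is correct; it is the paper's orbit system, with $h$-unit speed instead of $g_+$-normalized speed.

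\textbf{The gap is in Step 2: the rescaled system does not have bounded coefficients.} With $t=M_js$, $V_j=U/M_j$ and $B_j=A/M_j^2$, one gets
\[
 \nabla_{V_j}B_j=M_j^{-2}h(A,A)\,V_j=M_j^{2}\,h(B_j,B_j)\,V_j .
\]
A priori you only know $h(A,A)\le C|A|_{g_+}^2\le CM_j^4$, so the coefficient $\nu_j^2=\kappa_j^2/M_j^2$ is only $O(M_j^2)$. Without a bound on $\nu_j$ there is no Gronwall/Arzel\`a--Ascoli limit.

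This unbounded coefficient is exactly the ``transverse acceleration'' you worry about in Step~3. Since $h(B_j,B_j)=\nu_j^2/M_j^2$, a bounded $\nu_j$ gives $h(B,B)=h(V,B)=h(V,V)=0$ in the limit. In a three-dimensional Lorentzian space, a null vector orthogonal to a nonzero null $V$ is a multiple of $V$, so $B=bV$ by linear algebra alone.

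\textbf{The missing idea is a bound on $\nu$ and on the initial acceleration.} The paper normalizes only by $m=\max_{\mathrm{orbit}}|\Z|_{g_+}$; in your notation this is $M=\max_{\mathrm{orbit}}|U|_{g_+}$ and $\nu=\kappa/M$. It starts at a maximizing point and applies the second-derivative test there:
\[
 0\ge\frac{d^2}{dt^2}g_+(V,V)\Big|_{t=0}
 =2|A(0)|_{g_+}^2+2\nu^2+O\bigl(1+|A(0)|_{g_+}\bigr).
\]
This gives $|A(0)|_{g_+}^2+\nu^2\le C$, i.e.\ $|d\rho|_q\le Cm$.

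Your growth heuristic can do the same job, but only with this normalization. Then $|V(0)|_{g_+}=1$ and $|V|_{g_+}\le1$ on the whole orbit, so parallel transport over $|t|\le1$ is bounded. Adding and subtracting the frame solutions at $t=\pm1$ then bounds $\cosh(\nu)V(0)$ and $\nu^{-1}\sinh(\nu)A(0)$. Putting $|A|_{g_+}^{1/2}$ into $M_j$ destroys this, because it allows $|V_j(0)|_{g_+}\to0$, and then the limit need not even be a nonconstant null curve.

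\textbf{The infinite-length step also needs an actual argument}; ``does not decay on average'' is not a reason. In the paper, after $A(0)=bV(0)$ one reverses time so that $b\ge0$. Then
\[
 V(t)=f(t)P_tV(0),\qquad f(t)=\cosh(\nu t)+(b/\nu)\sinh(\nu t)\ge1 \quad (t\ge0),
\]
with $f(t)=1+bt$ when $\nu=0$; without fixing the sign of $b$ the factor can vanish. The hypothesis itself then controls parallel transport: $|P_tV(0)|_{g_+}\ge e^{-C\ell(t)}\ge e^{-CL_K}$. Hence $\ell(t)\ge ct$, contradicting $\ell(t)\le L_K$.

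\textbf{Your fallback does not work.} In the parallel frame, the closing condition $U(T)=U(0)$, $A(T)=A(0)$ only determines the $h$-holonomy around the orbit, as a boost of rapidity $\kappa T$. So periodicity is compatible with every value of $\kappa T$. Bounding it would require an a priori bound on the $g_+$-length of the orbit, which is not available.
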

\begin{proof}
Hypersurface orthogonality gives local coordinates in which
$\Z=\partial_\phi$ and $h=-\rho^2d\phi^2+q$, with $q$ Riemannian
and all coefficients independent of $\phi$. Thus the $h$-gradient
$\nabla\rho$ is the $q$-gradient on the transverse surface.
Let $x^A$, $A=1,2$, be coordinates on the transverse surface,
and let $\Gamma$ denote the Christoffel symbols of $h$ in
$(\phi,x^1,x^2)$. From $h_{\phi\phi}=-\rho^2$ and $h_{A\phi}=0$,
\begin{align*}
 \Gamma^\phi_{A\phi}
 &=\tfrac12(-\rho^{-2})(-2\rho\partial_A\rho)
   =\rho^{-1}\partial_A\rho,\\
 \Gamma^A_{\phi\phi}
 &=-\tfrac12q^{AB}\partial_B(-\rho^2)
   =\rho q^{AB}\partial_B\rho.
\end{align*}
Moreover $\Gamma^\phi_{AB}=\Gamma^A_{B\phi}=0$, since the coefficients
are independent of $\phi$. It follows that
\begin{align*}
 \nabla_\Z\Z&=\rho\nabla\rho,\\
 \nabla_\Z\nabla\rho
 &=\rho^{-1}q^{AB}(\partial_A\rho)(\partial_B\rho)\Z
   =\rho^{-1}|d\rho|_q^2\Z,\\
 \nabla_\Z(\nabla_\Z\Z)&=|d\rho|_q^2\Z.
\end{align*}
Both coefficients $\rho,|d\rho|_q$ are constant along an orbit.
We normalize the largest auxiliary speed on the orbit to one.
At a point attaining this maximum, the second derivative of the
speed will bound both the acceleration and the coefficient in
the orbit equation. Set
$m=\max_{\rm orbit}|\Z|_{g_+}$, start at a maximum, and
parametrize by $\gamma(t)=\Phi_{t/m}^\Z p$. Write
$V=\dot\gamma=\Z/m$ for its velocity and $A=\nabla_VV$ for its
acceleration, and set $\nu=|d\rho|_q/m$. Then
\begin{equation}\label{action:orbit-ode}
 \begin{aligned}
 \nabla_VV&=A,\\
 \nabla_VA&=\nu^2V,
 \end{aligned}
\end{equation}
with $|V|_{g_+}\le1$ and $|V(0)|_{g_+}=1$.
At the maximum, differentiating the fixed auxiliary metric
first gives
\[
 \frac d{dt}g_+(V,V)=(\nabla_Vg_+)(V,V)+2g_+(A,V).
\]
Differentiating each term once more gives, with every quantity
in the last line evaluated at $t=0$,
\[
 \begin{aligned}
 0&\ge\left.\frac{d^2}{dt^2}g_+(V,V)\right|_{t=0}\\
 &=2|A(0)|_{g_+}^2+2\nu^2
   +(\nabla^2g_+)(V,V,V,V)
   +(\nabla_Ag_+)(V,V)+4(\nabla_Vg_+)(A,V).
 \end{aligned}
\]
Here $\nabla^2g_+$ is the second covariant derivative of the
auxiliary metric formed with the connection of $h$. The constants in this
proof depend only on $K,h,g_+$ and the assumed length bound $L_K$.
Boundedness of the metric derivatives on $K$ gives
\begin{align*}
2|A(0)|_{g_+}^2+2\nu^2
 &\le C+C|A(0)|_{g_+}\le |A(0)|_{g_+}^2+C,\\
\shortintertext{and therefore}
|A(0)|_{g_+}^2+\nu^2&\le C.
\end{align*}
Since $\nu=|d\rho|_q/m$, this also gives
\[
 |d\rho|_q\le Cm.
\]
Let $P_t:T_{\gamma(0)}Q\to T_{\gamma(t)}Q$ be $h$-parallel
transport. Its operator norm uses $g_+$ at the two endpoints. The
two equations in \eqref{action:orbit-ode} have the exact solution
\[
 \begin{pmatrix}P_t^{-1}V(t)\\P_t^{-1}A(t)\end{pmatrix}
 =\exp\left(t\begin{pmatrix}0&1\\\nu^2&0\end{pmatrix}\right)
   \begin{pmatrix}V(0)\\A(0)\end{pmatrix}.
\]
For $\nu>0$ the two entries are
\[
\begin{aligned}
P_t^{-1}V(t)&=\cosh(\nu t)V(0)+\frac{\sinh(\nu t)}\nu A(0),\\
P_t^{-1}A(t)&=\nu\sinh(\nu t)V(0)+\cosh(\nu t)A(0).
\end{aligned}
\]
At $\nu=0$ these expressions have continuous limits
$V(0)+tA(0)$ and $A(0)$, so the estimates are uniform
as $\nu\to0$.
For a parallel vector $Y$,
$|\frac d{dt}|Y|_{g_+}^2|\le C|V|_{g_+}|Y|_{g_+}^2$, so
$\|P_t\|_{g_+}\le e^{C|t|}$. The initial bound and the matrix
formula therefore bound $V,A$ on every fixed time interval,
uniformly over all complete orbits.

We can now prove the bound for $m/\rho$. Suppose it fails and
take complete orbits over $K$ with $\rho_j/m_j\to0$, where the
subscript $j$ denotes the quantities just defined on the $j$th
orbit. Their normalized velocities approach the null cone,
while the preceding estimate bounds the accelerations and
coefficients needed to pass to the orbit equation. Pass to a subsequence of the bounded
initial data with $\nu_j\to\nu\ge0$. In a coordinate chart
\eqref{action:orbit-ode} is the fixed system
\[
\begin{aligned}
\dot\gamma^i&=V^i,\\
\dot V^i&=A^i-\Gamma^i_{k\ell}V^kV^\ell,\\
\dot A^i&=\nu^2V^i-\Gamma^i_{k\ell}V^kA^\ell,\\
\dot\nu&=0,
\end{aligned}
\]
where $i,k,\ell=1,2,3$ and the Christoffel symbols are those
of the fixed metric $h$.
For $|t|\le R$, the preceding bounds put all these solutions
in a compact set of position, velocity, acceleration and parameter
values $(\gamma,V,A,\nu)$ over $K$. The right-hand
side has a common Lipschitz bound there. Using a fixed auxiliary
distance on the bundle of these variables, Gronwall's inequality successively
in a finite chart cover gives
\[
 \sup_{|t|\le R}
 \operatorname{dist}\bigl((\gamma_j,V_j,A_j,\nu_j)(t),
               (\gamma,V,A,\nu)(t)\bigr)
 \le C_R\operatorname{dist}\bigl((\gamma_j,V_j,A_j,\nu_j)(0),
                     (\gamma,V,A,\nu)(0)\bigr)\longrightarrow0.
\]
We first cut off the system
outside the common compact set. On every finite interval the
limit stays over $K$ and solves the original equation.
Uniqueness identifies these limits as $R$ increases, giving a
solution on all of $\RR$. The identities
\[
\begin{aligned}
h(V_j,V_j)&=-\rho_j^2/m_j^2,\\
h(V_j,A_j)&=0,\\
h(A_j,A_j)&=\rho_j^2\nu_j^2/m_j^2
\end{aligned}
\]
pass to the limit. Since $|V(0)|_{g_+}=1$, the vector $V(0)$
is nonzero and null. Complete it to a null frame
$V(0),\underline V,E$ at $\gamma(0)$, with
$h(V(0),\underline V)=-1$ and $h(E,E)=1$.
For some $b\in\RR$, the two limiting orthogonality relations give
\[
\begin{aligned}
 A(0)&=bV(0)+h(A(0),E)E,\\
 0&=h(A(0),A(0))=h(A(0),E)^2,
\end{aligned}
\]
so $A(0)=bV(0)$.
Reverse time if necessary
to have $b\ge0$. The matrix formula for $h$-parallel transport
$P_t:T_{\gamma(0)}Q\to T_{\gamma(t)}Q$ now gives
\[
 V(t)=
 \begin{cases}
 [\cosh(\nu t)+(b/\nu)\sinh(\nu t)]P_tV(0),&\nu>0,\\
 (1+bt)P_tV(0),&\nu=0.
 \end{cases}
\]
Both scalar factors are at least one for $t\ge0$, and
$\nabla_VV=A\in\RR V$. We have therefore obtained a nonzero null curve on
$[0,\infty)$ which is a geodesic up to reparametrization. Its image stays in $K$, so the assumed length
bound gives
\[
 \ell(t)=\int_0^t|V(s)|_{g_+}\,ds\le L_K.
\]
On the other hand,
parallel transport and the displayed formula for $V$ give
\begin{align*}
|V(t)|_{g_+}&\ge|P_tV(0)|_{g_+}\ge e^{-C\ell(t)}\ge c_K,\\
\shortintertext{and hence}
\ell(t)&\ge c_Kt,
\end{align*}
a contradiction. Hence $m+|d\rho|_q\le C\rho$.

The connection of $h=-\rho^2d\phi^2+q$ also gives, for every
vector $X$,
\begin{align*}
 \nabla_X\Z
 &=(X\log\rho)\Z-h(X,\Z)\nabla\log\rho,\\
 \nabla_X\left(\frac{\Z}{\rho}\right)
 &=-h\left(X,\frac{\Z}{\rho}\right)\nabla\log\rho.
\end{align*}
We have bounded $\Z/\rho$ in the auxiliary norm, with
$h(\Z/\rho,\Z/\rho)=-1$. Its orthogonal planes therefore
range over a compact family of spacelike planes, on which $h$
and $g_+$ are uniformly equivalent. Since $\Z\rho=0$, the
gradient lies in these planes. Hence
\[
\begin{aligned}
 |\nabla\log\rho|_{g_+}^2
 &\le C h(\nabla\log\rho,\nabla\log\rho)
   =C|d\log\rho|_q^2\le C,\\
 |d\log\rho|_{g_+}
 &=|h(\nabla\log\rho,\cdot)|_{g_+}\le C.
\end{aligned}
\]
The derivative identity above then bounds
$|\nabla(\Z/\rho)|_{g_+}$ and proves
\eqref{action:intrinsic-bound}.
\end{proof}

We apply this lemma to the metric $h$ of
Definition~\ref{def:stationary-metrics}. By
\eqref{local:null-lift}, its null geodesics lift horizontally to
zero-energy spacetime null geodesics. On a compact subset of
$\{\g(\T,\T)>0\}$ this lifting map is smooth, and the
$g_+$-norm of a vector in $TQ$ is uniformly equivalent to the
$\mathbf g_+$-norm of its lift. Parametrize an $h$-null curve
by $g_+$-arclength. In the
stationary product coordinates its horizontal velocity is
\[
 v^i\partial_{x^i}
 -\frac{\g(\T,\partial_{x^i})v^i}{\g(\T,\T)}\T.
\]
Its bounded stationary component gives a lift on every finite
interval. Lemma~\ref{action:escape} consequently supplies $L_K$.
Circularity gives hypersurface orthogonality of the projected
rotational field, and its complete periodic flow supplies the
periodic orbits required above.

\begin{proposition}\label{action:relative}
For every compact $K\Subset\{\g(\T,\T)>0\}\subset Q$ there is
$C\ge1$ such that, along every complete $\Z$-orbit in $U$
whose stationary projection lies in $K$,
\begin{equation}\label{action:relative-est}
\begin{aligned}
\frac{|\Z|_{g_+}}{\rho}
 +\left|\nabla(\Z/\rho)\right|_{g_+}+|d\log\rho|_q&\le C,\\
C^{-1}\g(\Z,\Z)&\le W\le C\g(\Z,\Z),\\
|\Z|_{\mathbf g_+}&\le C\rho.
\end{aligned}
\end{equation}
Here $W=\rho^2=-h(\Z,\Z)$ on $Q$, and $q$ is the Riemannian
metric in \eqref{action:reduction}. The constant depends only
on $K$, the stationary geometry, and $g_+,\mathbf g_+$, and is
independent of $U,\Z$. Thus $\Z/\rho$ on these projected orbits
lies in a compact subset of $\{v\in TQ:h(v,v)=-1\}$ over $K$
independent of $U,\Z$.
\end{proposition}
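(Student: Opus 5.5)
The plan is to reduce the proposition to Lemma~\ref{action:intrinsic-orbits} applied to $(Q,h)$ over the compact set $K$, and then convert its conclusions into the three displayed estimates using compactness of $K$ in the ergoregion together with Lemma~\ref{action:area}.

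\emph{Step 1: hypotheses of Lemma~\ref{action:intrinsic-orbits}.} On $\pi(U)\cap\{\g(\T,\T)>0\}$, Lemma~\ref{action:area} gives $W>0$, so the projected field is $h$-timelike with $\rho=\sqrt{-h(\Z,\Z)}=\sqrt W$. Circularity makes $\{\T,\Z\}^{\perp_\g}$ integrable, and its projection is $\Z^{\perp_h}$, so $\Z$ is hypersurface orthogonal for $h$, as in \eqref{action:reduction}. A complete periodic orbit of period $2\pi$ projects to a periodic $\Z$-orbit in $K$. For the length bound $L_K$, I would use the remark preceding the proposition: an $h$-null segment in $K$ lifts horizontally, after reparametrization by \eqref{local:null-lift}, to a zero-energy spacetime null segment confined over $K$. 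Lemma~\ref{action:escape} bounds its $\mathbf g_+$-length by $C_K$, and uniform equivalence of $g_+$ with $\mathbf g_+$ on horizontal lifts over $K$ converts this into a $g_+$ bound $L_K$. The constant then depends only on $K$, the stationary geometry and the auxiliary metrics.

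\emph{Step 2: the first line.} Lemma~\ref{action:intrinsic-orbits} bounds $|\Z/\rho|_{g_+}$, $|\nabla(\Z/\rho)|_{g_+}$ and $|d\log\rho|_{g_+}$. To replace the last by $|d\log\rho|_q$, I note that $d\rho$ annihilates $\Z$, so $\nabla\log\rho$ lies in $\Z^{\perp_h}$. At the end of the proof of the lemma, these planes form a compact family of spacelike planes, since $\Z/\rho$ is bounded with $h(\Z/\rho,\Z/\rho)=-1$. On them $q=h|_{\Z^{\perp_h}}$ and $g_+$ are uniformly equivalent, so $|d\log\rho|_q\le C|d\log\rho|_{g_+}$.

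\emph{Step 3: the second and third lines.} For $W\le C\g(\Z,\Z)$, I would use \eqref{action:causal}, which gives $\rho\le(2\pi/\delta_K)\g(\Z,\Z)$. I would combine this with an upper bound for $\g(\Z,\Z)$ in terms of $\rho$, obtained from the third line: $|\Z|_{\mathbf g_+}\le C\rho$ gives $\g(\Z,\Z)\le C\rho^2$. This is the direction $W\ge c\,\g(\Z,\Z)$. For the reverse direction, \eqref{action:causal} gives $\rho\le C\g(\Z,\Z)$, hence $W=\rho^2\le C\g(\Z,\Z)^2\le C\g(\Z,\Z)\cdot C\rho^2$. That is circular, so instead I write
\[
W=\g(\T,\Z)^2-\g(\T,\T)\g(\Z,\Z)\le\bigl(|\g(\T,\Z)|+\rho\bigr)^2\le C\g(\Z,\Z)^2 .
\]
I then use $\g(\Z,\Z)\le C\rho^2$ together with the lower bound $\g(\Z,\Z)\ge c_K$ from \eqref{action:causal}. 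More simply, $\g(\Z,\Z)\le C\rho^2=CW$, and the bound $W\le C\g(\Z,\Z)$ follows from $W\le C\g(\Z,\Z)^2$ once $\g(\Z,\Z)$ is bounded above on $K$. That upper bound follows from $\g(\Z,\Z)\le C|\Z|^2_{\mathbf g_+}\le C\rho^2$ and $\rho^2\le C\g(\Z,\Z)^2$, which leaves only a bounded ratio.

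For the third line, I decompose the spacetime vector as $\Z=\Z^{\mathrm{hor}}+a\T$ with $a=\g(\T,\Z)/\g(\T,\T)$. The horizontal part has $\mathbf g_+$-norm comparable to $|\Z|_{g_+}\le C\rho$. The vertical coefficient satisfies $|a|\le C|\g(\T,\Z)|\le C\g(\Z,\Z)$ by \eqref{action:causal} and $\min_K\g(\T,\T)>0$. Since $\g(\Z,\Z)=|\Z^{\mathrm{hor}}|_\g^2+\dots$ is itself controlled by $|\Z^{\mathrm{hor}}|^2$ and $a^2$, I expect to close this with the identity $\g(\Z,\Z)\,\g(\T,\T)=\g(\T,\Z)^2-W$, which in fact gives $\g(\Z,\Z)\le\g(\T,\Z)^2/\min_K\g(\T,\T)$.

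\emph{Expected main obstacle.} The hardest point is closing Step 3 without circularity, namely bounding the vertical component $\g(\T,\Z)$ by $\rho$. My first attempt would use the horizontal lift: $h(\Z,\Z)=\g(\T,\T)\g(\Z^{\mathrm{hor}},\Z^{\mathrm{hor}})$, and the $h$-norm is bounded by the $g_+$-norm, giving $\g(\Z^{\mathrm{hor}},\Z^{\mathrm{hor}})\le C\rho^2$. Then $\g(\Z,\Z)=\g(\Z^{\mathrm{hor}},\Z^{\mathrm{hor}})+a^2\g(\T,\T)$ with $a^2\g(\T,\T)=\g(\T,\Z)^2/\g(\T,\T)$. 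The needed inequality $|\g(\T,\Z)|\le C\rho$ would follow from \eqref{action:causal} if $\g(\Z,\Z)\le C\rho$. So I would first establish an absolute upper bound $\g(\Z,\Z)\le C$, from $|\g(\T,\Z)|\le C\g(\Z,\Z)$ together with $\g(\Z,\Z)\le C\rho^2+C\g(\Z,\Z)^2$. If that fails, I would derive the vertical bound directly from the stationary temporal coordinate: $\vartheta$ is bounded along each closed orbit in $K$, and $\Z\vartheta$ has zero average over the orbit.
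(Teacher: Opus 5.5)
Your Steps~1 and~2 are fine. Step~3 has a genuine gap: the inequality $W\le C\,\g(\Z,\Z)$, equivalently $\g(\Z,\Z)\ge c\rho^2$, is never proved, and several manipulations run in the wrong direction.
\begin{itemize}
\item From $\g(\Z,\Z)\le C\rho^2$ and $\rho^2\le C\g(\Z,\Z)^2$ you only obtain the lower bound $\g(\Z,\Z)\ge c$, not an upper bound.
\item Likewise, $|\g(\T,\Z)|\le C\g(\Z,\Z)$ together with $\g(\Z,\Z)\le\g(\T,\Z)^2/\min_K\g(\T,\T)$ only bounds $|\g(\T,\Z)|$ from below.
\item Using the third line to prove the second is circular, since the third line has not been established independently.
\item Your ``first attempt'' needs $\g(\Z,\Z)\le C\rho$. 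That would require an absolute upper bound on $\rho$ uniform in $U,\Z$, which is neither available nor claimed. The paper bounds $\rho$ above only later, near a specific limiting boundary, via Killing transport.
\item The horizontal part satisfies $\g(\Z^{\mathrm{hor}},\Z^{\mathrm{hor}})=-W/\g(\T,\T)<0$. It is $\g$-timelike, so an upper bound on its $\g$-square carries no information.
\end{itemize}

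The missing idea is close to your fallback. The orbit is closed, so $\vartheta$ attains an extremum on it. There $\Z\vartheta=0$, that is, $\Z\in\ker d\vartheta$. Since $d\vartheta$ is timelike (Proposition~\ref{prep:quotient}), $\ker d\vartheta$ is a spacelike plane projecting isomorphically onto $TQ$. After stationary translation to the fixed section over $K$, this gives at that point
\[
c|\Z|_{g_+}^2\le\g(\Z,\Z)\le C|\Z|_{g_+}^2
\quad\text{and}\quad
|\g(\T,\Z)|\le C|\Z|_{g_+}.
\]
Combine this with $\rho^2=-h(\Z,\Z)\le C|\Z|_{g_+}^2$, which holds because $h$ has bounded coefficients on $K$, and with $|\Z|_{g_+}\le C\rho$ from Lemma~\ref{action:intrinsic-orbits}. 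You obtain $\g(\Z,\Z)\sim\rho^2$ and $|\g(\T,\Z)|\le C\rho$ at that point. Both contractions and $\rho$ are constant along the orbit, since $\Z$ is Killing and commutes with $\T$. The $\T$-orthogonal decomposition then gives $|\Z|_{\mathbf g_+}\le C\rho$ on the whole orbit.

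Your zero-average observation, with $a=\g(\T,\Z)/\g(\T,\T)$ constant along the orbit, would yield $|a|\le C\rho$. That gives the third line and $\g(\Z,\Z)\le C\rho^2$. The reverse inequality $\g(\Z,\Z)\ge c\rho^2$ still needs the spacelike-kernel fact at a zero of $\Z\vartheta$, which your proposal never invokes.
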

\begin{proof}
Lemma~\ref{action:intrinsic-orbits} gives the quotient estimates,
including the derivative bound. We transfer them to spacetime.
The closed orbit has an extremum of $\vartheta$, where
$\Z\in\ker d\vartheta$, a spacelike plane whose projection
to $TQ$ is an isomorphism. After translating this point to
the fixed stationary section,
uniform metric comparison gives
\[
\begin{aligned}
c|\Z|_{g_+}^2&\le\g(\Z,\Z)\le C|\Z|_{g_+}^2,\\
\rho^2&=-h(\Z,\Z)\le C|\Z|_{g_+}^2,\\
|\Z|_{g_+}^2&\le C\rho^2.
\end{aligned}
\]
The last inequality is the bound just proved, and the middle
one uses only bounded coefficients of the fixed metric $h$.
Thus
\[
\begin{aligned}
C^{-1}\rho^2&\le\g(\Z,\Z)\le C\rho^2,\\
|\g(\T,\Z)|&\le C|\Z|_{g_+}\le C\rho.
\end{aligned}
\]
These contractions and $\rho$ are constant along the orbit.
The $\T$-orthogonal decomposition of $\Z$ then yields
$|\Z|_{\mathbf g_+}\le C(|\Z|_{g_+}+|\g(\T,\Z)|)\le C\rho$ everywhere.
Together with the intrinsic estimate, this proves
\eqref{action:relative-est}. The equalities
$h(\Z/\rho,\Z/\rho)=-1$ and $|\Z/\rho|_{g_+}\le C$ give the
last assertion.
\end{proof}

\subsection{Verification in Kerr}\label{subsec:nontrapping}

We verify NT in subextremal Kerr by separating the zero-energy
null geodesic equations. The resulting radial acceleration is
strictly negative on every compact exterior interval, which lets
us exclude confinement even when the geodesic has radial turning
points.
Write $M>0$ and $a$,
$|a|<M$, for the mass and rotation parameters. When $a=0$,
$\T$ is timelike throughout the exterior, so there are no
nonzero null covectors of zero stationary energy. Suppose $a\ne0$.
In Boyer--Lindquist coordinates $(t,r,\theta,\phi)$,
$\T=\partial_t$. Write
$p=p_r\,dr+p_\theta\,d\theta+p_\phi\,d\phi$ for a nonzero
null covector of zero stationary energy, so $p_t=0$. The inverse Kerr metric gives
\[
 0=(r^2+a^2\cos^2\theta)\g^{-1}(p,p)
   =\Delta p_r^2+p_\theta^2
        +\frac{p_\phi^2}{\sin^2\theta}-\frac{a^2p_\phi^2}{\Delta},
\]
where
\[
 \Delta=r^2-2Mr+a^2.
\]
Here primes on $\Delta$ denote differentiation in $r$.
The rotation axis has $\g(\T,\T)=-\Delta/(r^2+a^2)<0$,
so it contains no such nonzero null covector.
We use the separation found by
Carter~\cite[Section~III.A, equations~(49), (55), and~(62)--(63)]{Carter68}.
For the calculation, multiply the null Hamiltonian
$\tfrac12\g^{-1}(p,p)$ by $r^2+a^2\cos^2\theta$, or
equivalently use the parameter $s$
with $d\lambda/ds=r^2+a^2\cos^2\theta$, where $\lambda$ is
affine. The radial and angular parts separate, so
\[
 \mathcal K=p_\theta^2+\frac{p_\phi^2}{\sin^2\theta}
\]
is Carter's separation constant $\mathcal K$ with his
$E=\mu=e=0$ and $\Phi=p_\phi$; his $Q$ in equation~(56)
is $\mathcal K-p_\phi^2$. The momentum $p_\phi$ is constant as well.
All Hamilton equations below use $s$, with $p_\theta,p_r$ still
the canonical momenta of the affine null geodesic. The angular
equations show the cancellation
directly:
\begin{align*}
\frac{d\theta}{ds}&=p_\theta,\\
\frac{dp_\theta}{ds}&=\frac{p_\phi^2\cos\theta}{\sin^3\theta},
\end{align*}
which imply
\[
 \frac{d\mathcal K}{ds}
 =2p_\theta\frac{p_\phi^2\cos\theta}{\sin^3\theta}
      -\frac{2p_\phi^2\cos\theta}{\sin^3\theta}p_\theta=0.
\]
The radial Hamilton equations give
\[
 \begin{aligned}
 \frac{dr}{ds}&=\Delta p_r,\\ 
 \frac{dp_r}{ds}
   &=-\frac{\Delta'}2\left(p_r^2+\frac{a^2p_\phi^2}{\Delta^2}\right),\\
 \left(\frac{dr}{ds}\right)^2&=a^2p_\phi^2-\Delta\mathcal K,\\
 \frac{d^2r}{ds^2}
   &=\frac{\Delta'}2
        \left(\Delta p_r^2-\frac{a^2p_\phi^2}{\Delta}\right)
     =-(r-M)\mathcal K.
 \end{aligned}
\]
Because we derived the acceleration from Hamilton's equations,
the same formula applies at radial turning points. Since $\Delta>0$ for
$r>r_+=M+\sqrt{M^2-a^2}$, a nonzero null covector has
$\mathcal K>0$ and $p_\phi\ne0$.

If the stationary projection of a maximal null geodesic lay in a compact
set, then $r_+<r_1\le r\le r_2$ and
\[
\begin{aligned}
\sin^2\theta&\ge\frac{p_\phi^2}{\mathcal K}>0,\\
|p_\theta|&\le\sqrt{\mathcal K},\\
|p_r|&=\frac{|dr/ds|}{\Delta}\le C.
\end{aligned}
\]
The position modulo stationary translation and the covector
therefore remain in a compact set away from the coordinate axis.
The smooth Hamilton equations, including the stationary-coordinate
equation, continue the geodesic for all $s\in\RR$. But
\[
 \frac{dr}{ds}(s)
 \le\frac{dr}{ds}(0)-(r_1-M)\mathcal K\,s,\qquad s\ge0,
\]
contradicts its bounded radial velocity. This proves NT in Kerr.
\section{Local extensions and successive domains}
\label{sec:discrete-continuation}

We now choose the domains on which to apply the local extension
theorem. The useful geometric condition belongs to the compact
region $H\subset Q$ that remains: every sufficiently short
zero-energy null segment with projected endpoints in $H$ must
project entirely into $H$. The Hessian inequality
\eqref{local:convex} preserves this condition when we remove
part of $H$, and intersections preserve it without any regularity
assumption on the boundary. Thus we can pass to a limiting set
while retaining this consequence of the Hessian inequality.

The successive choices must also ensure that a fixed neighborhood
cannot remain removable at every late stage. For this purpose
we enumerate a countable basis of neighborhoods in $\{\g(\T,\T)>0\}$. Whenever a local extension removes one of these
neighborhoods, it removes it permanently. Choosing the first
available one will give the required limiting property in
Lemma~\ref{discrete:greedy-construction}.

\subsection{Supporting hypersurfaces}

Boundaries and interiors of subsets of $Q$ are taken in $Q$,
unless explicitly called relative. Let $H\subset Q$ be closed
and $p\in\partial H$.
A smooth hypersurface supports $H$ at $p$ if it has
a defining function $f$ on a neighborhood $V$ such that
\[
 \begin{aligned}
 f(p)&=0\\
 df(p)&\ne0\\
 H\cap V&\subset\{f\ge0\}.
 \end{aligned}
\]
We apply the extension theorem from the side
$V\cap\{f<0\}\subset Q\setminus H$. Throughout this construction we
work in $\{\g(\T,\T)>0\}$, and we lift defining functions on $Q$
by $\pi$ when applying Proposition~\ref{local:extension}. We choose
the orientation by the side on which the field is already defined.
With this choice, the inequality \eqref{local:convex} prevents a short
zero-energy null segment with projected endpoints in $\{f\ge0\}$
from projecting into $\{f<0\}$. This is the relation between local extension and the
convexity property used below.

\subsection{Convexity along short null geodesics}

Recall from Proposition~\ref{prep:residual} that $C_+$ is the
initial open neighborhood carrying the rotational field,
$H_0=\{\g(\T,\T)\ge0\}\setminus C_+$, and
$\mathcal D_0=\{\g(\T,\T)\ge0\}\setminus C_-$, where
$\overline{C_-}\subset C_+$. We use the stationary auxiliary
Riemannian metric $\mathbf g_+$ on $\E$ and the fixed metric $g_+$
on $Q$. All remaining sets will be compact subsets of $H_0$.
We remove points only where $\g(\T,\T)>0$.
The circle action will therefore be constructed throughout the
ergoregion before we complete the extension at the ergosurface.

\begin{definition}\label{discrete:null-convexity}
Fix $\epsilon_0>0$. We impose the following condition on a
closed set $H\subset\mathcal D_0$: for every null geodesic segment
$\gamma:[a,b]\to\E$ satisfying
\[
 \begin{aligned}
 \g(\T,\dot\gamma)&=0\\
 \int_a^b|\dot\gamma|_{\mathbf g_+}\,ds&<\epsilon_0
 \end{aligned}
\]
we have
\[
 \pi(\gamma(a)),\pi(\gamma(b))\in H
 \quad\Longrightarrow\quad
 \pi(\gamma([a,b]))\subset H.
\]
\end{definition}

We measure these segments with the fixed Riemannian metric
$\mathbf g_+$ on spacetime, which remains smooth at $\g(\T,\T)=0$. In the region $\{\g(\T,\T)>0\}$ we can also work with the Lorentzian quotient metric $h$
of Definition~\ref{def:stationary-metrics}: on each compact subset,
stationary horizontal lifting gives a uniform comparison with the
auxiliary length of the corresponding $h$-null segments.

On a neighborhood carrying a hypersurface-orthogonal Killing
field $\Z$ which is timelike for $h$ and preserves $H$, this
condition has an ordinary convexity interpretation.
In the local product coordinates of Lemma~\ref{local:optical-curvature},
let $I_\phi$ be the coordinate interval along $\Z$ and let $C$
be the intersection of $H$ with the transverse surface. Invariance
then gives $H=I_\phi\times C$ within this chart. If $c$ is a unit-speed geodesic
of $q_{\mathrm{opt}}$,
then $s\mapsto(\phi_0+s,c(s))$ is a null geodesic of the product,
since
\[
 (-d\phi^2+q_{\mathrm{opt}})
       (\partial_\phi+\dot c,\partial_\phi+\dot c)
       =-1+|\dot c|_{q_{\mathrm{opt}}}^2=0.
\]
Conversely, the projection of every product null geodesic is a
geodesic of $q_{\mathrm{opt}}$. After restricting to sufficiently
short segments in this chart, the condition on $H$ is therefore
equivalent to local geodesic convexity of $C$ for $q_{\mathrm{opt}}$. The spacetime formulation above lets us retain
the condition before such a rotational neighborhood has been
constructed at the boundary.

\begin{lemma}\label{discrete:null-convex-cuts}
The set $H_0$ has the property in
Definition~\ref{discrete:null-convexity} for some $\epsilon_0>0$.
For this fixed $\epsilon_0$, the property is preserved by intersections
and by the following operation. Let $H$ have this property,
let $V\Subset Q$ be open, and let $C\subset V$ be relatively closed. Suppose every zero-energy null
segment contained over $V$, of $\mathbf g_+$-length less than $\epsilon_0$
and with projected endpoints in $C$, projects into $C$. If
\[
 \overline{H\cap(V\setminus C)}\Subset V,
\]
then $H'=H\setminus(V\setminus C)$ has the same property.
\end{lemma}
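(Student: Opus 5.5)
The plan is to treat the three assertions separately. The first is a local concavity argument at the initial boundary $\{r=\delta_+\}$ of Proposition~\ref{prep:residual}; the other two are purely point-set arguments about where a short segment first leaves the relevant set.

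\emph{The set $H_0$.} A zero-energy null vector at a point where $\g(\T,\T)<0$ vanishes, since $\T^\perp$ is spacelike there. Hence a nonconstant zero-energy null geodesic never meets $\{\g(\T,\T)<0\}$, and its projection stays in $\{\g(\T,\T)\ge0\}$. It remains to show that a short segment with projected endpoints in $H_0$ cannot enter $C_+$.

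Suppose $\pi(\gamma(s_0))\in C_+$, and let $(t_1,t_2)\ni s_0$ be the maximal subinterval on which $\pi(\gamma)\in C_+$. Since $H_0$ is closed and contains the endpoints, $\pi(\gamma(t_i))$ lies in the relative boundary of $H_0$ in $\{\g(\T,\T)\ge0\}$, which is the compact annulus of Proposition~\ref{prep:residual}.
\begin{itemize}
\item Choose a compact spacetime neighborhood of a stationary section of this annulus on which the Gaussian coordinate $r$ is defined. Choose it so that the strict inequality $\D^2r(X,X)<0$ holds there, for all $r$ in a band $[\delta_+-\eta,\delta_+]$; the proof of Proposition~\ref{prep:residual} gives \eqref{prep:strict-calculation} for every sufficiently small positive $r$.
\item By stationarity, $|dr|_{\mathbf g_+}$ is bounded there. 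Choosing $\epsilon_0$ small, the segment $\gamma|_{[t_1,t_2]}$ stays inside this neighborhood and satisfies $r\ge\delta_+-\eta$.
\item In an affine parametrization, $r\circ\gamma$ equals $\delta_+$ at $t_1,t_2$ and is $<\delta_+$ inside, so it has an interior minimum. There $\dot\gamma r=0$, while $\dot\gamma$ is nonzero, null and orthogonal to $\T$. This gives $\frac{d^2}{ds^2}r(\gamma)=\D^2r(\dot\gamma,\dot\gamma)<0$, which contradicts the minimum.
\end{itemize}
The $\mathbf g_+$-length condition does not depend on the parametrization, so this fixes $\epsilon_0$.

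\emph{Intersections.} If every $H_i$ has the property, then a segment with projected endpoints in $\bigcap H_i$ projects into each $H_i$. The intersection is closed, so it has the property.

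\emph{The cut.} $H'$ is closed: its complement is the union of the open set $Q\setminus H$ and the open subset $V\setminus C$ of $V$. Take a segment $\gamma$ of length $<\epsilon_0$ with projected endpoints in $H'\subset H$. By the property for $H$, $\pi(\gamma)\subset H$. Suppose $\pi(\gamma(s_0))\in V\setminus C$, and let $(t_1,t_2)$ be the maximal open subinterval around $s_0$ on which $\pi(\gamma)\in V\setminus C$. Then:
\begin{itemize}
\item The points $\pi(\gamma(t_i))$ lie in $\overline{H\cap(V\setminus C)}\subset V$.
\item If $t_i$ is an endpoint of $[a,b]$, then $\pi(\gamma(t_i))\in H'$, so $\pi(\gamma(t_i))\notin V\setminus C$. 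Otherwise maximality gives the same conclusion. In either case $\pi(\gamma(t_i))\in C$.
\item The segment $\gamma|_{[t_1,t_2]}$ projects into $V$, has endpoints over $C$, and has length $<\epsilon_0$.
\end{itemize}
The hypothesis on $C$ then puts its projection in $C$, which contradicts $s_0$. The main obstacle is the first part: we must ensure uniformly that short segments crossing the annulus stay inside the chart of $r$ and in the band where \eqref{prep:strict-calculation} is strict. Near the ergosurface corners, the only admissible directions are multiples of $\T$, and the inequality still applies there.
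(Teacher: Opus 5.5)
Your proof is correct and follows the paper's argument essentially step by step. The three ingredients are the same: the interior-minimum contradiction for $r$ at the initial annulus, using the strict null Hessian inequality of Proposition~\ref{prep:residual}; conservation of $\g(\T,\dot\gamma)$, which keeps zero-energy segments in $\{\g(\T,\T)\ge0\}$; and the component-of-the-preimage argument for the cut, where compact containment forces the endpoints into $C$.

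The differences are cosmetic. The paper confines short segments near the annulus in $Q$, using a Lipschitz bound for $d\pi$ and $\operatorname{dist}_{g_+}(A,Q\setminus V_0)$. You do it in spacetime by stationary translation to a compact section. There you should say explicitly that the positive $\mathbf g_+$-distance to the complement of that neighborhood keeps the segment inside; the bound on $|dr|_{\mathbf g_+}$ alone does not. You also check that $H'$ is closed, which the paper leaves implicit.
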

\begin{proof}
We first verify the property for the initial set. Near its relative
boundary we use the Gaussian coordinate $r$ of
Proposition~\ref{prep:residual}, whose Hessian satisfies \eqref{local:convex}
on a neighborhood of the compact annulus $r=\delta_+$ and its
nearby levels. If a zero-energy null segment
with both projected endpoints in $H_0$ enters $\{r<\delta_+\}$ and
remains in this neighborhood, then at an interior minimum below
$\delta_+$ we have
\[
 \begin{aligned}
 (r\circ\pi\circ\gamma)'&=0,\\
 (r\circ\pi\circ\gamma)''&=\D^2(r\circ\pi)(\dot\gamma,\dot\gamma)<0.
 \end{aligned}
\]
This contradicts the second-derivative test. These equations use an affine parameter.
Their sign at a critical point is unchanged by reparametrization.
To make the length restriction uniform, let $A$ be the compact
initial annulus and choose a relatively compact neighborhood $V_0$ of
$A$ on which $r$ is defined and satisfies \eqref{local:convex}.
Stationarity and compactness give $L<\infty$ with
\[
 |d\pi(X)|_{g_+}\le L|X|_{\mathbf g_+}
 \quad\text{over }\overline{V_0}.
\]
Choose $\epsilon_0>0$ so that
$L\epsilon_0<\operatorname{dist}_{g_+}(A,Q\setminus V_0)$.
If the projected segment meets $A$, its first exit from $V_0$ would give
\[
 \operatorname{dist}_{g_+}(A,Q\setminus V_0)
 \le\int|d\pi(\dot\gamma)|_{g_+}\,ds
 <L\epsilon_0,
\]
a contradiction. The preceding minimum argument therefore applies to
every segment of the required length that crosses the initial annulus.
Along a null geodesic, the Killing equation also gives
\[
 \frac d{ds}\g(\T,\dot\gamma)
 =\dot\gamma^\alpha\dot\gamma^\beta\D_\alpha\T_\beta=0.
\]
A nonzero null vector orthogonal to $\T$ cannot occur where $\T$ is
timelike. Thus such a segment stays in $\{\g(\T,\T)\ge0\}$,
and crossing the annulus is the only way it could leave $H_0$.

We next check that the removal preserves the condition of
Definition~\ref{discrete:null-convexity}. Let $\gamma$
have projected endpoints in $H'$, so that its entire projection lies
in $H$. If its projection enters $V\setminus C$, let $(a_0,b_0)$
be a component of $(\pi\circ\gamma)^{-1}(V\setminus C)$.
Compact containment of the removed part excludes an endpoint on
$\partial V$, and consequently
\[
 \begin{aligned}
 \pi(\gamma(a_0)),\pi(\gamma(b_0))&\in C\\
 \pi(\gamma([a_0,b_0]))&\subset V.
 \end{aligned}
\]
The subsegment has length less than $\epsilon_0$. Its assumed convexity
contradicts $\pi(\gamma((a_0,b_0)))\subset V\setminus C$.
An intersection has the property by applying its defining implication
to each of the intersected sets.
\end{proof}

For a compact $H\subset H_0$ define the open stationary set
\begin{equation}\label{discrete:exact-domain}
 U_H=\pi^{-1}(C_+)\ \cup\
 \pi^{-1}\bigl(\{\g(\T,\T)>0\}\cap(\mathcal D_0\setminus H)\bigr).
\end{equation}
It is open because $H\subset H_0$ and a fixed neighborhood of
$\partial\mathcal D_0\cap\{\g(\T,\T)>0\}$ is contained in $C_+$.
We retain $\Z$ on all of $U_H$. Thus, when we use a boundary
neighborhood to remove part of $H$, we require its extension to agree
with $\Z$ wherever the two fields are defined. Formula
\eqref{discrete:exact-domain} then determines the enlarged domain
directly from the new remaining set. In particular,
\[
 \partial H\cap\{\g(\T,\T)>0\}\subset\partial\pi(U_H).
\]

\begin{definition}\label{discrete:admissible}
A pair $(H,\Z)$ is required to satisfy the following conditions.
The set $H\subset H_0$ is compact, $U_H$ in
\eqref{discrete:exact-domain} is connected, and $\Z$ is a Killing
field on $U_H$ whose flow is complete with effective period $2\pi$.
It agrees on $\pi^{-1}(C_+)$ with the field of
Proposition~\ref{prep:residual} and satisfies
\[
 [\T,\Z]=0.
\]
We require that $H$ satisfy
Definition~\ref{discrete:null-convexity}, with the fixed
$\epsilon_0$ of Lemma~\ref{discrete:null-convex-cuts}, and that
$H=H_0$ on a neighborhood in $Q$ of the projected ergosurface
$E_0=\{\g(\T,\T)=0\}\subset Q$.
\end{definition}

The initial pair is supplied by Proposition~\ref{prep:residual}
and Lemma~\ref{discrete:null-convex-cuts}. To enlarge its domain,
we need a local extension which agrees with the old field on
their entire overlap and has a complete rotational action.
Lemma~\ref{local:complete-flows} then identifies the actions on
that overlap. With an invariant defining function, the removed
region is invariant as well, and the following lemma verifies
that the enlarged pair has all the required properties.

\begin{lemma}\label{discrete:admissible-update}
Let $(H,\Z)$ satisfy Definition~\ref{discrete:admissible} and let
$V\Subset\operatorname{Int}(\mathcal D_0)\cap\{\g(\T,\T)>0\}$
be open. Suppose a Killing field $\widetilde\Z$ on $\pi^{-1}(V)$
commutes with $\T$, has complete period-$2\pi$ flow there, and
\[
 \widetilde\Z=\Z\quad\text{on }\pi^{-1}(V)\cap U_H.
\]
Let $f\in C^\infty(V)$ satisfy $\widetilde\Z f=0$ on $V$,
where the field is projected to $Q$. Suppose its stationary
pullback satisfies $\g^{-1}(\pi^{\ast}df,\pi^{\ast}df)>0$ and
\[
 \D^2(f\circ\pi)(X,X)<0
 \quad\bigl(X\ne0,\ X(f\circ\pi)=\g(X,X)=\g(\T,X)=0\bigr)
\]
for $X\in T\pi^{-1}(V)$.
For $\varepsilon\in\RR$, assume
\[
 \overline{H\cap V\cap\{f<\varepsilon\}}\Subset V
\]
and that every component of $V\cap\{f<\varepsilon\}$ meets
$\pi(U_H)$. Then
\[
 H'=H\setminus\bigl(V\cap\{f<\varepsilon\}\bigr)
\]
and the glued field on $U_{H'}$ satisfy Definition~\ref{discrete:admissible}.
Under the same assumptions on $V,\widetilde\Z$, the conclusion
also holds for $H'=H\setminus V$ when $H\cap\partial V=\varnothing$
and every component of $V$ meets $\pi(U_H)$.
\end{lemma}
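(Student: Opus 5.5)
We have to check each clause of Definition~\ref{discrete:admissible} for the pair $(H',\Z')$, where $\Z'$ is obtained by gluing $\Z$ on $U_H$ with $\widetilde\Z$ on $\pi^{-1}(V\cap\{f<\varepsilon\})$ (or on $\pi^{-1}(V)$ in the second case). We start with the set-theoretic facts. $H'$ is compact because $V$ is open, so we remove a relatively open subset of the compact set $H$. The inclusion $H'\subset H_0$ is immediate. Since $V\Subset\{\g(\T,\T)>0\}$, nothing is removed near $E_0$, so $H'=H=H_0$ on the neighborhood of $E_0$ that $H$ already had. Next, \eqref{discrete:exact-domain} gives
\[
 U_{H'}=U_H\cup\pi^{-1}\bigl(H\cap V\cap\{f<\varepsilon\}\bigr)\subset U_H\cup\pi^{-1}(V\cap\{f<\varepsilon\}),
\]
where we use $V\subset\operatorname{Int}(\mathcal D_0)\cap\{\g(\T,\T)>0\}$. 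For connectedness we use the hypothesis that every component of $V\cap\{f<\varepsilon\}$ meets $\pi(U_H)$. Each point of the new part lies in such a component, and its stationary inverse image is connected. It therefore joins the connected set $U_H$, so $U_{H'}$ is connected.

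For the field we define $\Z'=\Z$ on $U_H$ and $\Z'=\widetilde\Z$ on $\pi^{-1}(V\cap\{f<\varepsilon\})\cap U_{H'}$. The two definitions agree on the overlap by hypothesis, so $\Z'$ is a smooth Killing field commuting with $\T$. We need $\Z'$ to be complete with period $2\pi$ on exactly $U_{H'}$, so we apply Lemma~\ref{local:complete-flows} with $U_1=U_H$ and $U_2=\pi^{-1}(V\cap\{f<\varepsilon\})$. This requires $U_2$ to be invariant under $\widetilde\Z$. That holds because $\widetilde\Z f=0$ and $\pi^{-1}(V)$ is invariant under the complete flow of $\widetilde\Z$. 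The lemma then gives a complete period-$2\pi$ flow on $U_1\cup U_2$. This union is $U_{H'}$ together with the part of $U_2$ that already lies in $U_H$, so it equals $U_{H'}$.

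Effectiveness is checked in two steps. If $\Phi^{\Z'}_s$ were the identity on $U_{H'}$, then it would be the identity on $\pi^{-1}(C_+)$, where $\Z'$ is the original field. Its period is effective there, so $s\in2\pi\mathbb Z$. Agreement with the field of Proposition~\ref{prep:residual} on $\pi^{-1}(C_+)$ is inherited from $\Z$.

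The remaining and main point is the null convexity of Definition~\ref{discrete:null-convexity} for $H'$. We apply the removal clause of Lemma~\ref{discrete:null-convex-cuts} with $C=V\cap\{f\ge\varepsilon\}$, which is relatively closed in $V$. Then $H'=H\setminus(V\setminus C)$, and the compact-containment hypothesis is exactly the assumed one. It remains to show that a zero-energy null segment lying over $V$, with projected endpoints in $\{f\ge\varepsilon\}$, cannot enter $\{f<\varepsilon\}$. Suppose it does. Then $F=f\circ\pi\circ\gamma$, in an affine parameter, has an interior minimum below $\varepsilon$, where $F'=0$ and $F''=\D^2(f\circ\pi)(\dot\gamma,\dot\gamma)$. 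At that point $\dot\gamma$ is nonzero and null, it satisfies $\g(\T,\dot\gamma)=0$ and $\dot\gamma(f\circ\pi)=0$, so the assumed strict inequality gives $F''<0$. This contradicts the minimum, so the segment stays in $C$. The argument needs the strict inequality on every level of $f$ inside $V$, not only at $\varepsilon$, and this is what the hypothesis provides. The degenerate case $\dot\gamma=0$ does not occur for a genuine null geodesic.

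In the second variant, $H'=H\setminus V$ with $H\cap\partial V=\varnothing$. We take $C=\varnothing$. Then $H\cap V=H\setminus\overline V$ is closed, hence compact, and it lies inside $V$. Convexity for $C=\varnothing$ is vacuous, since no segment has endpoints in $C$. Connectedness and gluing go through as before, with $U_2=\pi^{-1}(V)$, which is invariant by assumption.
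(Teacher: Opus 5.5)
Your proposal is correct and follows the paper's proof essentially step for step. You use the interior-minimum argument with the strict Hessian inequality to feed the removal clause of Lemma~\ref{discrete:null-convex-cuts}, obtain invariance of $\pi^{-1}(V\cap\{f<\varepsilon\})$ from $\widetilde\Z f=0$, glue via Lemma~\ref{local:complete-flows}, and get effectiveness by restriction to $\pi^{-1}(C_+)$; taking $C=\varnothing$ in the second variant is just a repackaging of the paper's direct argument via $H\cap\partial V=\varnothing$. One small slip: in that variant the relevant identity is $H\cap V=H\cap\overline V$, not $H\setminus\overline V$, and with this correction your compactness conclusion stands.
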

\begin{proof}
Let $A=V\cap\{f<\varepsilon\}$. It is open, and
$H'=H\setminus A$ is compact. Since $\overline V$ lies in
$\{\g(\T,\T)>0\}$, $H'=H_0$ near $E_0$.
For an affinely parametrized zero-energy null segment contained
over $V$, with projected endpoints in $\{f\ge\varepsilon\}$, an interior
minimum below $\varepsilon$ would give
\begin{align*}
 d(f\circ\pi)(\dot\gamma(s_0))&=0,\\
 0\le\left.\frac{d^2}{ds^2}(f\circ\pi\circ\gamma)(s)\right|_{s=s_0}
 &=\D^2(f\circ\pi)(\dot\gamma(s_0),\dot\gamma(s_0))<0.
\end{align*}
Thus the projection of every such segment stays in
$\{f\ge\varepsilon\}$. Lemma~\ref{discrete:null-convex-cuts}
and the compact-containment hypothesis give
the condition of Definition~\ref{discrete:null-convexity} for $H'$.

Write $\Phi_t$ and $\widetilde\Phi_t$ for the complete flows
on $U_H$ and $\pi^{-1}(V)$. By Lemma~\ref{local:complete-flows},
\begin{align*}
 \widetilde\Phi_t(U_H\cap\pi^{-1}(V))
     &=U_H\cap\pi^{-1}(V),\\
 \widetilde\Phi_t(\pi^{-1}(H\cap V))
     &=\pi^{-1}(H\cap V).
\end{align*}
The second equality follows by taking the complement inside
$\pi^{-1}(V)$. Since
\[
 \frac d{dt}(f\circ\pi\circ\widetilde\Phi_t)
       =(\widetilde\Z f)\circ\pi\circ\widetilde\Phi_t=0,
\]
the set $\pi^{-1}(A)$ is also invariant. Consequently the fields
glue on the enlarged domain
\[
 U_{H'}=U_H\cup\pi^{-1}(A).
\]
Each component of $A$ meets $\pi(U_H)$, so this domain is
connected. On $U_H$ its flow is $\Phi_t$, and on $\pi^{-1}(A)$
it is $\widetilde\Phi_t$. The overlap identity therefore gives
a complete flow satisfying
\[
 \Phi_{2\pi}^{\Z}=\operatorname{Id}_{U_{H'}}.
\]
A smaller global period would restrict to a smaller period on
$\pi^{-1}(C_+)$, contrary to the initial normalization.
The glued field is Killing, commutes with $\T$, and agrees
with the initial field, proving all the required conditions.

For $H'=H\setminus V$, compactness and the unchanged neighborhood
of $E_0$ follow in the same way. A null segment with projected
endpoints in $H'$ already projects into $H$. If its projection
entered $V$, it would meet $H\cap\partial V$, contrary to the
hypothesis. Its projection therefore stays in $H'$. Now take $A=V$ in the preceding flow
and domain argument.
\end{proof}

\subsection{Selection of the remaining sets}

We choose a sequence of operations from
Lemma~\ref{discrete:admissible-update}. The choice will ensure
that a fixed neighborhood of a point in the intersection cannot
be removed at every sufficiently large index. Countability
suffices for this choice. In the next section we will construct
the required extension on a fixed neighborhood.

\begin{lemma}\label{discrete:greedy-construction}
There is a decreasing sequence of sets $H_j$, starting at $H_0$,
and compatible fields $\Z$ such that each pair $(H_j,\Z)$ satisfies
Definition~\ref{discrete:admissible}. Put $H_\infty=\bigcap_jH_j$. The compatible
fields define one Killing field commuting with $\T$, whose flow
is complete with effective period $2\pi$, on
\begin{equation}\label{discrete:union}
 U_{H_\infty}=\bigcup_jU_{H_j}.
\end{equation}
The intersection satisfies Definition~\ref{discrete:null-convexity}.
For the Hausdorff distance induced by $g_+$, if $H_\infty$ is nonempty,
\[
 d_{\mathrm{Haus}}(H_j,H_\infty)\longrightarrow0.
\]
For every $p\in H_\infty$ and $a>0$, infinitely many indices $j$
admit no operation of Lemma~\ref{discrete:admissible-update}
producing $H_j'$ with
\[
 H_j'\cap B_{g_+}(p,a)=\varnothing.
\]
\end{lemma}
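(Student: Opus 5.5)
We build the sequence by a greedy enumeration. Fix a countable basis $\{B_k\}_{k\ge1}$ of open $g_+$-balls with rational centers and radii, relatively compact in $\operatorname{Int}(\mathcal D_0)\cap\{\g(\T,\T)>0\}$. At each step, with $(H_j,\Z)$ admissible, let $k_j$ be the least index $k$ such that $H_j\cap B_k\ne\varnothing$ and some operation of Lemma~\ref{discrete:admissible-update} applied to $(H_j,\Z)$ produces $H_j'$ with $H_j'\cap B_k=\varnothing$. We then set $H_{j+1}$ equal to that $H_j'$, with the glued field. If no such $k$ exists, we set $H_{j+1}=H_j$. Lemma~\ref{discrete:admissible-update} guarantees that each $(H_{j+1},\Z)$ satisfies Definition~\ref{discrete:admissible}. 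The sets decrease, so the domains $U_{H_j}$ increase, and each field agrees with its predecessor on the old domain. The fields therefore define one Killing field on the union \eqref{discrete:union}, which equals $U_{H_\infty}$ by \eqref{discrete:exact-domain}, since the union of complements of $H_j$ is the complement of $H_\infty$. This field commutes with $\T$. Every orbit already lies in some $U_{H_j}$, so Lemma~\ref{local:complete-flows} (or simply the nesting) gives completeness and $\Phi_{2\pi}=\operatorname{Id}$. Effectiveness follows by restriction to $\pi^{-1}(C_+)$.

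The intersection inherits Definition~\ref{discrete:null-convexity} by the intersection clause of Lemma~\ref{discrete:null-convex-cuts}. Hausdorff convergence is the standard fact for decreasing compact sets with nonempty intersection. If $x_j\in H_j$ stayed at distance $\ge\eta$ from $H_\infty$, a subsequence would converge to a point lying in every $H_j$ (each $H_j$ being closed and the sets nested), hence in $H_\infty$, which is a contradiction. The other half of the Hausdorff distance is trivial, since $H_\infty\subset H_j$.

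The selection property is where care is needed, and we expect it to be the main point. Suppose $p\in H_\infty$, $a>0$, and for all $j\ge j_0$ some admissible operation yields $H_j'\cap B_{g_+}(p,a)=\varnothing$. Choose a basis ball $B_k\ni p$ with $B_k\subset B_{g_+}(p,a)$. The same operation then clears $B_k$, and $H_j\cap B_k\ni p$ is nonempty, so $k$ is a candidate at every $j\ge j_0$. Hence $k_j\le k$ for all $j\ge j_0$. Each chosen index $k_j$ is cleared permanently, because $H_{j+1}\cap B_{k_j}=\varnothing$ and the sets decrease. Consequently no index is chosen twice. The finitely many indices $\le k$ are then exhausted after finitely many steps, which contradicts $k_j\le k$ for all large $j$. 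The argument therefore requires the minimality in the choice, the permanence of clearing, and the observation that $B_k$ contains $p\in H_j$ for all $j$. One should also check that the operations allowed in Lemma~\ref{discrete:admissible-update} have $V$ inside the region where the basis lives. This holds because both are contained in $\operatorname{Int}(\mathcal D_0)\cap\{\g(\T,\T)>0\}$.
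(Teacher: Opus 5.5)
Your proposal is correct and follows essentially the same argument as the paper. Both use a greedy least-index selection over a countable basis of balls, both note that a cleared ball stays cleared so no index is selected twice, and both handle Hausdorff convergence by compactness of $H_0$. The basis ball around $p$ exists because $p\in H_j\setminus H_j'$ is removed by an operation whose $V$ lies in $\operatorname{Int}(\mathcal D_0)\cap\{\g(\T,\T)>0\}$; your closing remark makes exactly the paper's point here.
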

\begin{proof}
Choose a countable basis of relatively compact $g_+$-balls
$B_i$, $i\ge1$, in
\[
 \operatorname{Int}(\mathcal D_0)\cap\{\g(\T,\T)>0\}.
\]
At stage $j$, take the least $i$ for which
\[
 H_j\cap B_i\ne\varnothing
\]
and an operation of Lemma~\ref{discrete:admissible-update} makes
$B_i$ disjoint from the new remaining set. Choose one such operation. If there is no such
index, repeat the pair. A selected index can never be selected
again: after its removal,
\[
 H_k\cap B_i=\varnothing\qquad(k\ge j+1).
\]

By compatibility, the fields glue on the connected union of
their domains. A point outside $\bigcap_jH_j$ is outside some
$H_j$, which proves \eqref{discrete:union}. Every point in the
union belongs to an invariant $U_{H_j}$, where its orbit is
complete and period-$2\pi$. The resulting field is therefore
complete and periodic, and its restriction to the initial domain
excludes a smaller period. Lemma~\ref{discrete:null-convex-cuts}
preserves the condition of Definition~\ref{discrete:null-convexity}
under intersection.

For the Hausdorff assertion, suppose $H_\infty\ne\varnothing$
and take a sequence $p_j\in H_j$ such that
\[
 \operatorname{dist}_{g_+}(p_j,H_\infty)\ge\eta>0.
\]
A subsequence converges in the compact set $H_0$ to a point $p$.
For each fixed $m$, its late terms lie in $H_m$, so
\[
 p\in\bigcap_mH_m=H_\infty,
\]
contradicting the distance bound. This proves the asserted
convergence.

Suppose finally that $p\in H_\infty$, $a>0$, and every stage
$j\ge j_0$ admits an operation of
Lemma~\ref{discrete:admissible-update} producing $H_j'$ disjoint
from $B_{g_+}(p,a)$.
Since $p$ is removed by such an operation,
\[
 p\in\operatorname{Int}(\mathcal D_0)\cap\{\g(\T,\T)>0\}.
\]
Choose a basis ball with
\[
 p\in B_i\subset B_{g_+}(p,a).
\]
For every $j\ge j_0$, this ball meets $H_j$ and the assumed
operation makes $H_j'$ disjoint from it. Hence the index selected at stage $j$
is at most $i$. All these selected indices are distinct,
which is impossible for infinitely many stages. This proves
the final assertion.
\end{proof}

\section{Geometry of the limiting boundary}
\label{sec:limit-geometry}

Let $H_j$ be the decreasing sets of
Lemma~\ref{discrete:greedy-construction}, and put
$H_\infty=\bigcap_jH_j$. The compatible fields give one complete
periodic Killing field $\Z$ on
$U_{H_\infty}=\bigcup_jU_{H_j}$, where $U_H$ is defined in
\eqref{discrete:exact-domain}. Throughout this section, $\nabla$ is
the Levi--Civita connection of the Lorentzian quotient metric
$h=\g(\T,\T)\bar h$ on $\{\g(\T,\T)>0\}\subset Q$.
We use $g_+$ for the fixed auxiliary Riemannian metric on $Q$
and $\vartheta=0$ for the stationary section of
Proposition~\ref{prep:quotient}. Bundle values on this section are
regarded as functions of their base point in $Q$.
We show that a boundary which
remains in the ergoregion after these extensions is smooth
and that its second fundamental form vanishes on null tangent vectors. The precise result is the following proposition,
whose proof is given at the end of this section.

\begin{proposition}\label{discrete:stalled-boundary}
For the sequence in Lemma~\ref{discrete:greedy-construction}, every
point of
\[
 \partial H_\infty\cap\{\g(\T,\T)>0\}
\]
has a neighborhood in which this boundary is a smooth timelike
hypersurface of $(Q,h)$. For either unit spacelike normal $n$, its
second fundamental form $B(X,Y)=-h(\nabla_Xn,Y)$ satisfies
\[
 B(X,X)=0
 \qquad(X\in T\partial H_\infty,\ h(X,X)=0).
\]
Locally the remaining set is either one side of the hypersurface
or the hypersurface itself.
\end{proposition}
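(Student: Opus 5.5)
The proof splits into three stages, matching the outline in the introduction: a positive-reach estimate for $H_\infty$, boundary limits and local invariance of $\Z$, and a comparison with $q_{\mathrm{opt}}$-geodesics that either contradicts the selection property or gives the photon-surface identity.

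\emph{Positive reach.} Fix $p_0\in\partial H_\infty\cap\{\g(\T,\T)>0\}$ and a compact neighborhood $K\Subset\{\g(\T,\T)>0\}$ of $p_0$. Every full $\Z$-orbit in $U_{H_\infty}$ projecting near $p_0$ stays in a fixed compact subset of the ergoregion, since the action preserves $\g(\T,\T)$ and the initial domain. Proposition~\ref{action:relative}, with the length bound of Lemma~\ref{action:escape}, then gives uniform bounds on $\Z/\rho$, $\nabla(\Z/\rho)$ and $d\log\rho$ on $\pi(U_{H_\infty})$ near $p_0$. These bounds are independent of the stage. The field $\Z/\rho$ is a unit timelike field for $h$ on the complement. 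It is complete there because $\rho$ is constant on orbits. Since $H_\infty$ satisfies Definition~\ref{discrete:null-convexity} by Lemma~\ref{discrete:greedy-construction}, the closure lemma (Lemma~\ref{closure:null-reach}) gives locally uniform positive reach of $H_\infty$ near $p_0$ for $g_+$.

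\emph{Boundary limits and invariance.} Positive reach gives exterior normal segments of uniform length, so I can write $\partial H_\infty$ near $p_0$ as lying on one side of a graph over a coordinate cylinder. The transport estimates for $\mathcal J=(\Z^\flat,\D\Z)$ after \eqref{local:killing-connection} then bound $\Z$ and $\D\Z$ uniformly up to the boundary. They also give limits there, as in Lemma~\ref{normal:parallel-boundary}. Using these limits together with the uniform orbit bounds, I expect to show local invariance of $\partial H_\infty$ under the projected limiting action (Proposition~\ref{closure:orbit-tube}). This puts us in the product chart $h=\rho^2(-d\phi^2+q_{\mathrm{opt}})$, where $H_\infty$ is locally $I_\phi\times C$. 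By the discussion after Definition~\ref{discrete:null-convexity}, $C$ is locally geodesically convex for $q_{\mathrm{opt}}$ in the transverse disk.

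\emph{Comparison with $q_{\mathrm{opt}}$-geodesics.} A closed set in a surface with positive reach and local geodesic convexity has boundary that is locally a $C^{1,1}$ curve, or else the set is locally a single arc. At $p_0$ I take a short $q_{\mathrm{opt}}$-geodesic $c$ supporting $C$. Two cases arise.

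\begin{itemize}
\item If $\partial C$ does not coincide with $c$ on any neighborhood of $p_0$, I perturb $c$ quadratically to get a curve with $\kappa_{\mathrm{opt}}>0$. It touches $C$ only at nearby points, and its negative side meets $\pi(U_{H_j})$ for large $j$. Lemma~\ref{local:optical-curvature} converts $\kappa_{\mathrm{opt}}>0$ into \eqref{local:convex}. Proposition~\ref{local:extension}, with constants uniform by the bounds above and Lemma~\ref{local:null-stationary}, extends $\Z$ across.
\item Otherwise $\partial C$ agrees with a geodesic near $p_0$.
\end{itemize}

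In the first case the extension must be glued into the construction. Lemma~\ref{local:compatibility} provides gluing and the full period, Lemma~\ref{local:free-action} gives a free action, and agreement with $\Z$ on each old exterior component follows by Killing transport. Then Lemma~\ref{discrete:admissible-update} gives an operation at every late stage $j$ that removes a fixed ball $B_{g_+}(p_0,a)$ from $H_j$. The Hausdorff convergence in Lemma~\ref{discrete:greedy-construction} makes this uniform in $j$. This contradicts the final assertion of Lemma~\ref{discrete:greedy-construction}. Running the argument from both sides excludes strictly curved boundary points, so the boundary is locally a union of $q_{\mathrm{opt}}$-geodesic arcs, hence smooth. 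Then $\kappa_{\mathrm{opt}}\equiv0$, and \eqref{local:optical-null-curvature} gives $B(X,X)=0$ on null tangents. The dichotomy between one side and the surface itself comes from whether $C$ has interior near the arc.

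\emph{Main obstacle.} The hard step is making the strict-alternative extension uniform: producing a fixed neighborhood whose removal is admissible at \emph{every} late stage. That requires the quantitative constant $A$ in \eqref{local:quantitative}, and the requirement that each component of the removed region meets $\pi(U_{H_j})$, to be controlled only by the limiting geometry. I would first try to handle this by fixing the perturbed curve and quadratic coefficient from $H_\infty$ alone, then using Hausdorff convergence to verify the compact-containment hypothesis for $H_j$.
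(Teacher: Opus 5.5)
Your overall strategy matches the paper's: positive reach from Lemma~\ref{closure:null-reach}, Killing transport up to the boundary, comparison with short $q_{\mathrm{opt}}$-geodesics, and a contradiction with the selection property in the strict case. The gap is in the middle step.

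You pass from boundary limits of $(\Z,\D\Z)$ to a product chart $h=\rho^2(-d\phi^2+q_{\mathrm{opt}})$ in which $H_\infty=I_\phi\times C$. You then read off geodesic convexity of $C$ from the discussion after Definition~\ref{discrete:null-convexity}. That discussion assumes a hypersurface-orthogonal Killing field on a whole neighborhood that preserves $H$. At $\partial H_\infty$ you have one only on the exterior, so $h$ need not be a product inside $H_\infty$, and $h$-null chords through $H_\infty$ are not lifts of $q_{\mathrm{opt}}$-geodesics. Proposition~\ref{closure:orbit-tube} cannot supply the invariance either: its hypothesis is precisely a strictly pseudoconvex supporting hypersurface, which is what you are trying to produce or exclude.

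Worse, a single boundary limit exists only when $N^P_{H_\infty}(p)$ is not a line (Lemma~\ref{normal:parallel-boundary}). When it is a line, Lemma~\ref{closure:graph-band} gives a band $\{a\le s\le b\}$ with $a(p)=b(p)$ and two exterior components whose limits of $(\Z,\D\Z)$ can differ. There is then neither one invariant set $C$ nor one optical metric. The paper works from one chosen exterior component at a time:
\begin{itemize}
\item Lemma~\ref{local:optical-reduction} transports $\mathcal J$ from that side to a smooth $\widetilde\Z$ whose local flow preserves that boundary graph.
\item The test inequality comes from short $h$-null chords with endpoints on the boundary and midpoint in the exterior (Lemma~\ref{discrete:weak-null-tests}). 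It is converted into $q_{\mathrm{opt}}$-curvature by shifting the test curve into the exterior (Lemma~\ref{local:weak-cap}).
\item In the band case, each graph is shown separately to be a photon surface, and the two are identified near $p$ by Lemma~\ref{closure:line-stalemate}.
\end{itemize}
In the one-sided case you could rescue convexity of $C$ by noting that a chord can leave $C$ only through the exterior, where the product form holds, but you still need the one-sided $\widetilde\Z$. Your closing remark about whether $C$ has interior does not address two graphs that touch at $p$ and separate nearby. That configuration is exactly what the last assertion of the proposition rules out.

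Several further steps need repair.
\begin{itemize}
\item Positive reach together with convexity does not give a $C^{1,1}$ boundary, since corners and isolated points have positive reach. Corners fall under your strict alternative (the paper uses Lemma~\ref{closure:corner-cap}).
\item An isolated orbit, with $N^P_{H_\infty}(p)$ the whole dual of the transverse plane, may have a nontrivial finite stabilizer. The saturation of a strictly curved transverse curve is then not an invariant hypersurface. The paper instead removes a tube around the orbit via the second form of Lemma~\ref{discrete:admissible-update}, and shows in Lemma~\ref{local:invariant-cut} that a proper cone forces a free orbit.
\item Lemma~\ref{local:compatibility} needs a local extension at every point of the boundary orbit. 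That requires a strictly pseudoconvex supporting hypersurface and a unique boundary limit at each such point. The paper gets these by extending the old flow maps to the boundary and transporting the hypersurface with the maps $I_t$ of \eqref{closure:extended-flow-map}, after checking that strict support excludes the line case.
\item Lemma~\ref{local:null-stationary} concerns points of the ergosurface and plays no role here.
\end{itemize}

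Finally, the obstacle you single out is not one. No uniformity of $A$ over stages is needed, because the extension is built once, from $U_{H_\infty}$, on a fixed invariant neighborhood. Since $U_{H_j}\subset U_{H_\infty}$, the same field and the same invariant function $f_\delta$ serve every late $j$. Hausdorff convergence is used only for the compact-containment margin, as you anticipate.
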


Thus the limiting boundary is locally a photon surface of $(Q,h)$:
every null geodesic initially tangent to it stays in it locally.
Claudel, Virbhadra, and Ellis~\cite[Definition~2.1]{CVE01}
introduced this local notion for nowhere-spacelike hypersurfaces.
Foertsch, Hasse, and Perlick~\cite[Proposition~4]{FHP03}
proved that a timelike two-dimensional submanifold of a Lorentzian
manifold is a photon surface precisely when it is totally umbilic.
In Section~\ref{sec:remaining} we lift the null geodesics
of this quotient surface to spacetime null geodesics orthogonal to
$\T$ and continue them at the ergosurface.

To prove the proposition we relate the condition of
Definition~\ref{discrete:null-convexity} to Killing extension.
Requiring each sufficiently short null geodesic with endpoints in
the remaining set to lie in that set gives an inequality for the
second fundamental form at smooth touching hypersurfaces. If the inequality is strict enough to admit a supporting
hypersurface, the local theorem extends $\Z$ and we can remove a
fixed neighborhood at every sufficiently late stage. The choice in
Lemma~\ref{discrete:greedy-construction} excludes this possibility.
We shall prove that the equality case is a smooth surface with
vanishing second fundamental form on null tangents, using comparison
with the geodesic equation of $q_{\mathrm{opt}}$ on a transverse disk.

Before making that comparison, we bound $(\Z,\D\Z)$ up to the
boundary from each exterior component. When we find a strictly
$\T$-conditionally pseudoconvex supporting hypersurface,
its local extension must moreover agree with $\Z$ on every nearby
exterior component. The condition of Definition~\ref{discrete:null-convexity} supplies
the required access from the exterior: by moving two nearest boundary points in opposite
timelike directions, we obtain a null geodesic joining them whose midpoint lies in
a ball disjoint from $H_\infty$. Lemma~\ref{closure:null-reach} turns this
contradiction into a uniform positive reach radius. We then transport
$(\Z,\D\Z)$ along the resulting exterior normal segments. This
gives the boundary data used both by the curvature comparison and
by the extension around a complete rotational orbit. All estimates
in this section are local on compact subsets of
$\{\g(\T,\T)>0\}$.

\subsection{Exterior normal segments and Killing transport}

\begin{definition}[Reach]\label{def:reach}
For a closed set $H\subset Q$, its reach in $g_+$ is the
supremum of $R\ge0$ for which every $x$ with
$d_{g_+}(x,H)<R$ has a unique nearest point in $H$.
Positive reach near $q\in\partial H$ means that this property
holds for all such $x$ in a neighborhood of $q$, for some $R>0$.
\end{definition}

Federer~\cite{Federer59} introduced reach in Euclidean space;
Bangert~\cite{Bangert82} treated the Riemannian setting.
Since points of $H$ are their own nearest points, we use this
condition on the exterior. In particular, a minimizing normal
segment can be continued with the same nearest point up to any
radius smaller than the reach and the injectivity radius, as long
as it stays in the neighborhood where nearest points are unique.
To see this, put
$d=d_{g_+}(\,\cdot\,,H)$. The nearest point depends continuously
on the exterior point. The first variation of distance therefore
gives $d\in C^1$ on the exterior tube and $|\nabla^{g_+}d|_+=1$.
An integral curve of $\nabla^{g_+}d$ satisfies
\[
 \operatorname{Length}_{g_+}(\gamma|_{[s,t]})
 =d(\gamma(t))-d(\gamma(s))
 \le d_{g_+}(\gamma(t),\gamma(s)).
\]
Equality of length and distance makes it a minimizing geodesic.
Its initial point and velocity determine that geodesic, so an
exterior normal segment continues with the same nearest point
throughout the tube. This is the local Riemannian form of the
argument of Federer~\cite[Theorem~4.8(3)--(6)]{Federer59}.

\begin{definition}[Proximal normals]\label{def:proximal-conormal}
Let $H\subset Q$ be closed and $q\in\partial H$.
We use the proximal normal cone of
Bernicot and Venel~\cite[Definition~3.6]{BV15} and denote its
$g_+$-metric dual by $N_H^P(q)$. Thus $N_H^P(q)$ consists of the covectors $\alpha\in T_q^{\ast}Q$ for which there is $s_0>0$ such that
\[
 d_{g_+}\bigl(\exp_q^{g_+}(s\alpha^{\sharp_{g_+}}),H\bigr)
       =s|\alpha|_+\qquad(0<s<s_0).
\]
Here $\alpha^{\sharp_{g_+}}$ is the metric dual of $\alpha$ in $g_+$.
\end{definition}

We shall use a quadratic characterization of these normal covectors.
The Euclidean normal inequality and normal-ray description are due to
Federer~\cite[Theorem~4.8(7) and~(12)]{Federer59}; the corresponding
Riemannian characterization is given by
Bernicot and Venel~\cite[Remark~3.11 and Proposition~3.18]{BV15}.
We give the local calculation, including its uniform constants,
because we will change the auxiliary Riemannian metric below. If $H$ has positive reach, the
normal-segment argument lets us follow every unit
$\alpha\in N_H^P(q)$ up to a fixed radius smaller than the reach
and the injectivity radius.

Fix $R$ below the common reach and injectivity radii, and put
$c=\exp_q^{g_+}(R\alpha^{\sharp_{g_+}})$. In relatively compact
coordinates, with Euclidean norm $|\cdot|$, first variation gives
\[
 \begin{aligned}
 0&\le d_{g_+}(c,x)^2-d_{g_+}(c,q)^2\\
  &=-2R\alpha(x-q)
   +\int_0^1(1-s)\partial_{ij}d_{g_+}(c,q+s(x-q))^2
                         (x-q)^i(x-q)^j\,ds\\
  &\le-2R\alpha(x-q)+C_R|x-q|^2\qquad(x\in H).
 \end{aligned}
\]
The segment is restricted to a coordinate ball avoiding the cut
locus of $c$. Consequently
\begin{equation}\label{normal:conormal-inequality}
 \alpha(x-q)\le C|x-q|^2
 \qquad(x\in H,\quad \alpha\in N_H^P(q),\quad|\alpha|_+=1).
\end{equation}
The constant is uniform on compact sets with a common reach
radius. Conversely, use $g_+$-normal coordinates $\xi$ at $q$.
For $|\alpha|_+=1$ and small $s>0$,
\[
 d_{g_+}(\exp_q^{g_+}(s\alpha^{\sharp_{g_+}}),x)^2-s^2 =-2s\alpha(\xi)+|\xi|^2+O(s|\xi|^2)
 \ge[1-(2C+C_0)s]|\xi|^2.
 \]
The assumed quadratic bound gives the last inequality and hence
the nearest-point property for $s<(2C+C_0)^{-1}$.
We can repeat this expansion for any fixed smooth Riemannian metric.
Since in each case the covectors in $N_H^P(q)$ are characterized by the
same local quadratic inequality, the subset $N_H^P(q)\subset T_q^\ast Q$ is independent of the
metric. Moreover, when the bound is uniform on unit covectors, norm
equivalence and the converse estimate extend all unit normal rays
to one fixed radius $R>0$, reduced if necessary for the new metric.
We recover uniqueness of nearest points from this extension property.
Indeed, if $p,q$ are nearest to $x$ at distance
$s<R$, extend the minimizing segment from $p$ through $x$ to $c$
at distance $R$ from $p$. Then
\[
 R=d(c,H)\le d(c,q)\le d(c,x)+d(x,q)=(R-s)+s=R.
\]
Equality makes the broken segment from $c$ through $x$ to $q$
a minimizing geodesic. Its continuation from $x$ is unique, so
$q=p$. Here all segments are chosen below the injectivity radius
and inside the fixed coordinate neighborhood.
For normal covectors $\alpha,\beta$ and $a,b\ge0$, the same inequalities give
\[
 (a\alpha+b\beta)(x-q)
 \le (aC_\alpha+bC_\beta)|x-q|^2.
\]
The converse therefore puts $a\alpha+b\beta$ in $N_H^P(q)$.
For unit normal covectors at varying base points the constant is uniform,
so passage to the limit in \eqref{normal:conormal-inequality}
also proves closedness.
Every boundary point has a nonzero normal covector: project exterior
points tending to it onto $H$, and take a convergent subsequence
of the resulting unit normal covectors.

The role of timelike invariance is already visible in a flat
example. Let
\[
 h=-dt^2+|dx|^2
\]
on $\mathbb R\times\mathbb R^2$, and suppose locally that
$H=\mathbb R\times A$. For close points $x_-,x_+\in A$, put
$d=|x_+-x_-|$ and consider
\[
 \gamma(s)=\left(-\frac d2+sd,(1-s)x_-+sx_+\right),
 \qquad 0\le s\le1.
\]
Its endpoints lie in $H$, and
\[
 h(\dot\gamma,\dot\gamma)=-d^2+|x_+-x_-|^2=0.
\]
Closure under short null segments therefore puts the spatial segment
$[x_-,x_+]$ in $A$. Conversely, local convexity of $A$ gives this
inclusion of short null segments. In this example, timelike invariance turns
closure under short null geodesic segments into ordinary convexity
in the transverse plane.
For a variable metric and a variable timelike field, we use opposite
orbit shifts in place of the two time translations. The following
lemma proves uniqueness of nearest points in a fixed neighborhood,
using only a first derivative bound for the field. This will allow
us to transport $(\Z,\D\Z)$ along exterior normal segments.

\begin{lemma}\label{closure:null-reach}
Let $K\subset O\Subset\{\g(\T,\T)>0\}$, where $K$ is compact
and $O\subset Q$ is open, and let $H\subset Q$ be closed.
For some $r>0$, assume that every $h$-null geodesic segment
contained in $O$, of $g_+$-length less than $r$, with endpoints
in $H$ lies in $H$.
Suppose an open set $U\subset\{\g(\T,\T)>0\}\setminus H$
contains $O\setminus H$ and carries a complete smooth field $V$
such that, for some $C<\infty$, on $O\setminus H$,
\[
 \begin{aligned}
 h(V,V)&=-1,\\
 |V|_{g_+}+|\nabla V|_{g_+}&\le C.
 \end{aligned}
\]
Then $H$ has a uniform positive reach radius near $K$, depending
only on $K,O,r,C$ and the fixed metrics.
\end{lemma}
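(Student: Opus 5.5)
We argue by contradiction, making quantitative the heuristic from the introduction: if a small exterior ball touches $H$ at two points, moving them in opposite timelike directions produces a short null segment whose midpoint lies in the ball. We fix a radius $R_0>0$, to be shrunk depending only on $K,O,r,C$ and the fixed metrics, below the $g_+$ injectivity radius on a compact neighborhood of $K$ inside $O$. Suppose $x\in O\setminus H$ with $d_{g_+}(x,K)<R_0$ has two distinct nearest points $p,q\in H$ at distance $s=d_{g_+}(x,H)<R_0$. The open ball $B=B_{g_+}(x,s)$ lies in $O\setminus H\subset U$, so the flow of $V$ is defined on it. Set $\ell=d_{g_+}(p,q)\le 2s$ and $m$ the $g_+$-midpoint of the minimizing segment from $p$ to $q$.

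The first step is a first-order model at $m$, in $h$-normal coordinates scaled by $\ell$. Because $|V|_{g_+}+|\nabla V|_{g_+}\le C$, the field $V$ is within $O(C\ell)$ of a constant unit timelike vector $V_m$ on a ball of radius comparable to $\ell$. In the rescaled picture $p,q$ are at unit separation, and $h$ is flat up to $O(\ell)$. We seek the null geodesic from $\Phi^V_{-t}(p')$ to $\Phi^V_{t}(q')$, where $p',q'$ are points of $B$ near $p,q$ and the orbit points nearest $p,q$ are the ones required to lie in $H$. The difficulty is that $V$ lives only on the complement, so shifting $p$ itself is not allowed. We therefore shift the orbits of points of $B$ approaching $p$ and $q$, and we must ensure that the shifted endpoints lie in $H$.

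The cleanest route is probably to redo the argument with the roles exchanged. We consider the set $H$ thickened along orbits, and note that a $V$-orbit segment of length $t$ through $B$ cannot enter $H$ before leaving $B$ only if the orbit is tangent to the ball boundary in the appropriate sense. Instead we use the closure property directly. Take the $h$-null geodesic $\gamma$ whose midpoint is the point $m'$ of $B$ at $g_+$-distance $\ge s-C\ell^2/s$ from $\partial B$ near $m$, with direction $\pm V_m$ plus the unit spatial direction from $p$ to $q$. In the flat model its endpoints at parameter $\pm\ell/2$ are $p\mp\frac{\ell}{2}V_m$ and $q\pm\frac{\ell}{2}V_m$, up to $O(\ell^2)$.

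The main obstacle is to show that these endpoints lie in $H$, or to replace them by nearby points in $H$ while the segment still passes through $B$. We plan to exploit the structure of the nearest-point configuration. Both $B$ and its orbit translates $\Phi^V_{\pm t}(B)$, for $|t|\lesssim \ell$, lie in $U\subset Q\setminus H$, by completeness of $V$ on $U$ and the derivative bound, since the translate of the ball is contained in the complement as long as it stays in $U$. The translated balls $\Phi^V_{\pm\ell/2}(B)$ are nearly balls of radius $s$ centered at $x\pm\frac{\ell}{2}V_m$ and still miss $H$.

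We then run a continuity argument in the null geodesics joining a point of $H$ near $p$ to a point of $H$ near $q$. Consider the one-parameter family of short null geodesics from $p$ with initial directions making angle with $V_m$. Its endpoint set is a null cone sheet through $q+O(\ell^2)$ shifted by $\pm V_m$, and $H$ must meet this sheet near $q$, because otherwise the orbit through $q$ would be a curve in $Q$ joining $q$ to the exterior ball without meeting $H$. This is the step we expect to need care. We will try first to prove that $\{p,q\}$ themselves are joined, after a small perturbation within $H\cap\overline{B}^c$-boundary, by a null geodesic whose midpoint region dips into $B$ by an amount $\gtrsim \ell^2/s-C\ell^2$. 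In the flat timelike-invariant model, this dip is exactly the Euclidean depth of a chord inside a ball.

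Once such a segment is found, its $g_+$-length is $O(\ell)<r$ for $R_0$ small, its endpoints lie in $H$, and it is contained in $O$, so by hypothesis it lies in $H$. This contradicts its passage through $B\subset Q\setminus H$. Uniformity of $R_0$ then follows because all errors are controlled by $C$, the $C^2$ bounds of $h,g_+$ on $\overline O$, and $\operatorname{dist}_{g_+}(K,Q\setminus O)$.
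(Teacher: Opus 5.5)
Your proposal identifies the right heuristic and the right obstacle, but it does not overcome that obstacle. Nothing in it produces a short null segment whose \emph{endpoints lie in $H$}. Your suggested fix, that $H$ must meet the null cone of $p$ near $q$ ``because otherwise the orbit through $q$ would join $q$ to the exterior ball'', has no content: $q\in H$, and no orbit through $q$ exists a priori.

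The missing idea is to extend the exterior flow to the two contact points by limits from the ball.
\begin{itemize}
\item In normal coordinates centered at $x$, every coordinate chord between points of $B$ lies in $B\subset U$.
\item For $|t|\le t_0$, the bound on $\nabla V$ and Gronwall give $|d\Phi^V_t|\le Ce^{C|t|}$ along such chords. Hence $\Phi^V_t$ is uniformly Lipschitz on $B$.
\item Consequently the limits $p(t)=\lim_{B\ni y\to p}\Phi^V_t(y)$ and $q(t)$ exist. They satisfy $p'(0)=V(p)$ with a uniform quadratic remainder, where $V(p),V(q)$ are limits from $B$ and $|V(q)-V(p)|\le C\ell$.
\item These curves lie in $H$. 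If $p(t)\notin H$, then $p(t)\in O\setminus H\subset U$. Continuity of the inverse flow on the invariant open set $U$ then gives $p=\Phi^V_{-t}(p(t))\in U$, contradicting $p\in H$.
\end{itemize}
The admissible endpoints are therefore $p(-t)$ and $q(t)$, not translates of points of $B$.

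Two further ingredients are needed, and both are absent.

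First, you need transversality of the chord to $V$. Since $p(t),q(t)\in H$ and $d_{g_+}(x,\cdot)\ge s$ on $H$ with equality at $t=0$, Gauss' lemma gives $V(p)\cdot(x-p)=V(q)\cdot(x-q)=0$. Hence $|(q-p)\cdot V(p)|=|(q-x)\cdot(V(p)-V(q))|\le Cs\ell$. This transversality does three things:
\begin{itemize}
\item it makes the chord spacelike with $h(q-p,q-p)\gtrsim\ell^2$ (the paper works in $\widehat g_+=h+2h(V_0,\cdot)^2$, where Euclidean orthogonality to a timelike vector forces $h$-spacelike);
\item it gives the sign change of $h(\dot\gamma_t,\dot\gamma_t)$ between $t=0$ and $t=M\ell$;
\item it keeps the null connecting geodesic nonconstant. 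Without it, $q$ could lie nearly on the orbit of $p$, and the null crossing could occur where $p(-t)=q(t)$.
\end{itemize}

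Second, the two shifts must be opposite and simultaneous. Only then do the first-order displacements cancel, so that the midpoint $z$ of the null geodesic satisfies $|z-m|\le C\ell^2$, while $|m-x|^2=s^2-\ell^2/4$.

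Your fallback of fixing $p$ and moving only near $q$ loses this cancellation and is exactly borderline. Take the flat model $h=-dt^2+da^2+db^2$ with $V=\partial_t$ and the ball centered at the origin. Put $p=(0,-\ell/2,b_0)$ and $q=(0,\ell/2,b_0)$ with $b_0^2=s^2-\ell^2/4$.
\begin{itemize}
\item One-sided shift: the null point on the orbit of $q$ is $q+\ell\,\partial_t$. The midpoint $(\ell/2,0,b_0)$ lies exactly on the sphere of radius $s$, so no contradiction survives the error terms.
\item Symmetric shift by $\ell/2$: the midpoint is $(0,0,b_0)$, with squared norm $s^2-\ell^2/4$. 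This margin of $\ell^2/4$ is what absorbs the $O(s\ell^2)$ errors.
\end{itemize}
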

\begin{proof}
Suppose that an exterior point has two nearest points in $H$. We first
extend the exterior flow to these points by taking limits from the
ball, then move them in opposite timelike directions until the
connecting geodesic becomes null. The first-order displacements
cancel at its midpoint, while their difference changes the causal
character of the connecting geodesic. The remaining error will
be smaller than the difference between the ball radius and the
distance of its chord midpoint from the center.

Choose a fixed smooth $h$-unit timelike field $V_0$ locally and put
$\widehat g_+=h+2h(V_0,\,\cdot\,)\otimes h(V_0,\,\cdot\,)$.
It suffices to prove
a reach bound in this metric, by the change-of-metric calculation following
Definition~\ref{def:proximal-conormal}.
The field $V_0$ and all coordinate constants below depend only on
the fixed geometry.
Suppose an exterior point $c$ has distinct nearest points $p,q$ at
the same sufficiently small distance $R$. In $\widehat g_+$-normal coordinates
centered at $c$, choose the initial basis with first vector $V_0(c)$.
Then $\widehat g_+(c)=I$, $h(c)=\operatorname{diag}(-1,1,1)$, and the
coordinate ball $B_R(c)$ is the geodesic ball disjoint from $H$.
In this proof $|\cdot|$ and $\cdot$ denote the Euclidean coordinate
norm and inner product, and $d=|q-p|$.
The open coordinate chord from $p$ to $q$ lies in $B_R(c)$,
where there is one smooth exterior field. The bound on $\nabla V$
and the fixed connection coefficients give $|\partial V|\le C$ there.
The boundary values are its limits from this ball; hence
\[
 \begin{aligned}
 V(q)-V(p)&=\int_0^1\partial V(p+s(q-p))(q-p)\,ds,\\
 |V(q)-V(p)|&\le Cd.
 \end{aligned}
\]
All values $V(p),V(q)$ in this proof denote limits from the
same ball $B_R(c)$. Write $\Phi_t$ for the complete flow on $U$.
Choose $K\subset O_1\Subset O$ and then $t_0>0$ with
\[
 Ct_0<\tfrac12\operatorname{dist}_{g_+}(\overline{O_1},Q\setminus O).
\]
The speed bound keeps every trajectory starting in $O_1\setminus H$
inside $O$ for $|t|\le t_0$. Restrict all the small balls to $O_1$.
In coordinates, the variational equation and Gronwall's inequality
give
\begin{align*}
 \frac d{dt}d\Phi_t&=(\partial V)_{\Phi_t}d\Phi_t,\\
 |d\Phi_t|&\le C e^{C|t|}.
\end{align*}
For $x,y$ in the ball, apply this bound along their coordinate
chord, whose image remains in the exterior:
\begin{align*}
 d_{g_+}(\Phi_t(x),\Phi_t(y))
 &\le\int_0^1\bigl|(d\Phi_t)_{x+s(y-x)}(y-x)\bigr|_{g_+}\,ds\\
 &\le C e^{C|t|}|x-y|.
\end{align*}
Thus $\Phi_t$ has a continuous limit from the ball at $p$ and $q$,
uniformly for these times. Denote the resulting curves by $p(t)$
and $q(t)$. They lie in $\partial H$. Indeed they are limits of
exterior points, and if, for example, $p(t)$ were exterior, the
inverse flow would give
\[
 p=\lim_{x\to p,\ x\in B_R(c)}\Phi_{-t}(\Phi_t(x))
   =\Phi_{-t}(p(t))\notin H.
\]
The coordinate acceleration of every exterior trajectory satisfies
\begin{align*}
 \frac{d^2}{dt^2}\Phi_t^i(x)
   &=\partial_jV^i(\Phi_t(x))V^j(\Phi_t(x)),\\
 \left|\frac{d^2}{dt^2}\Phi_t(x)\right|&\le C.
\end{align*}
Taking limits in its Taylor formula proves that $p'(0)=V(p)$
and $q'(0)=V(q)$, with uniform quadratic remainders.
The distance to $c$ has a minimum along these two-sided boundary
curves. Gauss' lemma therefore gives
\[
 V(p)\cdot(c-p)=V(q)\cdot(c-q)=0,
\]
so
\[
 |(q-p)\cdot V(p)|
 =|(c-q)\cdot(V(q)-V(p))|\le CRd.
\]
Write $V(p)=(v^0,v')$ in these coordinates. The hypotheses and
$h(p)-h(c)=O(R)$ imply
$(v^0)^2-|v'|^2\ge1/2$ and $|V(p)|\le C$ for small $R$.
If $w\cdot V(p)=0$, then $w^0=-w'\cdot v'/v^0$ and
\[
 h(c)(w,w)\ge\frac{|w'|^2}{2(v^0)^2}\ge c_0|w|^2.
\]
Apply this inequality to
\[
 w=q-p-\frac{(q-p)\cdot V(p)}{|V(p)|^2}V(p).
\]
The bound $|(q-p)\cdot V(p)|\le CRd$ gives
\begin{align*}
 |w-(q-p)|&\le CRd,\\
 h(c)(q-p,q-p)&\ge c_0|w|^2-CRd^2
                       \ge\tfrac12c_0d^2
\end{align*}
after reducing $R$.
The boundary curves just constructed satisfy
\[
 \begin{aligned}
 p(-t)&=p-tV(p)+O(t^2)\\
 q(t)&=q+tV(q)+O(t^2).
 \end{aligned}
\]
Work inside one convex-normal neighborhood for $h$. For
$0\le t\le Md$, let $\gamma_t:[0,1]\to Q$ be the geodesic from
$p(-t)$ to $q(t)$, with affine parameter on $[0,1]$.
To estimate the connecting geodesics, integrate
their coordinate equation twice with the two endpoint values fixed:
\[
 \begin{aligned}
 &\gamma_t^i(s)-(1-s)p(-t)^i-sq(t)^i\\
 &\quad=\int_0^1\min\{s,u\}\bigl(1-\max\{s,u\}\bigr)
       \Gamma^i_{kl}(\gamma_t(u))\dot\gamma_t^k(u)
                                      \dot\gamma_t^l(u)\,du.
 \end{aligned}
\]
Here $\Gamma^i_{kl}$ are the Christoffel symbols of $h$ in these
coordinates. The endpoint distance is $O_M(d)$; smoothness of the exponential
map and its inverse gives $|\dot\gamma_t|\le C_Md$.
The integral and its first $s$-derivative are consequently
$O_M(d^2)$. Together with the orbit expansions, this yields
\[
 \begin{aligned}
 \dot\gamma_t&=q-p+t\bigl(V(p)+V(q)\bigr)+O_M(d^2),\\
 h(\dot\gamma_t,\dot\gamma_t)
 &=h(c)\bigl(q-p+t(V(p)+V(q)),q-p+t(V(p)+V(q))\bigr)
       +O_M(Rd^2).
 \end{aligned}
\]
For the second equality, $h-h(c)=O_M(R)$, and $d\le2R$
absorbs the velocity error $O_M(d^3)$.
At $t=0$ the constant squared speed is at least $c_0d^2/4$.
Since $V(p)+V(q)=2V(p)+O(d)$, we have
$h(c)(V(p)+V(q),V(p)+V(q))\le-2$ for small $R$. Hence
\[
 h(\dot\gamma_{Md},\dot\gamma_{Md})
 \le \bigl(C+CM-2M^2+C_MR\bigr)d^2<0
\]
if $M$ is fixed sufficiently large and then $R$ sufficiently small.
Choose $R$ also so that $2MR<t_0$, hence $Md<t_0$.
To exclude a constant connecting geodesic, let $P$ be the
Euclidean orthogonal projection onto $V(p)^\perp$. The vector
$w$ above equals $P(q-p)$, so, uniformly for $0\le t\le Md$,
\begin{align*}
 P(q(t)-p(-t))
   &=w+tP(V(q)-V(p))+O(t^2)=w+O_M(d^2),\\
 |P(q(t)-p(-t))|
   &\ge (1-CR)d-C_Md^2\ge d/2>0
\end{align*}
after reducing $R$, since $d\le2R$.
Continuity therefore gives a nonconstant null connecting
geodesic for some $0<t_{\ast}<Md$.
Its endpoints belong to $H$, it stays in the prescribed larger
neighborhood, and its auxiliary length is at most $C_Md<r$.

We now compare the midpoint of this null geodesic with the original
chord midpoint. Write $z$ for its affine midpoint and $m=(p+q)/2$.
Using the integral formula at $s=1/2$ and the two orbit expansions,
we obtain
\[
 \begin{aligned}
 z-m&=\tfrac12t_{\ast}(V(q)-V(p))+O(t_{\ast}^2+d^2),\\
 |z-m|&\le C(t_{\ast}d+t_{\ast}^2+d^2)\le C_Md^2.
 \end{aligned}
\]
But
\[
 \begin{aligned}
 |m-c|^2&=R^2-d^2/4\\
 |z-c|^2&\le R^2-d^2/4+CRd^2+Cd^4<R^2.
 \end{aligned}
\]
The midpoint lies in $B_R(c)$, contrary to the required inclusion
of the null segment in $H$. All sufficiently near exterior points
therefore have a unique nearest point, with a uniform radius.
\end{proof}

On a domain carrying the circle action, put $V=\Z/\rho$,
where $\rho^2=-h(\Z,\Z)$. Proposition~\ref{action:relative} gives
exactly the bounds needed above:
\[
 \begin{aligned}
 h(V,V)&=-1,\\
 |V|_{g_+}+|\nabla V|_{g_+}&\le C.
 \end{aligned}
\]
The constants are uniform for complete orbits confined to a fixed
compact subset of the ergoregion.
We apply the lemma directly to $H_\infty$. Fix a chart near its
boundary in the ergoregion and choose $\delta>0$ below
$\g(\T,\T)$ on a larger chart. The complete action on
$U_{H_\infty}$ preserves $\g(\T,\T)$ and the initial region
$C_-$. Flow uniqueness prevents an orbit starting outside $C_-$
from crossing its invariant boundary. Every orbit under consideration
is therefore confined to the fixed compact set
\[
 \mathcal D_0\cap\{\g(\T,\T)\ge\delta\}.
\]
Proposition~\ref{action:relative} gives the displayed bounds on
the exterior of $H_\infty$. Since $\Z\rho=0$, the normalized flow
is complete on each orbit as well. The open set
$\pi(U_{H_\infty})\cap\{\g(\T,\T)>0\}$ is invariant under
this complete normalized flow and contains the local exterior.
The condition of Definition~\ref{discrete:null-convexity} for
$H_\infty$ follows from
Lemma~\ref{discrete:greedy-construction}, and horizontal lifting
compares the quotient and spacetime lengths on these fixed charts.
Thus $H_\infty$ has locally uniform positive reach in the ergoregion.

We can now pass from geometry to the Killing transport equations.
Every nearby exterior point lies on a normal segment reaching a
fixed compact subset of the exterior, so parallel transport along
these segments bounds $(\Z,\D\Z)$ up to the boundary. The construction
is illustrated in Figure~\ref{fig:null-reach-transport}. To identify
the limits from different directions, we connect the normal rays
by exterior paths whose lengths tend to zero. This works whenever
the cone at the boundary point is not a line. If it is a line, its two opposite rays
must be treated separately because their normal segments may lie in
different exterior components.

\begin{lemma}\label{normal:parallel-boundary}
Let $H\subset Q$ be closed with positive reach near
$q\in\partial H$. Let $\mathcal W\subset Q$ be an open neighborhood
of $q$, put $U=\mathcal W\setminus H$, and let $\DD$ be a smooth
connection on a vector bundle over $\mathcal W$.
There exist a neighborhood $V\Subset \mathcal W$ of $q$ and a compact set
$K\Subset U$ such that every smooth section $\mathcal J$ on $U$ with
$\DD \mathcal J=0$ satisfies, in a fixed local bundle frame,
\[
 \|\mathcal J\|_{C^m(U\cap V)}\le C_m\sup_K|\mathcal J|\qquad(m\ge0),
\]
where the norms use the chosen coordinates, bundle frame and a fixed
positive bundle metric, and $C_m$ is independent of $\mathcal J$.
If $N_H^P(q)$ is not a line, the limit $\mathcal J(q)=\lim_{U\ni x\to q}\mathcal J(x)$
exists. These limits are continuous at boundary points with
this property. If a smooth parallel section $\widetilde{\mathcal J}$
near $q$ satisfies $\widetilde{\mathcal J}(q)=\mathcal J(q)$, then
$\widetilde{\mathcal J}=\mathcal J$ on $U\cap V$ after shrinking $V$.
\end{lemma}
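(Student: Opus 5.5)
We transport $\mathcal J$ inward from a fixed compact exterior set along the outward normal segments provided by positive reach, and then use the fixed-cylinder Gronwall estimate for parallel transport recorded in Section~\ref{subsec:compatibility}. Let $R>0$ be a common reach radius near $q$, below the injectivity radius of $g_+$ on a compact neighborhood of $q$. Choose $V$ so small that every $x\in V\setminus H$ satisfies $d_{g_+}(x,H)<R/4$, and so that its nearest point $\xi(x)$ lies within $R/4$ of $q$. By the normal-segment continuation following Definition~\ref{def:reach}, the minimizing segment from $\xi(x)$ through $x$ extends, with the same nearest point, to the point $c(x)$ at distance $R/2$ from $H$. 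We shrink $\mathcal W$ if necessary (or choose $R$ small) so that these segments stay in $\mathcal W$. Define $K$ as the closure of $\{c(x):x\in V\setminus H\}$. It lies in $\{d_{g_+}(\cdot,H)=R/2\}$ near $q$, so it is compact and contained in $U$.

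For the $C^0$ bound, $\mathcal J(x)$ is obtained from $\mathcal J(c(x))$ by $\DD$-parallel transport along a segment of length at most $R/2$. The exponential Gronwall bound then gives $|\mathcal J(x)|\le C\sup_K|\mathcal J|$. For derivatives, we straighten the normal segments locally. We do not have smoothness of $\xi$, so instead we use the cylinder argument of Section~\ref{subsec:compatibility} directly. Near any $x_0\in U\cap V$, take coordinates $(s,y)$ in which $s$ is a coordinate along a fixed direction close to the normal at $x_0$. Then pick a small disk $B$ and a transverse slice $\{s=s_0\}$ near $c(x_0)$, lying in a slightly enlarged compact set $K'\Subset U$. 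The key point is that positive reach makes $H$ locally the subgraph side $\{s\ge u(y)\}$ of a Lipschitz graph in such coordinates, with the segments from $\{s_0\}\times B$ staying in $U$ while $s<u(y)$. This follows from the proximal normal inequality \eqref{normal:conormal-inequality}, uniform in base point, which gives a cone condition. The $C^m$ estimate $\|\mathcal J^0\|_{C^m}\le C_m\|\mathcal J(s_0,\cdot)\|_{C^m}$ then applies. Interior elliptic-free control of $\|\mathcal J(s_0,\cdot)\|_{C^m}$ on $K'$ follows from $\DD\mathcal J=0$ itself, since all derivatives are algebraic in $\mathcal J$ and the connection coefficients, so they are bounded by $\sup_{K'}|\mathcal J|$. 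Finitely many such charts cover $V$ by compactness, which gives the estimate with $K$ replaced by $K'$. We rename $K'$ as $K$.

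For the limit at $q$ when $N_H^P(q)$ is not a line, the uniform $C^1$ bound makes $\mathcal J$ uniformly Lipschitz along exterior paths. Hence it suffices to show that any two exterior points near $q$ can be joined inside $U$ by a path of length $O(|x|+|x'|)$. The normal cone is a closed convex cone (shown after Definition~\ref{def:proximal-conormal}). If it is not a line, then either it is a single ray or it contains no pair of opposite vectors, or it is a half-space or larger. In each case the set of unit normals at nearby boundary points, after the quadratic inequality, lies in a connected region. Concretely, we would connect $x$ to $c$-type points along normals, and join normal rays at $q$ through small spherical arcs at radius $r$ which lie outside $H$ by \eqref{normal:conormal-inequality}. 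We then let $r\to0$, so the oscillation of $\mathcal J$ tends to zero. Continuity of the limits at such boundary points follows from the same uniform Lipschitz bound. Finally, if $\widetilde{\mathcal J}(q)=\mathcal J(q)$, then $\widetilde{\mathcal J}-\mathcal J$ is parallel on $U\cap V$ with limit $0$ at $q$. The Gronwall estimate along paths from points near $q$ forces it to vanish on the exterior component approached. By the path-connection just described, after shrinking $V$ this is all of $U\cap V$.

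The main obstacle is the geometric step: showing that $U\cap V$ is connected near $q$ by short paths precisely when the normal cone is not a line, with lengths uniform in the boundary point. I would first try to prove that the complement of $H$ near $q$ contains, for each unit $\alpha\in N_H^P(q)$, the ball of radius $r|\alpha|$ centered at $\exp_q(r\alpha^\sharp)$. Two such balls for non-opposite $\alpha,\beta$ are then connected through balls along the geodesic arc in the cone, since convex combinations stay normal. If they fail to connect, this would force opposite normals, that is, a line.
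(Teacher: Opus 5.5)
Your $C^0$ bound follows the paper: write $\mathcal J(x)=P_{c\to x}\mathcal J(c)$ along the extended normal segment, and take $K$ to be the set of fixed-distance endpoints.

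The cylinder detour for the $C^m$ bound is unnecessary. Its claim that positive reach makes $H$ locally a subgraph $\{s\ge u(y)\}$ is also false in general: a smooth surface, whose normal cone is a line, or a single point are counterexamples. As you note yourself, $\partial_i\mathcal J=-\Gamma_i^{\DD}\mathcal J$ and induction give $|\partial^I\mathcal J(x)|\le C_I|\mathcal J(x)|$ pointwise. So the $C^m$ estimate follows directly from the $C^0$ one, exactly as in the paper.

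\textbf{The gap is the boundary limit.} You reduce it to joining arbitrary exterior points $x,x'$ near $q$ by paths in $U$ of length $O(|x|+|x'|)$, but you never prove this, and your constructions do not give it.
\begin{itemize}
\item Going from $x$ out to $c(x)$ costs length about $R/2$. The Lipschitz bound then only gives oscillation at most $CR\sup_K|\mathcal J|$, which does not tend to zero.
\item The balls $B(\exp_q(r\alpha^\sharp),r)$ with $\alpha\in N_H^P(q)$ and $r<R$ do not catch all exterior points tending to $q$. For the half-space $\{s\ge0\}$, the points $x_j=(y_j,-t_j)$ with $t_j=|y_j|^3$ lie in none of them. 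So you would still have to join normal rays over base points $p\ne q$ to those at $q$, which is exactly the step you flag as the obstacle.
\item Opposite normals do not force a line. The paper joins $\pm\alpha$ through a third direction of the cone.
\end{itemize}

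\textbf{The missing idea is that no connectivity of $U\cap V$ is needed.} Take $x_j\to q$ with nearest points $p_j\to q$ and unit normals $\alpha_j\in N_H^P(p_j)$.
\begin{itemize}
\item Closedness of the proximal normal cones gives a subsequence with $\alpha_j\to\alpha\in N_H^P(q)$.
\item Then $\mathcal J(x_j)=P_{x_{R,j}\to x_j}\mathcal J(x_{R,j})$ with $x_{R,j}\to\exp_q(R\alpha^\sharp)\in K$.
\item $\mathcal J$ is continuous on $K$, and $\DD$ is smooth across $H$, so transport is continuous under $C^1$ convergence of the segments. Hence $\mathcal J(x_j)$ converges to the radial limit along $\alpha$.
\item Your spherical-arc comparison at $q$ makes the radial limits independent of direction. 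So every subsequence has the same limit, and the full limit exists.
\end{itemize}

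The same compactness argument gives continuity of the boundary values. It also gives the uniqueness claim: $\widetilde{\mathcal J}=\mathcal J$ on each normal segment from $q$, hence on a neighborhood of their length-$R$ endpoints, hence along every normal segment whose extended endpoint enters that neighborhood. This replaces your exterior-component argument, which also rests on the unproven path-connectivity.
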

\begin{proof}
We first use the normal segments to transport the section from
points at a fixed distance outside $H$. Fix a smooth positive bundle metric for
$|\mathcal J|$ and a local bundle frame in a coordinate neighborhood with compact closure
in $\mathcal W$, where the nearest-point property holds. Take $R>0$
below the local reach and injectivity radii. After reducing the
boundary neighborhood, all its normal segments of length $R$
stay in this coordinate neighborhood. For $p\in\partial H$
and a unit $\alpha\in N_H^P(p)$, put
\[
 x_R=\exp_p^{g_+}(R\alpha^{\sharp_{g_+}}).
\]
As $p$ ranges over a smaller closed neighborhood, the unit normal
directions form a closed set, so the corresponding endpoints $x_R$
form a compact set $K$. The preceding extension of normal segments
gives $d_{g_+}(x_R,H)=R$, so $K$ is compactly contained in $U$.
For $x=\exp_p^{g_+}(s\alpha^{\sharp_{g_+}})$, $0<s<R$,
$\DD$-parallel transport $P$ along this segment gives
\begin{align*}
\mathcal J(x)&=P_{x_R\to x}\mathcal J(x_R),\\
\shortintertext{and therefore}
|\mathcal J(x)|&\le e^{CR}\sup_K|\mathcal J|.
\end{align*}
Every sufficiently near exterior point has this form.
Writing $\Gamma_i^{\DD}$ for the connection matrices, the equation
$\partial_i\mathcal J=-\Gamma_i^{\DD}\mathcal J$ gives, for every
coordinate multi-index $I$,
\[
 \partial^I\partial_i\mathcal J
 =-\sum_{J\le I}\binom{I}{J}
       (\partial^J\Gamma_i^{\DD})\partial^{I-J}\mathcal J.
\]
Induction gives the asserted $C^m$ bounds. Transport along each
normal segment extends smoothly to its boundary endpoint and gives,
in the fixed bundle frame,
\[
 \bigl|\mathcal J(\exp_p^{g_+}(s\alpha^{\sharp_{g_+}}))
            -P_{x_R\to p}\mathcal J(x_R)\bigr|
 \le Cs\sup_K|\mathcal J|.
\]

To prove that the radial limits give one boundary value, fix $q$
where $N_H^P(q)$ is not a line. Any two unit directions in this convex cone can
be joined within its unit section by spherical arcs of total
length at most $2\pi$. If the directions are opposite, use
any third direction not on their line. At distance $s>0$,
the normal exponential map sends these arcs to a path in $U$
of length at most $Cs$. If $\eta_s$ is such a path, the coordinate
parallel equation and the bound already proved give
\[
 \begin{aligned}
 \mathcal J(\eta_s(1))-\mathcal J(\eta_s(0))
 &=-\int_0^1\Gamma_i^\DD(\eta_s(t))\dot\eta_s^i(t)
                                      \mathcal J(\eta_s(t))\,dt,\\
 |\mathcal J(\eta_s(1))-\mathcal J(\eta_s(0))|
 &\le C\operatorname{Length}_{g_+}(\eta_s)\sup_K|\mathcal J|.
 \end{aligned}
\]
In particular,
\[
 \left|\mathcal J(\exp_q^{g_+}(s\alpha_1^{\sharp_{g_+}}))
       -\mathcal J(\exp_q^{g_+}(s\alpha_2^{\sharp_{g_+}}))\right|
       \le Cs\sup_K|\mathcal J|.
\]
The limit along each normal segment exists as $s\downarrow0$.
By this inequality it is independent of the direction, and we
denote it by $\mathcal J(q)$.

If $x_j\in U$ tends to $q$, its nearest points $p_j$ tend
to $q$. Pass to a subsequence of its unit normal covectors
converging to $\alpha\in N_H^P(q)$, using the chosen coordinates
to identify nearby cotangent spaces. The extended endpoints
satisfy
\begin{align*}
x_{R,j}&\longrightarrow\exp_q^{g_+}(R\alpha^{\sharp_{g_+}}),\\
\mathcal J(x_j)&=P_{x_{R,j}\to x_j}\mathcal J(x_{R,j})\longrightarrow \mathcal J(q).
\end{align*}
This proves convergence as $x\in U$ tends to $q$. At a sequence
of boundary points tending to $q$, use their radial expressions
$P_{x_R\to p}\mathcal J(x_R)$ and pass to a convergent subsequence
of unit normal covectors. The same limit proves continuity of the boundary
values wherever they are defined.

It remains to compare a smooth extension with the section on the
entire nearby exterior. The equality
$\widetilde{\mathcal J}(q)=\mathcal J(q)$ and uniqueness of parallel
transport first give $\widetilde{\mathcal J}=\mathcal J$ on each short normal segment
from $q$. The endpoints at a fixed length $R$ form a compact
subset of $U$. At each endpoint, transport within a coordinate
ball extends the equality to that ball. If equality failed at $x_j\in U$ tending to $q$,
extend its normal segment to length $R$. A subsequence of
the extended endpoints converges to the preceding compact
set, where equality holds on an open neighborhood. Transport along such a segment gives
\[
 |\mathcal J-\widetilde{\mathcal J}|(x_j)
 \le e^{CR}|\mathcal J-\widetilde{\mathcal J}|(x_{R,j})=0,
\]
which is a contradiction. All these segments lie
in the domain of $\widetilde{\mathcal J}$ after initially reducing
$R$ and the neighborhood.
\end{proof}

\begin{figure}[htbp]
\centering
\begin{tikzpicture}[x=1cm,y=1cm,font=\normalsize,
                   line cap=round,line join=round]
\path[use as bounding box] (0,-2.35) rectangle (15.4,2.85);
\begin{scope}[shift={(2.45,.1)}]
  \node[align=center,text width=4.7cm] at (0,2.4)
       {(a) Auxiliary ball\\and nearest points};
  \fill[black!5] (0,0) circle[radius=1.2];
  \draw[thick] (0,0) circle[radius=1.2];
  \coordinate (rc) at (0,0);
  \coordinate (rp) at (-.8,.8944);
  \coordinate (rq) at (.8,.8944);
  \coordinate (rm) at (0,.8944);
  \draw[thin] (rp)--(rc)--(rq);
  \draw[densely dashed] (rp)--(rq);
  \foreach \pt in {rc,rp,rq,rm} \fill (\pt) circle[radius=1.4pt];
  \node[below] at (rc) {$c$};
  \node[above left] at (rp) {$p$};
  \node[above right] at (rq) {$q$};
  \node[below=3pt,fill=white,inner sep=1pt] at (rm) {$m$};
  \node[left] at (-.47,.36) {$R$};
  \node[align=center,text width=4.75cm] at (0,-1.98)
       {$B_R(c)\cap H=\varnothing$\\[3pt]
        $|m-c|^2=R^2-\frac14|p-q|^2$};
\end{scope}
\begin{scope}[shift={(7.55,.1)}]
  \node[align=center,text width=4.8cm] at (0,2.4)
       {(b) Timelike motion\\and a null geodesic};
  % Perspective projection of three independent coordinate directions.
  % The contacts lie in z=0; their timelike motions leave that plane.
  \begin{scope}[x={(1cm,0cm)},y={(.3cm,.24cm)},z={(0cm,1cm)}]
    \fill[black!3] (0,0,0) circle[radius=1.2];
    \draw[gray!70] (0,0,0) circle[radius=1.2];
    \draw[densely dashed,gray!70] (-1.2,0,0)--(1.2,0,0);
    \draw[densely dashed,gray!70] (0,-1.2,0)--(0,1.2,0);
    \coordinate (sc) at (0,0,0);
    \coordinate (sp) at (-.8,.8944,0);
    \coordinate (sq) at (.8,.8944,0);
    \coordinate (spp) at (-.8,.8944,-.9);
    \coordinate (sqq) at (.8,.8944,.9);
    \coordinate (sz) at (.045,.86,.02);
    \draw[densely dashed,gray] (sp)--(sc)--(sq);
    \draw[densely dashed] (sp)--(sq);
    \draw[thin] (-.8,.8944,.36)--(-.8,.8944,-1.12);
    \draw[thin] (.8,.8944,-.3)--(.8,.8944,1.13);
    \draw[->,thick] (sp)--(spp);
    \draw[->,thick] (sq)--(sqq);
    \draw[very thick] (spp) .. controls (-.45,.87,-.50)
                     and (-.19,.85,-.21) .. (sz)
                     .. controls (.29,.87,.28)
                     and (.55,.90,.56) .. (sqq);
  \end{scope}
  % Sphere silhouette; the horizontal ellipse above is its contact plane.
  \draw[gray!70] (0,0) ellipse[x radius=1.26,y radius=1.2];
  \foreach \pt in {sc,sp,sq,spp,sqq,sz}
       \fill (\pt) circle[radius=1.35pt];
  \node[below left,fill=white,inner sep=1pt] at (sc) {$c$};
  \node[left=3pt,fill=white,inner sep=1pt] at (sp) {$p$};
  \node[right=3pt,fill=white,inner sep=1pt] at (sq) {$q$};
  \node[below left=1pt,fill=white,inner sep=1pt] at (spp) {$p(-t_{\ast})$};
  \node[above right=1pt,fill=white,inner sep=1pt] at (sqq) {$q(t_{\ast})$};
  \node[above left=3pt,fill=white,inner sep=1pt] at (sz) {$z$};
  \node[rotate=48,fill=white,inner sep=1pt] at (.6,.63) {null};
  \node[align=center,text width=4.8cm] at (0,-1.98)
       {$|z-m|\le C|p-q|^2$\\[3pt]
        $z\in B_R(c)$};
\end{scope}
\begin{scope}[shift={(12.85,.1)}]
  \node[align=center,text width=4.85cm] at (0,2.4)
       {(c) Exterior segments\\and parallel transport};
  % A convex corner shows that no smooth-boundary assumption is needed.
  \fill[black!9] (-1.85,-1.2)--(-1.85,-.57)--(-.65,.03)
      --(.2,.21)--(1.65,-.056)--(1.65,-1.2)--cycle;
  \draw[thick] (-1.85,-.57)--(-.65,.03)--(.2,.21)--(1.65,-.056);
  \node at (-1.2,-.83) {$H$};
  \node at (1.32,.50) {$U$};
  % K is a compact positive-distance subset of the exterior, not a boundary.
  \fill[black!4] (-1.93,.82)
     .. controls (-.8,1.55) and (.15,1.81) .. (1.7,1.38)
     --(1.7,1.11)
     .. controls (.1,1.52) and (-.8,1.30) .. (-1.93,.56)--cycle;
  \draw[densely dashed] (-1.93,.82)
     .. controls (-.8,1.55) and (.15,1.81) .. (1.7,1.38);
  \draw[densely dashed] (-1.93,.56)
     .. controls (-.8,1.30) and (.1,1.52) .. (1.7,1.11);
  \node[fill=white,inner sep=1pt] at (.95,1.55) {$K\Subset U$};
  \coordinate (tp) at (.2,.21);
  \coordinate (tx) at (.2,.72);
  \coordinate (tr) at (.2,1.46);
  \draw[thick] (-1.05,-.17)--(-1.61,.95);
  \draw[thick] (1.0,.063)--(1.226,1.293);
  \draw[thick] (tp)--(tr);
  \draw[->,thick] (.2,1.19)--(.2,.87);
  \foreach \pt in {tp,tx,tr} \fill (\pt) circle[radius=1.4pt];
  \node[below right] at (tp) {$p$};
  \node[right=3pt,fill=white,inner sep=1pt] at (tx) {$x$};
  \node[above left=2pt,fill=white,inner sep=1pt] at (tr) {$x_R$};
  \node[align=center,text width=4.85cm] at (0,-1.98)
       {$\mathcal J(x)=P_{x_R\to x}\mathcal J(x_R)$\\[3pt]
        $d_{g_+}(x_R,H)=R$};
\end{scope}
\end{tikzpicture}
\caption{Short null geodesics and Killing transport.
(a) Two nearest points $p,q\in H$ to the center of a ball disjoint from $H$ give
a chord midpoint $m=(p+q)/2$ strictly inside it.
(b) In the three-dimensional stationary quotient, opposite
timelike boundary motions produce a short null geodesic whose
endpoints lie in $H$ but whose midpoint $z$ remains inside the
ball, contradicting the required inclusion of the null segment in $H$.
The schematic ball is measured in the auxiliary Riemannian metric.
(c) The resulting unique exterior normal segments reach a fixed
compact set $K\Subset U$ at distance $R$ from $H$.
Parallel transport $P$ carries the controlled values of $\mathcal J$
on $K$ to nearby exterior points, uniformly up to the possibly
nonsmooth boundary. For the connection $\DD$ defined after
\eqref{local:killing-connection},
$\mathcal J=(\Z^\flat,\D(\Z^\flat))$.
See Lemmas~\ref{closure:null-reach} and~\ref{normal:parallel-boundary}.}
\label{fig:null-reach-transport}
\end{figure}

\begin{lemma}\label{closure:normal-limits}
For the set $H_\infty$ in Lemma~\ref{discrete:greedy-construction},
put $U=U_{H_\infty}$. The set $H_\infty$ has locally
uniform positive reach in the ergoregion.
For every compact set
\[
 K\subset\partial H_\infty\cap\{\g(\T,\T)>0\},
\]
there are a
neighborhood $O\subset Q$ and constants $C_m,c>0$ such that,
on the section $\vartheta=0$,
\[
 \|(\Z,\D\Z)\|_{C^m(\{\vartheta=0\}\cap U\cap\pi^{-1}(O))}
       \le C_m\qquad(m\ge0),
\]
where the $C^m$ norm uses coordinate derivatives on the stationary
section and the positive tensor norm induced by $\mathbf g_+$.
Also $c\le\rho\le c^{-1}$ there, where
$\rho=\sqrt{W[\T,\Z]}=\sqrt{-h(\Z,\Z)}$.
Every limiting normal covector on $K$ satisfies
\[
 h^{-1}(\alpha,\alpha)\ge c|\alpha|_{g_+}^2.
\]
For $p\in K$, if $N^P_{H_\infty}(p)$ is not a line, the pair
$(\Z,\D\Z)$ has one limit as $x\in\pi(U)$ tends to $p$,
with values evaluated on $\vartheta=0$. The projected limit of $\Z$ is timelike for
$h$ and annihilated by every $\alpha\in N^P_{H_\infty}(p)$.
\end{lemma}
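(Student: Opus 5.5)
The plan is to assemble the lemma from three ingredients already in place: the reach statement discussed just before Lemma~\ref{normal:parallel-boundary}, the orbit bounds of Proposition~\ref{action:relative}, and the transport estimate of Lemma~\ref{normal:parallel-boundary} applied to $\mathcal J=(\Z^\flat,\D(\Z^\flat))$ with the connection $\DD$ of \eqref{local:killing-connection}.

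The steps, in order:

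\begin{enumerate}
\item \textbf{Positive reach.} Fix $K$. Choose a compact neighborhood $K'\Subset\{\g(\T,\T)>0\}$ of $K$ and put $\delta=\tfrac12\min_{K'}\g(\T,\T)$. As noted before Lemma~\ref{normal:parallel-boundary}, every $\Z$-orbit in the exterior near $K'$ stays in the compact set $\mathcal D_0\cap\{\g(\T,\T)\ge\delta\}$, because the flow preserves $\g(\T,\T)$ and the invariant boundary of $C_-$. Proposition~\ref{action:relative} then gives $h(V,V)=-1$ and $|V|_{g_+}+|\nabla V|_{g_+}\le C$ for $V=\Z/\rho$. The field $V$ is complete since $\Z\rho=0$. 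The convexity of Definition~\ref{discrete:null-convexity} for $H_\infty$ (Lemma~\ref{discrete:greedy-construction}), transferred to $h$-null segments by horizontal lifting with uniform length comparison, supplies the hypothesis of Lemma~\ref{closure:null-reach}. That lemma yields a uniform reach radius near $K$.

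\item \textbf{Bounds on $(\Z,\D\Z)$ and on $\rho$.} Apply Lemma~\ref{normal:parallel-boundary} on the section $\vartheta=0$, with the bundle pulled back to $Q$, and take the $\DD$-parallel section $\mathcal J$. This gives a neighborhood $O$ and a compact set $K_1\Subset\pi(U)\setminus H_\infty$ such that all $C^m$ norms on $O$ are bounded by $C_m\sup_{K_1}|\mathcal J|$. To bound $\sup_{K_1}|\mathcal J|$ uniformly:
   \begin{itemize}
   \item $|\Z|_{\mathbf g_+}\le C\rho$ by Proposition~\ref{action:relative}.
   \item $\rho$ is bounded above, because $\rho^2=W\le C\g(\Z,\Z)\le C|\Z|^2_{g_+}$ and $|\Z|_{g_+}\le C\rho$.
   \end{itemize}
   An absolute upper bound for $\rho$ still needs an argument. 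The plan is to use $|d\log\rho|_q\le C$ from Proposition~\ref{action:relative} along paths in the connected exterior $\pi(U)\cap\{\g(\T,\T)\ge\delta\}$, joining $K_1$ to the initial region $C_+$, where $\Z$ is the fixed field of Proposition~\ref{prep:residual}. Once $\log\rho$ is bounded on $K_1$, the derivative $\D\Z$ there is controlled by $|\Z|$ and $\nabla(\Z/\rho)$ through the decomposition used in the proof of Proposition~\ref{action:relative}. The same Harnack-type integration of $d\log\rho$ along normal segments of length $R$ gives $c\le\rho\le c^{-1}$ on $O$.

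\item \textbf{Limits and the normal conditions.} Where $N^P_{H_\infty}(p)$ is not a line, Lemma~\ref{normal:parallel-boundary} gives a unique limit of $(\Z,\D\Z)$. Since $\rho\ge c$ and $|\Z/\rho|_{g_+}\le C$, the limit of $\Z/\rho$ lies in the compact set of unit timelike vectors from Proposition~\ref{action:relative}, so the projected limit of $\Z$ is $h$-timelike.

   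For annihilation, take $\alpha\in N^P(p)$. The set $H_\infty$ is invariant under the flow extended to the boundary, as in the boundary-curve construction in Lemma~\ref{closure:null-reach}. So the curve $p(t)$ obtained as a limit of exterior flow lines lies in $H_\infty$ and has $p'(0)=\Z(p)$. Applying \eqref{normal:conormal-inequality} to $x=p(\pm t)$ gives $\pm t\,\alpha(\Z)\le Ct^2$, hence $\alpha(\Z)=0$.

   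For the inequality $h^{-1}(\alpha,\alpha)\ge c|\alpha|^2_{g_+}$: $\alpha$ annihilates the timelike vector $\Z/\rho$, which ranges over a compact family, so $\alpha$ lies in a compact family of spacelike conormal spaces. That gives the bound. At points where the cone is a line, use each limiting normal covector. Each is approached by normals at nearby non-line points, or it comes from a single exterior side along its normal segment, where the radial limit alone suffices for the same argument.
\end{enumerate}

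The main obstacle is the absolute two-sided bound on $\rho$, and hence on $\Z$, independent of how the domain was enlarged. Proposition~\ref{action:relative} controls only ratios and $d\log\rho$. If path-integration fails near boundaries with bad topology, the fallback is to invoke the transport bound through the chain of domains $U_{H_j}$ started from $C_+$, using the connectedness built into Definition~\ref{discrete:admissible}.
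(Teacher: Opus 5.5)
Your outline follows the paper's proof. You obtain positive reach from Lemma~\ref{closure:null-reach} with $V=\Z/\rho$, and boundedness and boundary limits from Lemma~\ref{normal:parallel-boundary} applied to $\mathcal J=(\Z^\flat,\D(\Z^\flat))$. You then combine a two-sided boundary curve, built from limits of exterior flow lines, with \eqref{normal:conormal-inequality} to get $\alpha(\Z)=0$. The compact family of unit timelike directions gives $h^{-1}(\alpha,\alpha)\ge c|\alpha|_{g_+}^2$.

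The step you call the main obstacle is not one. The lemma concerns the single field $\Z$ on $U_{H_\infty}$, and its constants may depend on that field. So $\sup_{K_1}|\mathcal J|<\infty$ holds simply because $K_1$ is a compact subset of $\pi(U)$, where $\mathcal J$ is smooth; no uniformity over the successive enlargements is required. Your first bullet is circular, but you do not need it. The upper bound for $\rho$ follows from $\rho^2=W\le C|\Z|_{\mathbf g_+}^2$. The paper gets the lower bound from the causal period estimate \eqref{action:causal} combined with $W\ge C^{-1}\g(\Z,\Z)$ from Proposition~\ref{action:relative}. Your alternative also works: integrate $|d\log\rho|_{g_+}\le C$ along normal segments of length at most $R$, starting from the trivially available bounds on $K_1$.

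Drop the path integration to $C_+$ and its fallback. Connectedness of $\pi(U)\cap\{\g(\T,\T)\ge\delta\}$ is nowhere established, and nothing in the lemma needs it.

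At line points, use the one-sided limit of $V$ from a ball tangent at $p$ in the direction $\alpha$, which is your ``single exterior side'' alternative. It works at every boundary point. Your other alternative, approximating by normals at nearby non-line points, can be unavailable where $H_\infty$ is locally a thin surface.
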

\begin{proof}
Having obtained positive reach for $H_\infty$, we apply the
boundedness part of Lemma~\ref{normal:parallel-boundary} to the
pullback of $\DD$ to $\vartheta=0$. Recall from
\eqref{local:killing-connection} that its parallel section is
$\mathcal J=(\Z^\flat,\D(\Z^\flat))$, with second component
$(\D(\Z^\flat))_{\alpha\beta}=\D_\alpha\Z_\beta$.
Its fixed-distance exterior normal endpoints form a compact subset
of $\pi(U)$, and hence
\[
 \sup_{\{\vartheta=0\}\cap U\cap\pi^{-1}(O)}
                  |(\Z^\flat,\D(\Z^\flat))|<\infty.
\]
The parallel equation bounds every coordinate derivative in turn.
On a fixed compact neighborhood in $\{\g(\T,\T)>0\}$, the causal
period estimate and Proposition~\ref{action:relative} give
\[
 \rho^2=W\ge C^{-1}\g(\Z,\Z)\ge c_0>0.
\]
The upper bound follows from the bound for $\Z$ and the fixed
metric coefficients.
For the normal covectors we use the exterior flow of
$V=\Z/\rho$, where $h(V,V)=-1$, as in the reach proof.
Fix a sufficiently small $g_+$-ball disjoint from $H_\infty$
and tangent to it at $p$. Limits of the normalized
flow from this ball give a two-sided boundary curve $p(t)$ with
\begin{align*}
 p(t)&=p+tV(p)+O(t^2),\\
 h(V(p),V(p))&=-1,\\
 |V(p)|_{g_+}&\le C.
\end{align*}
Here $V(p)$ is the limit from this ball. For every unit
$\alpha\in N^P_{H_\infty}(p)$, the quadratic inequality
\eqref{normal:conormal-inequality} gives, for both signs of $t$,
\begin{align*}
 \alpha(p(t)-p)&\le C|p(t)-p|^2,\\
 t\alpha(V(p))&\le C t^2.
\end{align*}
It follows that $\alpha(V(p))=0$. In a local $h$-orthonormal
frame, write the components of $V(p)$ as $V^0,V^1,V^2$ and
the dual components of $\alpha$ as $\alpha_0,\alpha_1,\alpha_2$.
The identities $h(V(p),V(p))=-1$ and $\alpha(V(p))=0$ give
\begin{align*}
 (V^0)^2&=1+(V^1)^2+(V^2)^2,\\
 \alpha_0&=-\frac{\alpha_1V^1+\alpha_2V^2}{V^0}.
\end{align*}
Therefore
\[
 \begin{aligned}
 h^{-1}(\alpha,\alpha)
 &=\alpha_1^2+\alpha_2^2
     -\frac{(\alpha_1V^1+\alpha_2V^2)^2}{(V^0)^2}\\
 &\ge\frac{\alpha_1^2+\alpha_2^2}{(V^0)^2}
 \ge c|\alpha|_{g_+}^2.
 \end{aligned}
\]
For the last inequality we used the uniform bound for $|V(p)|_{g_+}$,
$\alpha_0^2\le\alpha_1^2+\alpha_2^2$, and equivalence of the
coordinate and $g_+$ norms on a finite frame cover of $K$.
The constant is independent of the exterior ball.
Closedness of the cones of normal covectors gives the same inequality for
all limiting normal covectors on the compact set.

If $N^P_{H_\infty}(p)$ is not a line,
Lemma~\ref{normal:parallel-boundary} gives one limit
$\mathcal J(p)$ of $(\Z^\flat,\D(\Z^\flat))$ as exterior points tend to $p$.
The positive lower bound for $\rho$ shows that every preceding
limit from a ball disjoint from $H_\infty$ is the projection of this same field divided by
$\rho(p)$. Hence
\begin{align*}
 \alpha(\Z(p))&=0
                 \quad(\alpha\in N^P_{H_\infty}(p)),\\
 h(\Z(p),\Z(p))&=-\rho(p)^2<0.
\end{align*}
\end{proof}

We next choose coordinates in which the exterior components are
explicit. Recall that a convex cone is pointed if its intersection
with its negative is $\{0\}$. At a point $p$ where $N^P_{H_\infty}(p)$ is pointed,
the bound \eqref{normal:conormal-inequality} gives a direction in which each nearby
line crosses the boundary exactly once. When the cone is a line,
we obtain a lower and an upper boundary that can coincide. In the
second case we will construct an auxiliary field from each exterior
side, using the limits of $(\Z,\D\Z)$ from that component.

Rataj and Zaj\'i\v{c}ek~\cite[Theorem~5.9]{RZ17} cover the boundary
of a set of positive reach in $\mathbb R^d$ locally by finitely many
semiconcave hypersurfaces; their Theorem~6.4 classifies the local
structure in $\mathbb R^2$. For our three-dimensional set, we prove
the descriptions below directly from the stated conditions on its
normal cone. We use their Definition~2.1 with a rescaled constant:
locally, $v(x)+C|x|^2$ is convex for some $C\ge0$.
Semiconcavity of $v$ means semiconvexity of $-v$.

\begin{lemma}\label{closure:graph-band}
Let $p\in\partial H_\infty\cap\{\g(\T,\T)>0\}$.
If $N^P_{H_\infty}(p)$ is pointed, there are local coordinates
$(x,s)$, with $x\in\mathbb R^2$, in which
\[
 H_\infty=\{s\ge v(x)\},
\]
with $v$ Lipschitz and semiconvex. If this cone is a line, there
are coordinates of the same form, centered at $p$, in which
\[
 \begin{aligned}
 H_\infty&=\{a(x)\le s\le b(x)\},\\
 a(0)&=b(0)=0,\\
 Da(0)&=Db(0)=0,
 \end{aligned}
\]
where $a$ is semiconvex and $b$ semiconcave.
\end{lemma}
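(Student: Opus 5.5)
The plan is to work entirely near $p$ in coordinates adapted to the normal cone, and to derive the graph descriptions from the quadratic normal inequality \eqref{normal:conormal-inequality}, the uniform positive reach of Lemma~\ref{closure:normal-limits}, and the nonemptiness and closedness of the proximal normal cones established after Definition~\ref{def:proximal-conormal}. The key tool is upper semicontinuity of $q\mapsto N^P_{H_\infty}(q)\cap\{|\alpha|_+=1\}$: a subsequence of unit normals at $q_j\to p$ converges to a unit normal at $p$, by the uniform constant in \eqref{normal:conormal-inequality} on a set of common reach.

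\emph{Pointed case.} Because the closed convex cone $N=N^P_{H_\infty}(p)$ is pointed, I choose a vector $e$ with $\alpha(e)\le -c|\alpha|$ for every $\alpha\in N$. I then take coordinates $(x,s)$ centered at $p$ with $\partial_s=e$. By semicontinuity, the same inequality holds with constant $c/2$ for every unit normal at boundary points $q$ near $p$.

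Next I show that each vertical line $\{x\}\times(-\epsilon,\epsilon)$ meets $\partial H_\infty$ exactly once. Suppose $q=(x,s_1)\in H_\infty$ and $s_2>s_1$ with $s_2-s_1$ small. If $(x,s_2)\notin H_\infty$, its nearest point $q'$ has a unit normal $\alpha$ pointing from $q'$ toward $(x,s_2)$. Applying \eqref{normal:conormal-inequality} with the point $x-q'$ replaced by $(x,s_1)-q'$ gives a contradiction with $\alpha(e)\le -c/2$ once the neighborhood is small. The same inequality shows that $H_\infty$ contains a point below $p$ on each nearby vertical line. It also excludes $H_\infty$ containing an entire small vertical segment through $p$ and lying below: points $(x,s)$ with $s<0$ and $|x|$ small lie outside $H_\infty$, since $p\in\partial H_\infty$ and there is an exterior direction with $\alpha(e)<0$.

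So, locally, $H_\infty=\{s\ge v(x)\}$. The uniform cone condition, meaning $H_\infty$ contains a fixed upward cone around $e$ at each boundary point, makes $v$ Lipschitz. Semiconvexity follows from \eqref{normal:conormal-inequality}: at each $x_0$, a normal $\alpha=(\beta,-\lambda)$ with $\lambda\ge c$ gives $v(x)\ge v(x_0)+\lambda^{-1}\beta(x-x_0)-C|x-x_0|^2$, up to Lipschitz-absorbed terms. A uniform quadratic lower support at every point is the rescaled semiconvexity of Rataj--Zaj\'i\v{c}ek.

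\emph{Line case.} Here $N=\RR\alpha_0$, and I take $e$ dual to $\alpha_0$, so that $\pm\alpha_0(e)=\pm1$. Points $(0,s)$ with $s\neq0$ small lie outside $H_\infty$, because both normal rays from $p$ are exterior. For nearby $x$, I set
\[
a(x)=\min\{s:(x,s)\in H_\infty\},\qquad b(x)=\max\{s:(x,s)\in H_\infty\}
\]
on the locally closed projection. Applying the argument of the pointed case to the upper exterior and to the lower exterior separately, nearby boundary normals lie in small cones around $\alpha_0$ or $-\alpha_0$ by semicontinuity. This shows that each vertical line meets the lower exterior and the upper exterior in single half-intervals.

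The main obstacle is proving that $H_\infty\cap$ (vertical line) is an interval, and that the projection of $H_\infty$ is a full neighborhood of $0$, so that $a$ and $b$ are defined everywhere. For the interval property, a gap $(x,s)\notin H_\infty$ with points of $H_\infty$ above and below would have a nearest point whose normal direction lies near $\pm\alpha_0$. I would derive a contradiction from \eqref{normal:conormal-inequality} applied to the point on the opposite side. For the projection, I would first try the null-convexity of $H_\infty$ from Lemma~\ref{discrete:greedy-construction}. If that is not enough, I would use the orbit curves $p(t)$ of Lemma~\ref{closure:normal-limits}: they are tangent to the boundary and timelike with $\alpha_0(V)=0$, and I would combine them with normal rays of reach radius. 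If the projection could still miss points, I would weaken the statement to a defined domain.

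Finally, semiconvexity of $a$ and semiconcavity of $b$ follow as in the pointed case, using normals near $-\alpha_0$ and $+\alpha_0$ respectively. Then $a\le b$, $a(0)=b(0)=0$, and the fact that the only normals at $p$ are $\pm\alpha_0$, together with the quadratic supports, force $Da(0)=Db(0)=0$.
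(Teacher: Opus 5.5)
\textbf{The line case has a genuine gap, at exactly the step you flag.} Your pointed case is essentially the paper's argument: interior and exterior cones from \eqref{normal:conormal-inequality} and upper semicontinuity, then semiconvexity from a normal whose $s$-component is bounded away from zero. Two slips there: in your orientation the interior points lie above $p$, and the exterior region at $p$ is a cone $\{p-te+w:|w|\le\epsilon t\}$, not all points with $s<0$ and $|x|$ small.

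In the line case you never show that every vertical segment over a neighborhood of $x=0$ meets $H_\infty$, so $a$ and $b$ are not known to be defined on a full disk. Your fallbacks do not close this. The orbit curve through $p$ gives boundary points only along one curve, not over a two-dimensional set of $x$. Null convexity is not the relevant input. Weakening the statement to a partial domain is not acceptable either: later arguments, for example Lemma~\ref{discrete:weak-null-tests} and the line-case exclusion in Proposition~\ref{closure:orbit-tube}, evaluate $a,b$ at arbitrary nearby points.

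\textbf{The missing idea is a horizontal-projection argument.}
\begin{itemize}
\item Choose the cylinder $\overline{B^2_R}\times[-\delta,\delta]$ so thin that its end strips $|s|\ge\delta/2$ miss $H_\infty$. This holds because the two normals $\pm ds$ at $p$ give $|s|\le C(|x|^2+s^2)$ on $H_\infty$.
\item For $|x_0|<R/4$, minimize $|x-x_0|^2$ over the compact set $H_\infty\cap(\overline{B^2_R}\times[-\delta,\delta])$. Since $0\in H_\infty$, the minimum is at most $|x_0|^2$, so the minimizer $q$ lies neither on the lateral face nor on the ends.
\item If $x_q\ne x_0$, then $\nu=(x_0-x_q,0)$ satisfies $\nu(z-q)\le\tfrac12|z-q|^2$ for $z\in H_\infty$ near $q$. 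By the quadratic characterization following Definition~\ref{def:proximal-conormal}, $\nu$ is a proximal normal at $q$ with $\nu_s=0$.
\item This contradicts upper semicontinuity, which forces $|\alpha_s|\ge c$ for every nearby unit normal.
\end{itemize}

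\textbf{Two smaller points.} Your sketch of gap exclusion for the interval property does work. However, semiconvexity of $a$ and semiconcavity of $b$ do not follow ``as in the pointed case''. There your Lipschitz bound came from the interior cone, which does not exist when, say, $a=b$. You need Lipschitz bounds to absorb $|a(\eta)-a(x)|^2$ in the quadratic support estimate. The paper gets them directly from \eqref{normal:conormal-inequality} applied at the higher of two lower endpoints, absorbing the quadratic term by taking $\delta$ small. The normal used there has $\alpha_s\le-c$; it exists as a limit of nearest-point normals of $q-t\partial_s$, $t\downarrow0$. The same construction with $+t$ gives normals with $\alpha_s\ge c$ at the upper endpoints.
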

\begin{proof}
We use the Euclidean metric of these coordinates to identify
vectors and covectors, and normalize the normal covectors in this metric. By the change-of-metric calculation following
Definition~\ref{def:proximal-conormal}, there is one constant $C$ such that
\[
 \alpha(z-q)\le C|z-q|^2
 \qquad(z\in H_\infty,\ \alpha\in N_{H_\infty}^P(q),\ |\alpha|=1).
\]
The set of unit covectors in $N_{H_\infty}^P(q)$ is upper semicontinuous
as a function of $q$:
if $q_k\to p$ and $\alpha_k\in N_{H_\infty}^P(q_k)$ converge to $\alpha$,
passage to the limit in the displayed inequality puts
$\alpha\in N_{H_\infty}^P(p)$.

In the pointed case choose a unit vector $E$ and $c>0$ with
$\alpha(E)>2c$ for every unit normal covector at $p$. After reducing the
neighborhood, every unit normal covector there satisfies $\alpha(E)>c$.
For a boundary point $q_0$, put
$z=q_0-tE+w$, where $|w|\le\epsilon t$ and $\epsilon<c/2$.
If $z\notin H_\infty$, let $q$ be its nearest point and
$\alpha=(z-q)/d$, where $d=|z-q|\le t+|w|$. Then
\[
 d+ct-|w|\le\alpha(q_0-q)
       \le C|q_0-q|^2
       \le C(d+t+|w|)^2.
\]
The left side is at least $ct/2$ and the right side is $O(t^2)$.
Thus this cone lies in $H_\infty$ for a uniform small range of $t$.
For $z=q_0+tE+w$, any unit normal covector at $q_0$ instead gives
$\alpha(z-q_0)\ge ct/2>C|z-q_0|^2$, so this opposite cone lies
outside $H_\infty$.

Choose a coordinate product of a disk with an interval whose end disks lie in these two cones at $p$.
The inner disk lies in $H_\infty$ and the outer disk lies outside $H_\infty$.
Every line segment parallel to $E$ therefore meets the boundary. Along
this segment, the two cones put an interval of $H_\infty$ below every boundary point and an exterior interval above it.
Two boundary crossings would require a reentry with the opposite
orientation. Thus each segment has exactly one crossing. The same
cones imply that its height difference between two such segments is at
most $\epsilon^{-1}$ times their separation: a larger difference
would place one boundary point in the other's interior or exterior
cone. Reverse $E$ to obtain the inward coordinate $s$ and the graph
$H_\infty=\{s\ge v(x)\}$. A unit normal covector at $(x,v(x))$ has
the decomposition $\alpha=\alpha_i\,dx^i+\alpha_s\,ds$, with
$i\in\{1,2\}$ and $\alpha_x=(\alpha_1,\alpha_2)$.
Its height component satisfies $\alpha_s\le-c$, and hence
\[
 v(z)\ge v(x)-\frac{\alpha_x}{\alpha_s}\cdot(z-x)
       -\frac{C(1+\epsilon^{-2})}{c}|z-x|^2.
\]
After adding a fixed positive quadratic function to $v$, this
inequality gives an affine function touching it from below at
every point. This proves semiconvexity.

Suppose now that the cone at $p=0$ is $\mathbb R\,ds$.
Upper semicontinuity puts every nearby unit normal covector in small
neighborhoods of $ds$ and $-ds$. In particular,
\[
 |\alpha_s|\ge c>0.
\]
The two normal covectors at $p$ give
$|s|\le C(|x|^2+s^2)$ on $H_\infty$. Choose a closed cylinder
$\overline{B_R^2}\times[-\delta,\delta]$ so small that the regions near its two ends
$|s|\ge\delta/2$ miss $H_\infty$.

Every vertical segment with $|x_0|<R/4$ meets $H_\infty$ in this cylinder.
Otherwise minimize $|x-x_0|^2$ over its compact intersection
with $H_\infty$. Since $0\in H_\infty$, the minimum is at most $|x_0|^2$;
the minimizer $q=(x_q,s_q)$ cannot lie on a lateral face, and
the empty end strips exclude the other faces. If that segment missed $H_\infty$, $\nu=(x_0-x_q,0)$ would be nonzero and
\[
 \nu(z-q)\le\tfrac12|x_z-x_q|^2\le\tfrac12|z-q|^2
 \qquad(z\in H_\infty\text{ near }q).
\]
The quadratic characterization puts $\nu$ in $N_{H_\infty}^P(q)$. Since
$\nu_s=0$, this contradicts $|\alpha_s|\ge c$.

We now show that each vertical segment meets $H_\infty$ in one interval. For this we
first determine the orientation of a normal covector at an endpoint of a
possible gap. Take $q\in H_\infty$ with $q+t\partial_s\notin H_\infty$ for small
$t>0$. Let $q_t$ be the nearest point of $q+t\partial_s$,
put $d_t=|q+t\partial_s-q_t|$, and let
$\alpha_t=(q+t\partial_s-q_t)/d_t$. Since $|q-q_t|\le2t$,
\[
 t\alpha_t(\partial_s)
 =d_t-\alpha_t(q-q_t)\ge-4Ct^2.
\]
A limiting unit normal covector at $q$ therefore has nonnegative
$s$-component, and that component is at least $c$.
For a gap of length $d$ above $q$, evaluation at its upper endpoint
would give $cd\le Cd^2$. Take $2\delta<c/C$ to exclude every gap.
Thus its intersection with $H_\infty$ is one closed interval $[a(x),b(x)]$.
The same projection argument, with the sign of $t$ reversed,
gives a normal covector with $\alpha_s\le-c$ at its lower endpoint;
at its upper endpoint there is one with $\alpha_s\ge c$.

Compare the lower endpoints over $x$ and $\eta$.
Use a normal covector with $\alpha_s<0$ at the higher of the two lower endpoints.
The quadratic inequality \eqref{normal:conormal-inequality} gives
\[
 c|a(\eta)-a(x)|\le |\eta-x|
                  +C\bigl(|\eta-x|^2+|a(\eta)-a(x)|^2\bigr).
\]
Since $|a(\eta)-a(x)|\le2\delta$, reducing $\delta$ absorbs the last
term and proves a uniform Lipschitz bound. The upper endpoints
satisfy the same estimate, using normal covectors with positive $s$-component. If $L$ is a
common Lipschitz constant, the original inequalities now give
\[
 \begin{aligned}
 a(\eta)&\ge a(x)-\frac{\alpha_x}{\alpha_s}\cdot(\eta-x)
                  -\frac{C(1+L^2)}c|\eta-x|^2,
                       &&\alpha_s\le-c,\\
 b(\eta)&\le b(x)-\frac{\beta_x}{\beta_s}\cdot(\eta-x)
                  +\frac{C(1+L^2)}c|\eta-x|^2,
                       &&\beta_s\ge c.
 \end{aligned}
\]
These are the asserted semiconvexity and semiconcavity estimates.
For $H_\infty$, the two normal covectors at $p$ moreover give
$|a(x)|+|b(x)|\le C'|x|^2$, so $Da(0)=Db(0)=0$.
\end{proof}

\subsection{The second fundamental form at touching hypersurfaces}

The graph descriptions allow us to express the condition of
Definition~\ref{discrete:null-convexity} as an inequality for the second fundamental form, even at a
nonsmooth boundary. We first test the boundary with a smooth touching
graph. If its second fundamental form has the wrong sign on a null tangent, the midpoint of a short null geodesic with endpoints in $H_\infty$ lies in the
exterior, giving the same contradiction as before. On a disk
transverse to the rotational field, this says that every smooth
upper touching curve has nonnegative signed geodesic curvature in
$q_{\mathrm{opt}}$.

We compare the boundary with the short geodesic of $q_{\mathrm{opt}}$
joining two of its points. A positive multiple of a fixed concave quadratic
function gives the two comparisons we need. Adding it to the geodesic
rules out a boundary lying above that geodesic, by the upper-test
inequality. Subtracting it produces a curve of positive geodesic curvature
touching from the exterior whenever the boundary lies below. Its
stationary lift satisfies the condition for Killing extension.
Excluding this possibility forces equality with the geodesic and proves smoothness.
Lemma~\ref{local:geodesic-stalemate} gives the precise argument.

\Needspace{6\baselineskip}
\begin{lemma}\label{discrete:weak-null-tests}
Suppose that, in a coordinate cylinder in $\{\g(\T,\T)>0\}$,
\[
 H_\infty=\{(t,y,s):a(t,y)\le s\le b(t,y)\},
\]
where $a\le b$, $a$ is semiconvex, and $b$ is semiconcave.
Let $p$ lie on the graph of $b$. If a smooth function $v$ touches
$b$ from below at $p$ and its graph is timelike there, then
\[
 B(X,X)\le0
 \qquad\bigl(X\in T_p\operatorname{graph}v,\ h(X,X)=0\bigr),
\]
where the unit normal points toward increasing $s$ and
$B(X,Y)=-h(\nabla_Xn,Y)$. A smooth upper test for $a$ has the
opposite inequality. The same assertions hold for the boundary of
a one-sided region.
\end{lemma}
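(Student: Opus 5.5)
We argue by contradiction, using the null convexity of $H_\infty$ (Definition~\ref{discrete:null-convexity}, retained under intersection by Lemma~\ref{discrete:greedy-construction}) together with the horizontal lifting of $h$-null geodesics from \eqref{local:null-lift}. Suppose $v$ touches $b$ from below at $p$, so $v\le b$ near $p$ with $v(p)=b(p)$. Suppose also that some nonzero $h$-null $X\in T_p\operatorname{graph}v$ has $B(X,X)>0$. Put $f=v(t,y)-s$, a defining function for $\operatorname{graph}v$. Since $n$ points toward increasing $s$, we have $nf<0$. As in the calculation following Definition~\ref{def:stationary-metrics} and in \eqref{flow:null-extension}, $\Hess_hf(X,X)=-(nf)B(X,X)>0$, because $X$ is null and tangent so the conformal and normal corrections drop out. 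We do not need $\Z$-invariance here; only the quotient Hessian identity $\Hess_hf(X,X)=|df|_h\,h(\nabla_X(\nabla f/|df|_h),X)$ is used.

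Take the affinely parametrized $h$-null geodesic $\gamma$ with $\gamma(0)=p$ and $\dot\gamma(0)=X$. Then $f(\gamma(\sigma))=\tfrac12\Hess_hf(X,X)\sigma^2+O(\sigma^3)$, which is positive for small $\sigma\ne0$. So $\gamma$ leaves the graph of $v$ on the side $s<v$ at both ends, and it does so quadratically. This sign is the wrong one for us: the endpoints lie below $v\le b$, but they may also lie below $a$. We therefore perturb the geodesic. For small $\eta>0$, take the $h$-null geodesic $\gamma_\eta$ through $p+\eta\partial_s$ with velocity a null vector close to $X$ and tangent to the translated graph. Its height relative to $v+\eta$ is again $\tfrac12\Hess_hf(X,X)\sigma^2+O(\sigma^3)+O(\eta\sigma)$. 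Choose $\sigma_\pm\sim\pm\sqrt{\eta}$ so that the endpoints satisfy $s=v+\eta-c\eta\le b$; this puts them at or below the upper boundary. For the lower boundary, note that $b(p)>a(p)$ lets us pick $\eta$ smaller than $b(p)-a(p)$ by continuity, and then both endpoints lie in $H_\infty$. If instead $a(p)=b(p)$, the endpoints must land exactly in $H_\infty$, and this is where we expect the main obstacle to lie. There we will move the endpoints onto $\operatorname{graph}b$ itself and use the upper bound $b\ge v$ together with semiconcavity of $b$. More robustly, we take endpoints $q_\pm$ on $\operatorname{graph}b$ near $\gamma(\sigma_\pm)$ and join them by the unique short null geodesic obtained by adjusting the timelike coordinate, as in the proof of Lemma~\ref{closure:null-reach}.

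The midpoint of the segment lies at height $v(\text{midpoint})+\eta-O(\eta)$ relative to its endpoints. More precisely, the quadratic bending $\tfrac12\Hess_hf(X,X)\sigma^2$, which is of order $\eta$, places the endpoints a definite amount $c\eta$ below the midpoint's height. Any error from adjusting the endpoints onto $\operatorname{graph}b$ is $O(\eta^{3/2})$ by the $C^{1,1}$-type bounds that semiconcavity gives from above. Hence the midpoint satisfies $s>b$ and lies outside $H_\infty$. The segment has $g_+$-length $O(\sqrt\eta)<\epsilon_0$ and stays in $\{\g(\T,\T)>0\}$. Its horizontal lift is a zero-energy spacetime null segment of comparable $\mathbf g_+$-length, by the comparison stated after Definition~\ref{discrete:null-convexity}. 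This contradicts Definition~\ref{discrete:null-convexity} for $H_\infty$, proving $B(X,X)\le0$.

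The statement for $a$ follows by the reflection $s\mapsto-s$, which reverses $n$ and hence the sign of $B$. For a one-sided region $\{s\ge v\}$ or $\{s\le v\}$, the same argument applies with no constraint from the other side, so the endpoint-placement difficulty disappears. Timelikeness of the graph is used only to ensure that null tangent vectors exist and that $df$ is $h$-spacelike, so that $n$ is defined.
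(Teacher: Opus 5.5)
Your setup is sound: the identity $\Hess_hf(X,X)=-(nf)B(X,X)$, the horizontal lift, and the reflection for $a$ are all correct. Your shifted construction also works when $a(p)<b(p)$, or for a one-sided region. There the $h$-null geodesic tangent to $\{s=v+\eta\}$ at $p+\eta\partial_s$ drops below $v\le b$ within parameter $O(\sqrt\eta)$, continuity keeps its endpoints above $a$, and its central point lies above $b$.

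The gap is the case $a(p)=b(p)$. This case is not peripheral: it is exactly the line-cone case in which the lemma is applied in Propositions~\ref{closure:orbit-tube} and~\ref{discrete:stalled-boundary}. There your claim that moving the endpoints onto $\operatorname{graph}b$ costs only $O(\eta^{3/2})$ is false. Semiconcavity and the lower test give only $0\le b-v\le C|x-x_0|^2$. Your endpoints are at horizontal distance of order $\sqrt\eta$ from $x_0$, so $b-v$ can be of order $\eta$ there. Since the endpoints sit at height at most $v-\eta$, they must be raised by at least $\eta$, the same order as the shift and the bending. After this move they are no longer a definite amount below the old central point. So the mechanism you describe no longer tells you where the midpoint of the new null chord lies relative to $b$.

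What is missing is an averaging inequality for chords of $\operatorname{graph}b$; with it the $\eta$-shift becomes unnecessary. The steps are as follows.
\begin{enumerate}
\item Show $Db(x_0)=Dv(x_0)$ by combining $v\le b$ with $b(x_0+\xi)+b(x_0-\xi)-2b(x_0)\le C|\xi|^2$.
\item Pass to $h$-normal coordinates at $p$ with common tangent plane $s=0$. Then $|b(t,y)|\le C(t^2+y^2)$ and $B(X,X)=D^2v(X,X)$ for $X=\partial_t+\partial_y$.
\item Take both endpoints on $\operatorname{graph}b$, namely $p_\pm=(\pm\tau,\pm\eta,b(\pm\tau,\pm\eta))$. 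These lie in $H_\infty$ for any $a\le b$, so no case distinction arises.
\item For some $\tau$ between $\eta-C_1\eta^3$ and $\eta+C_1\eta^3$ the connecting geodesic is null. Its affine midpoint has horizontal offset $O(\eta^3)$ and height $\tfrac12\{b(\tau,\eta)+b(-\tau,-\eta)\}+O(\eta^3)$.
\item By $b\ge v$ this height is at least $\tfrac12\{v(\tau,\eta)+v(-\tau,-\eta)\}+O(\eta^3)=\tfrac12D^2v(X,X)\eta^2+o(\eta^2)$. Meanwhile $b$ at that horizontal position is $O(\eta^6)$.
\end{enumerate}
So the midpoint lies strictly above $b$, contradicting Definition~\ref{discrete:null-convexity}; this is the paper's argument. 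Within your scheme, the same inequality $b\ge v$ is what would show that raising the endpoints onto $b$ keeps the midpoint above $b$. At that point the initial shift plays no role.
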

\begin{proof}
We prove the assertion for the upper graph. Write $x=(t,y)$
and let $p=(x_0,b(x_0))$ be the contact point. Semiconcavity of
$b$ and the inequalities $v\le b$, $v(x_0)=b(x_0)$ give,
for small $\xi\in\RR^2$,
\begin{align*}
 b(x_0+\xi)+b(x_0-\xi)-2b(x_0)&\le C|\xi|^2,\\
 v(x_0+\xi)\le b(x_0+\xi)
   &\le 2b(x_0)-v(x_0-\xi)+C|\xi|^2.
\end{align*}
Taylor expansion of the smooth function $v$ on both sides yields
\[
 \bigl|b(x_0+\xi)-b(x_0)-Dv(x_0)\xi\bigr|
       \le C'|\xi|^2.
\]
In particular, $Db(x_0)=Dv(x_0)$. We can therefore choose
$h$-normal coordinates centered at $p$ with their
common tangent plane $s=0$ and $s$ increasing toward the upper
exterior. Then
\[
 \begin{aligned}
 h_p&=-dt^2+dy^2+ds^2\\
 b(0)&=v(0)=0\\
 Db(0)&=Dv(0)=0.
 \end{aligned}
\]
The quadratic upper bound for $b$ and its smooth lower test give
$|b(t,y)|\le C(t^2+y^2)$. Here $D^2v$ is the coordinate Hessian on the $(t,y)$ plane.
Extend $X$ tangentially to the graph with constant $(t,y)$
components. Since $\Gamma(h)(p)=0$ and $n_p=\partial_s$,
with $i,j\in\{t,y\}$ we obtain
\[
 \begin{aligned}
 B(X,X)&=-h(\nabla_Xn,X)=h(n,\nabla_XX)\\
       &=X^iX^j\partial_i\partial_jv=D^2v(X,X)
                       \quad\text{at }p.
 \end{aligned}
\]

Suppose $D^2v(X,X)>0$ for $X=\partial_t+\partial_y$. For small $\eta>0$,
choose points on the upper graph,
\[
 \begin{aligned}
 p_-&=(-\tau,-\eta,b(-\tau,-\eta))\\
 p_+&=(\tau,\eta,b(\tau,\eta)).
 \end{aligned}
\]
They belong to $H_\infty$. For $|\tau-\eta|\le C_1\eta^3$,
normal coordinates and $b=O(\eta^2)$ give
\[
 h_{p_-}((\exp^h_{p_-})^{-1}p_+,(\exp^h_{p_-})^{-1}p_+)
 =4(\eta^2-\tau^2)
       +\{b(\tau,\eta)-b(-\tau,-\eta)\}^2+O(\eta^4).
\]
Taking $\tau=\eta\pm C_1\eta^3$ with $C_1$ sufficiently large
changes its sign. Continuity therefore gives a null connecting
geodesic for some such $\tau$. Its affine midpoint is
\[
 \left(0,0,\frac{b(\tau,\eta)+b(-\tau,-\eta)}2\right)+O(\eta^3),
\]
since $\Gamma(h)=O(\eta)$ and its coordinate velocity is
$O(\eta)$. On the other hand,
\[
 \frac{b(\tau,\eta)+b(-\tau,-\eta)}2
 \ge\frac{v(\tau,\eta)+v(-\tau,-\eta)}2
 =\frac12D^2v(X,X)\eta^2+o(\eta^2)>c\eta^2.
\]
Write $(t_m,y_m,s_m)$ for this midpoint. Its exterior displacement
is therefore
\[
 s_m-b(t_m,y_m)\ge c\eta^2-C\eta^3>0
\]
for small $\eta$, so the midpoint lies outside $H_\infty$.
The null segment lifts to a zero-energy spacetime segment of
$\mathbf g_+$-length $O(\eta)$ on this fixed chart in $\{\g(\T,\T)>0\}$, contradicting Definition~\ref{discrete:null-convexity}.
The other null direction $\partial_t-\partial_y$ gives the same
calculation. Replacing $s$ by $-s$ gives the assertion for the lower graph.
\end{proof}

We next pass to a disk transverse to the rotational direction. Fix
one boundary graph given by Lemma~\ref{closure:graph-band}, denote
it by $\Sigma$, and choose one adjacent exterior component. Its
inverse image under $\pi$ lies in $U_{H_\infty}$ and carries $\Z$.
The construction below extends the field smoothly from this side
and gives a smooth Riemannian metric on the disk, equal to the
optical metric on the chosen exterior.

\begin{lemma}\label{local:optical-reduction}
Near a point of $\Sigma$ there is a smooth stationary field
$\widetilde\Z$, equal to $\Z$ on the chosen exterior, whose
projection is timelike for $h$ and whose local flow preserves
$\Sigma$. A transverse disk $S$ has coordinates $(y,s)$ in which
\[
 \Sigma\cap S=\{s=v(y)\},
\]
with $v$ Lipschitz and semiconvex and the chosen exterior given by
$s<v(y)$. Put $\rho^2=-h(\widetilde\Z,\widetilde\Z)$. The metric
\[
 q_{\mathrm{opt}}(X,Y)
 =\rho^{-2}h(X,Y)
       +\rho^{-4}h(\widetilde\Z,X)h(\widetilde\Z,Y),
 \qquad X,Y\in TS,
\]
is smooth and positive definite on $S$. It equals the optical
metric on the chosen exterior, and all its coordinate derivatives on
$\Sigma\cap S$ are determined by their limits from that exterior.
The Killing and circularity identities for $\widetilde\Z$ hold
on the chosen exterior and its boundary.
\end{lemma}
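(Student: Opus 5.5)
The plan is to build $\widetilde\Z$ from the boundary limits of the Killing transport system and then read off the transverse description and the optical metric from it.

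\textbf{Step 1: boundary data.} Fix $p\in\Sigma$ and the chosen exterior component $U_-$ adjacent to $p$. Lemma~\ref{closure:normal-limits} bounds $\mathcal J=(\Z^\flat,\D(\Z^\flat))$ in every $C^m$ norm on $\{\vartheta=0\}\cap U\cap\pi^{-1}(O)$. In the graph coordinates of Lemma~\ref{closure:graph-band}, with $\Sigma$ the graph of $v$ (or of $a$ or $b$ in the band case), I would run the fixed-segment transport construction from Section~\ref{subsec:compatibility}. Choose $s_0$ inside $U_-$ at fixed height and solve $\DD_{\partial_s}\mathcal J^0=0$ with initial data $\mathcal J$ at $s=s_0$. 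The displayed estimate $\|\mathcal J^0\|_{C^m}\le C_m\|\mathcal J(s_0,\cdot)\|_{C^m}$ shows that $\mathcal J^0$ is smooth on a full coordinate cylinder. ODE uniqueness gives $\mathcal J^0=\mathcal J$ on $U_-$, because every vertical segment from height $s_0$ to a point of $U_-$ stays in $U_-$. Put $\widetilde\Z=(\mathcal J^0)_1^\sharp$ and extend it stationarily. Then every coordinate derivative on $\Sigma$ is a limit from $U_-$. All identities that are tensorial in $\mathcal J$ and hold on $U_-$ pass to $\overline{U_-}$ by continuity. These include the Killing equation $\D\widetilde\Z^\flat=$ second component of $\mathcal J^0$, the curvature transport relation, $[\T,\widetilde\Z]=0$, and the circularity identities of Lemma~\ref{action:area}. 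Off $\overline{U_-}$ the field is smooth but need not be Killing; the statement does not require that. Since $\rho\ge c$ by Lemma~\ref{closure:normal-limits}, $\widetilde\Z$ stays $h$-timelike near $p$ after shrinking the neighborhood.

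\textbf{Step 2: invariance of $\Sigma$.} The local flow of $\widetilde\Z$ maps $U_-$ into $U_-\cup\{\text{other exterior}\}$ only through points where it coincides with the complete flow of $\Z$. That complete flow preserves $H_\infty$, hence $U_-$, because $U_{H_\infty}$ is invariant and components are permuted continuously. So the local flow of $\widetilde\Z$ preserves $U_-$ near $p$, and by continuity it preserves its boundary portion $\Sigma$. Concretely, trajectories starting in $U_-$ coincide with $\Z$-trajectories, and boundary trajectories are limits of these, as in the proof of Lemma~\ref{closure:null-reach}. Lemma~\ref{closure:normal-limits} also shows that the normal covectors annihilate $\widetilde\Z(p)$, which is consistent with tangency.

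\textbf{Step 3: transverse disk and metric.} Take $S$ to be a smooth surface through $p$ transverse to $\widetilde\Z$, for instance a coordinate plane containing the $s$-direction and a direction complementary to $\widetilde\Z$ within the tangent plane $s=\text{const}$. By Step 2, locally $\Sigma$ is the $\widetilde\Z$-saturation of $\Sigma\cap S$. The flow box of $\widetilde\Z$ maps a product $I\times S$ diffeomorphically onto a neighborhood of $p$, so the graph property in $(x,s)$ transfers to a graph $\{s=v(y)\}$ over $S$. The Lipschitz bound and semiconvexity are preserved under this smooth change of coordinates, after adding a quadratic. If necessary, flip the sign of $s$ so that $U_-$ corresponds to $s<v$. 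The formula for $q_{\mathrm{opt}}$ is $\rho^{-2}$ times $h$ restricted to $\widetilde\Z^{\perp_h}$ and transported to $TS$ by projection along $\widetilde\Z$. Since $\widetilde\Z$ is timelike, this is smooth and positive definite, and on $U_-$ it agrees with the optical metric from Section~\ref{subsec:optical}. Smoothness up to $\Sigma$ follows from Step 1.

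\textbf{Main obstacle.} I expect Step 2 to be the hardest: showing that a merely Lipschitz boundary is preserved by the flow of a field that is Killing only on one side. This is also needed to ensure that the graph description survives the flow-box change of coordinates.
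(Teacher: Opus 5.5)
Your proposal is correct and follows the paper's proof essentially step for step. It builds $\widetilde\Z$ by transporting $(\Z^\flat,\D(\Z^\flat))$ from a strip in the chosen exterior along vertical segments, and it obtains invariance of $\Sigma$ from limits of exterior trajectories together with the inverse flow. It then takes a coordinate disk $S$ containing $\partial_s$ and gets positivity of $q_{\mathrm{opt}}$ from $h$-orthogonal projection onto the spacelike plane $\widetilde\Z^{\perp_h}$.

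One point should be made explicit: why $S$ can contain $\partial_s$ and still be transverse to $\widetilde\Z$. The paper derives this from the Lipschitz bound along invariant boundary trajectories, $|\widetilde\Z^s|\le C(|\widetilde\Z^t|+|\widetilde\Z^y|)$. Your own remark gives the same conclusion, since normal covectors have $|\alpha_s|\ge c$ and annihilate $\widetilde\Z(p)$. Also, with $S$ a coordinate plane of the original chart, $v$ is simply a restriction of $a$, so you do not need any coordinate-invariance of semiconvexity.
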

\begin{proof}
Choose graph coordinates $(t,y,s)$ on $Q$, centered at the point
under consideration. After reversing $s$ for an upper exterior
component, the chosen exterior is $s<a(t,y)$ with $a$ Lipschitz
and semiconvex. On the fixed stationary section
put $\mathcal J=(\Z^\flat,\D(\Z^\flat))$ and choose a smooth
bundle frame. Write $\Gamma_s^\DD$ for the matrix in this frame
of the Killing transport connection $\DD$, pulled back to the stationary
section, in the $s$ direction.
Take a strip about $s=s_-$ in
the chosen exterior and solve for $\widetilde{\mathcal J}$:
\[
 \begin{aligned}
 \partial_s\widetilde{\mathcal J}(t,y,s)
    &=-\Gamma_s^\DD(t,y,s)\widetilde{\mathcal J}(t,y,s),\\
 \widetilde{\mathcal J}(t,y,s_-)&=\mathcal J(t,y,s_-).
 \end{aligned}
\]
The smooth coefficients and strip data
give a smooth solution on the whole coordinate rectangle. Below
the boundary graph, each vertical segment stays in the exterior,
so uniqueness gives $\widetilde{\mathcal J}=\mathcal J$ there.
We take $\widetilde\Z$ to be the metric dual of the first
component and extend it stationarily. It is timelike for $h$ near
the boundary by Lemma~\ref{closure:normal-limits}. In the rectangle
we use the smooth field supplied by this transport equation. On the
chosen exterior and its boundary we also have the Killing and circularity identities,
by equality with $\Z$ and continuity.

We claim that the local flow of $\widetilde\Z$ preserves this boundary.
Reduce the cylinder and the time interval so that the trajectories
remain in the original cylinder. On the chosen exterior component this flow equals the
complete flow of $\Z$. A trajectory cannot change exterior
components without meeting $H_\infty$. For exterior points
$p_k\to p\in\partial H_\infty$, smooth dependence gives
\[
 \Phi_\tau^{\widetilde\Z}(p)
     =\lim_k\Phi_\tau^\Z(p_k)
       \in\overline{Q\setminus H_\infty}.
\]
If this limit were exterior, the inverse flow would give
$p=\Phi_{-\tau}^\Z(\Phi_\tau^{\widetilde\Z}(p))\notin H_\infty$,
a contradiction. Thus the flow preserves the boundary graph.

Writing $\widetilde\Z^i$ for the projected components in $(t,y,s)$,
the Lipschitz bound for $a$ along a boundary trajectory gives
\[
 |\widetilde\Z^s|
 \le C\bigl(|\widetilde\Z^t|+|\widetilde\Z^y|\bigr).
\]
Since $\widetilde\Z$ is timelike, its two base components cannot
both vanish. After a linear change of $(t,y)$ and a restriction of the chart,
we have $\widetilde\Z^t\ne0$. Take the transverse disk $S=\{t=0\}$, with coordinates $(y,s)$. The flow map
\[
 (\tau,y,s)\longmapsto\Phi_\tau^{\widetilde\Z}(0,y,s)
\]
has, at $\tau=0$, the nonzero differential determinant
\[
 \det\bigl(\widetilde\Z,\partial_y,\partial_s\bigr)|_S
                 =\widetilde\Z^t|_S\ne0.
\]
It therefore gives a smooth local coordinate system. On the disk,
put $v(y)=a(0,y)$, which is Lipschitz and semiconvex by restriction.
Boundary invariance and the inverse flow then give $s=v(y)$ for
$\Sigma$ in these coordinates, with the chosen exterior $s<v(y)$.
For the metric in the statement we have
\[
 q_{\mathrm{opt}}(X,X)
 =\rho^{-2}h\bigl(X+\rho^{-2}h(\widetilde\Z,X)\widetilde\Z,
                  X+\rho^{-2}h(\widetilde\Z,X)\widetilde\Z\bigr).
\]
The vector inside the pairing is the $h$-orthogonal projection of
$X$ onto $\widetilde\Z^{\perp_h}$. This plane is spacelike, and
that projection is injective on the transverse plane $TS$.
Thus $q_{\mathrm{opt}}$ is positive definite. On the exterior its
formula is exactly orthogonal projection to $\Z^{\perp_h}$ followed
by rescaling by $\rho^{-2}$, as in \eqref{action:reduction}.

To verify that all boundary derivatives of the optical metric are determined by the exterior,
let $\widehat\Z$ be another smooth
continuation agreeing with $\Z$ on the same exterior. In a fixed
bundle frame over the stationary section and for every coordinate
multi-index $I$, continuity
from that open set gives
\[
 \partial^I(\widetilde\Z-\widehat\Z)|_\Sigma=0.
\]
Denote the metric constructed from $\widehat\Z$ by
$\widehat q_{\mathrm{opt}}$, using the same transverse disk and
the same disk coordinates. Write $\Gamma^i_{kl}$ for the metric
Christoffel symbols, with indices in $\{y,s\}$. For every multi-index
$J$ in these disk coordinates, the algebraic formula gives
\begin{align*}
 \partial^J(q_{\mathrm{opt}}-\widehat q_{\mathrm{opt}})
                         |_{\Sigma\cap S}&=0,\\
 \Gamma^i_{kl}(q_{\mathrm{opt}})|_{\Sigma\cap S}
  &=\Gamma^i_{kl}(\widehat q_{\mathrm{opt}})|_{\Sigma\cap S}.
\end{align*}
Thus both the metric and the boundary values of its geodesic equation
are fixed by the exterior geometry.
\end{proof}

We write this geodesic equation in the disk coordinates $(y,s)$ of
Lemma~\ref{local:optical-reduction}.
For a slope $p$, let $Y=\partial_y+p\partial_s$. Write
$\Gamma^i_{kl}$ for the Christoffel symbols of $q_{\mathrm{opt}}$
in these coordinates, with indices in $\{y,s\}$. Define
\begin{equation}\label{local:scalar-ode}
 \begin{aligned}
 A_0(y,s,p)
 &=(p\Gamma^y_{kl}-\Gamma^s_{kl})Y^kY^l\\
 &=-\Gamma^s_{yy}
   +(\Gamma^y_{yy}-2\Gamma^s_{ys})p
   +(2\Gamma^y_{ys}-\Gamma^s_{ss})p^2+\Gamma^y_{ss}p^3.
 \end{aligned}
\end{equation}
For a $C^2$ graph $s=v(y)$, the equation
$v''=A_0(y,v,v')$ is the geodesic equation with $y$ used
as parameter. We will apply the acceleration and curvature
formulas below to smooth curves touching the boundary. Put
$Y=\partial_y+v'\partial_s$. Then
\[
 \nabla^{q_{\mathrm{opt}}}_YY
  =\Gamma^y_{kl}Y^kY^l\,\partial_y
       +(v''+\Gamma^s_{kl}Y^kY^l)\partial_s.
\]
This acceleration is proportional to $Y$ precisely when
$v''=A_0(y,v,v')$. Lemma~\ref{local:optical-reduction} fixes every Christoffel symbol
on $s=v(y)$ by its limit from the chosen exterior. Thus
$A_0(y,v(y),v'(y))$ is independent of the smooth continuation
wherever $v'$ exists, while the comparison curves below use the
chosen smooth metric on the whole disk.

For a $C^2$ graph, we relate the equation to its signed geodesic
curvature in $q_{\mathrm{opt}}$. Here $\det q_{\mathrm{opt}}$ denotes
the determinant of the metric matrix in $(y,s)$, and primes denote
$d/dy$. Let
$N_{\mathrm{opt}}$ be the $q_{\mathrm{opt}}$-unit normal toward $s>v(y)$.
Its metric dual annihilates $Y$ and has unit norm in
$q_{\mathrm{opt}}^{-1}$:
\[
 q_{\mathrm{opt}}(N_{\mathrm{opt}},\,\cdot\,)
 =\frac{\sqrt{\det q_{\mathrm{opt}}}}
             {\sqrt{q_{\mathrm{opt}}(Y,Y)}}(ds-v'\,dy).
\]
The tangential term arising from normalization of $Y$ disappears
on contraction with this covector. Hence
\begin{align*}
 \kappa_{\mathrm{opt}}
 &=\frac{q_{\mathrm{opt}}(\nabla^{q_{\mathrm{opt}}}_YY,
                                  N_{\mathrm{opt}})}{q_{\mathrm{opt}}(Y,Y)}\\
 &=\frac{\sqrt{\det q_{\mathrm{opt}}}}
             {q_{\mathrm{opt}}(Y,Y)^{3/2}}
       \left[v''+\Gamma^s_{kl}Y^kY^l
                         -v'\Gamma^y_{kl}Y^kY^l\right].
\end{align*}
Thus
\begin{equation}\label{local:scalar-curvature}
 \kappa_{\mathrm{opt}}
 =\frac{\sqrt{\det q_{\mathrm{opt}}}}
             {q_{\mathrm{opt}}(Y,Y)^{3/2}}
                  \bigl[v''-A_0(y,v,v')\bigr].
\end{equation}
The factor multiplying $v''-A_0(y,v,v')$ is positive and is
bounded above and below on compact height and slope ranges. We can therefore read the curvature sign
directly from the difference $v''-A_0(y,v,v')$. The following lemma
records the two touching tests needed in the comparison.

\begin{lemma}\label{local:weak-cap}
Let $v,q_{\mathrm{opt}}$ be given by
Lemma~\ref{local:optical-reduction}, and let $A_0$ be defined by
\eqref{local:scalar-ode}. If a smooth graph $w\ge v$ touches $v$
at $y_0$, then
\[
 w''(y_0)\ge A_0(y_0,w(y_0),w'(y_0)).
\]
If a smooth graph $w\le v$ touches $v$ and has
$\kappa_{\mathrm{opt}}>0$ at contact, the surface formed by its images under the local flow of
$\widetilde\Z$ supports $H_\infty$ there. Its inverse image under
$\pi$ is a smooth strictly $\T$-conditionally pseudoconvex hypersurface.
\end{lemma}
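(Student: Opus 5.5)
The plan treats the two assertions separately. Both rest on passing between the transverse disk $S$ with its graph coordinates $(y,s)$ and the three-dimensional quotient. We use the product structure $\rho^{-2}h=-d\phi^2+q_{\mathrm{opt}}$, which holds on the chosen exterior and, by Lemma~\ref{local:optical-reduction}, extends with all boundary derivatives to the disk.

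\emph{The first assertion.} Let $w\ge v$ touch $v$ at $y_0$. Since the exterior is $s<v(y)$, the graph of $w$ lies in $H_\infty$ near the contact point. We sweep this graph by the local flow of $\widetilde\Z$ and obtain a smooth invariant timelike surface $\Sigma_w$. Near the contact point it lies on the $H_\infty$ side of $\Sigma$, and it touches $\Sigma$ there.

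We then compare $\Sigma_w$ with the graph description of Lemma~\ref{closure:graph-band}. In the rotated graph coordinates, $\Sigma_w$ is a smooth lower test for the graph bounding $H_\infty$ from the exterior side. Lemma~\ref{discrete:weak-null-tests}, used in its version for the boundary of a one-sided region or of a band, therefore gives $B(X,X)\le0$ on null tangents. Here the normal is taken toward the exterior, that is, toward decreasing $s$ on the disk.

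Lemma~\ref{local:optical-curvature} translates this bound. It gives $B_{33}=B_{44}=\kappa_{\mathrm{opt}}/(2\rho)$, with the normal $N_{\mathrm{opt}}$ pointing the same way. Taking $N_{\mathrm{opt}}$ toward the exterior, we get $\kappa_{\mathrm{opt}}\le0$ for that normal, which means $\kappa_{\mathrm{opt}}\ge0$ for the normal toward $s>w$. By \eqref{local:scalar-curvature}, with its positive prefactor, this is exactly $w''(y_0)\ge A_0(y_0,w(y_0),w'(y_0))$.

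The delicate point is that Lemma~\ref{local:optical-curvature} was proved for the genuine Killing field in the exterior. The surface $\Sigma_w$, however, lies in $H_\infty$, where only $\widetilde\Z$ is defined. At the contact point this causes no problem. The curvature formula only involves the metric, $\widetilde\Z$, and their first derivatives at that point. By Lemma~\ref{local:optical-reduction}, all of these agree with their exterior limits, and they satisfy the Killing and circularity identities there. So the computation of $B_{aa}$ at the contact point goes through verbatim. I expect this to be the main step needing care: one must check that the product form used in Lemma~\ref{local:optical-curvature} is needed only to first order at the point. I would verify this by redoing that computation pointwise, using $\nabla\widetilde\Z$ skew at the contact point.

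\emph{The second assertion.} Let $w\le v$, so its graph lies in the closure of the exterior. Sweeping it by the flow of $\widetilde\Z$ gives an invariant surface. We take as defining function $f=s-w(y)$, transported invariantly along the flow, so that $f<0$ lies in the exterior. Then $H_\infty\subset\{f\ge0\}$ locally, since $H_\infty=\{s\ge v\}\cap(\cdot)$ near the point, and $df\ne0$. This shows that the surface supports $H_\infty$.

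For pseudoconvexity, the normal toward $f>0$ is $N_{\mathrm{opt}}$ toward $s>w$, and $nf>0$. Relation \eqref{flow:null-extension} gives $\D^2(f\circ\pi)(X,X)=-(nf)B(X,X)=-(nf)\kappa_{\mathrm{opt}}|X|^2$-type multiples $<0$ on the null tangents orthogonal to $\T$. The conormal is spacelike because the span of $\T$ and $\widetilde\Z$ is timelike, with $\widetilde\Z$ $h$-timelike by Lemma~\ref{closure:normal-limits}. Strictness is open, so it persists near the contact point. Here again we use the pointwise validity of \eqref{flow:null-extension} at the boundary, which follows from the Killing identities for $\widetilde\Z$ there.
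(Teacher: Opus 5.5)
Your proposal is correct and follows the paper's proof. For the upper test you combine Lemma~\ref{discrete:weak-null-tests} with $B_{33}=B_{44}=\kappa_{\mathrm{opt}}/(2\rho)$ and \eqref{local:scalar-curvature}, and for the lower test you use the $\widetilde\Z$-invariant defining function $f=s-w(y)$, its spacelike conormal, and \eqref{flow:null-extension}, exactly as the paper does. The one difference is how you justify the curvature identity at the boundary contact point: you recompute it from the $1$-jets of $h$ and $\widetilde\Z$, which works because the Killing and circularity identities hold there by continuity, whereas the paper shifts $w$ to $w-\varepsilon$ so that the swept surface lies in the open exterior, where Lemma~\ref{local:optical-curvature} applies literally, and then lets $\varepsilon\downarrow0$.
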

\begin{proof}
For an upper test $w$, apply the contact calculation in the proof
of Lemma~\ref{discrete:weak-null-tests} to the semiconcave function
$-v$ and its smooth lower test $-w$. We obtain
\[
 v(y)=v(y_0)+w'(y_0)(y-y_0)+O(|y-y_0|^2),
\]
and hence $v'(y_0)=w'(y_0)$. The images under the local flow of $\widetilde\Z$ form a smooth
hypersurface touching the boundary graph $\Sigma$ from above. To apply Lemma~\ref{local:optical-curvature} where $\widetilde\Z=\Z$ and $\LL_\Z\g=0$, first replace
$w$ by $w-\varepsilon$. For a local Lipschitz
bound $M_1$ for $w-v$ and $|y-y_0|<\varepsilon/(2M_1+2)$,
\[
 w(y)-\varepsilon-v(y)
 \le M_1|y-y_0|-\varepsilon<-\varepsilon/2.
\]
The shifted surface lies in the chosen exterior near its central
point, and its identity for $B_{33}$ and $B_{44}$ therefore follows from
that lemma. Letting $\varepsilon\downarrow0$ gives the same identity
at contact, since the optical metric is smooth and the derivatives
of $w$ are unchanged by the shift. With $\rho^2=-h(\widetilde\Z,\widetilde\Z)$ and
$Y=\partial_y+w'\partial_s$, use the normalized null tangents
$e_3,e_4$ of Lemma~\ref{local:optical-curvature}, with their disk
tangents lifted $h$-orthogonally to $\widetilde\Z$.
Thus $h(e_3,e_4)=-1$. We write $B_{ab}=B(e_a,e_b)$, with normal
toward increasing $s$.
Lemma~\ref{discrete:weak-null-tests} gives
\[
 0\le B_{33}=B_{44}
 =\frac{\sqrt{\det q_{\mathrm{opt}}}}
        {2\rho\,q_{\mathrm{opt}}(Y,Y)^{3/2}}
          \bigl[w''-A_0(y,w,w')\bigr].
\]
This proves the first assertion.

For the lower supporting curve use the defining function
$f=s-w(y)$, constant along the auxiliary flow. Its entire negative
side lies in the designated exterior component, and its zero set
lies in that component's closure. The Killing identities hold on this closure by continuity from
the chosen exterior. With the normal $n$ pointing toward increasing
$s$ and the normalization $h(e_3,e_4)=-1$,
\[
 \begin{aligned}
 B_{33}=B_{44}&=\frac{\kappa_{\mathrm{opt}}}{2\rho}>0\\
 \Hess_h f(e_a,e_a)&=-(nf)\frac{\kappa_{\mathrm{opt}}}{2\rho}<0
 \quad(a=3,4).
 \end{aligned}
\]
Here $n=\rho^{-1}N_{\mathrm{opt}}$, with disk vectors lifted
horizontally. Since $df$ annihilates the timelike vector
$\widetilde\Z$, it is spacelike. For a spacetime null vector
$X$ with $X(f\circ\pi)=\g(\T,X)=0$, the quotient identity derived from
\eqref{local:conformal-connection} gives
\[
 \D^2(f\circ\pi)(X,X)=\Hess_hf(d\pi X,d\pi X)<0.
\]
Thus Proposition~\ref{local:extension} applies.

We also obtain agreement with $\Z$ on the entire designated exterior
component, after shrinking the neighborhood. To see this, choose a
rectangular flowbox inside the extension neighborhood and a strip
below both $w$ and $v$. The two Killing fields and their first covariant derivatives agree on this strip. Every point
below $v$ joins the strip by a vertical segment contained below
$v$, so uniqueness for \eqref{local:killing-connection} gives
agreement there. We choose this comparison strip inside the extension
neighborhood, after applying the local theorem. To identify the
extension on all other nearby exterior components, we will use the
uniqueness of the boundary limit of $(\Z,\D\Z)$ in
Proposition~\ref{closure:orbit-tube}.
\end{proof}

\begin{lemma}\label{local:geodesic-stalemate}
Let $\Sigma,v,q_{\mathrm{opt}}$ be as in
Lemma~\ref{local:optical-reduction}, with $A_0$ defined by
\eqref{local:scalar-ode}. Suppose that no point of $\Sigma$ admits
a smooth strictly $\T$-conditionally pseudoconvex supporting
hypersurface for $H_\infty$ whose negative side lies in the chosen
exterior component. Then $v$ is smooth and
\[
 v''=A_0(y,v,v').
\]
Consequently $B_{33}=B_{44}=0$ on $\Sigma$, and the integral curves of either tangent null direction are
$h$-null geodesics up to reparametrization.
\end{lemma}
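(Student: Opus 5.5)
We work on the transverse disk $S$ of Lemma~\ref{local:optical-reduction}, where $v$ is Lipschitz and semiconvex, the chosen exterior is $\{s<v(y)\}$, and $q_{\mathrm{opt}}$ is smooth. The goal is to show that on every short interval $[y_1,y_2]$, $v$ coincides with the solution $g$ of the two-point boundary problem
\[
 g''=A_0(y,g,g'),\qquad g(y_1)=v(y_1),\quad g(y_2)=v(y_2).
\]
This is the short $q_{\mathrm{opt}}$-geodesic joining two boundary points, written as a graph. For $|y_2-y_1|$ small it exists, is unique and smooth, and its slope stays close to the difference quotients of $v$. This follows from the shooting method, since $A_0$ is smooth and $v$ is Lipschitz. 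Once $v=g$ on all short intervals, $v$ is smooth and satisfies $v''=A_0(y,v,v')$. Formula \eqref{local:scalar-curvature} then gives $\kappa_{\mathrm{opt}}=0$. Lemma~\ref{local:optical-curvature}, with the limiting identity obtained by the $\varepsilon$-shift as in the proof of Lemma~\ref{local:weak-cap}, then gives $B_{33}=B_{44}=0$ and the geodesic property of the null tangent curves.

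\emph{Step 1: $v\le g$.} Let $\eta(y)=(y-y_1)(y_2-y)\ge0$ be the fixed concave quadratic. Suppose $v-g$ has a positive maximum on $(y_1,y_2)$. Let $\lambda>0$ be the smallest value for which $g+\lambda\eta\ge v$. Then $w=g+\lambda\eta$ is a smooth upper test touching $v$ at an interior point $y_0$. We have $w''=g''-2\lambda=A_0(y_0,g,g')-2\lambda$, and $w,w'$ differ from $g,g'$ by $O(\lambda|y_2-y_1|)$. The Lipschitz bound for $A_0$ on compact height and slope ranges therefore gives $w''(y_0)<A_0(y_0,w,w')$ once the interval is short. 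This contradicts the first assertion of Lemma~\ref{local:weak-cap}.

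\emph{Step 2: $v\ge g$.} Suppose $v<g$ somewhere. Let $\lambda>0$ be the largest value for which $w=g-\lambda\eta\le v$. Then $w$ touches $v$ from below at an interior point. By the same computation, $w''-A_0(y,w,w')\ge2\lambda-C\lambda|y_2-y_1|>0$, so \eqref{local:scalar-curvature} gives $\kappa_{\mathrm{opt}}>0$ at contact. By the second assertion of Lemma~\ref{local:weak-cap}, the $\widetilde\Z$-saturation of this graph is a smooth strictly $\T$-conditionally pseudoconvex supporting hypersurface whose negative side lies in the chosen exterior. This contradicts the hypothesis.

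The main obstacle is Step 2 when the contact set is degenerate. The lower comparison curve may touch $v$ at an endpoint $y_1$ or $y_2$ rather than at an interior point. To handle this, I would first perturb, replacing $g$ by the geodesic with endpoint values $v(y_i)-\epsilon$, so that $w<v$ near both endpoints. The maximal $\lambda$ then produces an interior contact point, and we let $\epsilon\to0$ at the end.

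A related difficulty is uniformity: the interval length must be small independently of the base point, using the Lipschitz and semiconvexity constants of $v$ and the $C^2$ bounds of $q_{\mathrm{opt}}$ on $S$. Semiconvexity enters because it guarantees that $v$ has one-sided derivatives. It also ensures that any contact with a smooth lower curve is a genuine tangency, so the contact analysis in Lemma~\ref{local:weak-cap} applies.
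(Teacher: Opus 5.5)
Your strategy is the paper's. You compare $v$ on a short interval with the graph of the short $q_{\mathrm{opt}}$-geodesic through its endpoints. Adding a multiple of a concave quadratic gives an upper test that contradicts the first part of Lemma~\ref{local:weak-cap}. Subtracting one gives a lower curve of positive optical curvature, which is the forbidden supporting hypersurface.

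\textbf{The gap in Step~1.} Your weight $\eta=(y-y_1)(y_2-y)$ vanishes at the endpoints, and with it the claim ``touching $v$ at an interior point'' is false as written. The supremum of $(v-g)/\eta$ need not be attained inside the interval. Take $y_1=0$, $y_2=d$ and $v-g=y(d-y)^2/d$, which is smooth and hence semiconvex. Then $(v-g)/\eta=(d-y)/d$, the minimal $\lambda$ is $1$, and $g+\eta-v=y^2(d-y)/d>0$ on $(0,d)$. The only contact is a one-sided tangency at $y_1$. There you do not know $w\ge v$ on both sides, so Lemma~\ref{local:weak-cap} does not apply.

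So the endpoint degeneracy you flag for Step~2 already occurs in Step~1, and it needs the same repair there. Raise the endpoint values of $g$ by $\epsilon$; for small $\epsilon$, $v-g_\epsilon$ still has a positive maximum, and the contact becomes interior.

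\textbf{How the paper avoids this.} It takes $\eta=1-K(y-y_c)^2$ with $K=2L+1$ and $d$ small, so that $\tfrac12\le\eta\le1$. Since $v=w$ at the endpoints, the positive maximum of $(v-w)/\eta$ or $(w-v)/\eta$ is then automatically interior, and no perturbation is needed. It also gives $a\le Cd$, so $a\eta'=O(d^2)$ and every comparison curve stays close to the geodesic in height and slope. With your $\eta$ in Step~2, the multiplier can be of order $M/d$, for example when $v$ has a convex kink below the chord. Then $\lambda\eta'$ is only bounded, not small. This is harmless if you fix the Lipschitz constant of $A_0$ on a slope range enlarged by a multiple of $M$ before shrinking the interval.

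\textbf{Two smaller corrections.}
\begin{enumerate}
\item In Step~2 the admissible values of $\lambda$ form an up-set, since $\eta\ge0$. You want the \emph{smallest} $\lambda$ with $g-\lambda\eta\le v$, not the largest.
\item Semiconvexity of $v$ forces tangency for smooth \emph{upper} contacts; this is what the first part of Lemma~\ref{local:weak-cap} uses. It does not do so for lower contacts: $v=|y|$ and $w=0$ give a lower contact at a corner. This does not affect Step~2, because the second part of Lemma~\ref{local:weak-cap} only needs $w\le v$ with contact and $\kappa_{\mathrm{opt}}>0$ there.
\end{enumerate}
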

\begin{proof}
Fix a Lipschitz bound $M$ for $v$ and a compact height and slope
range containing its graph, with an extra margin in both variables.
All coefficient bounds below refer to this fixed enlarged range.
Let $I=[y_-,y_+]$ have length $d$, and let $l$ be the affine
function with values $v(y_\pm)$ at its endpoints.

For small $d$, join $(y_-,v(y_-))$ to $(y_+,v(y_+))$ by the
unique short geodesic of $q_{\mathrm{opt}}$ in a fixed convex-normal
neighborhood. Parametrize it affinely by $t\in[0,1]$ and denote
its coordinates by $c(t)=(c^y(t),c^s(t))$. Smoothness of the
exponential map and $|v(y_+)-v(y_-)|\le Md$ give
\begin{align*}
 |\dot c|&\le Cd,\\
 \ddot c^{\,i}&=-\Gamma^i_{kl}(c)\dot c^k\dot c^l=O(d^2).
\end{align*}
Integrating and using the endpoint values, we obtain
\begin{align*}
 \dot c^y&=d+O(d^2)>d/2,\\
 \dot c^s&=v(y_+)-v(y_-)+O(d^2).
\end{align*}
Thus this geodesic is a graph $s=w(y)$. Dividing the last two
identities and integrating once more gives
\begin{equation}\label{local:short-geodesic}
 \|w-l\|_\infty+d\|w'-l'\|_\infty\le Cd^2.
\end{equation}
In particular $w''=A_0(y,w,w')$, and its graph remains in the
chosen height and slope range.

Let $L$ bound $|\partial_sA_0|$ and $|\partial_pA_0|$ on the
enlarged range. Put $K=2L+1$, let $y_c$ be the midpoint of $I$,
and choose the concave quadratic function
\[
 \eta(y)=1-K(y-y_c)^2.
\]
After decreasing $d$, its explicit derivatives give
\begin{align*}
 \tfrac12&\le\eta\le1,\\
 |\eta'|&\le Kd,\\
 -\eta''-L(\eta+|\eta'|)
 &\ge2K-L(1+Kd)\ge K>0.
\end{align*}
We shall add or subtract $a\eta$ at a first contact. The Lipschitz
bound and \eqref{local:short-geodesic} give
\[
 0<a\le2\|v-w\|_\infty\le Cd.
\]
For every such $a$,
\begin{align*}
 \|a\eta\|_\infty&\le Cd,\\
 \|a\eta'\|_\infty&\le Cd^2.
\end{align*}
Thus the perturbed curves, and all height and slope segments between
them and $w$, remain in the enlarged coefficient range. The constant
$L$ was fixed before these choices.

Suppose first that $v>w$ somewhere, and set
\[
 a=\max_I\frac{v-w}{\eta}>0.
\]
Then $w+a\eta\ge v$, with contact at an interior point since
$v=w$ at the endpoints. The first assertion of
Lemma~\ref{local:weak-cap} requires
$(w+a\eta)''-A_0(y,w+a\eta,w'+a\eta')\ge0$ at contact.
The mean-value theorem gives the opposite sign:
\begin{align*}
 (w+a\eta)''-A_0(y,w+a\eta,w'+a\eta')
 &=a\eta''+A_0(y,w,w')-A_0(y,w+a\eta,w'+a\eta')\\
 &\le a\bigl[\eta''+L(\eta+|\eta'|)\bigr]
 \le-aK<0.
\end{align*}
Hence $v\le w$.

If $w>v$ somewhere, choose instead
\[
 a=\max_I\frac{w-v}{\eta}>0.
\]
Now $w-a\eta\le v$, again with interior contact, and
\begin{align*}
 (w-a\eta)''-A_0(y,w-a\eta,w'-a\eta')
 &=-a\eta''+A_0(y,w,w')-A_0(y,w-a\eta,w'-a\eta')\\
 &\ge a\bigl[-\eta''-L(\eta+|\eta'|)\bigr]
 \ge aK>0.
\end{align*}
By \eqref{local:scalar-curvature}, this lower touching curve has
strictly positive geodesic curvature in $q_{\mathrm{opt}}$. The second assertion of
Lemma~\ref{local:weak-cap} supplies a strictly $\T$-conditionally
pseudoconvex supporting hypersurface, contrary to the hypothesis. We conclude that $v=w$ on every sufficiently short
interval, which proves smoothness and the geodesic equation for
$q_{\mathrm{opt}}$.

By \eqref{local:optical-null-curvature}, $B_{33}=B_{44}=0$.
Let $\nabla^\Sigma$ be the induced connection of $h|_{T\Sigma}$
and $n$ its unit normal toward the remaining set. For a nonzero
tangent null field $X$ on $\Sigma$,
\[
 \begin{aligned}
 h(\nabla_X^\Sigma X,X)&=\tfrac12Xh(X,X)=0\\
 \nabla_X X&=\nabla_X^\Sigma X+B(X,X)n\in\RR X.
 \end{aligned}
\]
Since the orthogonal complement of a null vector in a Lorentzian
two-plane is its own span, the last inclusion follows. We conclude
that the integral curves are ambient null geodesics after
reparametrization.
\end{proof}

Figure~\ref{fig:optical-alternative} shows the two strict comparisons.
An upper contact gives a null segment with endpoints in $H_\infty$
and midpoint outside it; a lower contact gives
the hypersurface needed for Killing extension. Once both possibilities
are excluded, the geodesic of $q_{\mathrm{opt}}$ generates the timelike surface
whose null geodesics we follow in Section~\ref{sec:remaining}.

\begin{figure}[htbp]
\centering
\begin{tikzpicture}[x=1cm,y=1cm,font=\normalsize,
 line cap=round,line join=round,>=stealth]
 \path[use as bounding box] (0,-.55) rectangle (15.8,4.15);
 \begin{scope}[shift={(2.35,1.35)},x=1.45cm,y=1.8cm]
  \node at (0,1.33) {(a) Upper contact};
  \path[fill=black!6]
    plot[domain=-1.2:1.2,samples=71]
      (\x,{.8*(1-\x*\x/1.44)})
      --(1.2,-.25)--(-1.2,-.25)--cycle;
  \draw[densely dotted,thick] (-1.2,0)--(1.2,0);
  \draw[very thick] plot[domain=-1.2:1.2,samples=71]
      (\x,{.8*(1-\x*\x/1.44)});
  \draw[dashed,thick] plot[domain=-1.2:1.2,samples=71]
      (\x,{.8-.2*\x*\x});
  \fill (0,.8) circle[radius=1.1pt];
  \fill (-1.2,0) circle[radius=1pt];
  \fill (1.2,0) circle[radius=1pt];
  \node[fill=white,inner sep=1pt] at (-.91,.73) {$w+a\eta$};
  \node[fill=white,inner sep=1pt] at (.83,.27) {$v$};
  \node[below] at (0,-.01) {$w$};
  \node at (0,-.65) {$\kappa_{\mathrm{opt}}<0$};
 \end{scope}
 \begin{scope}[shift={(7.55,1.35)},x=1.45cm,y=1.8cm]
  \node at (0,1.33) {(b) Lower contact};
  \path[fill=black!6]
    plot[domain=-1.2:1.2,samples=71] (\x,{.625*\x*\x})
      --(1.2,-.25)--(-1.2,-.25)--cycle;
  \draw[densely dotted,thick] (-1.2,.9)--(1.2,.9);
  \draw[very thick] plot[domain=-1.2:1.2,samples=71]
      (\x,{.625*\x*\x});
  \draw[dashed,thick] plot[domain=-1.2:1.2,samples=71]
      (\x,{.27*\x*\x});
  \fill (0,0) circle[radius=1.1pt];
  \fill (-1.2,.9) circle[radius=1pt];
  \fill (1.2,.9) circle[radius=1pt];
  \node[above] at (0,.91) {$w$};
  \node[fill=white,inner sep=1pt] at (-.78,.52) {$v$};
  \node[fill=white,inner sep=1pt] at (.74,-.10) {$w-a\eta$};
  \node at (0,-.65) {$\kappa_{\mathrm{opt}}>0$};
 \end{scope}
 \begin{scope}[shift={(12.5,.90)},x=.88cm,y=.88cm]
  \node at (.15,3.25) {(c) Equality};
  \path[fill=black!6] (-1.6,0)--(1.6,0)--(2.05,2.4)--(-1.15,2.4)--cycle;
  \draw[thin] (-1.6,0)--(-1.15,2.4)--(2.05,2.4)--(1.6,0);
  \begin{scope}
   \clip (-1.6,0)--(1.6,0)--(2.05,2.4)--(-1.15,2.4)--cycle;
   \foreach \c in {-1.2,0,1.2}{
    \draw[black!55,thin] ({\c-1.2},0)--({\c+1.65},2.4);
    \draw[black!55,thin] ({\c+1.2},0)--({\c-.75},2.4);
   }
   \draw[->,thick] (.225,1.2)--(1.02,1.87);
   \draw[->,thick] (.225,1.2)--(-.42,1.87);
  \end{scope}
  \fill (.225,1.2) circle[radius=1.3pt];
  \node[fill=white,inner sep=1pt] at (1.1,2.03) {$e_4$};
  \node[fill=white,inner sep=1pt] at (-.54,2.03) {$e_3$};
  \draw[very thick] (-1.6,0)--(1.6,0);
  \node[below=3pt] at (0,0) {$v=w$};
  \draw[->,thick] (2.08,.30)--(2.36,1.75);
  \node[right] at (2.25,1.04) {$-\Z$};
  \node[fill=white,inner sep=1pt] at (-.89,.36) {$\Sigma$};
  \node at (.20,-.83) {$B_{33}=B_{44}=0$};
 \end{scope}
\end{tikzpicture}
\caption{The comparison in Lemma~\ref{local:geodesic-stalemate}.
Panels (a) and (b) depict the comparison when $q_{\mathrm{opt}}$ is Euclidean: the dotted segment $w$
joins the endpoints of the boundary curve $v$, the shaded side is
the exterior, and the dashed curve touches at the marked interior
point. In (a), $w+a\eta$ is an upper test with negative geodesic curvature,
contrary to Lemma~\ref{local:weak-cap}. In (b), $w-a\eta$ lies
in the exterior and has positive geodesic curvature at contact.
Applying the rotational flow and taking the inverse image under
$\pi$ give a supporting hypersurface satisfying the condition for
Killing extension.
If both alternatives are excluded, $v=w$. Panel (c) shows its
timelike surface $\Sigma$ formed by the rotational flow in $(Q,h)$,
with one interval of the angular variable drawn vertically. The
curves tangent to $e_3$ and $e_4$ are ambient null geodesics.}
\label{fig:optical-alternative}
\end{figure}

\subsection{Extension around a complete orbit}

We now use a strictly $\T$-conditionally pseudoconvex supporting
hypersurface to perform the operation in Lemma~\ref{discrete:admissible-update}. The local extension theorem
gives a Killing field near one point, while our successive domains
carry complete rotational orbits. We extend around the orbit by
transporting the hypersurface around the boundary orbit and using
the unique limit of $(\Z,\D\Z)$ to identify the resulting local
Killing fields with the old field. This gives a complete periodic extension in
Proposition~\ref{closure:orbit-tube}.

The defining function must also be invariant, since we remove
entire rotational orbits. If $f$ defines the supporting hypersurface and
$\rho^2=-h(\Z,\Z)$, circularity allows us to preserve the
strict Hessian inequality on null tangent vectors: constant extension from a horizontal
disk replaces $\Hess_hf(e_3,e_3)$ and $\Hess_hf(e_4,e_4)$ by their mean minus
$\Z^2f/(2\rho^2)$. Since $f$ has a minimum along the boundary
orbit, $\Z^2f\ge0$ there. We prove this sign-preserving identity in
\eqref{local:invariant-hessian}.
A small positive quadratic perturbation then gives a positive
lower bound for the defining function near the edge of the neighborhood. The convergence $H_j\downarrow H_\infty$ preserves
that lower bound for every sufficiently late $H_j$, so the resulting
removal contradicts Lemma~\ref{discrete:greedy-construction}.
We begin with a consequence of the normal covector inequalities: two
independent normal covectors already give a supporting hypersurface
with strictly negative Hessian on its null tangent vectors.

\begin{lemma}\label{closure:corner-cap}
If $p\in\partial H_\infty\cap\{\g(\T,\T)>0\}$ and
$\dim N^P_{H_\infty}(p)=2$, there are a neighborhood $V$ of $p$
and $f\in C^\infty(V)$ with spacelike nonzero differential such that
\begin{align*}
 f(p)&=0,\\
 H_\infty\cap V&\subset\{f\ge0\}
\end{align*}
and
\[
 \Hess_hf(X,X)<0
 \qquad\bigl(0\ne X\in\ker df(p),\ h(X,X)=0\bigr).
\]
\end{lemma}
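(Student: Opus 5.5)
The plan is to build $f$ explicitly from two independent proximal normal covectors at $p$, using the quadratic normal inequality \eqref{normal:conormal-inequality} together with the uniform spacelike bound on normal covectors from Lemma~\ref{closure:normal-limits}. Choose $h$-normal coordinates $x$ centered at $p$, so that $\Gamma(h)(p)=0$ and $\Hess_h$ at $p$ equals the coordinate Hessian. Take independent $\alpha_1,\alpha_2\in N^P_{H_\infty}(p)$ and set $\ell_i=\alpha_i(x)$. By the change-of-metric remark following Definition~\ref{def:proximal-conormal}, \eqref{normal:conormal-inequality} holds in these coordinates with some constant $C$:
\[
 \ell_i\le C|x|^2\qquad\text{on }H_\infty\text{ near }p,\ i=1,2.
\]
Put $u=\ell_1-\ell_2$. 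Since $\max(\ell_1,\ell_2)\le C|x|^2$ on $H_\infty$,
\[
 -(\ell_1+\ell_2)=|u|-2\max(\ell_1,\ell_2)\ge|u|-2C|x|^2 .
\]

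Define
\[
 f=-(\ell_1+\ell_2)-Au^2+2C|x|^2,
\]
with $A$ to be chosen. On $H_\infty$ we get $f\ge|u|-Au^2\ge0$ once $|u|\le1/A$, which holds on a small neighborhood $V$. Clearly $f(p)=0$ and $df(p)=-(\alpha_1+\alpha_2)$. This is a nonzero element of the cone, since the cone is closed, convex and pointed. By Lemma~\ref{closure:normal-limits} it is therefore spacelike, and after shrinking $V$ the differential stays nonzero and spacelike. At $p$ the Hessian is
\[
 \Hess_hf(X,X)=-2A\,u(X)^2+4C|X|^2 .
\]

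The key step is to show that $u(X)\ne0$ for every nonzero null $X\in\ker df(p)$. Every nonzero element of the $2$-dimensional normal cone is spacelike with the uniform bound $h^{-1}(\alpha,\alpha)\ge c|\alpha|^2$. Since a positive definite quadratic form on an open cone of a plane is positive on the whole plane, the covector plane $P=\operatorname{span}\{\alpha_1,\alpha_2\}$ is spacelike. Its annihilator line $\ker\alpha_1\cap\ker\alpha_2$ is then timelike; it is in fact the $\Z$-direction from Lemma~\ref{closure:normal-limits}. A null $X\in\ker(\alpha_1+\alpha_2)$ with $u(X)=0$ would lie in this timelike line, which is impossible.

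By compactness of the projectivized null directions in the timelike plane $\ker df(p)$, we get $u(X)^2\ge c'|X|^2$ there. Choosing $A>2C/c'$ then gives $\Hess_hf(X,X)<0$ on these null tangents. The main point to check carefully is the spacelike character of the full normal plane, including the possibility that the $2$-dimensional cone is not pointed (a half-plane). In that case I would still pick two independent directions in its relative interior, for which $\alpha_1+\alpha_2\ne0$, and the same argument applies.
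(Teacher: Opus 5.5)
Your construction is the paper's: the same $f$ up to constants, the same Hessian computation in $h$-normal coordinates, and the same key point that $u=\alpha_1-\alpha_2$ cannot vanish on a null vector of $\ker df(p)$. The paper uses $(2C+1)|x|^2$ and gets $f\ge|x|^2$ on $H_\infty$; your bound $f\ge|u|(1-A|u|)\ge0$ already suffices for the statement. However, the justification you give for the key point is false. A quadratic form that is positive, even uniformly, on an open cone of a plane need not be positive definite on that plane. For example, take $h^{-1}=-a_0^2+a_1^2+a_2^2$ and the degenerate covector plane $P=\{a_0=a_1\}$, spanned by $(1,1,0)$ and $(0,0,1)$. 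For $\alpha=s(1,1,0)+t(0,0,1)$ with $|s|<t$ one has $h^{-1}(\alpha,\alpha)=t^2\ge\frac13|\alpha|^2$. Yet the annihilator of $P$ is the null line spanned by $X=(1,-1,0)$. For such a cone, $u(X)=0$ with $X$ null in $\ker df(p)$, and there $\Hess_hf(X,X)=4C|X|^2>0$. So the uniform spacelike bound on the normal cone alone cannot give the key step.

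What works is the fact you mention only in passing, and it is exactly the paper's argument. A $2$-dimensional cone is not a line, so Lemma~\ref{closure:normal-limits} applies and gives a boundary limit $\Z(p)$ that is $h$-timelike and annihilated by every element of $N^P_{H_\infty}(p)$. Hence $\ker\alpha_1\cap\ker\alpha_2=\RR\,\Z(p)$ is timelike. Equivalently, the normal cone lies in the annihilator of a timelike vector, on which $h^{-1}$ is positive definite. With this replacement, your argument is complete. Finally, $\alpha_1+\alpha_2\ne0$ follows from independence alone, so the discussion of pointed versus half-plane cones is unnecessary.
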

\begin{proof}
By Lemma~\ref{closure:normal-limits}, $N^P_{H_\infty}(p)$ lies in the
plane of covectors annihilating the common timelike field, on which
$h^{-1}$ is positive definite. We use two
independent normal covectors to make the Hessian negative on the null
tangents, while their quadratic bounds keep the hypersurface outside
$H_\infty$. Take independent normal covectors $\alpha,\beta$ and
$h$-normal coordinates $x$ centered at $p$. Regard the covectors
as linear functions in these coordinates. Equation~\eqref{normal:conormal-inequality} gives
\[
 \max\{\alpha(x),\beta(x)\}\le C|x|^2
 \qquad(x\in H_\infty).
\]
The line $\ker\alpha\cap\ker\beta$ is timelike. Consequently
\[
 \min_{\substack{|X|=1,\ h(X,X)=0\\(\alpha+\beta)(X)=0}}
                 |(\alpha-\beta)(X)|>0.
\]
Choose $M$ sufficiently large and put
\[
 f(x)=-(\alpha+\beta)(x)+(2C+1)|x|^2
                                  -M(\alpha-\beta)(x)^2.
\]
The differential at $p$ is $-(\alpha+\beta)\ne0$, and is spacelike.
Since the Christoffel symbols vanish at $p$,
\[
 \Hess_hf(X,X)=2(2C+1)|X|^2
                         -2M\bigl((\alpha-\beta)(X)\bigr)^2<0
\]
on its nonzero null tangents.

For $x\in H_\infty$, the same two normal inequalities give
\begin{align*}
 |(\alpha-\beta)(x)|
 &\le-(\alpha+\beta)(x)+2C|x|^2,\\
 M(\alpha-\beta)(x)^2
 &\le MC_1|x|\bigl(-(\alpha+\beta)(x)+2C|x|^2\bigr).
\end{align*}
Here $C_1$ bounds the norm of $\alpha-\beta$.
Reduce the coordinate ball so that $MC_1|x|\le1/2$. Then
\begin{align*}
 f(x)&\ge(1-MC_1|x|)(-(\alpha+\beta)(x))
           +(2C+1-2CMC_1|x|)|x|^2\\
 &\ge\bigl[-2C(1-MC_1|x|)+2C+1-2CMC_1|x|\bigr]|x|^2
 =|x|^2.
\end{align*}
Thus its entire negative side is in the existing domain.
\end{proof}

\begin{proposition}\label{closure:orbit-tube}
Let $p\in\partial H_\infty\cap\{\g(\T,\T)>0\}$ admit a
smooth strictly $\T$-conditionally pseudoconvex supporting
hypersurface. There is a connected stationary invariant neighborhood
$V\subset\E$ of $\pi^{-1}(p)$ carrying a commuting Killing extension
$\widetilde\Z$ of complete effective period $2\pi$, with
\[
 \widetilde\Z=\Z\quad\text{on }V\cap U_{H_\infty}.
\]
Its flow preserves $H_\infty\cap\pi(V)$.
\end{proposition}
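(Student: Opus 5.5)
We first apply the local theorem at every point of the boundary orbit through $p$, and then glue. Let $f$ define the given supporting hypersurface near $p$, with $H_\infty\subset\{f\ge0\}$ and negative side in the exterior, and lift it by $\pi$. Proposition~\ref{local:extension} applies, since $\T\ne0$ in the ergoregion, $\g^{-1}(df,df)>0$ because the conormal is spacelike, and \eqref{local:convex} holds. It gives a Killing extension $\Z_p$ near $\pi^{-1}(p)$ that agrees with $\Z$ on the local negative side.

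To identify $\Z_p$ with $\Z$ on \emph{every} nearby exterior component, we split by the normal cone. If $N^P_{H_\infty}(p)$ is not a line, Lemma~\ref{closure:normal-limits} gives one boundary limit $\mathcal J(p)$ of $(\Z^\flat,\D\Z^\flat)$. Continuity from the negative side gives $\mathcal J_p(p)=\mathcal J(p)$, and Lemma~\ref{normal:parallel-boundary} then gives agreement on $U\cap V$. If the cone is a line, the band description of Lemma~\ref{closure:graph-band} applies. The negative side of $f$ then lies in one exterior component, so the remaining component must be handled separately. I expect a supporting hypersurface to exist only when $a=b$ near $p$, or when the other side can be reached in a second way. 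My first attempt is to use Lemma~\ref{local:optical-reduction} from the other side: it produces a field $\widetilde\Z'$ whose Killing data at $p$ are the limits from that side. I would then compare $\mathcal J_p(p)$ with those limits using Proposition~\ref{canon:reconstruction}-type rigidity. Both fields are periodic with period $2\pi$, commute with $\T$, and preserve the same orbit, so the flat-cylinder argument on the level sets of invariants forces them to agree up to sign; the sign is then fixed by continuity of $\g(\T,\Z)$ along the boundary. \textbf{This identification across a thin band is the step I expect to be the main obstacle.}

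Next, transport around the orbit. The orbit of $\Z$ through $p$ lies in $\partial H_\infty$, which follows by taking limits of the exterior flow as in the proof of Lemma~\ref{closure:null-reach}. For $q=\Phi_s^{\widetilde\Z}(p)$ we have the pushed-forward hypersurface $\Phi_s(\{f=0\})$. Since the flow is an isometry commuting with $\T$ and preserving the exterior, this is again a strictly pseudoconvex support at $q$. Each orbit point therefore receives a local extension agreeing with $\Z$ on its nearby exterior. By compactness of the orbit, Lemma~\ref{local:compatibility}, applied with $C=\pi^{-1}(\text{orbit})\cap\{\vartheta=0\}$, glues these local extensions into one stationary Killing field on a neighborhood. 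The projected local flow preserves the orbit, so the lemma yields an invariant neighborhood on which the flow is complete and $\Phi_{2\pi}=\operatorname{Id}$. Effectiveness follows by restricting to the old domain, as in Lemma~\ref{discrete:admissible-update}. If necessary, we pass to the connected component containing $\pi^{-1}(p)$.

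Finally, the flow of $\widetilde\Z$ preserves $H_\infty\cap\pi(V)$. On $V\cap U_{H_\infty}$ the two flows coincide by Lemma~\ref{local:complete-flows}, so the flow of $\widetilde\Z$ preserves the exterior part $\pi(V)\setminus H_\infty$. Since both flows are complete on $V$, it therefore also preserves the complement, which is $H_\infty\cap\pi(V)$.
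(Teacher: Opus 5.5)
Your treatment of the case where $N^P_{H_\infty}(p)$ is a line is a genuine gap, and the identification you sketch there would not work. Proposition~\ref{canon:reconstruction} is available only on a neighborhood $\mathcal W$ of a compact piece of the ergosurface $E_0$. The common levels of $\g(\T,\T)$ and $\psi$ are known to be flat Lorentzian cylinders only there, because Lemma~\ref{canon:regularity} gives independent gradients on $E_0$. Your point $p$ lies in the open ergoregion, where these levels need not even be surfaces. The uniqueness clause of that proposition also requires a Killing field with complete flow of effective period $2\pi$ on a connected stationary domain. At that stage your $\Z_p$ is only a local Killing field near $\pi^{-1}(p)$, and the field of Lemma~\ref{local:optical-reduction} is Killing only on the closure of its own exterior component. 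So nothing in your argument forces the one-sided limits of $(\Z,\D\Z)$ from the two exterior components to agree. Consequently Lemma~\ref{local:compatibility} cannot be applied: its hypothesis $\Z_p=\Z$ on all of $V_p\cap U$ is exactly what is missing.

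The missing idea is that the line case cannot occur at a point carrying a strictly $\T$-conditionally pseudoconvex support. In the coordinates of Lemma~\ref{closure:graph-band}, $H_\infty=\{a\le s\le b\}$ with $a(p)=b(p)$. After reversing $s$ if necessary, the support is a graph $s=w$ with $w\le a\le b$ and equality at $p$, so $w$ is also a smooth lower test for the semiconcave function $b$. Lemma~\ref{discrete:weak-null-tests} then gives $B(X,X)\le0$ on null tangents for the upward normal, that is, $\Hess_h(s-w)(X,X)=-|d(s-w)|_hB(X,X)\ge0$, contradicting strict pseudoconvexity. Hence the cone is not a line, and Lemma~\ref{normal:parallel-boundary} gives a single boundary limit from all nearby exterior components. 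The same applies at every orbit point once the support has been transported there.

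Two further steps need justification you do not give. First, the flow $\Phi_s^{\widetilde\Z}$ is not defined for $|s|\le2\pi$ before the gluing, so the support must be moved by the old flow, which lives only on $\pi(U)$. The paper extends that flow smoothly to $\{f\le0\}$ using the derivative bounds of Lemma~\ref{closure:normal-limits}, defines $I_t$ by normal exponential maps, and uses $(\nabla dI_t)_p=0$ to carry the Hessian inequality along the orbit. Second, $\g^{-1}(df,df)>0$ is not automatic, since for a timelike conormal \eqref{local:convex} is vacuous. It follows because $-df_p\in N^P_{H_\infty}(p)$ and Lemma~\ref{closure:normal-limits} makes such normal covectors spacelike.
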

\begin{proof}
Put $U=U_{H_\infty}$. Let $f$ define the supporting hypersurface,
with $\{f<0\}\subset\pi(U)$.
Since $f(p)=0$ and $f\ge0$ on $H_\infty$ near $p$, Taylor's
formula in a fixed coordinate chart gives
\[
 -df_p(x-p)\le C|x-p|^2
 \qquad(x\in H_\infty\text{ near }p).
\]
Thus $-df_p\in N^P_{H_\infty}(p)\setminus\{0\}$.
Lemma~\ref{closure:normal-limits} and
Definition~\ref{def:stationary-metrics} give
\[
 \g^{-1}(\pi^{\ast}df,\pi^{\ast}df)
   =\g(\T,\T)h^{-1}(df,df)>0.
\]
This verifies the spacelike conormal hypothesis of
Proposition~\ref{local:extension}.
We first show that $(\Z,\D\Z)$ has a unique boundary limit,
which we need to compare the local extensions. It suffices to
exclude the case where $N^P_{H_\infty}(p)$ is a line. In that case Lemma~\ref{closure:graph-band} would give
$H_\infty=\{a\le s\le b\}$ with $a(p)=b(p)$.
After reversing $s$ if necessary, its supporting graph has height
$w\le a\le b$. At contact,
\[
 Dw(p)=Da(p)=Db(p).
\]
Lemma~\ref{discrete:weak-null-tests} applied to this lower test for
$b$ gives
\[
 \Hess_h(s-w)(X,X)\ge0
 \qquad\bigl(h(X,X)=0,\ X(s-w)=0\bigr),
\]
contrary to strict pseudoconvexity. Thus $(\Z,\D\Z)$ has a
unique limit at $p$ from all nearby exterior components.

To carry the supporting hypersurface around the orbit, we first extend the old flow
maps smoothly to the boundary. We work on $Q$ and write $\Phi_t$
for the projected old flow. The complete orbits stay in a fixed compact subset of the ergoregion, by the confinement argument following
Lemma~\ref{closure:null-reach}. A finite cover of its boundary part
by Lemma~\ref{closure:normal-limits} bounds every derivative of $\Z$
there. The remaining exterior part has compact closure in $\pi(U)$,
where $\Z$ is smooth. We thus have uniform coefficient bounds
along the full orbit interval in the variational equations.
Here $D$ denotes coordinate differentiation of the projected field
and its flow:
\begin{align*}
 \frac d{dt}D\Phi_t&=(D\Z)_{\Phi_t}D\Phi_t,\\
 \frac d{dt}D^2\Phi_t
 &=(D^2\Z)_{\Phi_t}(D\Phi_t,D\Phi_t)
                         +(D\Z)_{\Phi_t}D^2\Phi_t.
\end{align*}
Gronwall's inequality and successive differentiation therefore give
\[
 \sup_{|t|\le2\pi,\ x\in\{f<0\}}
       \bigl(|D^k\Phi_t(x)|+|(D\Phi_t(x))^{-1}|\bigr)\le C_k
 \qquad(k\ge1)
\]
on a smaller smooth half-neighborhood of $p$.
In coordinates $(z,r)$ flattening $f=0$, these bounds imply
\[
 |D^k\Phi_t(z,-r)-D^k\Phi_t(z,-s)|\le C_{k+1}|r-s|.
\]
Thus the maps and their full differentials have smooth one-sided
limits on $f=0$, jointly with $t$. We use $\Phi_t(x)$ and
$d\Phi_t|_x$ also for these limits.

Let $n$ be the spacelike unit normal toward $f>0$, and let
$\exp^h$ be the exponential map of $h$. Define a map on a full
neighborhood of $f=0$ by
\begin{equation}\label{closure:extended-flow-map}
 I_t\bigl(\exp_x^h(rn_x)\bigr)
   =\exp_{\Phi_t(x)}^h\bigl(r\,d\Phi_t|_x n_x\bigr).
\end{equation}
For $r<0$, isometry invariance of the geodesic equation gives
\[
 I_t=\Phi_t\quad\Longrightarrow\quad I_t^{\ast}h=h.
\]
At $r=0$ we have $dI_t=d\Phi_t$, which is invertible.
The inverse function theorem gives a uniform smaller neighborhood
on which $I_t$ is a diffeomorphism for $|t|\le2\pi$.
Set $\gamma(t)=\Phi_t(p)$ and $f_t=f\circ I_t^{-1}$.
We use the isometry identity on the negative side to compute the
second derivatives of $I_t$ at the boundary, where we must preserve the
strict Hessian inequality. Write $\Gamma^k_{ij}$ for the Christoffel
symbols of $h$. Preservation of the connection on the negative side
reads
\[
 \partial_i\partial_j I_t^k
 +\Gamma^k_{ab}(I_t)\partial_iI_t^a\partial_jI_t^b
 -\Gamma^a_{ij}\partial_aI_t^k=0.
\]
Smoothness gives the same equation at $p$, or
$(\nabla dI_t)_p=0$, where the covariant derivative uses $h$
on both the source and target. Differentiating $f=f_t\circ I_t$ twice,
we obtain
\begin{align*}
 \Hess_hf(X,X)|_p
 &=\Hess_hf_t(dI_tX,dI_tX)|_{\gamma(t)}
       +df_t\bigl((\nabla dI_t)(X,X)\bigr)|_p\\
 &=\Hess_hf_t(dI_tX,dI_tX)|_{\gamma(t)}.
\end{align*}
Moreover, $dI_t$ preserves $h$ at $p$ and
\[
 \{f_t<0\}=I_t(\{f<0\})\subset\pi(U).
\]
Thus the null tangents and their strict Hessian inequality are
transported together, and $\{f_t=0\}$ is a strictly pseudoconvex supporting
hypersurface at $\gamma(t)$.

For $p_j\in\{f<0\}$ approaching $p$, we have
$\Phi_t(p_j)\to\gamma(t)$.
If $\gamma(t)\in\pi(U)$, continuity of the old inverse flow gives
\[
 p=\lim_j\Phi_{-t}(\Phi_t(p_j))
                       =\Phi_{-t}(\gamma(t))\in\pi(U),
\]
a contradiction. Thus
\begin{align*}
 \gamma([0,2\pi])&\subset\partial H_\infty,\\
 \gamma(2\pi)&=\gamma(0)=p.
\end{align*}
We can now apply the local extension theorem all along the compact
curve. Each point has such a supporting hypersurface, so
$N^P_{H_\infty}(\gamma(t))$ is not a line. Combining the local theorem with
Lemma~\ref{normal:parallel-boundary}, we obtain local Killing fields with
\[
 (\widetilde\Z,\D\widetilde\Z)|_{\gamma(t)}
   =\lim_{\substack{x\to\gamma(t)\\x\in\pi(U)}}(\Z,\D\Z)(x),
\]
where the limits of the spacetime field and its covariant derivative
are taken on the fixed stationary section.
Each field agrees with $\Z$ throughout its intersection with $U$.
To identify two such fields whose domains contain a point $q$ of
the curve, write $\mathcal J_i=(\widetilde\Z_i^\flat,\D(\widetilde\Z_i^\flat))$
for $i=1,2$ and transport their difference along a path $\eta$ from $q$
in a common coordinate ball. Equation~\eqref{local:killing-connection}
gives
\[
 |\mathcal J_1-\mathcal J_2|(\eta(s))
 \le \exp\!\left(C\int_0^s|\dot\eta(v)|\,dv\right)
                 |\mathcal J_1-\mathcal J_2|(q)=0.
\]
Thus the fields agree on a neighborhood of every common point
of the curve. We now apply the finite-cover restriction in
Lemma~\ref{local:compatibility}: after shrinking the covering
neighborhoods, every retained overlap lies in their equality set.
It gives one stationary Killing field near the lifted curve,
agreeing with $\Z$ throughout its intersection with $U$. We retain the connected
component containing this curve and its stationary translates. Its projected
integral curve through $p$ is $\gamma$, since both have velocity
given by the boundary limit of $\Z$. In particular, the compact
set $\gamma([0,2\pi])$ is invariant under this local flow.

To recover the full spacetime period, we use the old spacetime
flows. In the stationary product coordinate $\vartheta$, the
bounds for $\Z$ on the fixed section give
\begin{align*}
 |\vartheta(\Phi_t^\Z(q))-\vartheta(q)|
 &=\left|\int_0^t d\vartheta(\Z)(\Phi_s^\Z(q))\,ds\right|\\
 &\le C|t|\qquad(|t|\le2\pi).
\end{align*}
The spacetime variational
equations therefore give one-sided limits of these flows and their
differentials as well. Passing to the limit in their period identities
gives on the lifted orbit
\begin{align*}
 \Phi_{2\pi}^{\widetilde\Z}(q)&=q,\\
 d\Phi_{2\pi}^{\widetilde\Z}|_q&=\operatorname{Id}.
\end{align*}
The argument of Lemma~\ref{local:compatibility} now gives
a stationary invariant neighborhood $V$ with complete period $2\pi$.
A smaller effective period would be the identity on an open subset
of the connected old domain and hence throughout that domain,
contrary to its original effective period.

It remains to show that the complete circle action preserves the remaining
set. Lemma~\ref{local:complete-flows}, applied to the complete
flows on $U$ and $V$, gives
\[
 \Phi_t^{\widetilde\Z}(U\cap V)=U\cap V.
\]
Choose $V$ over $\operatorname{Int}(\mathcal D_0)$ in the ergoregion. Since
\[
 V\setminus U=V\cap\pi^{-1}(H_\infty),
\]
the inverse maps prove the asserted preservation of $H_\infty$.
\end{proof}

\begin{lemma}\label{local:invariant-cut}
If $p\in\partial H_\infty\cap\{\g(\T,\T)>0\}$ admits a smooth
strictly $\T$-conditionally pseudoconvex supporting hypersurface,
there is a fixed open set $A\ni p$, compactly contained in the
ergoregion, such that, for all sufficiently large $j$,
\[
 H_j'=H_j\setminus A
\]
is obtained by an operation in Lemma~\ref{discrete:admissible-update},
with a field extending $\Z$ on $U_{H_j}$.
\end{lemma}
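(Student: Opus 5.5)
The plan has four steps: apply Proposition~\ref{closure:orbit-tube}, build an invariant defining function from the supporting hypersurface, cut a fixed open set $A$ with it, and use Hausdorff convergence to verify the hypotheses of Lemma~\ref{discrete:admissible-update} for late $H_j$.

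\emph{Step 1: extension around the orbit.} Proposition~\ref{closure:orbit-tube} supplies a connected stationary invariant neighborhood $V$ of $\pi^{-1}(p)$ and a Killing field $\widetilde\Z$ on it. This field commutes with $\T$, has complete effective period $2\pi$, agrees with $\Z$ on $V\cap U_{H_\infty}$, and preserves $H_\infty\cap\pi(V)$. Since $U_{H_j}\subset U_{H_\infty}$, it also agrees with $\Z$ on $V\cap U_{H_j}$ for every $j$. We take $\pi(V)\Subset\operatorname{Int}(\mathcal D_0)\cap\{\g(\T,\T)>0\}$.

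\emph{Step 2: an invariant defining function.} Let $f$ define the given supporting hypersurface at $p$. In the product chart of \eqref{action:reduction} for $\widetilde\Z$, restrict $f$ to a horizontal disk through $p$ and extend it constantly along $\widetilde\Z$. Call the result $F$; then $\widetilde\Z F=0$. We must show the strict inequality \eqref{local:convex} survives this averaging. Write the two normalized null tangents at $p$ as $e_{3,4}=(-\Z/\rho\pm e)/\sqrt2$. Using circularity and the Christoffel symbols computed in Lemma~\ref{action:intrinsic-orbits}, $\Hess_hF(e_a,e_a)$ equals the mean of $\Hess_hf(e_3,e_3)$ and $\Hess_hf(e_4,e_4)$ minus $\Z^2f/(2\rho^2)$; this is \eqref{local:invariant-hessian}. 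Because $f\ge0$ on the invariant boundary orbit and $f(p)=0$, $f$ restricted to that orbit has a minimum at $p$, so $\Z^2f(p)\ge0$. The mean is negative by strict pseudoconvexity, so $\Hess_hF<0$ on the null tangents at $p$. The same holds at every orbit point, since the supporting hypersurfaces were transported isometrically in Proposition~\ref{closure:orbit-tube}. The differential $dF$ stays spacelike by Lemma~\ref{closure:normal-limits}.

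Next add a small invariant quadratic term. Set $F_1=F+\mu\,\mathrm{dist}^2_{q_{\mathrm{opt}}}(\cdot,\text{orbit of }p)$ measured transversally, with $\mu$ small enough that strict pseudoconvexity persists on a compact invariant tube $\overline{A_0}$ about the orbit. Is $F_1\ge0$ on $H_\infty$ near the orbit? Invariance of $H_\infty$ under $\widetilde\Z$ reduces this to the disk, where $f\ge0$ on $H_\infty$. Near $\partial A_0$ we then get $F_1\ge c>0$ on $H_\infty$.

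\emph{Step 3: choice of the cut.} Fix $\varepsilon\in(0,c/2)$. Put $A=A_0\cap\{F_1<\varepsilon\}$; it is an invariant open set containing the orbit of $p$.

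\emph{Step 4: late stages.} By Lemma~\ref{discrete:greedy-construction}, $H_j\to H_\infty$ in Hausdorff distance. By uniform continuity of $F_1$, for large $j$ we have $F_1>\varepsilon$ on $H_j$ near $\partial A_0$. This gives $\overline{H_j\cap A_0\cap\{F_1<\varepsilon\}}\Subset A_0$.

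Each component of $A_0\cap\{F_1<\varepsilon\}$ meets $\pi(U_{H_\infty})$. For large $j$ it meets $\pi(U_{H_j})$: a fixed point of $\{F<0\}$ near each component lies outside $H_\infty$, hence outside $H_j$ for large $j$ by compactness and monotonicity. Shrink $A_0$ so that $A$ has finitely many components, so one $j$ works for all of them.

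Lemma~\ref{discrete:admissible-update} with $f=F_1$ and this $\varepsilon$ now yields the admissible pair $(H_j\setminus A,\ \text{glued field})$. The gluing uses the agreement $\widetilde\Z=\Z$ on $\pi^{-1}(A_0)\cap U_{H_j}$.

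\emph{Main obstacle.} Step 2 is the hardest part: verifying the Hessian identity for the invariant extension at points off the disk, and keeping nonnegativity on $H_\infty$ after the quadratic perturbation. I would first check it at the orbit itself using $\nabla_\Z\Z=\rho\nabla\rho$. Then extend to a small tube by continuity and compactness of the orbit.
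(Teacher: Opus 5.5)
\textbf{Gap in Step 2: you assume the orbit is free.} Apart from this, your Steps 1--4 follow the paper's argument for the case where the projected orbit $O$ of $p$ is free. That includes the constant extension from a horizontal disk, identity \eqref{local:invariant-hessian} with $\Z^2f(p)\ge0$, the invariant quadratic perturbation, and the late-stage margin from $H_j\downarrow H_\infty$.

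Extending $f|_S$ constantly along the circles requires a neighborhood of $O$ of the form $S^1\times S$, i.e.\ a trivial projected stabilizer at $p$. Nothing proved so far gives this. Lemma~\ref{local:free-action} yields freeness only near a smooth invariant cooriented surface. But $\partial H_\infty$ is not yet known to be smooth, and $\{f=0\}$ is not invariant.

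Suppose the stabilizer is $\mathbb Z_m$ with $m\ge2$. Then $S$ meets each nearby orbit $m$ times, and the stabilizer rotates $S$ nontrivially about $p$. Your $F$ is well defined only if $f|_S$ is invariant under that rotation. This is impossible, because $df_p(\Z_p)=0$ forces $df_p|_{T_pS}\ne0$, and averaging over the stabilizer would make $df_p=0$. So Step 2 fails exactly at exceptional orbits.

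\textbf{The missing stabilizer analysis.}
\begin{itemize}
\item Freeness of the $\T$-action shows that a projected stabilizer element fixes the spacetime point $q$ over $p$.
\item It then acts on the spacelike plane $\{\T,\Z\}^{\perp_\g}$ by a rotation, and faithfully. Otherwise the isometry would fix $q$ with identity differential, contradicting the effective period.
\item The stabilizer preserves $N^P_{H_\infty}(p)$. A nontrivial rotation preserves a nonzero convex cone only if the cone is a line or the whole plane.
\item The line is excluded as in the proof of Proposition~\ref{closure:orbit-tube}, since a strictly pseudoconvex supporting graph is incompatible with a line cone.
\end{itemize}
Hence either $O$ is free and your construction applies, or $N^P_{H_\infty}(p)$ is the whole dual plane.

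\textbf{The whole-cone case needs a different cut.} The normal inequalities give $|x|\le C|x|^2$ on $H_\infty\cap S$, so $H_\infty\cap S=\{p\}$ and the orbit is isolated in $H_\infty$. Take $A=V$, an invariant tube with $H_j\cap\partial V=\varnothing$ for all late $j$. Then use the second alternative $H'=H\setminus V$ of Lemma~\ref{discrete:admissible-update}; no defining function is needed here.

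\textbf{Smaller points.}
\begin{itemize}
\item The Hessian inequality at the other points of $O$ follows from invariance of $F$ under the $h$-isometric flow. It does not follow from the transported functions $f_t$ of Proposition~\ref{closure:orbit-tube}.
\item The static product form for $\widetilde\Z$ needs circularity. Obtain it from Lemma~\ref{action:area} applied to the connected union of the tube with $U_{H_\infty}$.
\item ``Shrink $A_0$ so that $A$ has finitely many components'' needs an actual argument. In coordinates with $F=s$ on $S$, the set $\{s+\delta(s^2+y^2)<\epsilon\}$ in a box is the region below a graph. It is therefore connected, and it contains a fixed compact set in $\{s<0\}$ lying in $\pi(U_{H_j})$ for all large $j$.
\end{itemize}
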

\begin{proof}
We first make the defining function invariant under the extended
circle action, and then localize the removal in a fixed neighborhood.
By Proposition~\ref{closure:orbit-tube}, the projected orbit $O\subset Q$
through $p$ has a complete invariant neighborhood whose inverse image
carries a Killing extension of $\Z$. The union of this spacetime
neighborhood with $U_{H_\infty}$ is connected, so we may apply
Lemma~\ref{action:area} to obtain
\begin{align*}
 \T^\flat\wedge\Z^\flat\wedge d\T^\flat&=0,\\
 \T^\flat\wedge\Z^\flat\wedge d\Z^\flat&=0.
\end{align*}
Thus the spacelike distribution $\{\T,\Z\}^{\perp_\g}$ is
integrable throughout the extension neighborhood. We work on its
stationary quotient and write $\Psi_t$ for the projected flow.
The generator is timelike for $h$, so its stabilizer is finite.
If $t$ has order $m$ in this stabilizer, then for a spacetime point
$q$ above $p$ there is $a\in\RR$ with
\[
 \Phi_t^\Z(q)=\Phi_a^\T(q)
 \quad\Longrightarrow\quad
 q=\Phi_{mt}^\Z(q)=\Phi_{ma}^\T(q).
\]
The stationary action is free, so $a=0$.
Thus the stabilizer fixes $q$ and acts by rotations on the spacelike plane
$\{\T,\Z\}^{\perp_\g}$.
The representation is faithful: a trivial rotation fixes this plane
as well as $\T,\Z$, so the isometry fixes $q$ and has identity differential there. It
would give a smaller period on the old domain.

The stabilizer preserves $N^P_{H_\infty}(p)$ in the dual of
the transverse plane.
A nontrivial finite rotation cannot preserve a nonzero proper convex
cone unless that cone is a line. The line case was excluded in the
proof of Proposition~\ref{closure:orbit-tube}. Thus either the orbit
is free or the cone is the whole dual of the transverse plane.

Suppose first that this cone is proper, so the orbit is free.
Let $f$ define the supporting hypersurface at $p$. We shall make
$f$ invariant while preserving its strict Hessian inequality on
null tangent vectors.
Since $O\subset H_\infty$ and $f\ge0$ there near $p$,
\begin{align*}
 \Z f(p)&=0,\\
 \Z^2f(p)&\ge0.
\end{align*}
Take a horizontal disk $S$ through $p$ for the static expression
$h=-\rho^2d\phi^2+q$, where $q=h|_{TS}$ and
$\rho^2=-h(\Z,\Z)$.
A neighborhood of the entire orbit is $S^1\times S$.
Extend $f|_S$ constantly along the circles and call the resulting
function $\widetilde f$. Then
\begin{align*}
 d\widetilde f(p)&=df(p),\\
 \widetilde f|_{H_\infty}&\ge0.
\end{align*}

Put $e_0=-\Z/\rho$ and choose
$E\in T_pS\cap\ker df(p)$ with $q(E,E)=1$.
Extend $E$ horizontally to commute with $\Z$.
The static connection gives
\begin{align*}
 \nabla_{e_0}e_0&=\operatorname{grad}_q\log\rho,\\
 \nabla_{e_0}E&=(E\log\rho)e_0.
\end{align*}
Since $\Z\rho=\Z\widetilde f=0$, we obtain
\begin{align*}
 \Hess_h\widetilde f(e_0,e_0)
 &=-d\widetilde f(\operatorname{grad}_q\log\rho)\\
 &=\Hess_hf(e_0,e_0)-\rho^{-2}\Z^2f,\\
 \Hess_h\widetilde f(e_0,E)
 &=-\rho^{-1}\Z(E\widetilde f)-(E\log\rho)e_0\widetilde f=0,\\
 \Hess_h(\widetilde f-f)(E,E)
 &=E(E(\widetilde f-f))-d(\widetilde f-f)(\nabla_EE)=0.
\end{align*}
The last line is evaluated at $p$: both functions agree on $S$
and their differentials agree at $p$.
For $e_4=(e_0+E)/\sqrt2$ and $e_3=(e_0-E)/\sqrt2$, this yields
\begin{equation}\label{local:invariant-hessian}
 \Hess_h\widetilde f(e_a,e_a)
 =\frac{\Hess_hf(e_3,e_3)+\Hess_hf(e_4,e_4)}2
                     -\frac{\Z^2f}{2\rho^2}<0,
 \qquad a=3,4.
\end{equation}
The circle isometries transport this inequality around $O$.
Thus circularity and the minimum of $f$ on the orbit give an
invariant supporting hypersurface satisfying the strict Hessian inequality. We now localize the portion to be removed,
keeping it away from the edge of one fixed orbit neighborhood.
Choose coordinates $(s,y)$ on $S$ with $\widetilde f=s$ and
$p=(0,0)$. For $\delta>0$ sufficiently small put
\[
 f_\delta=s+\delta(s^2+y^2).
\]
On $O$ its differential is unchanged, and its Hessian differs from that
in \eqref{local:invariant-hessian} by
$\delta\Hess_h(s^2+y^2)$. Thus the strict null inequality persists
on $O$ and, by compactness, on nearby levels in a smaller
invariant neighborhood. Choose this neighborhood as
\[
 V=S^1\times\{|s|<R,\ |y|<R\},\qquad \delta R<1/8,
\]
with closure in the extension neighborhood. Since $s\ge0$ on
$H_\infty\cap V$,
\begin{equation}\label{local:cut-margin}
 f_\delta\ge\delta(s^2+y^2)
                         \quad\text{on }H_\infty\cap\overline V.
\end{equation}
Choose $0<\epsilon<\delta R^2/64$ and set
\[
 A=V\cap\{f_\delta<\epsilon\}.
\]
Its boundary inside $V$ is a graph, since
\begin{align*}
 \partial_s f_\delta&=1+2\delta s>1/2,\\
 f_\delta(-R,y)&<0<\epsilon<f_\delta(R,y).
\end{align*}
In particular $A$ is connected. The compact set
\[
 K_-=S^1\times\{-3R/4\le s\le-R/2,\ |y|\le R/2\}
\]
lies in the old domain and in $A$, because
\[
 f_\delta\le-R/2+13\delta R^2/16<-R/4
                           \quad\text{on }K_-.
\]

Decreasing compactness and \eqref{local:cut-margin} give, for all
sufficiently large $j$,
\begin{equation}\label{local:cut-late-margin}
 H_j\cap\overline V\cap\{s^2+y^2\ge R^2/4\}
                  \subset\{f_\delta\ge\delta R^2/8\}.
\end{equation}
Indeed, if this fails, choose $j_k\to\infty$ and
$x_k\in H_{j_k}\cap\overline V$ with
\[
 \begin{aligned}
 s(x_k)^2+y(x_k)^2&\ge R^2/4,\\
 f_\delta(x_k)&<\delta R^2/8.
 \end{aligned}
\]
A subsequence converges to $x\in\overline V$. For every fixed $m$,
all late $x_k$ lie in $H_m$, so $x\in H_\infty$. Continuity and
\eqref{local:cut-margin} would then give
\[
 \delta R^2/4\le\delta(s(x)^2+y(x)^2)
            \le f_\delta(x)\le\delta R^2/8,
\]
a contradiction. Consequently
\[
 \overline{H_j\cap A}\subset\{s^2+y^2<R^2/4\}\Subset V.
\]
Also $K_-\subset\bigcup_j\pi(U_{H_j})$, so compactness and
monotonicity give $K_-\subset\pi(U_{H_j})$ for all large $j$.

The new complete flow agrees with the old complete flow on
$U_{H_j}\cap\pi^{-1}(V)$.
Lemma~\ref{local:complete-flows} gives
\begin{align*}
 \Psi_t\bigl(\pi(U_{H_j})\cap V\bigr)&=\pi(U_{H_j})\cap V,\\
 \Psi_t(H_j\cap V)&=H_j\cap V.
\end{align*}
Thus the invariant function $f_\delta$, the compact containment in
\eqref{local:cut-late-margin}, and the connected set $K_-$ in the previous domain
verify every hypothesis of Lemma~\ref{discrete:admissible-update}.
Thus the operation in that lemma gives $H_j'=H_j\setminus A$
for every sufficiently large $j$.

We finally treat the case where $N^P_{H_\infty}(p)$ is the
whole dual of the transverse plane. Here we will remove a tube around the entire
orbit, including the case of a finite stabilizer. Take a small
transverse disk through $p$, with coordinates $x$ and tangent space
$\Z_p^{\perp_h}$. Use the Euclidean coordinate norm on this disk and
its dual norm on covectors. Taking the supremum in its normal inequalities
and absorbing the quadratic coordinate error gives
\[
 |x|=\sup_{|\alpha|=1}\alpha(x)\le C|x|^2
                  \qquad(x\in H_\infty\cap S).
\]
Thus $H_\infty\cap S=\{p\}$ after shrinking. Taking the images
of the disk under the circle action
gives a connected invariant neighborhood $V$ with
\begin{align*}
 H_\infty\cap\overline V&=O,\\
 H_\infty\cap\partial V&=\varnothing.
\end{align*}
A fixed compact transverse annulus in this tube lies in
$\pi(U_{H_\infty})$. For all sufficiently large $j$ it lies in
$\pi(U_{H_j})$, and
\[
 H_j\cap\partial V=\varnothing.
\]
The last assertion follows by taking a convergent subsequence on
$\partial V$ if it fails. Apply the entire-component case of
Lemma~\ref{discrete:admissible-update} with $A=V$.
\end{proof}

We will also need to identify the two boundary surfaces when the
lower and upper graphs meet. Once their second fundamental forms vanish on null tangents,
we can do this using the two ambient null geodesic families: a
shared null geodesic gives common tangent planes, and the second
family then fills a common neighborhood. We record this argument
to complete the proof of Proposition~\ref{discrete:stalled-boundary}.

\begin{lemma}\label{closure:line-stalemate}
Let $\Sigma,\widetilde\Sigma$ be smooth timelike surfaces in
$(Q,h)$ whose second fundamental forms vanish on null tangents.
If they are locally ordered graphs tangent at $p$, they coincide
near $p$.
\end{lemma}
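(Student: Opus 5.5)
Since $\Sigma$ and $\widetilde\Sigma$ are tangent at $p$ and both timelike, their common tangent plane $T_p\Sigma=T_p\widetilde\Sigma$ contains exactly two null directions. Take one of them, $X_p$, and let $\gamma$ be the $h$-null geodesic with $\dot\gamma(0)=X_p$. Both surfaces have $B(X,X)=0$ on null tangents. By the last step of Lemma~\ref{local:geodesic-stalemate}, the integral curves of their null tangent fields are ambient $h$-null geodesics up to reparametrization. So the null curve of $\Sigma$ through $p$ in the direction $X_p$ is, after reparametrization, the ambient geodesic $\gamma$. The same is true for $\widetilde\Sigma$, because the null geodesic with this initial direction is unique. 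Hence a short arc of $\gamma$ lies in both surfaces.

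Next I would show that the two surfaces have the same tangent planes along this arc. Write both as graphs $s=v(x)$ and $s=\widetilde v(x)$ over a common coordinate plane, ordered as $v\le\widetilde v$ and with equality on the projection of $\gamma$. Then $\widetilde v-v\ge0$ attains its minimum $0$ all along that curve, so $d(\widetilde v-v)=0$ there. The surfaces are therefore tangent at every point of the arc. In particular, at each point $\gamma(t)$ their second null directions $Y_t$ agree.

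The last step is to fill a neighborhood with the second family. Through each $\gamma(t)$, follow the ambient null geodesic with initial velocity $Y_t$. By the same uniqueness argument, this geodesic lies in both $\Sigma$ and $\widetilde\Sigma$ for a short uniform parameter range; uniformity comes from smooth dependence on the initial data along the compact arc. The map $(t,u)\mapsto\exp^h_{\gamma(t)}(uY_t)$ has independent differentials $\dot\gamma(t)$ and $Y_t$ at $u=0$. By the inverse function theorem, its image is a neighborhood of $p$ in $\Sigma$ and also in $\widetilde\Sigma$. The two surfaces therefore coincide near $p$.

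The main obstacle is the first step: making sure the ambient null geodesic actually stays in each surface, not merely that it is tangent to it. Here I rely on the identity $\nabla_XX\in\RR X$ for a null tangent field $X$, taken from the proof of Lemma~\ref{local:geodesic-stalemate}. Together with ODE uniqueness for the reparametrized geodesic equation, this gives containment. A lesser technical point is that the ordering of the graphs is needed in the second step; without it, mere agreement along a curve would not force the tangent planes to agree.
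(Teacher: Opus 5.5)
Your proposal is correct and follows essentially the same route as the paper: a common null geodesic lies in both surfaces because $B(X,X)=0$ makes the surfaces' null curves ambient geodesics, the ordering $v\le\widetilde v$ forces common tangent planes along that geodesic, and the second null family, through $(t,u)\mapsto\exp^h_{\gamma(t)}(uY_t)$, fills a common neighborhood. The only difference is cosmetic. You obtain each curve as a reparametrized integral curve of a null line field using $\nabla_XX\in\RR X$, whereas the paper takes the intrinsic null geodesic of each surface, whose ambient acceleration vanishes by the Gauss formula.
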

\begin{proof}
Write the graph heights as $v\le\widetilde v$. We use the
induced connections on the two surfaces and either choice of their
unit spacelike normals in the Gauss formula. We first follow a
common nonzero null tangent $L_p$ at $p$. The induced null geodesic in either
surface has ambient acceleration
\[
 \nabla^h_{\dot\gamma}\dot\gamma
 =\nabla^\Sigma_{\dot\gamma}\dot\gamma
      +B(\dot\gamma,\dot\gamma)n=0,
\]
and the same computation holds for $\widetilde\Sigma$.
Uniqueness therefore puts $\gamma(s)=\exp_p^h(sL_p)$ in both
surfaces. Along this curve,
\[
 \widetilde v-v=0\quad\Longrightarrow\quad D(\widetilde v-v)=0,
\]
because the difference is nonnegative. Their tangent planes agree.
Set $L(s)=\dot\gamma(s)$, and let $\underline L(s)$ be the
other common null vector, normalized by $h(L,\underline L)=-1$. The preceding acceleration calculation
shows that
\[
 \Psi(s,t)=\exp_{\gamma(s)}^h(t\underline L(s))
                        \in\Sigma\cap\widetilde\Sigma.
\]
At $(0,0)$ the two differential columns are $L_p$ and $\underline L(0)$, and
\[
 \det
 \begin{pmatrix}
 h(L_p,L_p)&h(L_p,\underline L(0))\\
 h(\underline L(0),L_p)&h(\underline L(0),\underline L(0))
 \end{pmatrix}=-1.
\]
Thus $\Psi$ has rank two and its image is an open surface in both
$\Sigma$ and $\widetilde\Sigma$. This proves local coincidence.
\end{proof}

\begin{proof}[Proof of Proposition~\ref{discrete:stalled-boundary}]
We first exclude a strictly $\T$-conditionally pseudoconvex
supporting hypersurface at a point
$p\in\partial H_\infty\cap\{\g(\T,\T)>0\}$.
If a smooth strictly $\T$-conditionally pseudoconvex supporting
hypersurface existed there, Lemma~\ref{local:invariant-cut} would
give a fixed open set $A\ni p$ for which $H_j'=H_j\setminus A$
is obtained by the operation in Lemma~\ref{discrete:admissible-update}
for every sufficiently large $j$. Choose
$a>0$ with $B_{g_+}(p,a)\subset A$. Then
\[
 H_j'\cap B_{g_+}(p,a)=\varnothing
\]
for all these $j$, contradicting the conclusion of
Lemma~\ref{discrete:greedy-construction}. Thus no such supporting hypersurface exists. If $N^P_{H_\infty}(p)$ is not a line,
Lemma~\ref{closure:normal-limits} puts it in the plane of covectors
annihilating one timelike vector, on which $h^{-1}$ is positive definite. Its dimension is at most two,
and Lemma~\ref{closure:corner-cap} excludes dimension two.
Thus $N^P_{H_\infty}(p)$ is a ray or a line, and we
treat these two possibilities separately.

When $N^P_{H_\infty}(p)$ is a ray, we use Lemma~\ref{closure:graph-band} to
write $H_\infty$ locally on one side of a semiconvex graph and
Lemma~\ref{local:optical-reduction} to choose the transverse
curve $s=v(y)$ and the smooth Riemannian metric $q_{\mathrm{opt}}$
extending the optical metric from the chosen exterior. Recall that
$v''=A_0(y,v,v')$ is the geodesic equation for
$q_{\mathrm{opt}}$. The upper-test inequality of
Lemma~\ref{local:weak-cap} and the absence of a strictly pseudoconvex supporting hypersurface
allow us to apply Lemma~\ref{local:geodesic-stalemate}. It gives
$v\in C^\infty$ and
\[
 v''=A_0(y,v,v').
\] With $\rho^2=-h(\widetilde\Z,\widetilde\Z)>0$ from
Lemma~\ref{local:optical-reduction} and the normalized
null tangents $e_3,e_4$ of \eqref{local:optical-null-curvature},
the curvature formula \eqref{local:scalar-curvature} gives,
for $Y=\partial_y+v'\partial_s$,
\[
 B(e_3,e_3)=B(e_4,e_4)
 =\frac{\sqrt{\det q_{\mathrm{opt}}}}
        {2\rho\,q_{\mathrm{opt}}(Y,Y)^{3/2}}
       \bigl[v''-A_0(y,v,v')\bigr]=0.
\]

When $N^P_{H_\infty}(p)$ is a line, we work from each exterior component with
the restriction of $\Z$ to that component. In the coordinates of
Lemma~\ref{discrete:weak-null-tests}, we have
\[
 H_\infty=\{a(t,y)\le s\le b(t,y)\}.
\]
Here $a$ is semiconvex and $b$ is semiconcave. Where $a<b$, a hypersurface supporting one graph can be
localized away from the other graph, so the first paragraph
excludes strict pseudoconvexity. At a contact point
$p$, a smooth lower test $v$ for $a$ satisfies
\begin{align*}
 v&\le a\le b,\\
 v(p)&=a(p)=b(p).
\end{align*}
We may therefore use $v$ as a lower test for $b$ as well.
Semiconcavity of $b$ and the lower test give the common first
differential at $p$.
For $f=s-v(t,y)$, let
$n=\operatorname{grad}_h f/\sqrt{h^{-1}(df,df)}$ and let
$B(X,Y)=-h(\nabla_Xn,Y)$ be the second fundamental form of its
zero set at contact. Then
Lemma~\ref{discrete:weak-null-tests} gives
\[
 B(X,X)\le0
 \quad\Longrightarrow\quad
 \Hess_h f(X,X)=-(nf)B(X,X)\ge0
 \quad\bigl(Xf=h(X,X)=0\bigr).
\]
The Hessian is therefore incompatible with strict pseudoconvexity
for a hypersurface touching from the lower exterior. For an upper test
$w\ge b\ge a$, we apply the opposite inequality for $a$ to
$f=w-s$ and obtain $\Hess_hf(X,X)\ge0$ on null tangents again.
We can now apply Lemma~\ref{local:geodesic-stalemate} from
each exterior component to conclude that both graphs are smooth
and satisfy $B(e_3,e_3)=B(e_4,e_4)=0$. Since they are ordered
and tangent at $p$, Lemma~\ref{closure:line-stalemate} identifies
them on a neighborhood of $p$.

Finally, any null tangent to a Lorentzian two-plane is a multiple
of $e_3$ or $e_4$. The two identities just obtained therefore
give the asserted vanishing for every null tangent. Equivalently,
since $h(e_3,e_4)=-1$,
\[
 B=-B(e_3,e_4)\,h|_{T\partial H_\infty}.
\]
This is the total umbilicity described above.
\end{proof}

\section{Nontrapping and global continuation}
\label{sec:remaining}

Recall that $H_0\subset Q$ is the compact initial remaining set
of Proposition~\ref{prep:residual}, that
$H_\infty=\bigcap_jH_j\subset H_0$, and that
$E_0=\pi(\{\g(\T,\T)=0\})$ is the projected ergosurface.
Proposition~\ref{discrete:stalled-boundary} identifies any remaining
boundary in the ergoregion as a smooth timelike surface
whose second fundamental form vanishes on null tangent vectors. The integral curves of its two null directions lift to zero-energy
spacetime geodesics. We show that one of these geodesics could be
continued over the compact set $H_0$ for arbitrarily large
auxiliary length, contradicting Lemma~\ref{action:escape}.
This will prove the following continuation theorem.

\begin{theorem}\label{discrete:continuation}\label{prop:continuation}
Assume GR, SBS, NT, and $\T|_{S_0}\not\equiv0$.
The rotational Killing field $\Z$ of Lemma~\ref{prep:collar}
extends to a connected
stationary open set containing
\[
 \pi^{-1}(C_+)\cup\{p\in\E:\g(\T,\T)(p)>0\},
\]
commutes with $\T$, and has complete effective period $2\pi$.
\end{theorem}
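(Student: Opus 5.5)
The plan is to show that $H_\infty\cap\{\g(\T,\T)>0\}=\varnothing$. Lemma~\ref{discrete:greedy-construction} supplies the field on $U_{H_\infty}$ with complete effective period $2\pi$. That set then contains $\pi^{-1}(C_+)$ and every point with $\g(\T,\T)>0$, and the theorem follows. We argue by contradiction and suppose the intersection is nonempty. Since $H_\infty$ is compact and equals $H_0$ near $E_0$, I would first locate a boundary point $p\in\partial H_\infty\cap\{\g(\T,\T)>0\}$. The ergoregion is connected (proof of Proposition~\ref{prep:residual}) and meets the complement of $H_\infty$ near $\partial C_-$. If $H_\infty$ contained a whole component this would contradict connectedness, so a boundary point exists. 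By Proposition~\ref{discrete:stalled-boundary}, near $p$ the boundary is a smooth timelike photon surface $\Sigma$, and $\Z$ (extended smoothly from an exterior side as in Lemma~\ref{local:optical-reduction}) is tangent to it. Its transverse curve is a $q_{\mathrm{opt}}$-geodesic.

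Next, take the null direction $e_4$ at $p$ and follow the maximal integral curve of the null line field of $\partial H_\infty$, as long as it stays in the ergoregion. By Lemma~\ref{local:geodesic-stalemate} and Lemma~\ref{local:optical-curvature}, it is locally an $h$-null geodesic lying in $\partial H_\infty$. By \eqref{local:null-lift}, its horizontal lift is a zero-energy spacetime null geodesic whose projection stays in $\partial H_\infty\subset H_0$. The continuation is open (local smoothness near each boundary point) and closed in the ergoregion: a limit point lies in the closed set $\partial H_\infty$, where Proposition~\ref{discrete:stalled-boundary} again applies, and Lemma~\ref{action:escape} gives convergence of position and unit velocity. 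So the geodesic continues until its projection either has infinite auxiliary length inside $H_0$, which contradicts Lemma~\ref{action:escape}, or reaches $E_0$ in finite auxiliary length.

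The main obstacle is this ergosurface contact, and I would resolve it using Proposition~\ref{canon:reconstruction}. Near $E_0\cap\psi^{-1}(I)$, the field $\Z=\pm\Z_0$ is smooth across $\g(\T,\T)=0$, and $W$ and $\sigma_{\mathrm{ref}}$ give regular transverse data. Circularity (Lemma~\ref{action:area}) makes the spacetime geodesic reduce to Hamiltonian motion on the base rectangle with conserved momentum $\g(\Z,\dot\gamma)$ and zero energy. These are regular equations with no term linear in the transverse velocity, so they are time-reversible. At the contact point $\g(\T,\T)=0$, the only null direction orthogonal to $\T$ is $\T$ itself, so the transverse velocity vanishes. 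Reversibility and ODE uniqueness then force the transverse curve to retrace its incoming arc. Its lift continues the original spacetime null geodesic: it re-enters the ergoregion along $\partial H_\infty$ with the other null direction $e_3$, so its projection stays in $H_0$. Iterating, each excursion is bounded below in auxiliary length by compactness, and the number of contacts is unbounded. The resulting maximal zero-energy null geodesic therefore has projection in the compact set $H_0$, contradicting NT in the form of Lemma~\ref{action:escape}.

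Finally, with $H_\infty\cap\{\g(\T,\T)>0\}=\varnothing$, I would check that the glued domain \eqref{discrete:union} is connected and stationary and contains the asserted set. Its field commutes with $\T$ and has complete effective period $2\pi$ by Lemma~\ref{discrete:greedy-construction}, which concludes the argument.
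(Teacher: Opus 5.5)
Your proposal is correct and follows the paper's proof. You locate a point of $\partial H_\infty$ in the ergoregion, follow a lifted null generator of the photon surface of Proposition~\ref{discrete:stalled-boundary}, continue it through ergosurface contacts by the reversible Routhian reduction of Lemma~\ref{local:brake-reflection}, and contradict the length bound of Lemma~\ref{action:escape}. The differences are minor: you find the boundary point from connectedness of the whole ergoregion rather than from a path in a component of $\mathcal D_0$, and you rule out accumulating contacts by a uniform excursion length rather than by the paper's maximal-interval argument, where \eqref{local:isolated-contact} does that job; infinitely many contacts need not occur, but your earlier dichotomy already covers that case.
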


We prove the theorem at the end of this section. The additional
argument needed here concerns contact with the ergosurface, where
the quotient metric degenerates. Circularity removes the term
linear in velocity from the regular equation for the transverse
motion. With zero transverse velocity at contact, that equation
is reversible, and uniqueness makes the curve retrace its
incoming arc. We then verify that its spacetime lift continues
the original null geodesic.

\subsection{Continuation of the null geodesic at the ergosurface}

Lemma~\ref{action:escape} gives limits of position and
$\mathbf g_+$-unit velocity at every finite endpoint of a null
geodesic whose projection stays in a compact set. We show that an
endpoint over $E_0$ is an isolated contact and that continuation
through it preserves the limiting boundary. For the calculation
we use the momenta associated with $\T$ and $\Z_0$, with affine
geodesic parameter, and the Riemannian transverse metric of
Proposition~\ref{canon:reconstruction}, all regular at $E_0$.
We eliminate the orbit velocities by a Routh reduction, which
we compute below, and use circularity to obtain a regular
mechanical equation with no term linear in transverse velocity.
Uniqueness then gives the zero-velocity retracing described by
Montgomery~\cite[Section~2]{Montgomery14}.
To apply this observation here, we use equality of the metric
coefficients and their first derivatives on the limiting boundary
with their limits in Killing coordinates from the adjacent exterior. We can therefore solve the
regular transverse equation and check that its lift satisfies
the full spacetime geodesic equation. The resulting curve
retraces its transverse arc while continuing smoothly in
spacetime, as shown in Figure~\ref{fig:ergosurface-continuation}.

\begin{lemma}\label{local:brake-reflection}
Let $H_\infty$ be the set in
Proposition~\ref{discrete:stalled-boundary}, and let $\mathcal W$
be a neighborhood with reference field $\Z_0$ as in
Proposition~\ref{canon:reconstruction}. Suppose a zero-energy null
geodesic $\gamma:(s_-,s_0)\to\E$, with $s_0<\infty$, is
parametrized by $\mathbf g_+$-arclength and satisfies
\[
 \pi(\gamma(s))\in\partial H_\infty.
\]
Assume that its position and velocity converge:
\[
 \lim_{s\uparrow s_0}(\gamma(s),\dot\gamma(s))=(p_0,v_0)\in T\E,
\]
where $\pi(p_0)\in E_0\cap\mathcal W$. Here the dot denotes $d/ds$.
It extends through $s_0$ with projection in $\partial H_\infty$,
and $\g(\T,\T)(\gamma(s))>0$ for sufficiently small
$0<|s-s_0|$. In an affine parameter its curve in the two-dimensional space of $(\T,\Z_0)$-orbits retraces
itself at contact.
\end{lemma}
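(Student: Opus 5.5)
The plan is to work in coordinates adapted to the two commuting fields $\T,\Z_0$ of Proposition~\ref{canon:reconstruction}, which remain regular across $E_0$. Over $\mathcal W$ we use $(\vartheta,\phi,x)$, where $\phi$ is the $\Z_0$ flow parameter and $x=(x^1,x^2)=(\g(\T,\T),\psi)$ are the base coordinates of the rectangle. On every region where $\Z_0=\pm\Z$ is Killing, circularity (Lemma~\ref{action:area}) makes $\{\T,\Z_0\}^{\perp_\g}$ integrable, so the metric takes the block form
\[
 \g=G_{ab}(x)\,dy^a dy^b+\sigma_{\rm ref}(x),\qquad y=(\vartheta',\phi),
\]
with no cross terms between the orbit directions and the base. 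Here $G$ is the Gram matrix of $(\T,\Z_0)$, with $\det G=-W$ and $W\ge c_0>0$. By Proposition~\ref{canon:reconstruction}, $W[\T,\Z]=W[\T,\Z_0]$ and $\sigma=\sigma_{\rm ref}$ on the exterior. Continuity, together with the boundary limits of $(\Z,\D\Z)$ from Lemma~\ref{closure:normal-limits} and Lemma~\ref{normal:parallel-boundary}, should give agreement of the coefficients and their first derivatives on $\pi^{-1}(\partial H_\infty)\cap\pi^{-1}(\mathcal W)$ with these smooth reference data. First I reparametrize $\gamma$ affinely. This is legitimate up to $s_0$, since the $\mathbf g_+$-unit velocity converges and the factor in \eqref{action:normalized-flow} is bounded. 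The conserved momenta are then $\g(\T,\dot\gamma)=0$ and $\ell=\g(\Z_0,\dot\gamma)$; the latter is constant along the incoming arc because there $\Z_0=\pm\Z$ is Killing up to the boundary.

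Next I carry out the Routh reduction. Solving for the orbit velocities gives $\dot y=G^{-1}(0,\ell)$, and
\[
 (G^{-1})^{\phi\phi}=-\g(\T,\T)/W,
\]
which is smooth across $E_0$. The null constraint becomes $\sigma_{\rm ref}(\dot x,\dot x)=\ell^2\g(\T,\T)/W$. The transverse geodesic equation, with no term linear in $\dot x$ thanks to the absence of cross terms, reads
\[
 \nabla^{\sigma_{\rm ref}}_{\dot x}\dot x
   =\tfrac12\ell^2\operatorname{grad}_{\sigma_{\rm ref}}\bigl(\g(\T,\T)/W\bigr),
\]
a smooth second-order equation on the rectangle. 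I take limits at $s_0$: since $\g(\T,\T)=0$ at $\pi(p_0)$, the constraint gives $\dot x(s_0)=0$. Moreover $\ell\neq0$, since otherwise $\dot y=0$ and $\dot x\to0$ would force the limiting spacetime velocity to vanish, contradicting $v_0\ne0$. Lemma~\ref{canon:regularity} gives $d[\g(\T,\T)]\neq0$, so the force at contact is nonzero and points toward increasing $\g(\T,\T)$. Uniqueness for this reversible equation with zero initial velocity gives $x(s_0+u)=x(s_0-u)$, so the transverse curve retraces itself. The expansion $x(s_0+u)-x(s_0)=\tfrac12u^2a+O(u^3)$ with $d[\g(\T,\T)](a)>0$ then gives $\g(\T,\T)>0$ for small $u\ne0$.

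Then I lift the continued transverse curve by $\dot y=G^{-1}(0,\ell)$ and integrate $\vartheta',\phi$. This yields a smooth spacetime curve extending $\gamma$ through $s_0$, null and of zero energy by construction. Its projection consists of $\Z_0$-translates of points already on $\partial H_\infty$, because the transverse positions repeat. The $\Z$-action preserves $\partial H_\infty$ (as in Proposition~\ref{closure:orbit-tube}), and $\Z=\pm\Z_0$ near these points, so the projection stays in $\partial H_\infty$.

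The main obstacle is checking that the outgoing branch actually satisfies the full spacetime geodesic equation. The Routh equations reproduce the geodesic equation only where the block form holds together with its first derivatives. On the outgoing arc we know this only through one-sided limits from the exterior at boundary points, not on an open set. I would handle this as follows. Along $\partial H_\infty$ the Christoffel symbols, which involve only the metric coefficients and their first derivatives, coincide with those computed from $(G,\sigma_{\rm ref})$ and the vanishing cross terms. The outgoing arc visits exactly the same transverse points as the incoming one, where the incoming geodesic already satisfies the equations. So I would verify the geodesic equation pointwise on the arc using these boundary limits. If the first-derivative agreement in directions transverse to $\partial H_\infty$ proves delicate, I would fall back on Lemma~\ref{local:compatibility} to glue a smooth extension $\widetilde\Z$ of $\Z$ across the boundary and identify it with $\pm\Z_0$ via Proposition~\ref{canon:reconstruction}.
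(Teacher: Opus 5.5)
Your proposal follows essentially the paper's proof. You do a Routh reduction in the $(\T,\Z_0)$ chart with base coordinates $(\g(\T,\T),\psi)$, use circularity to remove the term linear in $\dot x$, get zero transverse velocity at contact, and apply time-reversal uniqueness for the mechanical equation with kinetic metric $\sigma_{\rm ref}$. You then lift the reflected curve and check the spacetime geodesic equation at boundary points, using the $(\T,\Z_0)$-invariance of $\partial H_\infty$. Two adjustments bring it in line with the paper.

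First, the paper does not claim a block form. It keeps the mixed coefficients $\omega^A_i$ in the smooth $(\theta,\phi,x)$ chart and uses only $d\omega^\phi=0$ on the closure of the exterior, so the lift is $\dot y^A=G^{AB}p_B-\omega^A_i\dot x^i$ rather than $G^{AB}p_B$. Exterior-adapted coordinates with no cross terms need not be smooth near $p_0$.

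Second, the agreement of first derivatives at boundary points, including transverse ones, is automatic. Identities holding on an open set hold with all derivatives on its closure, so your fallback through Lemma~\ref{local:compatibility} is unnecessary. It would also be unavailable, since no local Killing extensions are known at these boundary points.
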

\begin{proof}
Continue $\gamma$ through $(p_0,v_0)$ by the smooth normalized
geodesic equation \eqref{action:normalized-flow}. We must show
that this ambient continuation remains in the boundary.
A nonzero null vector orthogonal to the nonzero null vector
$\T$ is proportional to it, so $v_0=\lambda\T_{p_0}$ with
$\lambda\ne0$. The stationary Killing identities give
\begin{align*}
 \T[\g(\T,\T)]&=0,\\
 \D_\T\T&=-\tfrac12\operatorname{grad}_{\g}\g(\T,\T).
\end{align*}
Write $v=d\gamma/ds$. The first derivative of $\g(\T,\T)$
vanishes at $s_0$. Since $\D_vv$ is proportional to $v$, the
second derivative there is
\begin{align*}
 \frac{d^2}{ds^2}\g(\T,\T)(\gamma(s))\big|_{s=s_0}
 &=\D^2[\g(\T,\T)](v_0,v_0)
                    +d[\g(\T,\T)](\D_vv)\big|_{s=s_0}\\
 &=\lambda^2\bigl(\T\T[\g(\T,\T)]
             -d[\g(\T,\T)](\D_\T\T)\bigr)\big|_{p_0}\\
 &=\frac{\lambda^2}{2}|d[\g(\T,\T)]|_\g^2\big|_{p_0}>0.
\end{align*}
Lemma~\ref{canon:regularity} gives the last inequality. Taylor's
formula therefore yields
\begin{equation}\label{local:isolated-contact}
 \g(\T,\T)(\gamma(s))
 =\frac{\lambda^2}{4}|d[\g(\T,\T)]|_\g^2\big|_{p_0}
      (s-s_0)^2+o((s-s_0)^2)>0
\end{equation}
for sufficiently small $0<|s-s_0|$. In particular there is a
final incoming interval in the ergoregion.

The affine parameter $\tau$ with $\tau(s_0)=\tau_0$ is determined by
\[
 \frac{d\tau}{ds}
 =\exp\left(-\frac12\int_{s_0}^{s}
             (\D_v\mathbf g_+)(v,v)\,ds'\right).
\]
Its derivative tends to one at $s_0$ because the integrand is bounded.
We use this affine parameter in the remaining calculation, with a dot
denoting $d/d\tau$. Thus $\gamma(\tau)\to p_0$ and
$\dot\gamma(\tau)\to v_0$ as $\tau\uparrow\tau_0$.

On every connected component of
$U_{H_\infty}\cap\pi^{-1}(\mathcal W)$, reconstruction gives
$\Z=\Z_0$ or $\Z=-\Z_0$. Either identity implies
\[
 \LL_{\Z_0}\g=0
 \quad\text{on }U_{H_\infty}\cap\pi^{-1}(\mathcal W).
\]
We also need all $(\T,\Z_0)$-translates of each boundary
point to remain in the boundary. On a component of this overlap, uniqueness identifies the
old flow with the flow of $\Z_0$ or $-\Z_0$. The comparison extends
to every finite time: at an endpoint both complete flows are still
in their open domains, and their common point remains in the same
component of the overlap. The inverse flows give the reverse
inclusion. Thus, writing $\Phi_t^0$ for the reference flow,
\[
 \Phi_t^0\bigl(U_{H_\infty}\cap\pi^{-1}(\mathcal W)\bigr)
      =U_{H_\infty}\cap\pi^{-1}(\mathcal W).
\]
Taking boundary limits and using stationarity shows that the
boundary contains the entire $(\T,\Z_0)$-orbit through each of its points. On a
rectangle whose closure lies in a larger reference rectangle, we use the smooth function $W$
and transverse metric $\sigma_{\rm ref}$ of
Proposition~\ref{canon:reconstruction}. They agree with the
actual orbit determinant and transverse metric on
$U_{H_\infty}\cap\pi^{-1}(\mathcal W)$, so smoothness gives agreement of their first
derivatives at its boundary.

To separate the orbit velocities from the transverse velocities,
we choose local coordinates with $\T=\partial_\theta$,
$\Z_0=\partial_\phi$, and base coordinates
$x=(\g(\T,\T),\psi)$, where the twist potential satisfies
$d\psi=\iota_\T(\ast d\T^\flat)$. We write $y^A=(\theta,\phi)$, with
$A,B\in\{\theta,\phi\}$ and $i,j\in\{1,2\}$, and complete
the square in the orbit variables:
\[
 \g=G_{AB}(dy^A+\omega_i^A dx^i)(dy^B+\omega_j^B dx^j)
                           +\sigma_{ij}dx^i dx^j,
\]
where
\[
 (G_{AB})=
 \begin{pmatrix}
 \g(\T,\T)&\g(\T,\Z_0)\\
 \g(\T,\Z_0)&\g(\Z_0,\Z_0)
 \end{pmatrix}.
\]
At the contact point,
\[
 \det G(p_0)=-\g(\T,\Z_0)(p_0)^2<0.
\]
After shrinking the neighborhood, $G$ is therefore nondegenerate.
With $(G^{AB})=(G_{AB})^{-1}$, the mixed coefficients are
\[
 \omega_i^A=G^{AB}\g(\partial_{y^B},\partial_{x^i}),
\]
and the transverse coefficients are
\[
 \sigma_{ij}=\g(\partial_{x^i},\partial_{x^j})
                         -G_{AB}\omega_i^A\omega_j^B.
\]
These are the coefficients of the actual spacetime metric.
Stationarity makes them independent of $\theta$ throughout the
neighborhood. On $U_{H_\infty}\cap\pi^{-1}(\mathcal W)$ they are also
independent of $\phi$, so the horizontal lifts
$E_i=\partial_{x^i}-\omega_i^A\partial_{y^A}$ satisfy
\[
 [E_i,E_j]=-(\partial_i\omega_j^A-\partial_j\omega_i^A)
                         \partial_{y^A}.
\]
We use circularity here: the horizontal plane is integrable,
so its bracket belongs both to that plane and to
$\operatorname{span}\{\T,\Z_0\}$.
Their intersection is zero, and hence
\[
 [E_i,E_j]\in\operatorname{span}\{E_1,E_2\}
              \cap\operatorname{span}\{\T,\Z_0\}=\{0\}.
\]
Thus the one-forms $\omega^A=\omega_i^A dx^i$ satisfy
$d\omega^A=0$ there, where $d$ differentiates the two base coordinates.
Continuity from this open domain gives, on its boundary,
\begin{align*}
 \partial_i\omega_j^A-\partial_j\omega_i^A&=0,\\
 \partial^I(\sigma_{ij}-(\sigma_{\rm ref})_{ij})&=0
                         \qquad(|I|\le1).
\end{align*}
The reference coefficients in the last line are pulled back from
the base; $I$ denotes a multi-index in the four coordinates
$(\theta,\phi,x^1,x^2)$.
On $U_{H_\infty}\cap\pi^{-1}(\mathcal W)$ and its boundary,
$\det G=-W$.
Inverting the orbit block at these points gives
\[
 (G^{AB})=\frac1W
 \begin{pmatrix}
 -\g(\Z_0,\Z_0)&\g(\T,\Z_0)\\
 \g(\T,\Z_0)&-\g(\T,\T)
 \end{pmatrix}.
\]
In particular $G^{\phi\phi}=-\g(\T,\T)/W$ there, with the same first
coordinate derivatives by continuity from
$U_{H_\infty}\cap\pi^{-1}(\mathcal W)$.

Define the orbit momenta by $p_A=\g(\partial_{y^A},\dot\gamma)$.
The stationary momentum is $p_\theta=\g(\T,\dot\gamma)=0$.
The other momentum $p_\phi=\ell=\g(\Z_0,\dot\gamma)$ is constant
along the incoming geodesic because
\[
 \frac{d\ell}{d\tau}
 =\tfrac12(\LL_{\Z_0}\g)(\dot\gamma,\dot\gamma)=0.
\]
We also have $\ell\ne0$: otherwise $p_\theta=\ell=0$ would
place the nonzero null vector $\dot\gamma$ in the spacelike
plane orthogonal to $\T,\Z_0$.

The orbit momenta obey
\[
 p_A=G_{AB}(\dot y^B+\omega_i^B\dot x^i).
\]
Eliminating the orbit velocities gives
\begin{align*}
 \dot y^A&=G^{AB}p_B-\omega_i^A\dot x^i,\\
 \tfrac12\g(\dot\gamma,\dot\gamma)
 &=\tfrac12\sigma_{ij}\dot x^i\dot x^j+\tfrac12G^{AB}p_Ap_B\\
 &=\tfrac12\sigma_{ij}\dot x^i\dot x^j-\frac{\ell^2\g(\T,\T)}{2W}=0.
\end{align*}
Along the incoming boundary arc, the metric and its first derivatives
satisfy the same identities as on
$U_{H_\infty}\cap\pi^{-1}(\mathcal W)$, by continuity. We can therefore eliminate the orbit
velocities in the geodesic equation by subtracting $p_A\dot y^A$
from the Lagrangian. The resulting Routhian is
\begin{align*}
 \mathcal R
 &=\tfrac12\sigma_{ij}\dot x^i\dot x^j
       -\tfrac12G^{AB}p_Ap_B+p_A\omega_i^A\dot x^i\\
 &=\tfrac12\sigma_{ij}\dot x^i\dot x^j
                   +\frac{\ell^2\g(\T,\T)}{2W}+\ell\omega_i^\phi\dot x^i.
\end{align*}
Writing $\Gamma(\sigma)^j_{kl}$ for the Christoffel symbols computed
from $\sigma_{ij}$ using the base derivatives $\partial_{x^i}$, the
Euler--Lagrange equation along this boundary arc reads
\begin{align*}
 0&=\frac d{d\tau}
       (\sigma_{ij}\dot x^j+\ell\omega_i^\phi)
       -\frac12\partial_i\sigma_{jk}\dot x^j\dot x^k
       -\frac{\ell^2}{2}\partial_i\!\left(\frac{\g(\T,\T)}W\right)
       -\ell\partial_i\omega_j^\phi\dot x^j\\
  &=\sigma_{ij}(\ddot x^j+\Gamma(\sigma)^j_{k l}\dot x^k\dot x^l)
       -\frac{\ell^2}{2}\partial_i\!\left(\frac{\g(\T,\T)}W\right)
       +\ell(\partial_j\omega_i^\phi-\partial_i\omega_j^\phi)\dot x^j.
\end{align*}
The circularity identity $d\omega^\phi=0$ removes the term
linear in $\dot x$. This is what permits reflection with the
same conserved momentum $\ell$: under $\tau\mapsto-\tau$ the
acceleration and potential terms are unchanged, whereas the
discarded term would change sign. Since the remaining coefficients involve
only first derivatives of $\sigma$ and $\g(\T,\T)/W$, we may use their
boundary values to replace $\sigma$ by $\sigma_{\rm ref}$.
Together with the null constraint, we obtain
\begin{equation}\label{local:mechanical-reflection}
 \begin{aligned}
 \nabla^{\sigma_{\rm ref}}_{\dot x}\dot x
    &=\frac{\ell^2}{2}\operatorname{grad}_{\sigma_{\rm ref}}\!\left(\frac{\g(\T,\T)}W\right),\\
 |\dot x|_{\sigma_{\rm ref}}^2&=\frac{\ell^2\g(\T,\T)}W.
 \end{aligned}
\end{equation}
All coefficients are smooth on the fixed base rectangle. The
limiting spacetime position gives $x(\tau)\to x_0$, the base point
of $p_0$. Since $\g(\T,\T)(p_0)=0$ and $W(x_0)>0$, the energy identity gives
\[
 |\dot x(\tau)|_{\sigma_{\rm ref}}^2
       =\frac{\ell^2\g(\T,\T)(\gamma(\tau))}{W(x(\tau))}\longrightarrow0.
\]
This is the mechanical system in
Montgomery~\cite[Section~2, equation~(1)]{Montgomery14}, with
kinetic metric $\sigma_{\rm ref}$, potential $-\ell^2\g(\T,\T)/(2W)$, and energy zero.
The point $x_0$ lies in the interior of the larger reference
rectangle. We can therefore solve this smooth equation with
initial data $(x_0,0)$ and compare the solution with its time reverse. Both the covariant acceleration and the right-hand
side of \eqref{local:mechanical-reflection} are unchanged by
reversal, so the curves $\varepsilon\mapsto x(\tau_0+\varepsilon)$ and
$\varepsilon\mapsto x(\tau_0-\varepsilon)$ solve the same initial-value problem.
Uniqueness gives
\begin{equation}\label{local:brake-base}
 x(\tau_0+\varepsilon)=x(\tau_0-\varepsilon).
\end{equation}
The reflected curve therefore retraces the incoming arc in the
ergoregion.

We recover the spacetime curve by solving the smooth first-order
orbit equations
\begin{equation}\label{local:brake-lift}
 \begin{aligned}
 \dot\theta&=\frac{\ell\g(\T,\Z_0)}W
                                  -\omega_i^\theta\dot x^i,\\
 \dot\phi&=-\frac{\ell\g(\T,\T)}W-\omega_i^\phi\dot x^i.
 \end{aligned}
\end{equation}
The smooth coefficients and initial point $p_0$ determine this
lift through $\tau_0$. At contact its full velocity is
\[
 \dot\gamma(\tau_0)
       =\frac{\ell}{\g(\T,\Z_0)(p_0)}\T_{p_0}=v_0\ne0.
\]
The lift remains in the same boundary: by
\eqref{local:brake-base}, it uses the incoming transverse points,
and the boundary contains their entire $(\T,\Z_0)$-orbits. To
identify it with the continuation of the spacetime geodesic, we
check its acceleration against the actual metric. In the orbit
directions we obtain
\[
 \g(\D_{\dot\gamma}\dot\gamma,\partial_{y^A})
 =\frac{dp_A}{d\tau}
       -\frac12(\partial_{y^A}\g)(\dot\gamma,\dot\gamma)=0.
\]
For the horizontal lifts $E_i$ introduced above, the transverse
equations give
\begin{align*}
 \g(\D_{\dot\gamma}\dot\gamma,E_i)
 &=\sigma_{ij}\bigl(\ddot x^j+
           \Gamma(\sigma)^j_{k l}\dot x^k\dot x^l\bigr)
       -\frac{\ell^2}{2}\partial_i\!\left(\frac{\g(\T,\T)}W\right)\\
 &\quad+\ell(\partial_k\omega_i^\phi-
                         \partial_i\omega_k^\phi)\dot x^k=0.
\end{align*}
These four independent contractions imply
$\D_{\dot\gamma}\dot\gamma=0$. The reflected curve has the same
nonzero limiting velocity as the incoming curve, so uniqueness of
the spacetime geodesic equation proves the assertion. 
\end{proof}

\begin{figure}[tbp]
\centering
\begin{tikzpicture}[x=1cm,y=1cm,line cap=round,line join=round,
                    every node/.style={font=\normalsize},>=stealth]
 \path[use as bounding box] (0,-.7) rectangle (15.7,5.1);
 \begin{scope}
  \node at (3.7,4.9) {(a) Transverse orbit space};
  \draw[densely dashed] (1.25,.45)--(1.25,4.45);
  \node[anchor=north] at (1.25,.3) {$\g(\T,\T)=0$};
  \node at (4.55,4.15) {$\g(\T,\T)>0$};
  \draw[line width=1pt,domain=0:1,samples=70,variable=\u]
   plot ({1.25+4.8*\u},{2.25+1.25*\u*\u});
  \draw[->,line width=1pt,domain=.36:.17,samples=15,variable=\u]
   plot ({1.25+4.8*\u},{2.25+1.25*\u*\u});
  \draw[->,line width=1pt,domain=.64:.82,samples=15,variable=\u]
   plot ({1.25+4.8*\u},{2.25+1.25*\u*\u});
  \node[anchor=north] at (2.48,2.16) {incoming};
  \node[anchor=south] at (4.80,3.14) {outgoing};
  \fill (1.25,2.25) circle (1.8pt);
  \node[anchor=east] at (1.07,2.25) {$x_0$};
  \node[anchor=north west] at (1.40,1.65) {$\dot x(\tau_0)=0$};
  \fill (3.89,2.628) circle (1.8pt);
  \node[anchor=north west,align=left] at (3.92,2.46)
   {$x(\tau_0-\varepsilon)$\\$=x(\tau_0+\varepsilon)$};
  \node[align=center] at (3.85,-.47) {One transverse arc, traversed twice};
 \end{scope}
 \begin{scope}[xshift=8cm]
  \node at (3.7,4.9) {(b) A regular spacetime projection};
  \draw[densely dashed,->] (1.25,.45)--(1.25,4.45);
  \node[anchor=east] at (1.08,4.10) {$\theta$};
  \node[anchor=north] at (1.25,.3) {$\g(\T,\T)=0$};
  \draw[line width=1pt,domain=-2:1.75,samples=90,variable=\u]
   plot ({1.25+.95*\u*\u},{2.4+\u+.08*\u*\u});
  \draw[->,line width=1pt,domain=-1.05:-.68,samples=15,variable=\u]
   plot ({1.25+.95*\u*\u},{2.4+\u+.08*\u*\u});
  \draw[->,line width=1pt,domain=.80:1.12,samples=15,variable=\u]
   plot ({1.25+.95*\u*\u},{2.4+\u+.08*\u*\u});
  \draw[dotted] (2.981375,1.1958)--(2.981375,3.8958);
  \fill (2.981375,1.1958) circle (1.8pt);
  \fill (2.981375,3.8958) circle (1.8pt);
  \node[anchor=west,fill=white,inner sep=2pt] at (3.16,1.1958)
   {$\gamma(\tau_0-\varepsilon)$};
  \node[anchor=west,fill=white,inner sep=2pt] at (3.16,3.8958)
   {$\gamma(\tau_0+\varepsilon)$};
  \node[anchor=west,align=left] at (3.16,2.40)
   {Same transverse point,\\different spacetime events};
  \fill (1.25,2.4) circle (1.8pt);
  \draw[->,line width=.8pt] (1.25,2.48)--(1.25,3.12);
  \node[anchor=east] at (1.07,2.92) {$\T$};
  \node[anchor=east,align=right] at (1.04,2.07)
   {$\gamma(\tau_0)$};
  \node[align=center] at (3.85,-.47)
   {$0\ne\dot\gamma(\tau_0)\in\RR\T$};
 \end{scope}
\end{tikzpicture}
\caption{Continuation at the ergosurface, as in
Lemma~\ref{local:brake-reflection}.
In (a), the transverse curve $x$ reaches $x_0$ with zero velocity
and retraces exactly the same arc, by \eqref{local:brake-base}.
The incoming and outgoing portions have $\tau<\tau_0$ and
$\tau>\tau_0$, respectively.
In (b), the same geodesic $\gamma$ is shown schematically in the regular coordinate
projection $(\g(\T,\T),\theta)$, with the other coordinates suppressed
and orientation chosen so that $\dot\theta(\tau_0)>0$.
Its velocity at contact is a nonzero multiple of $\T=\partial_\theta$.
The arrows follow increasing affine parameter, and the repeated
transverse point corresponds to distinct spacetime events.
The dashed line represents the ergosurface, and each panel depicts
the indicated local coordinate projection.}
\label{fig:ergosurface-continuation}
\end{figure}
\subsection{The zero-energy null geodesic}

We can now follow one null geodesic along the remaining boundary,
using the preceding lemma whenever it reaches the ergosurface.
To continue the curve for arbitrarily long intervals, we
parametrize it by $\mathbf g_+$-arclength and use the normalized
geodesic equation of Lemma~\ref{action:escape}. On the fixed stationary section, its positions over
$H_0$ and $\mathbf g_+$-unit velocities lie in a compact set,
including above the ergosurface,
and the normalized equation has bounded right-hand side there.
Together these give limits for both position and velocity at
every finite endpoint. We will then apply the uniform length bound
in that lemma to obtain the contradiction with NT.

\begin{proof}[Proof of Theorem~\ref{discrete:continuation}]
Suppose that $H_\infty\cap\{\g(\T,\T)>0\}$ is nonempty.
We will construct a zero-energy null geodesic confined over $H_0$.
For its initial point we first find a boundary point in the open
ergoregion. Choose a connected component of $\mathcal D_0$ meeting
the intersection. Proposition~\ref{prep:residual} gives a nonempty
open subset of its intersection with $\{\g(\T,\T)>0\}$ disjoint from
$H_0$. This intersection is path connected: in a corner chart, a path
meeting the face $\g(\T,\T)=0$ can be pushed to positive values
of this boundary coordinate while the other boundary coordinates
remain nonnegative. There is consequently a path in that component,
$\eta:[0,1]\to\mathcal D_0\cap\{\g(\T,\T)>0\}$, with
\[
 \eta(0)\notin H_0\quad\text{and}\quad \eta(1)\in H_\infty.
\]
The first parameter for which $\eta$ meets the closed set
$H_\infty$ gives the required boundary point.

Let $\Sigma$ be the smooth timelike boundary near that point,
as supplied by Proposition~\ref{discrete:stalled-boundary}.
Let $n$ be a unit spacelike normal and $\nabla^\Sigma$ the
Levi--Civita connection of $h|_{T\Sigma}$.
For either nonzero null tangent field $X$ on $\Sigma$,
metric compatibility and the Gauss formula give
\begin{align*}
 h(\nabla_X^\Sigma X,X)&=\tfrac12Xh(X,X)=0,\\
 h(\nabla_X^hX,n)&=B(X,X)=0.
\end{align*}
The orthogonal complement of $X$ in $T\Sigma$ is $\RR X$.
Hence $\nabla_X^hX\in\RR X$, and the corresponding curves
are ambient $h$-null geodesics after reparametrization.

The horizontal lift identity \eqref{local:null-lift} turns
either family into zero-energy spacetime null geodesics after
reparametrization. On overlapping boundary charts they solve
the same spacetime equation, so uniqueness keeps their projections
in the boundary while $\g(\T,\T)>0$.

Parametrize one lift $\gamma$ by $\mathbf g_+$-arclength $s$,
with a dot denoting $d/ds$, and let $(s_-,s_+)$ be its maximal
interval through $s=0$ on which it solves
\eqref{action:normalized-flow} and
$\pi(\gamma(s))\in\partial H_\infty$. We show that neither
endpoint is finite. Suppose first that $s_+<\infty$.
Since $\pi\circ\gamma$ stays in the compact set $H_0$,
Lemma~\ref{action:escape} gives a limit in spacetime,
\[
 \lim_{s\uparrow s_+}(\gamma(s),\dot\gamma(s))=(p_0,v_0).
\]
This includes convergence of the stationary coordinate.
Preservation of the constraints and closedness of
$\partial H_\infty$ give
\begin{align*}
 |v_0|_{\mathbf g_+}&=1,\\
 \g(v_0,v_0)&=\g(\T,v_0)=0,\\
 \pi(p_0)&\in\partial H_\infty\subset H_0
                      \subset\{\g(\T,\T)\ge0\}.
\end{align*}
If $\g(\T,\T)(p_0)>0$, Proposition~\ref{discrete:stalled-boundary}
makes the boundary a smooth timelike surface near $\pi(p_0)$.
The limiting projected velocity is null and tangent to it.
The Gauss calculation above and the horizontal lift identity
produce a solution with initial data $(p_0,v_0)$ whose projection
remains in the boundary. Uniqueness for
\eqref{action:normalized-flow} extends $\gamma$ past $s_+$,
contradicting maximality.

If $\g(\T,\T)(p_0)=0$, choose a compact interval
$I\Subset\psi(E_0)$ containing $\psi(\pi(p_0))$ in its interior.
Proposition~\ref{canon:reconstruction} supplies a reference
neighborhood $\mathcal W$ of this level interval.
Lemma~\ref{local:brake-reflection}, applied to $(p_0,v_0)$,
continues the same solution past $s_+$ with projection in
$\partial H_\infty$, again a contradiction. Thus $s_+=\infty$.
The normalized equation is invariant under reversal of $s$,
so the same argument gives $s_-=-\infty$.

We have obtained a null geodesic parametrized by
$\mathbf g_+$-arclength on $\RR$, with projection in
$\partial H_\infty\subset H_0$. Applying
Lemma~\ref{action:escape} to $[-R,R]$ gives
\[
 2R=\int_{-R}^{R}|\dot\gamma(s)|_{\mathbf g_+}\,ds
       \le C_{H_0}\qquad(R>0),
\]
which is impossible. Therefore
\[
 H_\infty\cap\{\g(\T,\T)>0\}=\varnothing.
\]
Equation~\eqref{discrete:union} now gives the asserted domain.
Lemma~\ref{discrete:greedy-construction} supplies on this domain
the commuting Killing field with complete effective period $2\pi$.
\end{proof}
\section{Completion of the proof}\label{exit:section}

We have constructed the rotational Killing field throughout the ergoregion.
To finish the proof, we extend it analytically to the exterior
and prove that its flow is complete and periodic. The final
classification then uses smooth axisymmetric uniqueness. We
recall its precise input first, since it also explains why we
must replace the section through $S_0$ by a future section of
the horizon.

In our one-ended setting, the $I^+$-regularity condition of
Chru\'sciel and Costa~\cite[Definition~1.1]{CC08} requires,
besides completeness of $\T$ and global hyperbolicity of $\E$,
a connected acausal spacelike hypersurface $\mathcal S\subset\E$
containing an asymptotic section. Its closure must be a topological
manifold with boundary, consisting of a compact part and that
end, and its boundary must meet every
generator of the future horizon precisely once, with
\[
 \partial\overline{\mathcal S}
 \subset\partial\E\cap I^+(\M^{(\mathrm{end})}).
\]
We will construct $\overline{\mathcal S}$ with smooth boundary.
The vacuum case of the theorem of Chru\'sciel,
Costa, and Heusler~\cite[Theorem~3.2]{CCH12}, with the
nondegenerate conclusion of Chru\'sciel and
Costa~\cite[Corollary~6.3]{CC08}, is the uniqueness result we use.

\begin{theorem}[Axisymmetric uniqueness]\label{exit:uniqueness}
Let a smooth four-dimensional stationary vacuum spacetime be
asymptotically flat and $I^+$-regular, with a connected
nondegenerate future horizon. Suppose its stationary and
rotational Killing fields generate an effective isometric
$\RR\times U(1)$ action, with a nonempty rotation axis.
Then its domain of outer communications is isometric to a
nonextremal Kerr exterior.
\end{theorem}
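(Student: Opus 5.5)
The statement is a citation of known axisymmetric uniqueness results, so I will prove it by reduction rather than new analysis: verify that its hypotheses are exactly, or imply, those of the vacuum case of Chru\'sciel, Costa, and Heusler~\cite[Theorem~3.2]{CCH12}, and then use Chru\'sciel and Costa~\cite[Corollary~6.3]{CC08} to fix the parameter range.

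The first step is to translate the hypotheses. Asymptotic flatness in the sense of \eqref{intro:af}, with six derivatives of decay, should dominate the weaker decay used in \cite{CC08,CCH12}. That decay, for a stationary end, is some finite number of derivatives at rates $|x|^{-1}$ for the metric and $|x|^{-2}$ for its first derivatives. $I^+$-regularity in the sense of \cite[Definition~1.1]{CC08} is assumed verbatim. It includes completeness of $\T$, global hyperbolicity of $\E$, and the hypersurface $\mathcal S$ whose closure meets each future horizon generator once. The $\RR\times U(1)$ action being effective, with a nonempty axis, is exactly the symmetry input of \cite{CCH12}. Since the horizon is connected, the multi-component part of their analysis is not needed; only the single-horizon conclusion is used.

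The second step is to apply \cite[Theorem~3.2]{CCH12} in vacuum (vanishing Maxwell field). Its conclusion is that the domain of outer communications is isometric to that of a Kerr spacetime with $|a|\le M$, possibly extremal. The global analysis of the orbit space is carried out in \cite[Sections~5--7]{CC08}. It shows that the area function $W$ is positive off the axis and horizon, that the orbit space is a half-plane, and that the Ernst map is harmonic with Kerr boundary data. Together with the Mazur--Bunting identity, this is what identifies the metric.

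The third step is to exclude $|a|=M$ using nondegeneracy of the future horizon. An extremal Kerr horizon has vanishing surface gravity, so \cite[Corollary~6.3]{CC08} yields the nonextremal family $|a|<M$.

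I expect the only real obstacle to be the bookkeeping in the first step. The regularity and decay conventions of \cite{CC08,CCH12} must be checked against the ones adopted here, in particular that their notion of nondegenerate horizon matches the bifurcate, nonzero-surface-gravity situation we will produce. Once that dictionary is in place, the theorem is a direct quotation of those results.
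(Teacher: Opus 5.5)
Your proposal is correct and matches the paper. The paper also does not prove this statement: it quotes the vacuum case of Chru\'sciel, Costa, and Heusler~\cite[Theorem~3.2]{CCH12}, takes the nondegenerate (nonextremal) conclusion from Chru\'sciel and Costa~\cite[Corollary~6.3]{CC08}, and relies on the orbit-space reduction in \cite[Theorems~4.5 and~5.2, Section~6]{CC08}. One small correction: ``asymptotically flat'' in the statement means the initial-data falloff of \cite[Section~2.1]{CC08}, not \eqref{intro:af} itself; the paper checks that falloff separately, via \eqref{exit:asymptotic-data}, when it applies the theorem.
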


Here the rotation axis is the zero set of the rotational Killing field.
The smooth axisymmetric reduction in Chru\'sciel and
Costa~\cite[Theorems~4.5 and~5.2, Section~6]{CC08} supplies the
coordinates on the space of stationary and rotational orbits,
together with the boundary behavior of the associated harmonic map,
used in uniqueness. In our proof, analyticity enters earlier,
when we extend the local circle action by Chru\'sciel's theorem.

\subsection{A future section of the horizon}

GR already gives completeness of $\T$, global hyperbolicity,
and the required asymptotic end. To obtain the remaining
$I^+$-regularity conditions, we deform the section through
$S_0$ slightly to the future and keep its end fixed.
Its images under the stationary flow provide a future horizon
whose generators are complete to the future.
We record the resulting extension here and defer its construction
to Appendix~\ref{app:completion}.

\begin{lemma}\label{exit:regular-completion}
Assume GR and SBS. The spacetime $(\E,\g)$ embeds isometrically
as the domain of outer communications of a smooth stationary
vacuum spacetime which is $I^+$-regular in the sense of
Chru\'sciel and Costa~\cite[Definition~1.1]{CC08} and has one connected nondegenerate
future horizon. Its stationary field restricts to $\T$.
If $\T|_{S_0}\not\equiv0$, the circle action generated by the field
$\Z$ of Lemma~\ref{prep:collar} extends to a neighborhood of this
horizon.
\end{lemma}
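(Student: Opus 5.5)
The plan is to build the extension by gluing a neighborhood of the future horizon, obtained from the stationary flow of a future section, onto $\E$. We take as given the bifurcate data from SBS and the normal coordinates \eqref{prep:normal-coordinates}, in which $\mathbf K=\kappa(V\partial_V-U\partial_U)$ and the future horizon is $U=0$. First we choose the hypersurface $\mathcal S$. Take $\Sigma_1$ and, near its inner end, replace the collar $\Sigma_{\rm loc}=\{U=V\in(0,\delta)\}$ by a smooth spacelike hypersurface interpolating between $\Sigma_1$ away from $S_0$ and a surface crossing the future horizon transversally at a cross-section $S_1=\{U=0,\ V=V_1\}$ with $V_1>0$ small. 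In the normal chart a concrete choice is a level set of a temporal function $\tau$ bent upward by a cutoff, for instance $\{V-U=2\chi\,\epsilon\}$ glued to $\{U=V\}$. Here $\chi$ is a cutoff in $UV$ chosen so that the level reaches $U=0$ at $V=2\epsilon$. We then check that the resulting set is spacelike and acausal, using the temporal function $\tau$ retained after Lemma~\ref{prep:collar}, perturbed by a small compactly supported future deformation that keeps $d\tau$ timelike. Its closure meets $\partial\E$ exactly in $S_1$, which lies in $\HH^+\cap I^+(\M^{(\mathrm{end})})$ because every point of $\HH^+$ with $V>0$ is in the causal future of $\Sigma_1$.

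Next we construct the ambient spacetime. The future horizon $\HH^+$ near $S_1$ is invariant under the stationary flow, since $\T=\mathbf K-\Omega_H\Z$ is tangent to $U=0$. Stationary translates of a two-sided neighborhood $N$ of $S_1$ (with $U$ of both signs, taken inside $\mathcal O$) cover the orbits through $S_1$. We define $\widetilde\M=\E\cup_{\Phi^\T}(\RR\times N)$, glued along the flow, as a stationary quotient in the manner of Proposition~\ref{prep:quotient}. Smoothness and the Hausdorff property reduce to freeness and properness of the $\T$-action near $S_1$. These in turn follow from the boost form of $\mathbf K$: for $t>0$ the flow sends $V\mapsto e^{\kappa t}V$ and contracts $U$, combined with the periodic $\Z$ action. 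Every future generator of $\HH^+$ is an orbit of $\mathbf K$ reparametrized affinely by $V$, so it meets $S_1$ exactly once at $V=V_1$. Its completeness to the future then follows by gluing the flowed charts: $V$ ranges over $(0,\infty)$ under $\Phi^{\mathbf K}_t$, and it is $\T$ that acts, since $\Phi^\T_t=\Phi^{\mathbf K}_t\Phi^\Z_{-\Omega_H t}$ on $\mathcal U_{\mathrm{loc}}$. The vacuum equations and stationarity transfer automatically. Nondegeneracy holds because the surface gravity is $\kappa>0$, and connectedness holds because $S_0$, hence $S_1$, is a sphere. The extension of the circle action is immediate from $[\T,\Z]=0$: the neighborhood $\RR\times N$ is $\Z$-invariant because $\Z U=\Z V=0$, and the extended field is $\Phi^\T_t$-transported as in Lemma~\ref{prep:collar}. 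When $\T|_{S_0}\equiv0$ we instead have $\mathbf K=\T$, and the same construction applies without $\Z$.

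The main obstacle is the global part of $I^+$-regularity: the closure $\overline{\mathcal S}$ must be a topological manifold with compact interior part, and the glued space must be Hausdorff with $\E$ still its domain of outer communications. For compactness we would use properness of $\pi|_{\Sigma_1}$ from Proposition~\ref{prep:quotient} together with the last assertion of Lemma~\ref{prep:collar}, which shows that the collar end leaves every compact set of $Q$. For the domain of outer communications we would argue that points with $U\le0$ in the glued chart are not in $I^-(\M^{(\mathrm{end})})$, because $U=0$ is the achronal event horizon boundary given by SBS and stationary translation preserves it. We expect this step to be the most delicate part of Appendix~\ref{app:completion}.
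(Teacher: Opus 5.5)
Your plan follows the paper's construction. You deform the section near $S_0$ so that it meets $\HH^+$ in $\{U=0,\ V=2\varepsilon\}$, flow a neighborhood of that crossing by $\T$, and glue the resulting product to $\E$. However, the step you defer is justified by an argument that does not work. Achronality of $\HH^+=\partial I^-(\M^{(\mathrm{end})})$ constrains only causal curves in $\M$. The stationary translates you add outside $\mathcal O$ are abstractly glued points, so SBS says nothing about causal curves that pass through them; indeed $\M$ is only assumed to have a smooth horizon near $S_0$.

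What you must prove is a statement about the glued metric: no future causal curve crosses the null hypersurface $\{UV=0\}$ from $\{UV\le0\}$ back into $\{UV>0\}$. This needs a quantitative argument. The first-order sign only excludes transversal crossings, and a causal curve may run tangentially along a generator.

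The paper obtains this as follows. On $C$, $\operatorname{grad}_\g(UV)$ is a positive multiple of the future null field $\mathbf K$. Hence $|\g^{-1}(d(UV),d(UV))|\le C_0|UV|$ and $\g^{-1}(d(UV),dt)\ge c_0>0$, with $t$ temporal on the added product; stationarity and compactness of $C$ make these bounds uniform. For large $C_1$ the metric dual of $d(UV)-C_1|UV|\,dt$ is then future causal. This gives $\frac{d}{dt}(UV)_+\le C_1(UV)_+$ along future causal curves parametrized by $t$, and Gronwall's inequality finishes the argument.

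The same estimate is what shows that $\mathcal S$ remains acausal in the extended spacetime. Your check via $\tau$ only sees causal curves in $\M$. It also shows that the extension is causal.

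Second, the Hausdorff property does not follow from the boost form of $\mathbf K$, because that form holds only inside $\mathcal O$. A non-Hausdorff pair would come from section points $x_j\to C$ whose translates $\Phi^\T_{t_j}x_j$ converge in $\E$, possibly after leaving $\mathcal O$. Excluding this requires global input. The paper bounds $|t_j|\le b-a$ using \eqref{prep:comparison} and acausality of the deformed section. Alternatively, you could adapt the last assertion of Lemma~\ref{prep:collar}: the boost flow carries $x_j$ onto $\Sigma_{\rm loc}$ at $r=\sqrt{U_jV_j}\to0$, so $\pi(x_j)$ leaves every compact subset of $Q$.

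There are also three smaller omissions:
\begin{itemize}
\item In the case $\T|_{S_0}=0$ you must show that $\T=\kappa\mathbf K_0$ with $\kappa>0$. The paper uses Killing transport to get $\kappa\neq0$ and the causal comparison to get the sign.
\item Your stated reason for $S_1\subset I^+(\M^{(\mathrm{end})})$ is not established; use instead that $S_1$ is in the chronological future of nearby points of $\E$, as the paper does.
\item The asymptotic conditions of Chru\'sciel--Costa on the slice must be verified. The paper derives $k_{ij}=O_5(|x|^{-3})$ from \eqref{intro:af}.
\end{itemize}
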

The proof is given in Appendix~\ref{app:completion}. Its causal
point is that the added region cannot send a future causal curve
back into the exterior. In the normal coordinates $U,V$ of
\eqref{prep:normal-coordinates}, the exterior has $UV>0$.
If $t$ denotes the stationary parameter on the added neighborhood,
every future causal curve parametrized by $t$ satisfies, with
$(UV)_+=\max\{UV,0\}$,
\[
 \frac d{dt}(UV)_+\le C(UV)_+.
\]
Thus the future preserves $UV\le0$, which keeps the domain of
outer communications equal to $\E$ and preserves acausality
of the deformed section.

\subsection{Extension to the whole exterior}

We now extend $\Z$ to the whole exterior. The field constructed in
the ergoregion agrees, after fixing its orientation, with the
field $\Z_0$ of Proposition~\ref{canon:reconstruction} across
the ergosurface. This agreement lets us
apply stationary vacuum analyticity near every point where
$\g(\T,\T)\ge0$, using a timelike constant combination of
$\T$ and $\Z$. The global analytic rigidity theorem of
Chru\'sciel~\cite[Theorem~1.1]{Chrusciel97} then extends the
circle action, so that we can apply smooth axisymmetric uniqueness
to the spacetime supplied by Lemma~\ref{exit:regular-completion}.

\begin{proposition}\label{exit:global}
Assume GR, SBS, NT, and $\T|_{S_0}\not\equiv0$.
The period-$2\pi$ rotational Killing field $\Z$ of
Theorem~\ref{prop:continuation} extends to
$\Z\in C^\infty(T\E)$ whose flow is defined on $\E$ for
every $s\in\RR$ and satisfies
\[
\begin{aligned}
\LL_\Z\g&=0,\\
[\T,\Z]&=0,\\
\Phi_{2\pi}^\Z&=\operatorname{Id}.
\end{aligned}
\]
Moreover $(\E,\g)$ is isometric to a nonextremal Kerr exterior.
\end{proposition}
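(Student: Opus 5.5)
The plan has three stages: carry $\Z$ across the ergosurface with the reference field $\Z_0$, extend it over the region where $\T$ is timelike by analyticity and Chru\'sciel's global rigidity theorem, and then apply Theorem~\ref{exit:uniqueness} in the completion supplied by Lemma~\ref{exit:regular-completion}.

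\emph{Across the ergosurface.} Theorem~\ref{discrete:continuation} gives $\Z$ on a connected stationary set $U\supset\pi^{-1}(C_+)\cup\{\g(\T,\T)>0\}$, with complete effective period $2\pi$. By Proposition~\ref{canon:reconstruction}, on each reference neighborhood $\mathcal W$ of a compact piece $E_0\cap\psi^{-1}(I)$ we have $\Z=\pm\Z_0$ on every component of $U\cap\pi^{-1}(\mathcal W)$. The sign is fixed by \eqref{canon:limit} and by the normalization in Lemma~\ref{canon:negative-analytic} near $\mathcal U_{\rm loc}$. Connectedness of the ergoregion, established in the proof of Proposition~\ref{prep:residual}, then propagates the choice $\Z=\Z_0$ along $E_0$.

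The ergoregion side of $\mathcal W$ is where $\Z=\Z_0$ is Killing. Since $\Z_0$ is smooth across $\g(\T,\T)=0$, $\LL_{\Z_0}\g$ vanishes on the ergoregion and hence, by continuity, on $E_0$. On the side $\g(\T,\T)<0$ we do not need continuity: Lemma~\ref{local:null-stationary} with $f=-\g(\T,\T)$ gives \eqref{local:ergosurface-convexity}, so Proposition~\ref{local:extension} extends the field locally across the ergosurface. By uniqueness that extension coincides with $\Z_0$, and Lemma~\ref{local:compatibility} makes the period $2\pi$ complete on a stationary neighborhood of the ergosurface. Exhausting $\psi(E_0)$ by compact intervals, we obtain $\Z$ on a connected stationary open set containing $\{\g(\T,\T)\ge0\}$ together with a collar on the timelike side.

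\emph{Global extension and the main obstacle.} Near each point with $\g(\T,\T)\ge0$, a constant combination $\T+c\Z$ with suitable $c$ is timelike: this holds because $\operatorname{span}\{\T,\Z\}$ is timelike by Lemma~\ref{action:area} and $\Z_0$ is spacelike near $E_0$. M\"uller zum Hagen's theorem applied to this Killing field gives analyticity of $\g$ there, and $\g$ is analytic wherever $\T$ is timelike. So $(\E,\g)$ is analytic, and the local Killing field $\Z$ has analytic continuation along paths. Simple connectedness of $\E$ (Proposition~\ref{prep:quotient}) makes the continuation single valued, and uniqueness of Killing transport \eqref{local:killing-connection} gives $\LL_\Z\g=0$ and $[\T,\Z]=0$ on $\E$.

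The main obstacle is completeness and periodicity of the flow on all of $\E$, for which I would invoke Chru\'sciel~\cite[Theorem~1.1]{Chrusciel97}. This requires verifying its hypotheses: global hyperbolicity and completeness of $\T$ come from GR, and periodic orbits are already known on an open set. Alternatively, one can argue directly that the set where the time-$2\pi$ map is defined and equal to the identity is open (isometry uniqueness) and closed. Closedness would follow from the asymptotic end, where $\Z$ must be an asymptotic rotation with compact orbits, together with properness of $\pi|_{\Sigma_1}$ to prevent orbits from escaping.

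\emph{Classification.} Lemma~\ref{exit:regular-completion} supplies an $I^+$-regular extension with a connected nondegenerate future horizon, to which the circle action extends. The $\RR\times U(1)$ action is effective because the stationary action is free and $\Z$ has effective period $2\pi$. The axis is nonempty, since $\Z$ vanishes at the two poles of $S_0$ found in the proof of Lemma~\ref{canon:foliation}, and the axis continues into $\E$ as a codimension-two fixed set by the local analysis in Lemma~\ref{action:area}. Theorem~\ref{exit:uniqueness} then identifies $\E$ with a nonextremal Kerr exterior.
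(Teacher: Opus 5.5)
Your three stages are the paper's. Your axis argument is equivalent to the paper's explicit curve $\exp_p(re_1(p))$: the circle fixes the normal plane of $S_0$ at a zero of $\Z|_{S_0}$, so its exponential image enters $\E$.

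The genuine difference is at the ergosurface. The paper gets $\LL_{\Z_0}\g=0$ on the timelike side from analyticity of the period integral (Lemma~\ref{canon:negative-analytic}), then glues $\Z_0$ to $\Z$. You instead apply Proposition~\ref{local:extension} with $f=-\g(\T,\T)$ and glue with Lemma~\ref{local:compatibility}. This works; the paper itself notes it after \eqref{local:ergosurface-convexity}. It also makes $\Z_0$ unnecessary here, because Lemma~\ref{action:area} on the enlarged domain already gives $\g(\Z,\Z)>0$ and $W>0$ for your timelike combination. The paper's route instead reuses machinery it already has, and obtains the identification and the bounds of Proposition~\ref{canon:reconstruction} directly. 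Three repairs to your version:
\begin{itemize}
\item The local extension does not coincide with $\Z_0$ ``by uniqueness'', since $\Z_0$ is not yet known to be Killing on the timelike side. Proposition~\ref{canon:reconstruction} needs a complete period-$2\pi$ field, so glue first. In any case this identification is not needed.
\item Lemma~\ref{local:compatibility} needs $C\subset\partial U$, so do not exhaust $\psi(E_0)$. Take $C$ to be the lift of the compact set $E_0\cap H_0$. This set is preserved by the local flow, because the flow preserves $\g(\T,\T)$ and cannot enter the invariant set $C_+$. It suffices, since $E_0\setminus H_0\subset C_+$.
\item The M\"uller zum Hagen charts must be shown to form one analytic atlas. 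This holds because the transition maps are isometries, $\phi=\exp_{\phi(p)}\circ d\phi_p\circ\exp_p^{-1}$.
\end{itemize}

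The real gap is at the step you call the main obstacle. Your hypothesis list for Chru\'sciel's theorem omits the hypothesis that produces closed orbits: an asymptotically flat end with his falloff and timelike ADM four-momentum. Your ``direct'' alternative presupposes the same conclusion, namely that $\Z$ is asymptotically a rotation. It also does nothing to stop orbits from leaving through the horizon in finite time, so it is not a proof. Here the hypothesis follows from GR. Stationarity and the $\g_{0i}$ falloff in \eqref{intro:af} give $k_{ij}=O_5(|x|^{-3})$, as in \eqref{exit:asymptotic-data}. Hence the spatial ADM momentum vanishes and the four-momentum is $(M,0,0,0)$ with $M>0$. You should also note that the domain of outer communications of $\E$, computed inside $\E$, is $\E$.

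Even then, the theorem yields a complete periodic $Y$ commuting with $\T$, with $Y=a\T+b\Z$ on $\pi^{-1}(C_+)$ and $b\ne0$, and you must pass back to $\Z$. Properness of the stationary action gives $a=0$, since $\Phi^\T_{nPa}p=\Phi^\Z_{-nPb}p$ stays on one compact circle. Alternatively, completeness of your global $\Z=(Y-a\T)/b$ follows from the commuting complete flows. Then $\Phi^\Z_{2\pi}$ is an isometry equal to the identity on $\pi^{-1}(C_+)$, hence the identity on $\E$.

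Finally, effectiveness of the $\RR\times U(1)$ action also needs properness, not just freeness. If $\Phi^\T_s=\Phi^\Z_\theta$, all the points $\Phi^\T_{ns}p$ lie on one compact $\Z$-orbit.
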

\begin{proof}
We first extend $\Z$ across the ergosurface. Recall the initial domain
$C_+\subset Q$ of Proposition~\ref{prep:residual} and
$H_0=\pi(\{\g(\T,\T)\ge0\})\setminus C_+\subset Q$. The neighborhood
$\mathcal W$ of
Proposition~\ref{canon:reconstruction}, restricted as in
Lemma~\ref{canon:negative-analytic}, satisfies
$H_0\cap E_0\subset\mathcal W$, where
$E_0=\pi(\{\g(\T,\T)=0\})$.
Proposition~\ref{canon:reconstruction} supplies the smooth
period-$2\pi$ field $\Z_0$ on $\pi^{-1}(\mathcal W)$.
Lemma~\ref{canon:negative-analytic} gives
\[
 \LL_{\Z_0}\g=0
 \quad\hbox{on }\pi^{-1}(\mathcal W)\cap\{\g(\T,\T)<0\},
\]
with
\[
 \Z_0=\Z\quad\hbox{on }\pi^{-1}(\mathcal W)\cap\pi^{-1}(C_+).
\]
To identify $\Z$ with $\Z_0$ where $\g(\T,\T)>0$, we fix the
initial orientation by $\g(\T,\Z_0)>0$, as in
\eqref{canon:limit}. We use here the connectedness of the
ergoregion from Proposition~\ref{prep:residual}. Lemma~\ref{action:area},
applied to the connected invariant domain of
Theorem~\ref{prop:continuation}, gives
\[
\begin{aligned}
\g(\Z,\Z)&>0,\\
W[\T,\Z]&=\g(\T,\Z)^2-\g(\T,\T)\g(\Z,\Z)>0.
\end{aligned}
\]
In particular,
\[
 \g(\T,\Z)^2=W[\T,\Z]+\g(\T,\T)\g(\Z,\Z)>0,
\]
so $\g(\T,\Z)$ has constant sign on the connected ergoregion.
To determine the sign, recall from Lemma~\ref{prep:collar} that
on its nonempty intersection
with the initial neighborhood $\pi^{-1}(C_+)$ the field
$\mathbf K=\T+\Omega_H\Z$ is timelike and $\Omega_H<0$. There
\[
 2\Omega_H\g(\T,\Z)
   =\g(\mathbf K,\mathbf K)-\g(\T,\T)-\Omega_H^2\g(\Z,\Z)<0.
\]
Hence $\g(\T,\Z)>0$ throughout the ergoregion.
The resulting $U(1)$ action is effective, since its restriction
to $\pi^{-1}(C_+)$ is effective.
On $\pi^{-1}(\mathcal W)\cap\{\g(\T,\T)>0\}$, both inner products
$\g(\T,\Z)$ and $\g(\T,\Z_0)$ are positive. The sign in
Proposition~\ref{canon:reconstruction} therefore gives
\[
 \Z=\Z_0\quad\hbox{on }\pi^{-1}(\mathcal W)\cap\{\g(\T,\T)>0\}.
\]
We have proved $\LL_{\Z_0}\g=0$ on
$\pi^{-1}(\mathcal W)\cap\{\g(\T,\T)\ne0\}$,
and smoothness gives the identity at
$\g(\T,\T)=0$. We can therefore glue the fields on
$\pi^{-1}(C_+)\cup\{\g(\T,\T)>0\}\cup\pi^{-1}(\mathcal W)$,
with their period-$2\pi$ flows. This covers the
whole ergosurface because $E_0\setminus H_0\subset C_+$,
with the action on $\pi^{-1}(C_+)$ from
Proposition~\ref{prep:residual}.
The positive bounds for $\g(\Z,\Z)$ and $W[\T,\Z]$ hold there
by Proposition~\ref{canon:reconstruction} and the construction
near the horizon in Lemma~\ref{prep:collar}.

For the analytic extension, we first need analytic coordinates
near every point of $\E$. If $\g_p(\T,\T)\ge0$, we hold the coefficients fixed
at $p$ and take the Killing field
$\T-\frac{\g_p(\T,\Z)}{\g_p(\Z,\Z)}\Z$. Its squared norm at $p$ is
\[
 \g_p\left(\T-\frac{\g_p(\T,\Z)}{\g_p(\Z,\Z)}\Z,
           \T-\frac{\g_p(\T,\Z)}{\g_p(\Z,\Z)}\Z\right)
 =-\frac{W[\T,\Z](p)}{\g_p(\Z,\Z)}<0.
\]
It remains timelike near $p$. Where $\g(\T,\T)<0$, $\T$
itself is timelike. The stationary vacuum analyticity theorem of
M\"uller zum Hagen~\cite{MH70}, revisited by
Tod~\cite[Section~3.1]{Tod07},
thus gives analytic metric coordinates near
every point of $\E$. These coordinates form a compatible analytic atlas, since
every smooth transition map $\phi$ is a local isometry and
has the following analytic expression on normal neighborhoods:
\[
 \phi=\exp_{\phi(p)}\circ d\phi_p\circ\exp_p^{-1}.
\]
We apply the global extension theorem of
Chru\'sciel~\cite[Theorem~1.1 and Section~2]{Chrusciel97}
with analytic spacetime $\E$ itself.
The theorem requires a complete stationary field, a globally
hyperbolic simply connected domain of outer communications,
an asymptotically flat end with timelike ADM four-momentum,
and a local Killing field independent of the stationary field.
The first two conditions follow from GR and
Proposition~\ref{prep:quotient}. The domain of outer communications
computed inside $\E$ is again $\E$: any causal curve between
points in the end lies in $\E$, as one sees by moving its
endpoints slightly to the past and future along the timelike
stationary orbits in the end.

For the asymptotic hypothesis, let $\gamma$ and $k$ be the
induced metric and second fundamental form of the stationary
slice on the end. Equation~\eqref{exit:asymptotic-data} gives
\[
\begin{aligned}
 \gamma_{ij}-\delta_{ij}&=O_6(|x|^{-1}),\\
 k_{ij}&=O_5(|x|^{-3}).
\end{aligned}
\]
These bounds imply the falloff in
Chru\'sciel~\cite[equation~(2.2)]{Chrusciel97}, with
regularity order $3$ and decay exponent $3/4$.
With $\nu$ the Euclidean outward unit normal and $dS$ the
Euclidean area element on $|x|=R$, each spatial ADM momentum
component is
\[
 \frac1{8\pi}\lim_{R\to\infty}
 \int_{|x|=R}(k_{ij}-\tr_\gamma k\,\gamma_{ij})\nu^j\,dS
 =\lim_{R\to\infty}O(R^{-1})=0.
\]
The ADM four-momentum is therefore $(M,0,0,0)$, with $M>0$
by GR. Finally, the local rotational Killing field is independent
of $\T$, since properness of the stationary action excludes
a complete periodic orbit generated by a nonzero multiple of $\T$.

The proof of Chru\'sciel's theorem gives a complete periodic
Killing field $Y$, commuting with $\T$, whose restriction to
$\pi^{-1}(C_+)$ is
\[
 Y=a\T+b\Z,\qquad a,b\in\RR,\quad b\ne0.
\]
We identify this action with the one already constructed.
Let $P>0$ be a period of $Y$ and fix
$p\in\pi^{-1}(C_+)$. Since the original flows of $\T$ and
$\Z$ commute and are complete on this domain,
\[
 \Phi_{nPa}^{\T}(p)=\Phi_{-nPb}^{\Z}(p)
       \qquad(n\in\mathbb Z).
\]
The right side lies in one compact circle. Properness of the
stationary action forces $a=0$, so $Y/b$ is a complete global
extension of the original $\Z$. Uniqueness for the equations
for $(\Z,\D\Z)$ in~\eqref{local:killing-connection} identifies
it with the field already constructed on their connected common
domain, including its overlap with the horizon neighborhood of
Lemma~\ref{prep:collar}. We continue to denote this field by
$\Z$. Its time-$2\pi$ map is a global isometry equal to the
identity on $\pi^{-1}(C_+)$. Isometry uniqueness gives
\[
\begin{aligned}
 [\T,\Z]&=0,\\
 \Phi_{2\pi}^{\Z}&=\operatorname{Id}
                  \quad\hbox{on }\E,
\end{aligned}
\]
and the $U(1)$ action remains effective by restriction to
$\pi^{-1}(C_+)$.

We apply Lemma~\ref{exit:regular-completion} to obtain the
$I^+$-regular spacetime. The rotational Killing field agrees with the
field of Lemma~\ref{prep:collar} on their common horizon
neighborhood, so stationary translation extends it to the
extended spacetime with its effective $U(1)$ action. If $\Phi_s^\T=\Phi_\theta^\Z$,
then $\Phi_{ns}^\T(p)=\Phi_{n\theta}^\Z(p)$ stays in a compact
circle orbit for all integers $n$. Properness forces $s=0$.
Effectiveness of the $U(1)$ action then forces $\theta\in2\pi\mathbb Z$.
Thus the two complete flows generate an effective
$\RR\times U(1)$ action. To exhibit its rotation axis in
$\E$, choose a zero $p$ of $\Z|_{S_0}$, which exists since
$S_0\simeq S^2$. The outward spacelike unit normal $e_1$ of
Lemma~\ref{prep:collar}, which is invariant under rotations, gives
\[
 \Phi_\theta^\Z(\exp_p(re_1(p)))
 =\exp_{\Phi_\theta^\Z p}(r\,d\Phi_\theta^\Z e_1(p))
 =\exp_p(re_1(p))\qquad(0<r<\delta).
\]
These fixed curves lie in the exterior. Away from the axis,
$\Z$ is spacelike, since its closed orbits cannot be causal
in the globally hyperbolic exterior. The stationary and
rotational fields are independent there: dependence at one
point, together with commutation, would confine a complete
stationary orbit to a circle, contradicting properness.
We have therefore obtained the global action required by
Theorem~\ref{exit:uniqueness}. Its remaining assumptions are
GR, vacuum, and Lemma~\ref{exit:regular-completion}, which also
verified the asymptotic falloff. That theorem identifies
$(\E,\g)$ with a nonextremal Kerr exterior.
\end{proof}

\begin{proof}[Proof of Theorem~\ref{thm:main}]
In the rotating case, we conclude by
Theorem~\ref{prop:continuation} and Proposition~\ref{exit:global}.
If $\T|_{S_0}=0$, Lemma~\ref{exit:regular-completion} gives
an $I^+$-regular spacetime whose single future horizon is
generated by $\mathbf K=\T$, with $\kappa>0$.
We use smooth staticity in the form stated by Chru\'sciel,
Costa, and Heusler~\cite[Section~3.3.1]{CCH12} and
Chru\'sciel~\cite[Section~2]{Chrusciel23}: an asymptotically
flat, $I^+$-regular vacuum black hole whose horizon components
are all nonrotating and nondegenerate is static.
For this application, the construction of Chru\'sciel and
Costa~\cite[Section~7.2]{CC08} first embeds the same exterior
in a spacetime with a bifurcate Killing horizon and a Cauchy
section ending at its bifurcation surface, where $\T=0$.
Its hypotheses are supplied by the smooth horizon generated
by $\T$, its nonzero surface gravity, and the future section
of Lemma~\ref{exit:regular-completion}. The maximal-hypersurface
theorem of Chru\'sciel and Wald~\cite[Theorem~4.2]{CWMax94}
then provides a maximal asymptotically flat hypersurface,
asymptotically orthogonal to $\T$, with boundary at that
bifurcation surface. Sudarsky and Wald~\cite{SW92} proved
staticity in this setting. We apply the vacuum, nonrotating
case of the theorem of Sudarsky and Wald~\cite[Theorem~1]{SW93} to obtain
\[
 \T^\flat\wedge d\T^\flat=0\qquad\hbox{on }\E.
\]
We then apply static uniqueness, due to Bunting and
Masood-ul-Alam~\cite{BuntingMasood87}, in the global form of
Chru\'sciel, Costa, and Heusler~\cite[Theorem~3.1]{CCH12},
to identify $(\E,\g)$ with the Schwarzschild exterior.
\end{proof}

\appendix
\section{The local wave and transport argument}\label{app:local}

Ionescu and Klainerman~\cite[Theorem~1.2, Section~2.1, and
Lemma~2.10]{IK13} reduce extension of a Killing field to a curvature
wave equation coupled to transport equations, and obtain a
neighborhood independent of the size of the Killing field under
strong pseudoconvexity.
We verify that quantitative conclusion under the
$\T$-conditional pseudoconvexity hypothesis of
Proposition~\ref{local:extension}, using the Carleman
estimate of Ionescu and Klainerman~\cite[Proposition~3.3]{IK09}.
We arrange the transport equations so that their first derivatives
control the additional terms in the curvature wave equation.
The scalar estimate then applies to all these equations together.

Fix the point $p$ and the defining function $f$ of
Proposition~\ref{local:extension}, so $f(p)=0$ and $\T f=0$.
We choose coordinates $x=(x^0,x^1,x^2,x^3)$ with $x(p)=0$
and write $B_\rho=\{|x|<\rho\}$, with $B_1\Subset V$.
A fixed change of scale accommodates any prescribed coordinate
radius. We use Euclidean norms for coordinate component arrays;
$|\partial^j U|$ is the Euclidean norm of the array of all
order-$j$ derivatives of the components of $U$. The $C^k$ norms
below are the supremum norms of these arrays through order $k$
on the indicated coordinate or parameter domain.
Choose $A_0\ge1$ so that
\[
\begin{aligned}
 \sum_{j=0}^6\sup_{B_1}
 \bigl(|\partial^j\g|+|\partial^j\g^{-1}|
       +|\partial^j\T|+|\partial^jf|\bigr)&\le A_0,\\
 \g^{-1}(df,df)(p)&\ge A_0^{-1}.
\end{aligned}
\]
We enlarge $A_0$ to include the comparison with the fixed
auxiliary norm in~\eqref{local:quantitative}:
\[
 A_0^{-1}|X|^2\le |X|_+^2\le A_0|X|^2
 \quad\text{on }B_1.
\]
We write $C=C(A_0)$ for constants in the construction below.
The pseudoconvexity constant in~\eqref{local:quantitative}
will enter when we choose the Carleman weight. Throughout the
appendix, $\D$ and $\R$ are the connection and curvature of
$\g$.

\subsection{The curvature and transport equations}

We begin by extending $\Z$ as a smooth vector field and then
show that its deformation tensor vanishes. Following Ionescu
and Klainerman~\cite[Section~2, equations~(2.2)--(2.3)]{IK13}, we use the Jacobi equation
for $\Z$ and choose an auxiliary two-form by a transport equation.
This choice will give the cancellation needed in the curvature
wave equation. We derive the equations together, then differentiate
the transport equations once to control the terms in the wave equation.

Their geodesic field $L$ is denoted here by $N$, to distinguish
it from the horizon null generator. This construction requires
a geodesic field transverse to $\{f=0\}$; here we choose it spacelike.
We first construct $N$ on a ball whose radius
depends only on $A_0$. Prescribe
\[
 N^\alpha\big|_{f=0}
 =\frac{\g^{\alpha\beta}\partial_\beta f}
        {\g^{\mu\nu}\partial_\mu f\partial_\nu f},
\]
so that $Nf=1$ on $\{f=0\}$. The conormal lower bound and the
coordinate bounds allow a Euclidean rotation after which, on
a fixed smaller ball,
\[
\begin{aligned}
 \partial_3f&\ge C^{-1},\\
 \g^{-1}(df,df)&\ge(2A_0)^{-1}.
\end{aligned}
\]
For $y=(y^0,y^1,y^2)$, write the zero level as
$\iota(y)=(y^0,y^1,y^2,\iota^3(y))$.
Differentiation of $f\circ\iota=0$ gives
\[
 \partial_{y^i}\iota^3
 =-\frac{\partial_{x^i}f}{\partial_{x^3}f}\circ\iota
                       \qquad(i=0,1,2).
\]
Repeated differentiation, with the same denominator bounded
away from zero, gives
$\|\iota\|_{C^6}+\|N\circ\iota\|_{C^5}\le C(A_0)$ on a
three-dimensional ball of radius depending only on $A_0$.
Let $\Phi(s,y)$ solve
\[
\begin{aligned}
 \partial_s^2\Phi^\alpha
 &=-\Gamma^\alpha_{\beta\gamma}(\Phi)
                  \partial_s\Phi^\beta\partial_s\Phi^\gamma,\\
 \Phi(0,y)&=\iota(y),\\
 \partial_s\Phi(0,y)&=N(\iota(y)).
\end{aligned}
\]
Here $\Gamma^\alpha_{\beta\gamma}$ are the Christoffel symbols
of $\g$ in the $x$ coordinates. The geodesic equation first
bounds $|\partial_s\Phi|$ on a uniform interval. For $J=\partial_{y^i}\Phi$, its variation is
\[
 \partial_s^2J^\alpha
 =-(\partial_\rho\Gamma^\alpha_{\beta\gamma})(\Phi)
             J^\rho\partial_s\Phi^\beta\partial_s\Phi^\gamma
   -2\Gamma^\alpha_{\beta\gamma}(\Phi)
                  \partial_s\Phi^\beta\partial_sJ^\gamma.
\]
Differentiating twice more and applying Gronwall's inequality
successively bounds $\Phi$ in $C^3$ and $\partial_s\Phi$ in
$C^2$ by $C(A_0)$; the geodesic equation supplies the mixed
$s$ derivatives. The differential at $(0,0)$ has the explicit
inverse
\[
 (d\Phi_{(0,0)})^{-1}w
 =\bigl(df_p(w),\,(w^i-df_p(w)N^i(p))_{i=0}^2\bigr).
\]
It is bounded by $C(A_0)$. For $r>0$ depending only on $A_0$,
we can therefore arrange, on the parameter ball of radius $2r$,
\[
 \|(d\Phi_{(0,0)})^{-1}\|\,
 \|d\Phi_{(s,y)}-d\Phi_{(0,0)}\|\le\tfrac12.
\]
The quantitative inverse function theorem gives a common image
ball $B_{r_0}$ and bounds the inverse in $C^3$, with
$r_0=r_0(A_0)>0$. Defining $N(\Phi(s,y))=\partial_s\Phi(s,y)$,
we obtain
\[
\begin{aligned}
 \D_NN&=0,\\
 \|N\|_{C^2(B_{r_0})}&\le C(A_0).
\end{aligned}
\]
Since $\partial_s^2(f\circ\Phi)=\D^2f(N,N)$ is uniformly
bounded and $\partial_s(f\circ\Phi)=1$ at $s=0$, a further
uniform decrease of $r$ gives
\[
 \tfrac34\le Nf\le\tfrac54.
\]
In particular, $f$ and $s$ have the same sign. We take the
initial hypersurface to be $s=-\delta$, where $\delta=r/2$;
then
\[
 -\tfrac54\delta\le f(\Phi(-\delta,y))
                         \le-\tfrac34\delta<0.
\]
The stationary flow preserves $\{f=0\}$, the prescribed normal
vector $N$ there, and the geodesic equation. Uniqueness
therefore preserves both $N$ and the parameter $s$:
\[
\begin{aligned}
 [\T,N]&=0,\\
 \T s&=0.
\end{aligned}
\]
We restrict $V$ to this coordinate neighborhood, which
still contains a ball of radius determined only by $A_0$.
We extend $\Z$ from the values of $\Z$ and $\D_N\Z$ on
$s=-\delta$ by the Jacobi equation, using the curvature convention
$\R(X,Y)U=\D_X\D_YU-\D_Y\D_XU-\D_{[X,Y]}U$
from Section~\ref{prep:section}:
\begin{equation}\label{local:jacobi}
 \D_N\D_N\Z=\R(N,\Z)N.
\end{equation}
For each $y$, the original Killing field satisfies the same
Jacobi equation on the connected interval $-r<s<0$.
Uniqueness with the data at $s=-\delta$ proves agreement on
$V\cap\{f<0\}$.
The deformation tensor of $\Z$ is
\[
 {}^{(\Z)}\pi_{\alpha\beta}
   =(\LL_\Z\g)_{\alpha\beta}
   =\D_\alpha\Z_\beta+\D_\beta\Z_\alpha.
\]
The Jacobi equation fixes its contraction with $N$. Indeed,
\[
 \D_N(N^\alpha N^\beta\D_\alpha\Z_\beta)
 =N^\alpha N^\beta N^\gamma\R_{\beta\alpha\gamma\delta}\Z^\delta=0.
\]
This scalar vanishes on $\{f<0\}$. Commuting derivatives
and using it in the Jacobi equation gives
\begin{align*}
 \D_N(N^\beta{}^{(\Z)}\pi_{\alpha\beta})
 &=\D_\alpha(N^\gamma N^\beta\D_\gamma\Z_\beta)
       -N^\beta\D_\gamma\Z_\beta\D_\alpha N^\gamma
       -N^\gamma\D_\gamma\Z_\beta\D_\alpha N^\beta\\
 &=-N^\beta{}^{(\Z)}\pi_{\gamma\beta}\D_\alpha N^\gamma.
\end{align*}
Zero initial data imply
$N^\beta{}^{(\Z)}\pi_{\alpha\beta}=0$ throughout the neighborhood.

For an antisymmetric two-form $\omega$, to be chosen below, we
define the tensors of Ionescu and
Klainerman~\cite[Definition~2.3]{IK13}:
\begin{equation}\label{local:unknowns}
 \begin{aligned}
 \mathbf B&=\tfrac12({}^{(\Z)}\pi+\omega),\\
 \mathbf W&=\LL_\Z\R-\mathbf B\odot\R,\\
 \mathbf P_{\alpha\beta\gamma}
 &=\D_\alpha{}^{(\Z)}\pi_{\beta\gamma}
   -\D_\beta{}^{(\Z)}\pi_{\alpha\gamma}
   -\D_\gamma\omega_{\alpha\beta}.
 \end{aligned}
\end{equation}
Here
\[
 (\mathbf B\odot\R)_{\alpha_1\alpha_2\alpha_3\alpha_4}
 =\sum_{j=1}^4 \mathbf B_{\alpha_j}{}^\rho
       \R_{\alpha_1\ldots \alpha_{j-1}\rho \alpha_{j+1}\ldots \alpha_4},
\]
where $\mathbf B_\alpha{}^\rho=\g^{\rho \beta}\mathbf B_{\alpha\beta}$.
The spacetime tensors $\mathbf B,\mathbf P,\mathbf W$ have
covariant ranks $2,3,4$; we use boldface to distinguish them
from the surface second fundamental form $B$ and the area
function $W$.
Our $\mathbf P$ has the normalization of~\cite{IK13}; it is
twice the tensor $P$ in~\cite[equation~(2.9)]{IK15}.

The identity just proved gives
\[
 N^\gamma\mathbf P_{\alpha\beta\gamma}
 =-{}^{(\Z)}\pi_{\beta\mu}\D_\alpha N^\mu
   +{}^{(\Z)}\pi_{\alpha\mu}\D_\beta N^\mu
   -\D_N\omega_{\alpha\beta}.
\]
We determine $\omega$ by the transport equation of Ionescu and
Klainerman~\cite[Lemma~2.6]{IK13}, which sets this contraction
to zero:
\begin{equation}\label{local:gauge}
\begin{aligned}
 \D_N\omega_{\alpha\beta}
 &= {}^{(\Z)}\pi_{\alpha\mu}\D_\beta N^\mu
    -{}^{(\Z)}\pi_{\beta\mu}\D_\alpha N^\mu,\\
 \omega|_{f<0}&=0.
\end{aligned}
\end{equation}
Moreover,
\[
 \D_N(N^\beta\omega_{\alpha\beta})
 =N^\beta{}^{(\Z)}\pi_{\alpha\rho}\D_\beta N^\rho
       -N^\beta{}^{(\Z)}\pi_{\beta\rho}\D_\alpha N^\rho=0.
\]
The initial value is zero, so we have obtained
\begin{equation}\label{local:constraints}
\begin{aligned}
 {}^{(\Z)}\pi_{\alpha\beta}N^\beta
  &=\omega_{\alpha\beta}N^\beta=0,\\
 \mathbf P_{\alpha\beta\gamma}N^\gamma&=0,\\
 \mathbf B_{\alpha\beta}N^\alpha&=0,\\
 \mathbf B_{\alpha\beta}N^\beta&=0.
\end{aligned}
\end{equation}
These identities give the transport equations below. We will
also show that first derivatives of $\mathbf P$ account for
the second derivatives of $\mathbf B$ in the wave equation
for $\mathbf W$.

Following Ionescu and
Klainerman~\cite[Definition~2.5]{IK13}, we write
$\calM(U_1,\ldots,U_k)$ for a homogeneous linear
combination of the components of the indicated tensors.
Its coefficients are smooth and depend only on $\g$, $N$,
and their derivatives. Different occurrences may have
different coefficients. The equations below are understood
componentwise for each tensor on the left.
Pointwise absolute values below are the coordinate component
norms fixed at the beginning of the appendix.

We first verify vanishing on $\{f<0\}$ and invariance under
stationary translations.
On $f<0$, the extension is the original Killing field, so
$\omega=\mathbf B=\mathbf P=\mathbf W=0$.
Since the Killing
flow of $\T$ preserves $\D$ and $\R$, applying $\LL_\T$ to
\eqref{local:jacobi} and using $[\T,N]=0$ gives
\[
\begin{aligned}
\D_N^2[\T,\Z]&=\R(N,[\T,\Z])N,\\
([\T,\Z],\D_N[\T,\Z])|_{s=-\delta}&=0.
\end{aligned}
\]
Uniqueness of this linear ODE gives $[\T,\Z]=0$ and hence
$\LL_\T{}^{(\Z)}\pi=\LL_{[\T,\Z]}\g=0$.
Applying $\LL_\T$ to \eqref{local:gauge} now gives
$\D_N(\LL_\T\omega)=0$ with zero initial data. Thus
$\LL_\T\omega=0$, and \eqref{local:unknowns} gives
$\LL_\T(\mathbf B,\mathbf P,\mathbf W)=0$.

Using \eqref{local:constraints}, we contract the first slot
of $\mathbf P$ with $N$. Equation~\eqref{local:gauge} gives
\begin{align*}
 N^\rho\mathbf P_{\rho\beta\alpha}
 &=\D_N{}^{(\Z)}\pi_{\alpha\beta}
       +{}^{(\Z)}\pi_{\alpha\rho}\D_\beta N^\rho
       +\omega_{\rho\beta}\D_\alpha N^\rho\\
 &=\D_N({}^{(\Z)}\pi+\omega)_{\alpha\beta}
       +({}^{(\Z)}\pi_{\rho\beta}+\omega_{\rho\beta})
                                      \D_\alpha N^\rho.
\end{align*}
Together with the curvature commutator calculation in
Ionescu and Klainerman~\cite[Lemma~2.7]{IK13}, this gives
\begin{equation}\label{local:transport}
\begin{aligned}
\D_N\mathbf B_{\alpha\beta}
 &=\tfrac12N^\rho\mathbf P_{\rho\beta\alpha}
       -(\D_\alpha N^\rho)\mathbf B_{\rho\beta},\\
\D_N\mathbf P_{\alpha\beta\mu}
 &=2N^\nu\mathbf W_{\alpha\beta\mu\nu}
   +2N^\nu\mathbf B_\mu{}^\rho\R_{\alpha\beta\rho\nu}
       -(\D_\mu N^\rho)\mathbf P_{\alpha\beta\rho}.
\end{aligned}
\end{equation}

To see why the wave equation involves only first derivatives of
$\mathbf B$ and $\mathbf P$, recall the variation of the connection,
\[
 {}^{(\Z)}\Gamma_{\alpha\beta\mu}
 =\tfrac12\bigl(\D_\alpha{}^{(\Z)}\pi_{\beta\mu}
              +\D_\beta{}^{(\Z)}\pi_{\alpha\mu}
              -\D_\mu{}^{(\Z)}\pi_{\alpha\beta}\bigr).
\]
The tensor ${}^{(\Z)}\Gamma$ represents the variation of the
Levi--Civita connection. For a covariant tensor $U$ of rank $m$,
the commutator identity of Ionescu and
Klainerman~\cite[Lemma~2.2]{IK13} is
\[
 \D_\sigma(\LL_\Z U)_{\alpha_1\ldots\alpha_m}
 -(\LL_\Z(\D U))_{\sigma\alpha_1\ldots\alpha_m}
 =\sum_{j=1}^m{}^{(\Z)}\Gamma_{\alpha_j\sigma\rho}
 U_{\alpha_1\ldots\alpha_{j-1}}{}^\rho{}_{\alpha_{j+1}\ldots\alpha_m}.
\]
In the second term, the Lie derivative acts on all $m+1$
covariant indices, including $\sigma$.
We write $\Box_\g=\D^\alpha\D_\alpha$ on covariant tensors.
Applying the commutator identity twice to $\R$, we use the
vacuum curvature wave equation, whose right side is quadratic in $\R$.
The terms containing second derivatives of the deformation tensor
are displayed explicitly below:
\begin{align*}
 \Box_\g(\LL_\Z\R)_{\alpha_1\ldots\alpha_4}
 &=\sum_{j=1}^4
   (\D^\sigma{}^{(\Z)}\Gamma_{\alpha_j\sigma\rho})
                    \R_{\alpha_1\ldots\alpha_{j-1}}{}^\rho{}_{\alpha_{j+1}\ldots\alpha_4}
      +\calM({}^{(\Z)}\pi,\D{}^{(\Z)}\pi,\LL_\Z\R)_{\alpha_1\ldots\alpha_4},\\
 \Box_\g(\mathbf B\odot\R)_{\alpha_1\ldots\alpha_4}
 &=\sum_{j=1}^4
   (\D^\sigma\D_\sigma\mathbf B_{\alpha_j\rho})
                    \R_{\alpha_1\ldots\alpha_{j-1}}{}^\rho{}_{\alpha_{j+1}\ldots\alpha_4}
      +\calM(\mathbf B,\D\mathbf B)_{\alpha_1\ldots\alpha_4}.
\end{align*}
These second derivative terms combine because
\begin{align*}
 {}^{(\Z)}\Gamma_{\alpha\sigma\rho}
       -\D_\sigma\mathbf B_{\alpha\rho}
 &=\tfrac12\bigl(\D_\alpha{}^{(\Z)}\pi_{\sigma\rho}
              -\D_\rho{}^{(\Z)}\pi_{\alpha\sigma}
              -\D_\sigma\omega_{\alpha\rho}\bigr)\\
 &=\tfrac12\mathbf P_{\alpha\rho\sigma}.
\end{align*}
Subtracting the two wave equations therefore gives
\[
 \Box_\g\mathbf W_{\alpha_1\ldots\alpha_4}
 =\tfrac12\sum_{j=1}^4
   (\D^\sigma\mathbf P_{\alpha_j\rho\sigma})
                    \R_{\alpha_1\ldots\alpha_{j-1}}{}^\rho{}_{\alpha_{j+1}\ldots\alpha_4}
   +\calM(\mathbf B,\D\mathbf B,\mathbf W)_{\alpha_1\ldots\alpha_4}.
\]
The second derivatives of $\mathbf B$ have thus combined into
first derivatives of $\mathbf P$, which we control by
differentiating the transport equations.

Finally, for $U=\mathbf B,\mathbf P$,
\[
 \D_N\D_\gamma U
 =\D_\gamma(\D_NU)-(\D_\gamma N^\nu)\D_\nu U
       +N^\nu[\D_\nu,\D_\gamma]U.
\]
For example, the first equation in \eqref{local:transport} gives
\begin{align*}
 \D_N\D_\gamma\mathbf B_{\alpha\beta}
 &=\tfrac12N^\rho\D_\gamma\mathbf P_{\rho\beta\alpha}
       +\tfrac12(\D_\gamma N^\rho)\mathbf P_{\rho\beta\alpha}\\
 &\quad-(\D_\gamma\D_\alpha N^\rho)\mathbf B_{\rho\beta}
       -(\D_\alpha N^\rho)\D_\gamma\mathbf B_{\rho\beta}
       -(\D_\gamma N^\nu)\D_\nu\mathbf B_{\alpha\beta}\\
 &\quad+N^\nu[\D_\nu,\D_\gamma]\mathbf B_{\alpha\beta}.
\end{align*}
When we differentiate the second equation, we obtain
$2N^\nu\D_\gamma\mathbf W_{\alpha\beta\mu\nu}$ and first
derivatives of $\mathbf B,\mathbf P$. Each commutator is
curvature times the undifferentiated tensor, so the same
calculation controls $\D_N\D\mathbf P$ without introducing
second derivatives of $\mathbf W$. We have therefore derived
\begin{equation}\label{local:system}
 \begin{aligned}
 \D_N(\mathbf B,\mathbf P)
    &=\calM(\mathbf W,\mathbf B,\mathbf P),\\
 \D_N\D(\mathbf B,\mathbf P)
    &=\calM(\mathbf W,\D\mathbf W,\mathbf B,\D\mathbf B,
                                     \mathbf P,\D\mathbf P),\\
 \Box_\g\mathbf W
    &=\calM(\mathbf W,\mathbf B,\D\mathbf B,\mathbf P,\D\mathbf P).
 \end{aligned}
\end{equation}
The coefficients involve at most four coordinate derivatives of
$\g$ and two of
$N$. They are bounded by $C(A_0)$, independently of $\Z$, by
the construction of $N$ and the definition of $A_0$.

\subsection{The weighted estimates}

We combine the scalar formulation of Ionescu and
Klainerman~\cite[Section~2.2, Lemma~2.10]{IK13}
with their $\T$-conditional Carleman estimate~\cite[Proposition~3.3]{IK09}.
Once we have derived the equations
for $\mathbf B,\mathbf P,\mathbf W$, the analytic argument
requires only $Nf>0$, the differential inequalities below,
and a bound for differentiation along $\T$.

\begin{lemma}\label{local:scalar-uniqueness}
In the fixed chart above, assume \eqref{local:quantitative}
with constant $A$, and let $N$ be a smooth field on $B_{r_0}$
satisfying
\[
\begin{aligned}
 \|N\|_{C^1(B_{r_0})}&\le C(A_0),\\
 Nf&\ge\tfrac34.
\end{aligned}
\]
Let $G=(G_i)_{i=1}^I$ and $H=(H_j)_{j=1}^J$ be finite arrays
of smooth scalar functions on $B_{r_0}$ satisfying
\begin{equation}\label{local:scalar-system}
\begin{aligned}
 |\Box_\g G|+|NH|&\le C_0(|G|+|\partial G|+|H|),\\
 |\T G|&\le C_0|G|,\\
 (G,H)|_{B_{r_0}\cap\{f<0\}}&=0
\end{aligned}
\end{equation}
for some $C_0\ge1$. Here every operator acts on scalar components,
and $\Box_\g u=\g^{\alpha\beta}
(\partial_\alpha\partial_\beta u-\Gamma^\rho_{\alpha\beta}\partial_\rho u)$.
Then $G=H=0$ on $B_\delta$, with $\delta>0$ depending only
on $A_0,A$ and the fixed initial radius $r_0$.
\end{lemma}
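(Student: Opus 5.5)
The plan is to prove Lemma~\ref{local:scalar-uniqueness} by a Carleman estimate with a weight that makes the Carleman inequality dominate the right side of \eqref{local:scalar-system}, in the style of Ionescu and Klainerman~\cite[Lemma~2.10]{IK13}.

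\textbf{Weight and local Carleman estimate.} I use the weight $h_\epsilon=\epsilon^{-1}(f+\epsilon)+|x|^2$, or more precisely the variant for which \cite[Proposition~3.3]{IK09} holds under \eqref{local:quantitative}. Choose $\epsilon$ small in terms of $A_0,A$, so that the level set $\{h_\epsilon=\epsilon\}$ cuts out a compact region containing $B_\delta$, with $\delta\ll\epsilon$. That region lies in $\{f<0\}\cup B_{\epsilon^{1/2}}$.

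The $\T$-conditional estimate then gives, for $u\in C_0^\infty(B_{\epsilon^{10}}\cap\{h_\epsilon<\epsilon\})$ and $\lambda$ large,
\[
 \lambda\|e^{-\lambda\log h_\epsilon}u\|_{L^2}
 +\epsilon\|e^{-\lambda\log h_\epsilon}|\partial u|\|_{L^2}
 \le C\lambda^{-1/2}\|e^{-\lambda\log h_\epsilon}\Box_\g u\|_{L^2}
 +\epsilon^{-6}\|e^{-\lambda\log h_\epsilon}\T u\|_{L^2}.
\]
Since $|\T G|\le C_0|G|$, the $\T$ term is absorbed once $\lambda\gg\epsilon^{-6}C_0$. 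This is where the conditional weight is essential: it degrades only in the $\T$ direction, and there the equation gives zeroth-order control. The constants depend only on the $C^6$ bounds, the conormal lower bound, and $A$. I would track these through the proof of \cite[Proposition~3.3]{IK09} rather than reprove them.

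\textbf{Transport estimate for $H$.} I need a matching weighted inequality
\[
 \|e^{-\lambda\log h_\epsilon}v\|\le C\lambda^{-1}\|e^{-\lambda\log h_\epsilon}Nv\|
\]
for $v$ supported in the same region. To get it, integrate along the integral curves of $N$, using $N(\log h_\epsilon)\ge c/\epsilon>0$, which follows from $Nf\ge3/4$ and $|x|$ small. Integrating by parts in $N$ against $e^{-2\lambda\log h_\epsilon}$ produces the gain $\lambda N\log h_\epsilon$.

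\textbf{Closing the argument.} Apply both estimates to $\chi G$ and $\chi H$, where $\chi$ is a cutoff equal to $1$ on $\{h_\epsilon\le\epsilon/2\}$ and vanishing near $\{h_\epsilon=\epsilon\}$. The vanishing of $G,H$ on $f<0$ makes the products compactly supported in the relevant region.

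Adding the two estimates, the right sides $C_0(|G|+|\partial G|+|H|)$ are absorbed by the $\lambda$ and $\epsilon\lambda^{1/2}$ gains. The resulting inequality is
\[
 \|e^{-\lambda\log h_\epsilon}(G,H)\|_{\{h_\epsilon<\epsilon/2\}}
 \le C\|e^{-\lambda\log h_\epsilon}(\text{commutator terms})\|_{\{\epsilon/2\le h_\epsilon\le\epsilon\}}.
\]
The commutator terms are supported where $h_\epsilon\ge\epsilon/2$. Letting $\lambda\to\infty$ shows that $G=H=0$ where $h_\epsilon<\epsilon/2$, and this set contains $B_\delta$.

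\textbf{Main obstacle.} The main difficulty is the first step: confirming that the constants in the conditional Carleman estimate depend only on $A_0$ and $A$, and arranging the weight so that the support region lies where that estimate applies. Everything after that is standard absorption.
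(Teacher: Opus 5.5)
\textbf{The gap is in the localization step.} Your analytic ingredients match the paper's. You use the $\T$-conditional Carleman estimate of \cite[Proposition~3.3]{IK09} with a logarithmic weight and absorb the stationary term through $|\T G|\le C_0|G|$. Your weighted transport inequality comes from integrating by parts along $N$ with $Nf\ge3/4$, and you absorb after weighting the wave estimate by $\lambda^{1/2}$. The localization step is the only place where vanishing near $p$ is actually produced, and as written it fails.

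In your normalization $h_\epsilon=1+f/\epsilon+|x|^2$, so $h_\epsilon(p)=1$ and $\{h_\epsilon<\epsilon\}\subset\{f<0\}$. Your test functions are therefore supported where $G=H=0$ already holds. The set $\{h_\epsilon<\epsilon/2\}$ on which you conclude vanishing contains neither $p$ nor any ball $B_\delta$.

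Renormalizing does not fix this. For the weights to which \cite[Proposition~3.3]{IK09} applies, $\log(\varepsilon+f+e_\varepsilon)$ with $e_\varepsilon$ a negligible perturbation (bounded by $\varepsilon^{10}$ in $C^4$), the argument of the logarithm equals $\varepsilon$ at $p$. It varies only by $O(A_0\varepsilon^{10})$ on the domain $B_{\varepsilon^{10}}$ of the estimate. Hence two levels differing by a factor $2$ cannot both meet that domain. For the same reason, your lens of radius $\epsilon^{1/2}$ lies far outside $B_{\epsilon^{10}}$, so a cutoff built from levels of $h_\epsilon$ is not an admissible test function. Also, a convexifying term of unit size in your normalization, i.e.\ $\epsilon|x|^2$ added to $f$, is not a negligible perturbation in the sense of \cite[Definition~3.1]{IK09}.

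\textbf{What is missing is a separation of size $\varepsilon^{32}$.} It comes from combining the vanishing on $\{f<0\}$ with the tiny admissible convexifier. The paper takes $f_\varepsilon=\log(\varepsilon+f+\varepsilon^{12}|x|^2)$ and a cutoff in $|x|$, equal to one on $B_{\varepsilon^{10}/2}$ and supported in $B_{\varepsilon^{10}}$. Since $G,H$ vanish on $\{f<0\}$, all cutoff errors lie in the annulus intersected with $\{f\ge0\}$. There $\varepsilon+f+\varepsilon^{12}|x|^2\ge\varepsilon+\varepsilon^{32}/4$, while on $|x|<\varepsilon^{32}/(8(A_0+1))$ the same quantity is below $\varepsilon+\varepsilon^{32}/8$. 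The factor $\bigl((\varepsilon+\varepsilon^{32}/8)/(\varepsilon+\varepsilon^{32}/4)\bigr)^{\lambda}\to0$ then gives $G=H=0$ on a ball of radius $\delta\sim\varepsilon^{32}/A_0$, with $\varepsilon$ fixed by $A_0,A,r_0$.

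Your lens-shaped cutoff can be repaired the same way. Use levels of this weight just above its value $\varepsilon$ at $p$, at heights $\varepsilon+\eta$ with $\eta<\varepsilon^{32}$, so that the part of the lens in $\{f\ge0\}$ stays inside $B_{\varepsilon^{10}}$. The gap between levels must be this small and measured from the value at $p$, not a factor of $2$.

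As for your stated main obstacle, you need not follow constants through the proof of \cite[Proposition~3.3]{IK09}. It suffices to verify the weight conditions of \cite[Definition~3.1]{IK09} with $\varepsilon_1$ depending only on $A_0,A,r_0$, which is what the paper does.
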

\begin{proof}
Let $\mu_0\in[-A,A]$ be the constant in
\eqref{local:quantitative}. We write that inequality in the
coordinate norm, absorbing the fixed comparison constant into $A$.
The coordinate bounds give, on a ball of radius depending
only on $A_0,A$,
\[
 (\mu_0\g-\D^2f)(X,X)
   +A\{|Xf|^2+|\g(\T,X)|^2\}
 \ge\bigl(A^{-1}-C(A_0,A)|x|\bigr)|X|^2
 \ge(2A)^{-1}|X|^2.
\]
After increasing $A$ by a fixed factor, the same inequality
holds with $A^{-1}$ on the right. For $0<\varepsilon<\varepsilon_1$,
with $\varepsilon_1$ to be chosen below, set
\[
 f_\varepsilon=\log(\varepsilon+f+\varepsilon^{12}|x|^2),
 \qquad |x|<\varepsilon^{10}.
\]
The spacelike conormal gives, at $p$,
\[
 [\g^{-1}(df,df)]^2
       -\varepsilon\D^\alpha f\,\D^\beta f\,\D_\alpha\D_\beta f
 \ge A_0^{-2}-C(A_0)\varepsilon
 \ge(2A_0^2)^{-1}.
\]
We check the weight conditions of Ionescu and
Klainerman~\cite[Definition~3.1]{IK09} using these inequalities:
their $V,h_\varepsilon,e_\varepsilon$ are, respectively,
$\T$, $\varepsilon+f$, and $\varepsilon^{12}|x|^2$.
Their derivative norm is the sum of the absolute values of all
ordered coordinate components. Denote it by
$|\partial^j f|_{\ell^1}$ in the next two displays. There are
$4^j$ components, so Cauchy--Schwarz and our Euclidean array bound give
\[
 |\partial^j f|_{\ell^1}
       \le 2^j|\partial^j f|\le 2^jA_0.
\]
For $0<\varepsilon\le1/4$ we therefore have
\[
 \begin{aligned}
 \sum_{j=1}^4\varepsilon^j
       \sup_{|x|<\varepsilon^{10}}|\partial^j f|_{\ell^1}
 &\le A_0\sum_{j=1}^4(2\varepsilon)^j\\
 &\le\frac{2A_0\varepsilon}{1-2\varepsilon}
 \le4A_0\varepsilon.
 \end{aligned}
\]
Also, for $\varepsilon^{-2}\ge A$,
\[
 (\mu_0\g-\D^2f)(X,X)
 +\varepsilon^{-2}\{|Xf|^2+|\g(\T,X)|^2\}
 \ge A^{-1}|X|^2.
\]
Choose the parameter $\varepsilon_1$ in their definition so that
\[
\begin{aligned}
 \varepsilon_1^2&\le\min((2A_0^2)^{-1},A^{-1}),\\
 \varepsilon_1^{-1}&\ge\max(4A_0,|\mu_0|),\\
 C(A_0)\varepsilon_1^{22}&\le\tfrac14.
\end{aligned}
\]
Since $A_0\ge1$, this choice gives $\varepsilon_1\le1/4$.
Converting the background bounds for $\g$, $\g^{-1}$, and $\T$
to their component-sum convention changes $A_0$ by a fixed
numerical factor. We decrease $\varepsilon_1$ to meet their
restriction on this background bound and then, using only
$A_0,A,r_0$, so that
$B_{\varepsilon_1^{10}}$ lies in $B_{r_0}$ and in the ball
where the quantitative pseudoconvexity inequality holds,
and the preceding conormal lower bound holds for every
$0<\varepsilon<\varepsilon_1$.
The last displayed restriction will ensure the transport
inequality below. The spacelike-conormal bound verifies their
condition~(3.7), and the quadratic-form inequality verifies~(3.8).
The derivative bound, together with $(\varepsilon+f)(p)=\varepsilon$
and $\T(\varepsilon+f)=0$, verifies~(3.6).
The added term $\varepsilon^{12}|x|^2$ is a negligible perturbation
in the sense of~\cite[Definition~3.1, equation~(3.9)]{IK09}, because
\[
 \sup_{|x|<\varepsilon^{10}}
       |\partial^j(\varepsilon^{12}|x|^2)|
       \le C\varepsilon^{12}\le\varepsilon^{10}
       \quad (0\le j\le4).
\]
Also $|f(x)|\le A_0\varepsilon^{10}$, so the logarithm's
argument lies in $[\varepsilon/2,2\varepsilon]$.
We may therefore apply their Proposition~3.3 with parameters
chosen using only $A_0,A,r_0$. All integrals and $\|\cdot\|_2$
norms below use the positive density
$|d\Vol_\g|=\sqrt{|\det(\g_{\alpha\beta})|}\,dx^0\,dx^1\,dx^2\,dx^3$
on the fixed coordinate ball. Pointwise array norms remain
Euclidean. We obtain
\[
 \lambda\|e^{-\lambda f_\varepsilon}\phi\|_2
   +\|e^{-\lambda f_\varepsilon}\partial\phi\|_2
 \le C_\varepsilon\lambda^{-1/2}
        \|e^{-\lambda f_\varepsilon}\Box_\g\phi\|_2
       +\varepsilon^{-6}\|e^{-\lambda f_\varepsilon}\T\phi\|_2
\]
for $\phi\in C^\infty_c(\{|x|<\varepsilon^{10}\})$ and
sufficiently large $\lambda$.
The hypotheses give $Nf\ge3/4$ and $|N|\le C(A_0)$.
Our restriction on $\varepsilon_1$ therefore gives
\[
 Nf_\varepsilon
 =\frac{Nf+2\varepsilon^{12}\sum_\alpha x^\alpha N^\alpha}
        {\varepsilon+f+\varepsilon^{12}|x|^2}
 \ge\frac{3/4-C(A_0)\varepsilon^{22}}{2\varepsilon}
 \ge(4\varepsilon)^{-1}.
\]
Also $|\operatorname{div}_\g N|\le C(A_0)$.
Integration by parts therefore gives
\[
 \int(2\lambda Nf_\varepsilon-\operatorname{div}_{\g}N)
             e^{-2\lambda f_\varepsilon}\phi^2
   =2\int e^{-2\lambda f_\varepsilon}\phi N\phi.
\]
Cauchy--Schwarz gives, for $\lambda\ge4\varepsilon C(A_0)$,
\[
 \lambda\|e^{-\lambda f_\varepsilon}\phi\|_2
 \le8\varepsilon\|e^{-\lambda f_\varepsilon}N\phi\|_2.
\]
This is the transport estimate of Alexakis, Ionescu, and
Klainerman~\cite[Lemma~A.3]{AIK09} in the present normalization.

Fix $\varepsilon$ using only $A_0,A,r_0$, and choose a smooth cutoff
$0\le\chi\le1$, supported in $B_{\varepsilon^{10}}$ and equal
to one on its half ball. For a finite family $U$ of scalar functions, write
\[
 \|U\|_\lambda=\|e^{-\lambda f_\varepsilon}\chi U\|_2.
\]
Let $\mathcal E_\lambda$ be the sum of the weighted $L^2$ norms
of $[\Box_\g,\chi]G$, $(N\chi)H$,
$\lambda^{1/2}(\partial\chi)G$ and
$\varepsilon^{-6}\lambda^{1/2}(\T\chi)G$.
The cutoff contributions follow from
\begin{align*}
 [\Box_\g,\chi]G
 &=2\g^{\alpha\beta}(\partial_\alpha\chi)\partial_\beta G
                      +(\Box_\g\chi)G,\\
 N(\chi H)&=\chi NH+(N\chi)H,\\
 \T(\chi G)&=\chi\T G+(\T\chi)G.
\end{align*}
In particular the term $\chi\T G$ is controlled by
$C_0\chi|G|$, while the other stationary derivative is confined
to the annulus where $\chi$ varies. Multiply the wave estimate for $\chi G$
by $\lambda^{1/2}$ and add the transport estimate for $\chi H$.
The hypotheses of the lemma give
\begin{equation}\label{local:absorption}
\begin{split}
 &\lambda^{3/2}\|G\|_\lambda
       +\lambda^{1/2}\|\partial G\|_\lambda
       +\lambda\|H\|_\lambda\\
 &\quad\le C_\varepsilon\bigl\{
     (1+\lambda^{1/2})\|G\|_\lambda
       +\|\partial G\|_\lambda+\|H\|_\lambda
       +\mathcal E_\lambda\bigr\}.
\end{split}
\end{equation}
Here $C_\varepsilon=C_\varepsilon(A_0,A,C_0)\ge1$ includes
$\varepsilon^{-6}C_0$ and the thresholds of both estimates.
It is independent of $\lambda$ and of the functions $G,H$.
For $\lambda\ge\max\{1,16C_\varepsilon^2\}$, the unknown
terms on the right are absorbed into the left, leaving at most
$2C_\varepsilon\mathcal E_\lambda$.

We can now separate the cutoff errors from a smaller ball by the
weight. Every nonzero cutoff term lies in the annulus with $f\ge0$,
since $G,H$ vanish on $\{f<0\}$. On this annulus,
\[
\begin{aligned}
\varepsilon+f+\varepsilon^{12}|x|^2
      &\ge\varepsilon+\varepsilon^{32}/4,\\
\mathcal E_\lambda
      &\le C_{\varepsilon,G,H}(1+\lambda^{1/2})
                         (\varepsilon+\varepsilon^{32}/4)^{-\lambda}.
\end{aligned}
\]
On the ball $|x|<\varepsilon^{32}/(8(A_0+1))$, we have
$\chi=1$ and
\[
 \varepsilon+f(x)+\varepsilon^{12}|x|^2
 \le\varepsilon+(A_0+1)|x|
 <\varepsilon+\varepsilon^{32}/8.
\]
The unweighted $L^2$ norms of $G,H$ on that ball are
consequently bounded by
\[
 C_{\varepsilon,G,H}(1+\lambda^{1/2})
 \left(\frac{\varepsilon+\varepsilon^{32}/8}
             {\varepsilon+\varepsilon^{32}/4}\right)^\lambda
 \longrightarrow0.
\]
Thus $G=H=0$ on the fixed ball of radius
$\varepsilon^{32}/(8(A_0+1))$. Its choice used only $A_0,A,r_0$.
The coupling constant $C_0$ changes the lower threshold for
$\lambda$, and the functions change the cutoff-error constant;
neither changes the ball on which the exponential ratio gives
vanishing.
\end{proof}

\begin{proof}[Proof of Proposition~\ref{local:extension}]
Extend $\Z$ by~\eqref{local:jacobi} and form the tensors
\eqref{local:unknowns}. They vanish on $\{f<0\}$, are invariant
under the flow of $\T$, and satisfy~\eqref{local:system}.
In the fixed coordinates, $\LL_\T\mathbf W=0$ reads
\[
 \T(\mathbf W_{\alpha_1\ldots\alpha_4})
 =-\sum_{j=1}^4(\partial_{\alpha_j}\T^\rho)
                \mathbf W_{\alpha_1\ldots\alpha_{j-1}\rho
                                      \alpha_{j+1}\ldots\alpha_4}.
\]
Thus $|\T\mathbf W|\le C(A_0)|\mathbf W|$, where $\T$
on the left acts on the scalar components. To pass from the
tensor wave equation to scalar equations, we expand its connection
terms. With the scalar wave operator used in
Lemma~\ref{local:scalar-uniqueness}, the component functions satisfy
\begin{align*}
 \Box_\g(\mathbf W_{\alpha_1\ldots\alpha_4})
 &=(\D^\mu\D_\mu\mathbf W)_{\alpha_1\ldots\alpha_4}\\
 &\quad+2\g^{\mu\nu}\sum_{j=1}^4\Gamma^\rho_{\mu\alpha_j}
       \partial_\nu\mathbf W_{\alpha_1\ldots\alpha_{j-1}\rho
                                  \alpha_{j+1}\ldots\alpha_4}
       +\calM(\mathbf W)_{\alpha_1\ldots\alpha_4}.
\end{align*}
The remaining coefficients contain only first derivatives of
$\Gamma$ and products of two Christoffel symbols. For each
coordinate component $U$ of $\mathbf B$ or $\mathbf P$, we also have
\[
 N(\partial_\alpha U)
 =\partial_\alpha(NU)-(\partial_\alpha N^\beta)\partial_\beta U
\]
with all terms interpreted as scalar derivatives.
Take $G$ to be the array of components of $\mathbf W$, and $H$
the array of components of $(\mathbf B,\mathbf P,\partial\mathbf B,
\partial\mathbf P)$. The preceding identities and
\eqref{local:system} give
\[
\begin{aligned}
 |\Box_\g G|+|NH|&\le C(A_0)(|G|+|\partial G|+|H|),\\
 |\T G|&\le C(A_0)|G|.
\end{aligned}
\]
These are exactly the inequalities in~\eqref{local:scalar-system}.
Their coefficient bounds follow from the coordinate bounds for
$\g,\T,N$; no derivative of $\Z$ enters those coefficients.
Ionescu and Klainerman~\cite[Lemma~2.17]{IK15} give
\eqref{local:quantitative} from $\T$-conditional pseudoconvexity;
its constant $A$ is prescribed in the uniform assertion.
Lemma~\ref{local:scalar-uniqueness} therefore gives
$\mathbf W=\mathbf B=\mathbf P=0$ on a ball determined by
the stated geometric bounds. Since
\[
 {}^{(\Z)}\pi_{\alpha\beta}
      =\mathbf B_{\alpha\beta}+\mathbf B_{\beta\alpha}=0,
\]
the extension is Killing and still commutes with $\T$.
Uniqueness for~\eqref{local:killing-connection}, starting in
$\{f<0\}$, proves uniqueness of the extension.
The construction of $N$ and the scalar estimate give the
asserted radius independently of $\Z$.
\end{proof}

\section{Extension across the future horizon}\label{app:completion}

We prove the extension asserted in Lemma~\ref{exit:regular-completion}.
The local Hawking Killing field supplies the horizon generator, while
the stationary flow extends a small neighborhood of a future
section of the horizon. To preserve the given exterior, we
verify both that the gluing is Hausdorff and that future causal
curves cannot cross from the added region back into $\E$.

\begin{proof}[Proof of Lemma~\ref{exit:regular-completion}]
Recall the complete timelike stationary orbit
$\gamma(s)=\Phi_s^\T(p_\infty)$ through a fixed point $p_\infty$
in the asymptotic end, used in~\eqref{prep:comparison}.
We begin by fixing the horizon generator. In the rotating case
we use the local fields of Lemma~\ref{prep:collar}, with
$\mathbf K=\T+\Omega_H\Z$, where the constant $\Omega_H$ is
the angular velocity fixed in that lemma.
If $\T|_{S_0}=0$, we have $\D_X\T=0$ for $X\in TS_0$.
The skew endomorphism $\D\T$ consequently vanishes on $TS_0$
and preserves its Lorentzian normal plane; here
$(\D\T)(X)=\D_X\T$, with components $\D_\beta\T^\alpha$.
In the null basis
$L,\underline L$ fixed in Section~\ref{subsec:horizon-null},
it has the form
\[
\begin{aligned}
\D_L\T&=\kappa L,\\
\D_{\underline L}\T&=-\kappa\underline L,\\
\tfrac12\operatorname{tr}((\D\T)^2)&=\kappa^2.
\end{aligned}
\]
The Killing identity and $\T|_{S_0}=0$ give
\begin{align*}
X^\alpha\D_\alpha\D_\beta\T_\gamma
   &=X^\alpha\R_{\gamma\beta\alpha\delta}\T^\delta=0,\\
\shortintertext{and hence}
X(\kappa^2)
   &=\operatorname{tr}((\D\T)\D_X(\D\T))=0
                         \qquad(X\in TS_0).
\end{align*}
Thus $\kappa$ is constant and nonzero: if $\kappa=0$, both
$\T$ and $\D\T$ vanish at any point of $S_0$, and uniqueness
for~\eqref{local:killing-connection} would give $\T=0$ everywhere.
Let $\mathbf K_0$ be the local Hawking Killing field of
Alexakis, Ionescu, and Klainerman~\cite[Theorem~1.1 and
Proposition~2.3, equation~(2.12)]{AIK09}. For the optical functions
$u,\underline u$ of Section~\ref{subsec:horizon-null}, their normalization
$\mathbf K_0=\underline uL-u\underline L$ on the horizons gives
\[
\begin{aligned}
 \D_L\mathbf K_0&=L,\\
 \D_{\underline L}\mathbf K_0&=-\underline L,\\
 \D_X\mathbf K_0&=0\qquad(X\in TS_0)
\end{aligned}
\quad\hbox{on }S_0.
\]
Thus $\T-\kappa\mathbf K_0$ and its first covariant derivative
vanish on $S_0$, so~\eqref{local:killing-connection} gives
$\T=\kappa\mathbf K_0$ near $S_0$. Their Proposition~3.5
makes $\T$ timelike in the nearby exterior. It is future
directed there: otherwise, for $p$ in that neighborhood and
small $h>0$, stationarity would give
$\Phi_{nh}^\T(p)\ll p$ for every positive integer $n$.
Choosing $a<b$ with $\gamma(a)\ll p\ll\gamma(b)$ by
\eqref{prep:comparison}, we would obtain
\[
 \gamma(a+nh)\ll\Phi_{nh}^\T(p)\ll p\ll\gamma(b),
\]
contrary to causality when $nh>b-a$. Since $\mathbf K_0$ is
future directed on the future horizon, $\kappa>0$.
We therefore take $\mathbf K=\T$ and $\Omega_H=0$ in this case.
In either case, on a sufficiently small neighborhood $\mathcal O$
of $S_0$, the normal coordinates of
\eqref{prep:normal-coordinates} satisfy
\[
\begin{aligned}
\mathbf K&=\kappa(V\partial_V-U\partial_U),\\
\kappa&>0,\\
\E\cap\mathcal O&=\{U,V>0\},\\
\HH^+\cap\mathcal O&=\{U=0,V>0\}.
\end{aligned}
\]

Use the function $\tau$ constructed in
Lemma~\ref{prep:collar}, choosing an invariant normal frame
in the rotating case and any oriented orthonormal normal frame
when $\mathbf K=\T$. Recall that $\tau$ is temporal, meaning
that its gradient is everywhere past timelike, and that
$\tau=(V-U)/2$ near $S_0$.
Outside a compact neighborhood its zero level is the
hypersurface $\Sigma^0$ chosen in that lemma.
We deform the zero level by choosing a smooth cutoff $\eta=1$
near $S_0$, supported where $\tau=(V-U)/2$, and replacing
$\tau$ by $\tau-\varepsilon\eta$.
For small $\varepsilon>0$ this function is still temporal.
Indeed, choose $c>0$ with $\g^{-1}(d\tau,d\tau)\le-c$ on
the compact support of $d\eta$. Then
\[
 \g^{-1}(d\tau-\varepsilon d\eta,d\tau-\varepsilon d\eta)
 \le-c+2\varepsilon|\g^{-1}(d\tau,d\eta)|
                   +\varepsilon^2|\g^{-1}(d\eta,d\eta)|
 \le-c/2.
\]
The gradient is unchanged off this support. On a fixed
neighborhood of the compact portion being deformed, choose a
smooth future timelike field $Y$ with $Y\tau=1$. Its flow
identifies the nearby levels, and
\[
 Y(\tau-\varepsilon\eta)=1-\varepsilon Y\eta>0
\]
for small $\varepsilon$. The implicit function theorem therefore
identifies the new section with a compact deformation of the
old one. We take its connected exterior portion to be
$\mathcal S$. It is acausal and spacelike, its asymptotic end
is unchanged, and its closure is diffeomorphic to
$[1,\infty)\times S^2$, with boundary
\[
 C=\{(0,2\varepsilon,p):p\in S_0\}.
\]
Figure~\ref{fig:future-horizon-section}(a) shows the exterior
portion of this level, with boundary $C$, and its return to
the unchanged asymptotic slice.

The deformed section will parametrize a neighborhood of the
horizon by the stationary flow. In the rotating case the section
is invariant under the local rotations near $C$. If $\T$ were
tangent there, then $\mathbf K=\T+\Omega_H\Z$ would also be
tangent. In the nonrotating case $\mathbf K=\T$.
Both cases contradict spacelikeness because $\mathbf K$ is nonzero
and null on $C$. Thus $\T$ is transverse there, and hence nearby.
At $C$, the null covector $d(UV)$ has
nonzero restriction to a spacelike three-plane, so we may use
$UV$ as a coordinate on the section near $C$.

In the rotating case the images of its small exterior portion under
the stationary flow are disjoint by Lemma~\ref{prep:collar}, using timelikeness
of $\mathbf K$ and periodicity of $\Z$. In the nonrotating case this follows
directly from acausality, since $\T=\mathbf K$ is timelike there.
Write $\iota:(-\delta,\delta)\times S_0\to\mathcal O$
for this part of the deformed section, using $UV$ as its first
coordinate and $\iota(0,p)=(0,2\varepsilon,p)$.
In the rotating case we choose $\iota$ equivariant under the
circle action on $S_0$: the section is $V-U=2\varepsilon$
near $C$, and both $U,V$ are rotation invariant.
On the product below, $UV$ denotes
this first section coordinate, independent of the stationary
parameter $t$; the separate functions $U,V$ belong to the original
normal chart.
On $UV>0$, the stationary flow gives an embedding into $\E$:
\[
 F(t,q)=\Phi_t^\T(\iota(q)),\qquad
 q\in(0,\delta)\times S_0.
\]
On $\mathbb R\times(-\delta,\delta)\times S_0$, for
$X,Y\in T_q(( -\delta,\delta)\times S_0)$ and $a,b\in\mathbb R$, define
\[
 \widehat{\g}_{(t,q)}((a,X),(b,Y))
 =\g_{\iota(q)}(a\T+d\iota(X),b\T+d\iota(Y)).
\]
Transversality makes the local maps
$F_{t_0}(t_0+s,q)=\Phi_s^\T(\iota(q))$ diffeomorphisms for
small $s$, on both sides of $UV=0$. Stationarity gives
\[
\begin{aligned}
F^{\ast}\g&=\widehat{\g}\quad(UV>0),\\
F_{t_0}^{\ast}\g&=\widehat{\g},\\
\LL_{\partial_t}\widehat{\g}&=0,\\
\Ric(\widehat{\g})&=F_{t_0}^{\ast}\Ric(\g)=0.
\end{aligned}
\]
Thus $\widehat{\g}$ is a smooth Lorentzian vacuum metric,
agreeing with the exterior metric on the identified region.
The field $\partial_t$ corresponds to $\T$ under $F$; we use
$\T$ for the resulting stationary field after gluing.
In the rotating case the maps
$(t,UV,p)\mapsto(t,UV,\Phi_\theta^\Z(p))$ preserve this metric
and agree with the original rotations on the overlap.
They define the lifted field $\Z$ on the product, with
$\Z t=\Z(UV)=0$, and extend the Hawking field by
$\mathbf K=\T+\Omega_H\Z$. In the nonrotating case we extend
$\mathbf K$ by $\T$.

For the gluing to be Hausdorff, we need to separate the added
horizon from compact subsets of $\E$. Suppose, to the contrary,
that $p_j$ in the deformed section tends to $C$ while
$q_j=\Phi_{t_j}^{\T}p_j$ remains in a compact subset of $\E$. For a fixed point
$p_{\ast}$ of the deformed section, the comparison with $\gamma$
in~\eqref{prep:comparison} supplies $a<b$ such that
\[
\begin{aligned}
&\gamma(a)\ll p_{\ast}\ll\gamma(b),\\
&\gamma(a)\ll q_j\ll\gamma(b).
\end{aligned}
\]
If $t_j>b-a$, then
\[
 p_j\ll\gamma(b-t_j)\ll\gamma(a)\ll p_{\ast};
\]
if $t_j<a-b$, then
\[
 p_{\ast}\ll\gamma(b)\ll\gamma(a-t_j)\ll p_j.
\]
Both contradict acausality. Hence $|t_j|\le b-a$.
Taking subsequences with $t_j\to t$ and $q_j\to q\in\E$
would give $p_j\to\Phi_{-t}^{\T}q\in\E$, contrary to $p_j\to C$.

We now identify the domain of outer communications by proving
that $UV\le0$ is preserved along future causal curves in the
added neighborhood. We denote the glued metric by $\g$. On the
added product, the translates of the spacelike section have conormal $dt$,
so $\g^{-1}(dt,dt)<0$ on the added neighborhood. In the rotating
case $\Z t=0$, and $\mathbf K=\T+\Omega_H\Z$ gives
$\mathbf Kt=1$. This identity also holds when $\mathbf K=\T$.
Since $\mathbf K$ is future null on $C$, the metric gradient
of $t$ is past timelike there, and hence throughout a smaller
stationary product neighborhood.
In the original normal coordinates $(U,V,x^1,x^2)$, where
$x^A$ are local coordinates on $S_0$, the generator $\partial_V$
on $U=0$ is orthogonal to $\partial_V,\partial_{x^A}$ and
$\g(\partial_U,\partial_V)>0$. Inverting that row of the
metric gives
$\g^{UU}=\g^{UA}=0$ and $\g^{UV}=\frac1{\g(\partial_U,\partial_V)}$.
It follows that
\[
 \g^{\alpha\beta}\partial_\beta(UV)\partial_\alpha
       =\frac{V}{\g(\partial_U,\partial_V)}\partial_V
       =\frac{\mathbf K}{\kappa\g(\partial_U,\partial_V)}
       \quad\hbox{on }C.
\]
Since $\mathbf Kt=1$,
\[
 \g^{-1}(d(UV),dt)
       =\frac1{\kappa\g(\partial_U,\partial_V)}>0
                        \quad\hbox{on }C.
\]
Stationarity, smoothness and compactness of $C$ give constants
$c_0,C_0>0$ such that
\[
\begin{aligned}
|\g^{-1}(d(UV),d(UV))|&\le C_0|UV|,\\
\g^{-1}(d(UV),dt)&\ge c_0,\\
\g^{-1}(dt,dt)&<0.
\end{aligned}
\]
Choose $C_1>C_0/(2c_0)$. Then
\[
 \begin{aligned}
 &\g^{-1}(d(UV)-C_1|UV|dt,d(UV)-C_1|UV|dt)\\
 &\quad=\g^{-1}(d(UV),d(UV))
       -2C_1|UV|\g^{-1}(d(UV),dt)
       +C_1^2(UV)^2\g^{-1}(dt,dt)\\
 &\quad\le(C_0-2C_1c_0)|UV|\le0,\\
 &\g^{-1}(d(UV)-C_1|UV|dt,dt)\ge c_0.
 \end{aligned}
\]
The metric dual of $d(UV)-C_1|UV|dt$ is future causal. Recall that $(UV)_+=\max\{UV,0\}$.
Every future causal curve in the added neighborhood, parametrized
by $t$, consequently satisfies
\begin{align*}
\frac{d(UV)}{dt}&\le C_1|UV|,\\
\shortintertext{and hence}
\frac d{dt}(UV)_+&\le C_1(UV)_+
       \quad\hbox{almost everywhere}.
\end{align*}
Integrating the inequality for $(UV)_+$ gives, for $t\ge t_0$,
\[
 (UV)_+(t)\le e^{C_1(t-t_0)}(UV)_+(t_0).
\]
Thus $UV\le0$ is preserved in the future direction. Since a
path from the added region to the asymptotic end would have to
cross $UV=0$ toward $UV>0$, the domain of outer communications
remains $\E$. The same estimate keeps every causal path between
exterior points in the exterior, so the deformed section remains
acausal after gluing. A closed causal curve would have to stay
either in $\E$, where global hyperbolicity excludes it, or in
the added region, where $t$ is temporal. Thus the extended spacetime
is causal as well.
Figure~\ref{fig:future-horizon-section}(b) depicts this causal
property in the stationary coordinates $(t,UV)$.

\begin{figure}[htbp]
\centering
\begin{tikzpicture}[x=.92cm,y=.92cm,font=\normalsize,
                    line cap=round,line join=round]
 \begin{scope}
  \draw[densely dotted,black!60,line width=.55pt]
              (-.53,-.67) rectangle (3.0,1.48);
  \draw[line width=.85pt] (-.43,-.43)--(1.98,1.98);
  \draw[line width=.75pt] (-.67,.67)--(.82,-.82);
  \node[above left] at (1.78,1.78) {$\HH^+$};
  \node[right=3pt,fill=white,inner sep=1pt] at (.70,-.82) {$\HH^-$};
  \draw[dashed,line width=.75pt] (0,0)--(4.65,0);
  \draw[densely dashed,black!60,line width=.6pt] (-.34,.70)--(.70,.70);
  \draw[line width=1.2pt] (.70,.70)--(2.52,.70)
         .. controls (3.2,.70) and (3.45,0) .. (4.65,0)--(6.5,0);
  \filldraw[fill=white,line width=.65pt] (0,0) circle[radius=2pt];
  \fill (.70,.70) circle[radius=2pt];
  \node[anchor=east,fill=white,inner sep=1pt] at (-.19,-.17) {$S_0$};
  \node[above left=2pt] at (.70,.70) {$C$};
  \node[above=3pt,fill=white,inner sep=1pt] at (1.9,.70) {$V-U=2\varepsilon$};
  \node[below=3pt,fill=white,inner sep=1pt] at (2.0,0) {$V=U$};
  \node[align=center] at (5.25,.87) {unchanged\\asymptotic end};
  \node at (4.0,1.65) {$\E$};
  \node at (2.85,-1.47) {(a) Deformation of the section};
 \end{scope}
 \begin{scope}[xshift=10.2cm]
  \fill[black!13] (-1.30,-.83) rectangle (0,1.8);
  \fill[black!3] (0,-.83) rectangle (2.15,1.8);
  \draw[->,line width=.9pt] (0,-.83)--(0,1.92);
  \node[above=3pt] at (0,1.92) {$\HH^+:\ UV=0$};
  \draw[densely dashed,black!60,line width=.6pt] (-1.30,0)--(0,0);
  \draw[line width=1.2pt] (0,0)--(2.15,0);
  \fill (0,0) circle[radius=2pt];
  \node[above left=2pt] at (0,0) {$C$};
  \node[above=4pt] at (1.52,0) {$t=0$};
  \draw[->,line width=.9pt] (0,.35)--(0,.86);
  \node[anchor=west] at (.13,.70) {$\mathbf K$};
  \node[align=center] at (-.71,-.55) {$UV<0$};
  \node at (1.14,1.48) {$UV>0$};
  \draw[->,line width=.8pt] (1.6,-.67)
          .. controls (.90,-.45) and (.10,.39) .. (-.97,1.31);
  \draw[->,line width=.65pt] (-1.76,.44)--(-1.76,1.28);
  \node[above] at (-1.76,1.28) {$t$};
  \node at (.22,-1.47) {(b) Future causal crossing};
 \end{scope}
\end{tikzpicture}
\caption{The extension across the future horizon in Lemma~\ref{exit:regular-completion}.
In (a), the thick exterior section begins at
$C=\{U=0,V=2\varepsilon\}$ and rejoins the original section
outside the local chart (dotted box). Its dashed continuation
across the horizon is auxiliary. In the stationary coordinates
of (b), future causal curves may enter the added region $UV<0$
but cannot return to the exterior $UV>0$, by the inequality
for $(UV)_+$ above. The coordinates $(t,UV)$ in panel (b) display
this direction of causal crossing.}
\label{fig:future-horizon-section}
\end{figure}

\Needspace{4\baselineskip}
It remains to verify the horizon conditions in $I^+$-regularity.
The stationary field is complete by construction, and the
deformed section consists of a compact part and its original end,
with boundary $C$. On $U=0$, the original normal exponential
coordinates make $V$ affine. Hence
$\mathbf K=\kappa V\partial_V$ and $\D_{\partial_V}\partial_V=0$,
which imply
\[
 \D_{\mathbf K}\mathbf K=\kappa^2V\partial_V=\kappa\mathbf K.
\]
Stationary translation preserves this identity on the added
horizon. Together with $\mathbf Kt=1$, it gives
\[
 \D_{e^{-\kappa t}\mathbf K}(e^{-\kappa t}\mathbf K)
 =e^{-2\kappa t}
   (\D_{\mathbf K}\mathbf K-\kappa\mathbf K)=0.
\]
In the product coordinates, we identify $\{0\}\times\{0\}\times S_0$
with $C$ by $\iota$. The generator through $(0,0,p)$ is
\[
 \Phi_t^{\mathbf K}(0,0,p)
   =\begin{cases}
     (t,0,\Phi_{\Omega_Ht}^{\Z}p),&\T|_{S_0}\not\equiv0,\\
     (t,0,p),&\T|_{S_0}=0,
    \end{cases}
    \qquad p\in S_0.
\]
The rotations in the first case are complete, so the
generators are defined for all $t\in\RR$ and each meets $C$
exactly once.
Their future affine parameter is unbounded since
\[
 \int_0^t e^{\kappa s}\,ds=\frac{e^{\kappa t}-1}{\kappa}
       \longrightarrow\infty\qquad(t\to\infty).
\]
In the original normal chart, the short curves with tangent
$\partial_V-\partial_U$ are future timelike. For $p\in S_0$
and sufficiently small $s>0$, we therefore have
\[
 (s,2\varepsilon-s,p)\ll(0,2\varepsilon,p).
\]
The first point lies in $\E$, so it is chronologically preceded
by a point of the asymptotic end. Hence
\[
 C\subset\partial\E\cap I^+(\M^{(\mathrm{end})}).
\]
To verify the asymptotic-flatness condition in
Chru\'sciel and Costa~\cite[Section~2.1]{CC08}, we compute the second fundamental form on the unchanged
asymptotic slice. We return here to the asymptotic coordinates
$(t,x^1,x^2,x^3)$ of GR, in which $\T=\partial_t$.
Let $\gamma_{ij}=\g_{ij}$,
$i,j\in\{1,2,3\}$, be the induced metric, with inverse
$\gamma^{ij}$ and Levi--Civita connection $\nabla^\gamma$.
The shift vector has components $\gamma^{ij}\g_{0j}$, so
lowering its index with $\gamma$ gives $\g_{0i}$.
The induced metric is $\delta_{ij}+O_6(|x|^{-1})$.
The future unit normal and second
fundamental form satisfy
\[
 \begin{aligned}
 \mathbf n&=\sqrt{-\g^{00}}(\partial_t-\gamma^{ij}\g_{0j}\partial_i),\\ 
 k_{ij}&=-\g(\D_i\mathbf n,\partial_j),\\
 \frac{2k_{ij}}{\sqrt{-\g^{00}}}
 &=-\partial_t\gamma_{ij}
       +\nabla_i^\gamma\g_{0j}+\nabla_j^\gamma\g_{0i}
 =\partial_i\g_{0j}+\partial_j\g_{0i}
                  -2\Gamma(\gamma)^\ell_{ij}\g_{0\ell}.
 \end{aligned}
\]
We used stationarity, $\partial_t\gamma=0$, on the last line.
The falloff in~\eqref{intro:af} gives
$\sqrt{-\g^{00}}=1+O_6(|x|^{-1})$ and
\[
\begin{aligned}
 \partial_i\g_{0j}+\partial_j\g_{0i}&=O_5(|x|^{-3}),\\
 \Gamma(\gamma)^\ell_{ij}\g_{0\ell}&=O_5(|x|^{-4}).
\end{aligned}
\]
Thus, for every spatial multiindex $I$,
\[
 \partial^I k_{ij}=O(|x|^{-3-|I|}),\qquad |I|\le5.
\]
In particular,
\begin{equation}\label{exit:asymptotic-data}
\begin{aligned}
 \gamma_{ij}-\delta_{ij}&=O_6(|x|^{-1}),\\
 k_{ij}&=O_5(|x|^{-3}).
\end{aligned}
\end{equation}
These bounds verify the required asymptotic falloff.
Together with global hyperbolicity of $\E$ from GR, we have
verified $I^+$-regularity as defined by Chru\'sciel and
Costa~\cite[Definition~1.1]{CC08}. The horizon is connected with
surface gravity $\kappa>0$. In the rotating case, stationary
translation extends the local rotational action to its added neighborhood.
\end{proof}
\section*{Acknowledgements}

The author was partially supported by a grant of the Ministry of Research,
Innovation and Digitization, CCCDI - UEFISCDI, project number
ROSUA-2024-0001, within PNCDI IV.
The author is grateful to Professors Alexandru D.~Ionescu and
Sergiu Klainerman for their encouragement and for suggesting this problem.

\end{document}